\DeclareMathAlphabet{\pazocal}{OMS}{zplm}{m}{n}
\newcommand{\R}{\mathbf{R}}
\newcommand{\Ff}{\mathcal F}
\newcommand{\Rr}{\mathcal R}
 \newcommand{\Dd}{\mathcal{D}}
 \newcommand{\Hh}{\mathcal{H}}
 \newcommand{\Ll}{\mathcal{L}}
  \newcommand{\Ss}{\mathcal S}
 \newcommand{\RR}{\mathbf{R}}  
 \newcommand{\ZZ}{\mathbf{Z}}  
 \newcommand{\BB}{\mathbf{B}}  
 \renewcommand{\SS}{\mathbf{S}}  
  \newcommand{\Div}{\operatorname{Div}}
    \newcommand{\dist}{\operatorname{dist}}
 \newcommand{\area}{\operatorname{area}}
 \newcommand{\eps}{\epsilon}
\renewcommand{\vv}{\mathbf v}
\newcommand{\ee}{\mathbf e}
\newcommand{\pdf}[2]{\frac{\partial #1}{\partial #2}}
\newtheorem*{theorem*}{Theorem}
\newtheorem{theorem}{Theorem}[section]
\newtheorem{lemma}[theorem]{Lemma}
\newtheorem*{claim*}{Claim}
\newtheorem{corollary}[theorem]{Corollary}
\newtheorem{proposition}[theorem]{Proposition}
\theoremstyle{definition}
\newtheorem{definition}[theorem]{Definition}
\newtheorem{claim}{Claim}
\newtheorem{remark}[theorem]{Remark}
\def\pproof#1{\@ifnextchar[\opargproof
{\opargproof[\it Proof of #1.]}}
\def\opargproof[#1]{\par\noindent {\bf #1 }}
\begin{document}

\title[Scherk-Like Translators]{Scherk-like Translators for Mean Curvature Flow}

\begin{abstract}
We prove existence and uniqueness for a two-parameter family
of translators for mean curvature flow.
We get additional examples by taking limits at
the boundary of the parameter space.   
Some of the translators resemble well-known
minimal surfaces (Scherk's doubly periodic minimal surfaces,
helicoids), but others have no minimal surface analogs.
A one-parameter subfamily of the examples (the pitchforks)
have finite topology and quadratic area growth, and thus
might arise as blowups at singularities of initially smooth, closed
surfaces flowing by mean curvature flow.
\end{abstract}

\author[D. Hoffman]{\textsc{D. Hoffman}}

\address{David Hoffman\newline
Department of Mathematics \newline
 Stanford University \newline 
  Stanford, CA 94305, USA\newline
{\sl E-mail address:} {\bf dhoffman@stanford.edu}
}

\author[F. Martin]{\textsc{F. Martín}}

\address{Francisco Martín\newline
Departmento de Geometría y Topología  \newline
Instituto de Matemáticas IE-Math Granada \newline
Universidad de Granada\newline
18071 Granada, Spain\newline
{\sl E-mail address:} {\bf fmartin@ugr.es}
}
\author[B. White]{\textsc{B. White}}

\address{Brian White\newline
Department of Mathematics \newline
 Stanford University \newline 
  Stanford, CA 94305, USA\newline
{\sl E-mail address:} {\bf bcwhite@stanford.edu}
}

\date{\today}
\subjclass[2010]{Primary 53C44, 53C21, 53C42}
\keywords{Mean curvature flow, translating solitons, Jenkins-Serrin problem, curvature bounds,
 area estimates, Morse theory, comparison principle.}
\thanks{The second author was partially supported by MINECO-FEDER grant no. MTM2017-89677-P and 
by the Leverhulme Trust grant IN-2016-019.
The third author was partially supported by NSF grant DMS-1711293}



\maketitle


\setcounter{tocdepth}{1}
   \tableofcontents
\section{Introduction}
\label{sec:intro}
A surface $M$ in $\R^3$ is called a {\bf translator with velocity $\vv$} 
if 
\[
  t\mapsto M + t\vv
\]
is a mean curvature flow, i.e., if the normal velocity at each point is equal
to the mean curvature vector: $\vec H = \vv^\perp$.
By rotating and scaling, it suffices to consider the case $\vv=(0,0,-1)$:
in that case, we refer to $M$ simply as a
 translator\footnote{We follow Ilmanen's convention that translators move downward.
 Readers who prefer translators to move upward may wish to hold the paper upside-down
 when reading it.}.
Thus the translator equation is
\begin{equation}\label{MCF}
   \vec H = (0,0,-1)^\perp.
\end{equation}
 A {\bf complete translator} is a translator that is complete as a surface in $\RR^3$.
See~\cite{himw-survey} for an introductory survey about complete translators.
In this paper we construct  new examples of complete translators inspired by  old minimal surfaces.\nocite{himw-2}

In  the 19th Century, Scherk discovered a two-parameter family of doubly periodic minimal surfaces. 
(It is one-parameter family if one mods out by homothety.)
An appropriately chosen sequence of these surfaces converges to the helicoid, a singly periodic surface. 
 In this paper, we construct
a two-parameter family of complete translators inspired by Scherk doubly periodic minimal surfaces. 
Some of the examples resemble well-known minimal surfaces
  (see Figures~\ref{fig:scherk-translator} and~\ref{fig:helicoid}),
 but other examples have no minimal surface analog (see  Figures~\ref{fig:scherkenoid} and \ref{fig:pitchforks}).
  Theorem~\ref{translating-scherk-summary} summarizes 
  all the existence and uniqueness results contained in this paper.

Our work depends on an understanding of graphical translators, in particular the classification of complete graphical translators achieved in \cite{himw}. 
We discuss the classification at the end of the introduction.   
Translators can be considered minimal surfaces in a metric conformal to the standard Euclidean metric and this is key to our construction of graphical translators over parallelograms and strips with infinite boundary values. We present this in the following few paragraphs. 
The rest of the paper is concerned with the proof of the existence of the complete translators mentioned above.


\addtocontents{toc}{\protect\setcounter{tocdepth}{0}}
\section*{Translators as Minimal Surfaces}
\label{minimal-section}
Ilmanen~\cite{ilmanen} observed that the defining condition~\eqref{MCF} for a surface 
to translate under mean curvature flow is equivalent to the requirement that the surface be minimal with respect to the metric $g_{ij}=e^{-x_3}\delta_{ij}$.
This allows us to use compactness theorems, curvature estimates, maximum principles,  and monotonicity for $g$-minimal surfaces. 

If $M$ is a {\em graphical} translator  and $\nu$ is  the (upward-pointing) unit-normal vector field to $M$, 
then $\ee_3\cdot \nu$ is a positive  Jacobi field.  Consequently, $M$ is a stable $g$-minimal surface. Therefore a sequence  of translating graphs will converge, subsequentially, to a  translator. 
Moreover the vertical translates of $M$ are also  $g$-minimal and foliate a cylinder $\Omega\times\RR$, where
$\Omega\subset\RR^2$ is the region over which $M$ is a graph. As a consequence, $M$ is a $g$-area minimizing
 surface in $\Omega\times\RR$, and if $\Omega$ is convex, then $M$ is $g$-area minimizing in $\RR^3$
 (See Theorem~\ref{minimizing-theorem} in the appendix.)
  This provides local area estimates (Corollary~\ref{minimizing-corollary}).
 For the surfaces that arise in this paper, the boundaries are polygonal, and we also get curvature estimates up to the boundary (Theorem~\ref{curvature-bound}).
As a consequence, any sequence of such surfaces has a subsequence that converges smoothly away from the corners of the boundary curves; see Remark~\ref{dichotomy}.

 Reflection in  vertical planes and rotations about vertical lines are isometries of the Ilmanen metric. Therefore we can extend $g$-minimal surfaces by $180^\circ$ rotation about vertical lines (Schwarz reflection) and 
 we can use the Alexandrov method of moving planes in our context.



 \section*{ Graphical Translators}
\label{graph-secction}
\addtocontents{toc}{\protect\setcounter{tocdepth}{2}}

A {\bf graphical translator} is a translator that is the graph of a function 
over a domain in $\RR^2$.  In this case, we also refer to the function as a graphical translator.
The simplest example 
is the {\bf grim reaper surface}: it is  the graph  of the function
\begin{equation} \label{GR}
(x,y)\mapsto \log(\sin y)
\end{equation}
over  the strip $\RR \times (0, \pi)$.

If we rotate the grim reaper surface 
about the $y$ axis by an angle $\theta\in (-\pi/2,\pi/2)$ and then dilate by $1/\cos \theta$, 
the resulting surface is  a also a translator.  It is the graph of the function
\begin{equation}\label{TGR}
  (x,y) \mapsto \frac{\log (\sin (y \cos \theta))}{(\cos\theta)^2} + x \tan \theta. 
\end{equation}
over the strip given by $\RR\times(0,\pi/\cos\theta)$.
The graph of~\eqref{TGR}, or any surface obtained from it by translation
and rotation about a vertical axis, is called a {\bf tilted grim reaper  of width $w=\pi/\cos\theta$}.
We can rewrite~\eqref{TGR} in terms of the width $w$ as
\begin{equation}\label{TGR2}
  (x,y) \mapsto (w/\pi)^2 \log (\sin (y (\pi/w)) \pm x \sqrt{(w/\pi)^2 - 1}. 
\end{equation}
(The $\pm$ is there because for $w>\pi$, the tilted grim reaper can tilt in either direction:
the relation  $\cos\theta=\pi/w$ only determines $\theta\in (-\pi/2,\pi/2)$ up to sign.)
The width $w$ can be any number $\ge \pi$.  Note that the grim reaper surface is
the tilted grim reaper with tilt $0$ and  width $\pi$.

Building on fundamental work of X. J. Wang~\cite{wang} and Spruck-Xiao~\cite{spruck-xiao},
we together with Tom Ilmanen fully classified the complete graphical translators in $\RR^3$:

\begin{theorem}[Classification Theorem]\cite{himw}
\label{classification-theorem}
 For every $w>\pi$, there exists (up to translation) a unique complete
translator 
\begin{equation}\label{delta}
   u: \RR \times (0,w)\to\RR.
\end{equation}
for which the Gauss curvature is everywhere $>0$.  The function $u$ is symmetric with
respect to $(x,y)\mapsto (-x,y)$ and $(x,y)\mapsto (x,w-y)$ and thus attains its maximum at $(0,w/2)$.
Up to isometries of $\RR^2$ and vertical translation, the only other complete translating graphs
are the tilted grim reapers
 and the bowl soliton, a strictly convex, rotationally symmetric graph of an entire function.
\end{theorem}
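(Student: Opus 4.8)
The plan is to combine the known structural theorems for complete translating graphs with a direct construction, working throughout in Ilmanen's metric $g=e^{-x_3}\delta_{ij}$, for which translators are minimal surfaces. The starting observation is that if $u\colon\Omega\to\RR$ is a complete graphical translator and $M=\gra u$, then $\ee_3\cdot\nu>0$ is a positive Jacobi field on $M$, so $M$ is a stable $g$-minimal surface; this yields interior curvature estimates (and, along polygonal boundary pieces, boundary curvature estimates) together with the compactness needed for the limiting arguments below. By Spruck--Xiao~\cite{spruck-xiao}, $M$ is convex, so $\Omega$ is convex and the Gauss curvature $K$ is everywhere $\ge0$; by X.-J.~Wang~\cite{wang}, either $\Omega=\RR^2$ or, after an isometry of $\RR^2$, $\Omega$ is a strip $\RR\times(0,w)$. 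The entire case is essentially already settled: by Wang~\cite{wang} an entire complete translating graph in $\RR^3$ is rotationally symmetric, and a phase-plane analysis of the corresponding ODE identifies it with the bowl soliton. So the remaining work is to classify the complete translating graphs over strips.

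First I would show that the width satisfies $w\ge\pi$, by a comparison argument with grim reaper surfaces, and that $w=\pi$ forces $M$ to be the grim reaper. Then, for $w>\pi$, I would establish the dichotomy in the theorem according to whether $K$ vanishes. If $K(p)=0$ at some point $p\in M$, then the strong maximum principle for the second fundamental form of a convex translator forces the rank of the second fundamental form to drop identically; hence $M$ is a complete convex ruled translator, a cylinder over a convex plane curve, and the only such surfaces that are graphs over a strip are the tilted grim reapers of~\eqref{TGR2}, so $M$ is one of these. If instead $K>0$ everywhere, then $M$ is the surface $u$ whose existence and uniqueness must be proved.

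For existence, I would solve the translator (Dirichlet) problem on bounded convex domains $\Omega_n$ exhausting the strip $\RR\times(0,w)$, with boundary data chosen to tend to $+\infty$ along the two long edges of the strip, so that the solutions do not collapse onto a lower-dimensional object; the stability curvature estimates and the local $g$-area bounds (a translating graph over a convex domain is $g$-area minimizing) give smooth subsequential convergence, away from the corners, to a complete translating graph over $\RR\times(0,w)$ with $K\ge0$, after which one checks $K>0$ and the claimed behavior at infinity. Symmetry under $(x,y)\mapsto(-x,y)$ and $(x,y)\mapsto(x,w-y)$ follows from the Alexandrov method of moving planes applied to the two relevant vertical planes (reflections in vertical planes are $g$-isometries), which in particular shows the maximum is attained at $(0,w/2)$. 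Uniqueness follows by a sliding comparison: given two strictly convex complete translating graphs over the same strip, translate one past the other vertically and along the strip and use the strong maximum principle to conclude that they coincide.

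The main obstacle is the strictly convex case over a strip of width $w>\pi$. Choosing the exhausting domains $\Omega_n$ and the divergent boundary data so that the limit is a complete graph over the full strip --- and neither escapes to infinity, nor degenerates onto a tilted grim reaper, nor converges over a narrower strip --- requires delicate barrier constructions, and carrying out the moving-plane and sliding arguments on these noncompact surfaces with infinite boundary values requires uniform control of the geometry near the two edges of the strip and at the ends. The curvature estimates up to the polygonal boundary and the area estimates coming from the Ilmanen-metric viewpoint are exactly the tools that make these limiting and comparison steps go through.
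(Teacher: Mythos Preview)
This theorem is not proved in the present paper: it is quoted, with the citation~\cite{himw} in the theorem heading, as the main result of a separate paper, and is used here only as a tool. There is no proof in this paper to compare your proposal against.

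As an outline of the argument in~\cite{himw}, your sketch is broadly on target---Spruck--Xiao convexity, Wang's domain dichotomy, the splitting lemma when $K$ vanishes somewhere, and a separate existence/uniqueness argument for the strictly convex $\Delta$-wing over a strip of width $w>\pi$. Two places where the sketch is loose or misstated: first, a complete translating graph over a strip satisfies $u\to-\infty$ at the two edges (as the grim reaper does), so in your exhaustion the boundary data along the long edges should be pushed to $-\infty$, not $+\infty$. Second, the uniqueness of the $\Delta$-wing is not a bare sliding argument: two strictly convex complete graphs over the same strip have the same asymptotic planes at both edges and the same grim-reaper asymptotics at both ends, so translating one past the other does not obviously produce a first point of contact; one needs finer control of the asymptotics (or a Gauss-map/nodal-set argument) to close this. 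Your existence paragraph also glosses over the real difficulty, which you acknowledge at the end: one must rule out that the limit degenerates to a tilted grim reaper, and this is exactly where~\cite{himw} does nontrivial work.
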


In particular (as Spruck and Xiao had already shown), there are no complete graphical translators
defined over strips of width less than $\pi$.  Moreover
the grim reaper surface is the only example with width $\pi$.

The positively curved translator~\eqref{delta} in the Classification Theorem is called a {\bf $\Delta$-wing}.

\section{The Main Theorem}
\label{main-section}

\newcommand{\garea}{\operatorname{g-area}}

\newcommand{\ray}{\operatorname{ray}}

In this section, we  state the main result of this paper.  To do so,
we need some notation. 

\begin{definition}\label{P-notation}
For $\alpha\in (0,\pi)$, $w\in (0,\infty)$, and $0<L\le \infty$, let
 $  P(\alpha,w,L) $
be the set of points $(x,y)$ in the strip $\RR\times (0,w)$
such that
\[
    \frac{y}{\tan\alpha} < x < L + \frac{y}{\tan\alpha}.
\]
The lower-left corner of the region is at the origin and the interior angle at that corner is $\alpha$.
\end{definition}


\begin{figure}[htbp]
\begin{center}
\includegraphics[width=.55\textwidth]{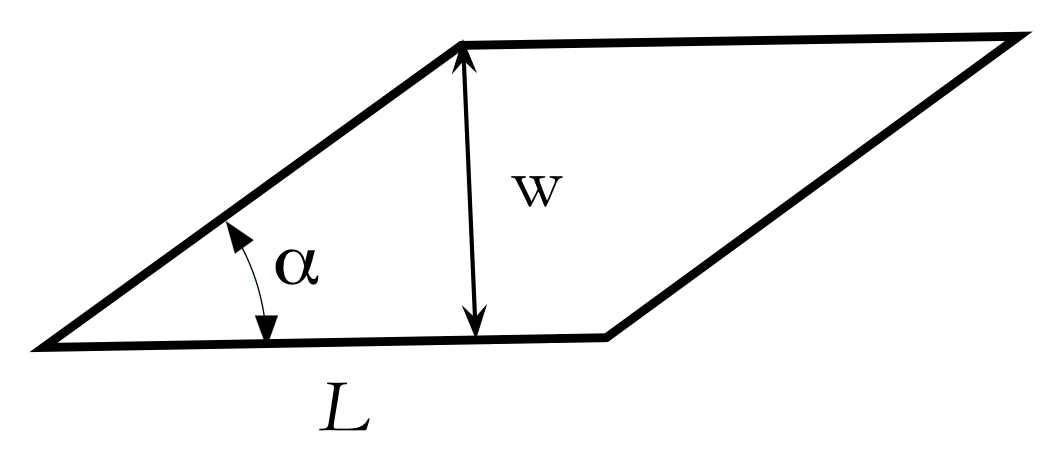}
\caption{\small The parallelogram with base $L$, corner angle $\alpha$, and height $w$.}
\label{fig:para-1}
\end{center}
\end{figure}

If $L<\infty$, $P(\alpha,w,L)$ is a parallelogram with base length $L$
 and width (in the $y$-direction)  $w$ (see Figure~\ref{fig:para-1}.)
The region $P(\alpha,w,\infty)$ may be regarded as a parallelogram whose right-hand side
has drifted off to infinity.

The following theorem summarizes some of the main facts about the classical doubly periodic Scherk minimal surfaces:

\begin{theorem}\label{classical-scherk-summary}
For each $\alpha\in (0,\pi)$, $w\in (0,\infty)$ and $L\in (0,\infty]$, the boundary value problem
\begin{equation*}
\begin{cases}
&u:P=P(\alpha,w,L)\to \RR, \\
&\text{The graph of $u$ is minimal}, \\
&\text{$u= -\infty$ on the horizontal sides of $P$}, \\
&\text{$u=+ \infty$ on the nonhorizontal sides of $P$}
\end{cases}
\end{equation*}
has a solution if and only if $P$ is a rhombus, i.e., if and only if
\[
   L = \frac{w}{\sin\alpha}.
\]
If $P=P(\alpha,w, w/\sin\alpha)$ is a rhombus, then the solution is unique
up to an additive constant, and there is a unique solution $u_{\alpha,w}$
satisfying the additional condition
\[
 \text{$(\cos(\alpha/2), \sin(\alpha/2),0)$ is tangent to the graph of $u$ at the origin}.
\]
The graph of $u_{\alpha,w}$ is bounded by the four vertical lines through the
corners of $P$.  It extends by repeated Schwartz reflection to a
doubly periodic minimal surface $\Ss_{\alpha,w}$.  (See Figure~\ref{fig:scherkfamily}.) 
{As $\alpha\to 0$, the surface $\Ss_{\alpha,w}$
 converges smoothly to the  parallel vertical planes $y=nw$, $n\in \ZZ$.}
 As $\alpha\to \pi$, the { surface}
$\Ss_{\alpha,w}$ converges smoothly to the helicoid given by
\[
  z = x \, \cot \left( \frac{\pi}{w} \, y \right).
\]
\end{theorem}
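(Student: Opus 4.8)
The plan is to recognize the stated Dirichlet problem as a Jenkins--Serrin problem on a convex quadrilateral with alternating infinite boundary data, to read off existence, uniqueness, and the boundary-line structure from that theory, to generate $\Ss_{\alpha,w}$ by Schwarz reflection, and finally to analyze the two degenerate limits by a compactness argument using curvature estimates. First I would compute the side lengths of $P(\alpha,w,L)$: each horizontal side has length $L$, while each nonhorizontal side has length $w/\sin\alpha$ (the left side runs from $(0,0)$ to $\tfrac{w}{\sin\alpha}(\cos\alpha,\sin\alpha)$). The four sides alternate between the prescribed values $+\infty$ and $-\infty$, so the Jenkins--Serrin theory applies, and its first necessary condition---equality of the total length of the $+\infty$-sides with that of the $-\infty$-sides---reads $2w/\sin\alpha = 2L$, i.e. $L = w/\sin\alpha$, which forces $L<\infty$ and makes $P$ a rhombus. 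Applying the same flux (divergence-theorem) computation to exhaustions of the unbounded region $P(\alpha,w,\infty)$, where the $+\infty$-boundary has finite length $w/\sin\alpha$ while the $-\infty$-boundary is infinite, also rules out $L=\infty$.

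Conversely, when $L=w/\sin\alpha$ I would verify the remaining Jenkins--Serrin hypothesis: for every simple closed polygon $\Gamma\ne\partial P$ whose vertices lie among the four corners of $P$, twice the total length of those sides of $\Gamma$ lying on $+\infty$-sides (resp. $-\infty$-sides) of $P$ is strictly less than the perimeter of $\Gamma$. The only such $\Gamma$ are the four triangles spanned by three of the four corners, and each consists of one $+\infty$-side and one $-\infty$-side of $P$ (both of length $L$) together with a diagonal of $P$, so both inequalities reduce to $2L < 2L + (\text{diagonal length})$, which is clear. Hence a solution exists; since both boundary values occur, it is unique up to an additive constant, again by Jenkins--Serrin. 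To pin down $u_{\alpha,w}$ I fix the constant as follows: at the corner over the origin the data jumps from $+\infty$ to $-\infty$, the closure of the graph contains the entire vertical line over the corner, and Schwarz reflection ($180^\circ$ rotation) in that line extends the surface smoothly across it; along the vertical line the (necessarily vertical) tangent plane turns monotonically from the vertical plane of one incident side to that of the other, so there is a unique height at which it contains the angle-bisecting direction $(\cos(\alpha/2),\sin(\alpha/2),0)$, and I translate $u$ vertically so that this height is $0$.

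That the graph of $u_{\alpha,w}$ is bounded exactly by the four vertical lines over the corners is part of the Jenkins--Serrin description of boundary behavior at corners where the data jumps between $\pm\infty$. Since $\Ss_{\alpha,w}$ contains these four lines, $180^\circ$ rotation about each extends it minimally, and iterating the four reflections produces the complete, embedded, doubly periodic minimal surface $\Ss_{\alpha,w}$, which one recognizes as the classical Scherk surface over the rhombus; its remaining properties, and the two limiting statements, may simply be quoted from the classical literature. For a self-contained treatment of the limits I would instead use interior curvature estimates for minimal surfaces together with boundary estimates along the straight lines contained in $\Ss_{\alpha,w}$ to extract smooth subsequential limits, and then identify them: as $\alpha\to0$ the fundamental rhombus degenerates so that the surface is squeezed between the planes $\{y=nw\}$, there is no minimal graph over the limiting strip with $-\infty$ data on both edges (a flux obstruction), and the maximum principle forces every limit to be exactly $\bigcup_{n\in\ZZ}\{y=nw\}$; as $\alpha\to\pi$ the rhombus degenerates the other way (its corners on the lines $x=0$ stay fixed, the others run off to $\pm\infty$), every limit is a complete minimal surface invariant under the translation $y\mapsto y+w$ and under the symmetries forced by the normalization, and $z = x\cot(\tfrac{\pi}{w}y)$ is the only such surface, so the whole family converges to it.

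The easy part is the first two steps: once the problem is recognized as Jenkins--Serrin, existence, uniqueness up to a constant, and the vertical-line boundary are immediate, and the Schwarz-reflection extension is routine. I expect the real work to be in the two limits---above all in identifying \emph{which} helicoid arises as $\alpha\to\pi$---since that requires controlling the normalizing constant and ruling out curvature concentration as the rhombus degenerates; short of citing the classical literature, this is where the genuine estimates live.
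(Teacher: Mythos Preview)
The paper does not give a proof of this theorem. It is stated purely as background: the sentence introducing it reads ``The following theorem summarizes some of the main facts about the classical doubly periodic Scherk minimal surfaces,'' and the only later reference to its content is at the start of \S\ref{behavior-section}, where the paper writes ``In~\cite{JenkSerr-II}, Jenkins and Serrin proved that a parallelogram can be the fundamental domain of a doubly periodic Scherk-type minimal surface if and only if it is a rhombus.'' So there is nothing in the paper to compare your argument against; the authors are simply quoting the classical theory and then devoting the rest of the paper to the translator analog (Theorem~\ref{translating-scherk-summary}).

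That said, your outline is the standard and correct route to the classical result: recognize the problem as a Jenkins--Serrin problem on a convex quadrilateral with alternating $\pm\infty$ data, read off the length condition $L=w/\sin\alpha$, verify the inequality for the four sub-triangles, and extend by Schwarz reflection. Your honest flagging of the $\alpha\to\pi$ limit as the place where real estimates are needed is accurate; in the classical literature this is handled, as you suggest, by curvature estimates plus a uniqueness/rigidity argument for the limit, and the paper is content to take all of this as known.
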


 \begin{figure}[htbp]
\begin{center}
\includegraphics[height=.19\textheight]{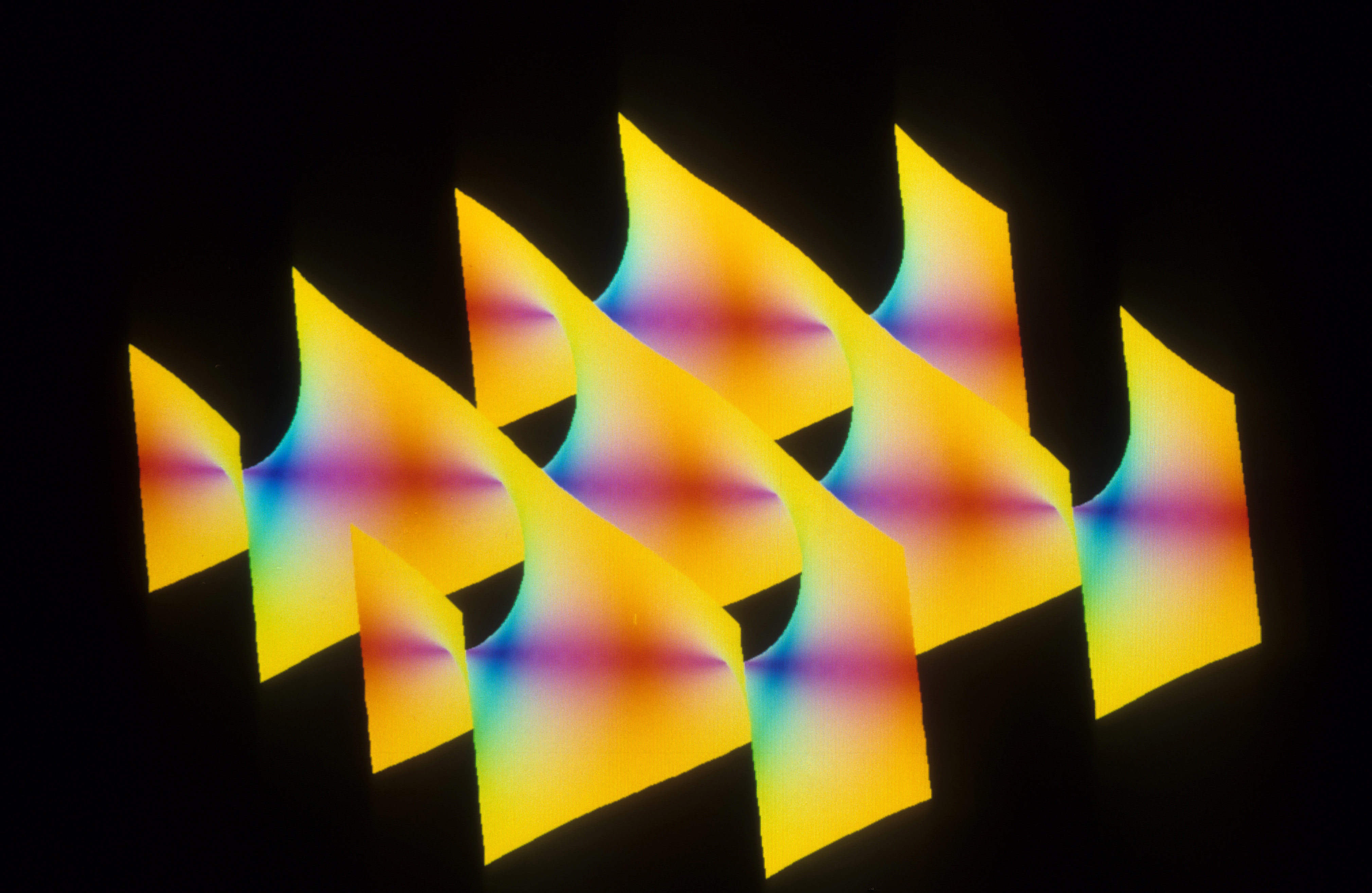}
\includegraphics[height=.19\textheight]{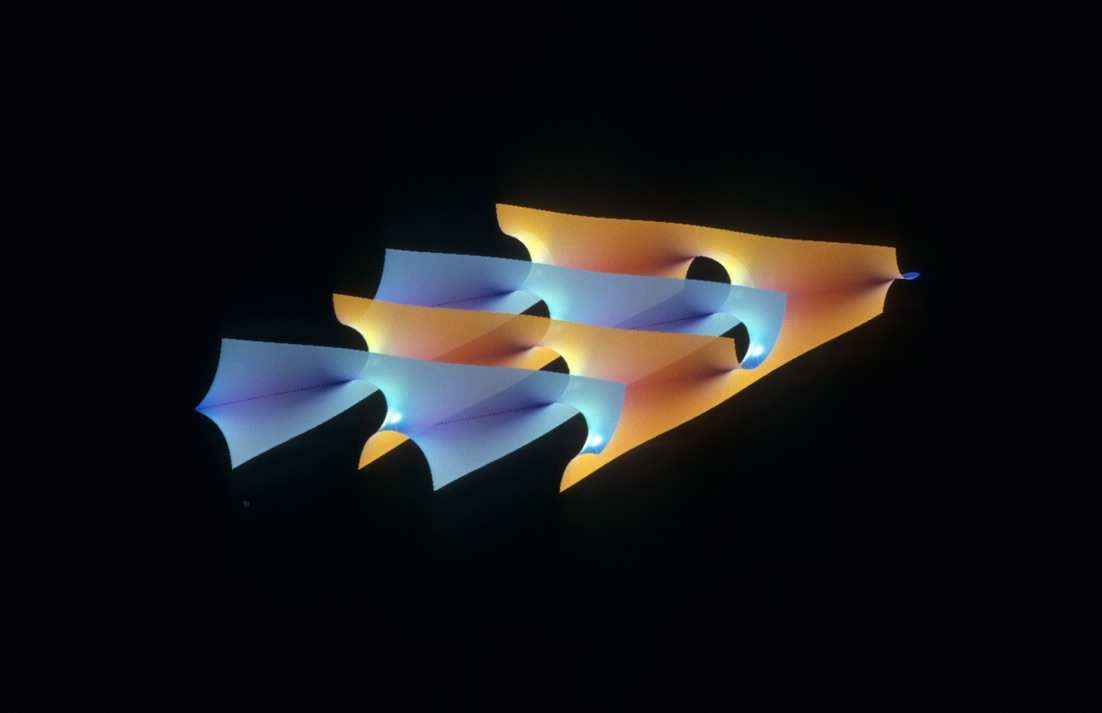}
\caption{The family of Scherk doubly periodic minimal surfaces.}
\label{fig:scherkfamily}
\end{center}
\end{figure}

For translators, 
Theorem~\ref{classical-scherk-summary} has the following analog, 
which summarizes the main results
of this paper.   See also Figure~\ref{fig:scherk-space}.

\begin{theorem}\label{translating-scherk-summary}
For each $\alpha\in (0,\pi)$ and $w\in (0,\infty)$, there is a unique $L=L(\alpha,w)$ in $(0,\infty]$
for which the boundary value problem
\begin{equation*}
\begin{cases}
&u:P=P(\alpha,w,L)\to \RR, \\
&\text{The graph of $u$ is a translator}, \\
&\text{$u= -\infty$ on the horizontal sides of $P$}, \\
&\text{$u=+\infty$ on the nonhorizontal side or sides of $P$}
\end{cases}
\end{equation*}
has a solution.   

The length $L(\alpha,w)$ is finite if and only if $w<\pi$.

If $P=P(\alpha,w, L(\alpha,w))$, then the solution is unique
up to an additive constant, and there is a unique solution $u_{\alpha,w}$
satisfying the additional condition
\[
 \text{$(\cos(\alpha/2), \sin(\alpha/2),0)$ is tangent to the graph of $u$ at the origin}.
\]
The graph of $u_{\alpha,w}$ extends by repeated Schwartz reflection to a
periodic surface $\Ss_{\alpha,w}$.  
If $w<\pi$, then $\Ss_{\alpha,w}$ is doubly periodic and we call it a {\bf Scherk translator}.
If $w\ge \pi$, then $\Ss_{\alpha,w}$ is singly periodic and we call it a {\bf Scherkenoid}.

As $\alpha\to 0$, the surface $\Ss_{\alpha,w}$
 converges smoothly to the parallel vertical planes $y=nw$, $n\in \ZZ$.
 
 As $\alpha\to \pi$, the surface
$\Ss_{\alpha,w}$ converges smoothly, perhaps after passing to a subsequence, to a limit surface $M$.
(We do not know whether the limit depends on the choice of subsequence.)
Furthermore,
\begin{enumerate}[\upshape$\bullet$]
\item
If $w<\pi$, then $M$ is helicoid-like: there is an $\hat{x}=\hat{x}_M\in \RR$ such that $M$ contains the
vertical lines $L_n$ through the points $n(\hat{x},w)$, $n\in\ZZ$.   Furthermore, $M\setminus \cup_nL_n$
projects diffeomorphically onto $\cup_{n\in\ZZ} \{ nw< y< (n+1)w\}$.
\item
If $w> \pi$, then $M$ is a complete, simply connected translator such that $M$ contains $Z$
and such that $M\setminus Z$ projects diffeomorphically onto $\{-w<y<0\}\cup \{0<y<w\}$.
We call such a translator a {\bf pitchfork} of width $w$.
\item
If $w=\pi$, then the component of $M$ containing the origin is a pitchfork $\Psi$ of width $\pi$,
but in this case we do not know
whether $M$ is connected\footnote{If $M$ is not connected, one can show that the other connected
components are grim reaper surfaces.}.
\end{enumerate}
\end{theorem}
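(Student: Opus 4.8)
The plan is to work throughout in Ilmanen's metric $g=e^{-x_3}\delta_{ij}$, so that the graph sought is a $g$-minimal graph over $P=P(\alpha,w,L)$ with infinite Dirichlet data: a Jenkins--Serrin problem for the operator $\operatorname{div}(Du/W)=-1/W$, $W=\sqrt{1+|Du|^2}$, where $z=u(x,y)$. Parallelograms are convex, so the cylinders $(\partial P)\times\RR$ are $g$-mean convex and the Dirichlet problem with arbitrary continuous \emph{finite} data is solvable; combined with the boundary curvature estimates (Theorem~\ref{curvature-bound}), the local area bounds (Corollary~\ref{minimizing-corollary}) and the subconvergence of Remark~\ref{dichotomy}, this gives the standard machinery. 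For finite $L$ I would let $u^L_n$ solve the Dirichlet problem with data $n$ on the two slanted sides and $-n$ on the two horizontal sides; by the comparison principle $u^L_n$ increases with $n$ and converges either to a solution $u^L$ with the prescribed infinite data, or to $+\infty$ locally uniformly. The whole theorem then reduces to deciding, for each $(\alpha,w)$, which $L\in(0,\infty]$ fall in the first alternative, and afterwards to taking limits $\alpha\to 0,\pi$.

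The existence of $L(\alpha,w)$ and the assertion $L<\infty\iff w<\pi$ is the technical heart. Integrating $\operatorname{div}(Du^L/W)=-1/W$ over $P$, and using that $Du^L/W$ tends to $\pm$ the outward unit conormal along the sides where $u^L\to\pm\infty$, gives the flux identity
\[
  2\,\frac{w}{\sin\alpha}-2L=-\int_{P}\frac{dA}{W}<0,
\]
so every admissible $L$ satisfies $L>w/\sin\alpha$; and a sliding argument --- restrict a putative solution over $P(\alpha,w,L_2)$ to the subparallelogram $P(\alpha,w,L_1)$, $L_1<L_2$, and apply the Jenkins--Serrin general maximum principle, legitimate here because every boundary arc of $P$ is a line segment --- shows that at most one $L\in(0,\infty]$ is admissible. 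To see that one is, and that it is finite exactly when $w<\pi$, I would invoke the Classification Theorem (Theorem~\ref{classification-theorem}): a solution over the infinite domain $P(\alpha,w,\infty)$ must, by the curvature and area estimates, converge as $x\to+\infty$ to a complete graphical translator over the strip $\RR\times(0,w)$, which exists precisely when $w\ge\pi$. Thus for $w\ge\pi$ one constructs the solution directly over $P(\alpha,w,\infty)$ --- asymptotic at the open end to a grim reaper, $\Delta$-wing or tilted grim reaper --- so $L(\alpha,w)=\infty$ and the sliding argument rules out any finite $L$; for $w<\pi$ this limiting option is gone, the auxiliary solutions with only one infinite side blow up as $L\to\infty$, and a Jenkins--Serrin--type existence argument (intermediate value for a flux functional monotone in $L$) produces the unique finite $L(\alpha,w)>w/\sin\alpha$.

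Uniqueness of $u_{\alpha,w}$ up to an additive constant, and the tangency normalization, come again from the Jenkins--Serrin general maximum principle; when $L=\infty$ one also uses uniqueness up to vertical translation of the limiting strip translator to pin down the open end. Since $180^\circ$ rotation about a vertical line is a $g$-isometry and the graph of $u_{\alpha,w}$ contains the vertical lines over the corners of $P$ (there $u_{\alpha,w}$ oscillates between $\pm\infty$), Schwarz reflection extends it to a complete translator $\Ss_{\alpha,w}$; this is consistent because a parallelogram tiles the plane under $180^\circ$ rotations about its vertices. When $L<\infty$ these rotations generate two independent periods, so $\Ss_{\alpha,w}$ is doubly periodic (a Scherk translator); when $L=\infty$ only the $y$-direction period remains, so $\Ss_{\alpha,w}$ is singly periodic (a Scherkenoid).

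Finally, for the limits I would extract subsequential smooth limits of $\Ss_{\alpha,w}$ away from the singular vertical lines (Theorem~\ref{curvature-bound}, Corollary~\ref{minimizing-corollary}, Remark~\ref{dichotomy}) and identify them. As $\alpha\to 0$ the parallelograms become ever more sheared slivers ($L(\alpha,w)>w/\sin\alpha\to\infty$), and comparison with the vertical planes $\{y=nw\}$, which are themselves translators, forces $\Ss_{\alpha,w}\to\bigcup_n\{y=nw\}$. The limit $M$ as $\alpha\to\pi$ is subtler: one must track which corner lines and which graphical sheets survive the degeneration. For $w<\pi$ enough survives that $M$ contains a discrete family of vertical lines $L_n$ and $M\setminus\bigcup_nL_n$ projects diffeomorphically onto $\bigcup_n\{nw<y<(n+1)w\}$ (helicoid-like); for $w>\pi$, where already $L(\alpha,w)=\infty$, only the central pair of sheets survives, and a Morse-theoretic sheet count together with the curvature estimates shows $M$ is a complete, simply connected translator containing the axis $Z$ with $M\setminus Z$ projecting diffeomorphically onto $\{-w<y<0\}\cup\{0<y<w\}$ --- a pitchfork; at the threshold $w=\pi$ the component of $M$ through the origin is a pitchfork of width $\pi$, while any further components are grim reaper surfaces by the Classification Theorem. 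The main obstacle is the existence step --- proving an admissible $L$ exists and establishing $L<\infty\iff w<\pi$, which forces a delicate coupling of the source-corrected Jenkins--Serrin theory with the Classification Theorem and the exclusion of interior blow-up at the critical parameter --- with the identification of the $\alpha\to\pi$ limit (especially the topology and projection structure of the pitchfork) the second substantial difficulty.
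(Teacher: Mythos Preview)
Your overall architecture is reasonable, but two of the key steps are either incorrect as written or substantially different from the paper's approach and not obviously available.

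First, the monotone-approximation scheme for existence has a gap: the functions $u^L_n$ with boundary data $n$ on the slanted sides and $-n$ on the horizontal sides are \emph{not} monotone in $n$, since as $n$ increases the data rise on two sides and fall on the other two; so neither the claimed monotone convergence nor the clean dichotomy ``solution or $+\infty$'' follows. The paper's existence proof for $w<\pi$ is quite different: it centers the parallelogram at the origin, solves with data $0$ on the horizontal sides and $h$ on the slanted sides, and runs a shooting argument in $L$ --- the center value tends to $h$ as $L\to 0$ and stays bounded as $L\to\infty$ (the latter because, by the Classification Theorem, there is no complete translating graph over a strip of width $<\pi$), so there exists $L(h)$ with center value exactly $h/2$; sending $h\to\infty$ along a subsequence produces the graph with infinite data over some finite $L$. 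Non-existence for $w\ge\pi$ with $L<\infty$ is proved directly by a grim-reaper barrier (Proposition~\ref{no-thick-scherks}), which is more elementary than the indirect ``limit at the open end'' argument you outline.

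Second, your uniqueness arguments --- the sliding argument for uniqueness of $L$ and the Jenkins--Serrin general maximum principle for $u_{\alpha,w}$ --- are not what the paper does and need justification you do not supply. The translator equation carries the source term $-1/W$, so the classical Jenkins--Serrin flux proof of uniqueness (which uses $\operatorname{div}(Du/W)=0$) does not transfer verbatim; and in the sliding argument the comparison is indeterminate on the shared $\pm\infty$ edges, so no immediate contradiction emerges. The paper instead proves uniqueness of both $L$ and $u$ via a Gauss-map/Morse-theoretic mechanism: it first shows (Theorem~\ref{scherk-gauss-map-theorem}) that the Gauss map of $\operatorname{graph}(u)$ is a diffeomorphism onto the open upper hemisphere, so if $u-v$ were non-constant one could translate $u$ to produce an isolated critical point of the difference; a small perturbation to general position then contradicts a Rado-type count on overlapping parallelograms (Propositions~\ref{rado-proposition}--\ref{4-points-proposition} and Theorem~\ref{monotone-theorem}). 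The same device, with the Gauss image replaced by the lune $\Rr(w)$, handles the scherkenoid case. Your flux inequality $L>w/\sin\alpha$ and the broad outline for the $\alpha\to 0,\pi$ limits are correct, but the paper's $\alpha\to\pi$ analysis (Theorem~\ref{pi-theorem}) is more delicate than you indicate: for $w\ge\pi$ one must show the limit domain is the full strip (by comparing Gauss images), and for $w>\pi$ a quantitative monotonicity estimate (Lemma~\ref{down-lemma}) is needed to exclude stray components of the limit.
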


\begin{figure}[htbp]
\begin{center}
\includegraphics[height=5cm]{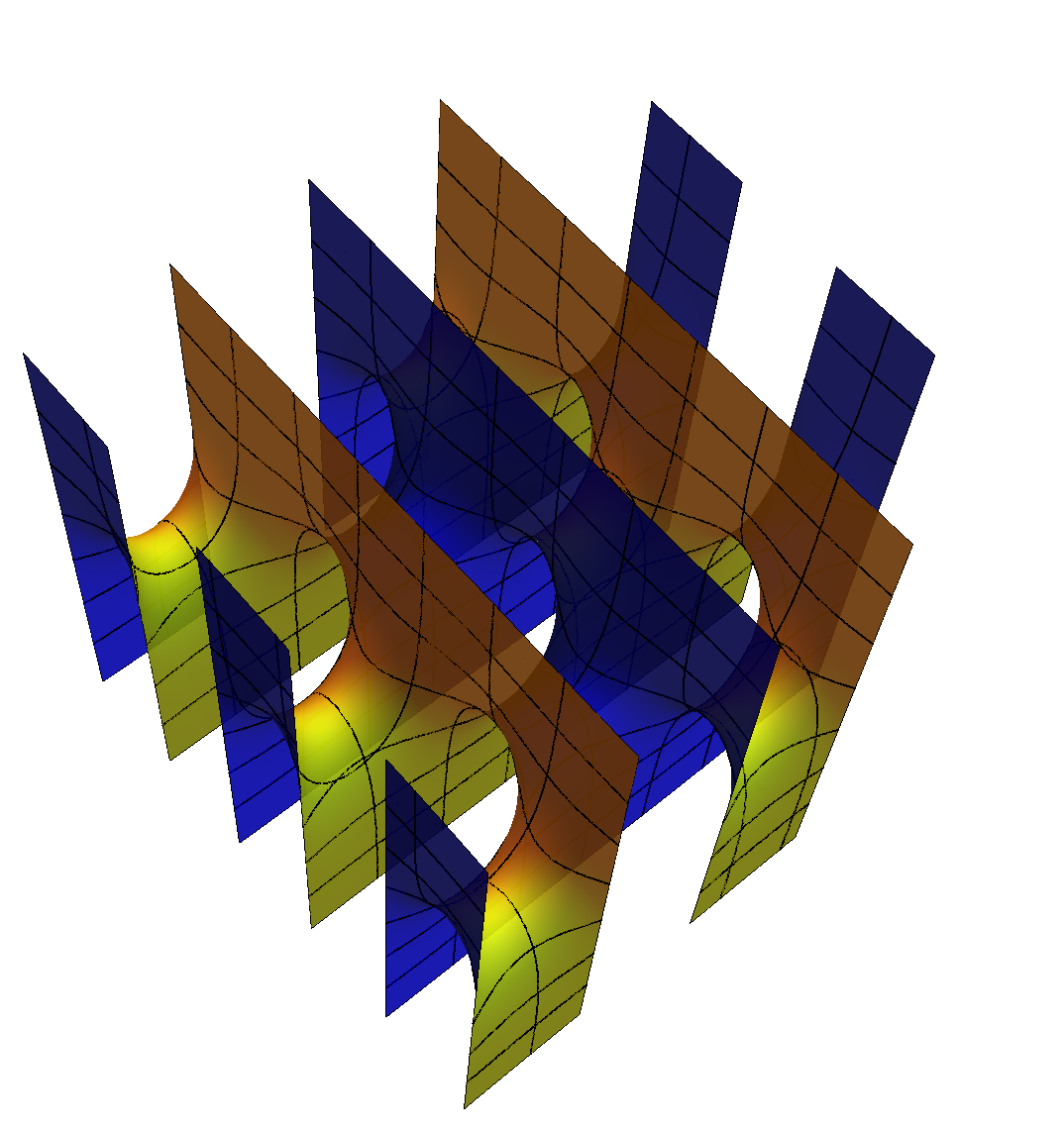}\quad \includegraphics[height=5cm]{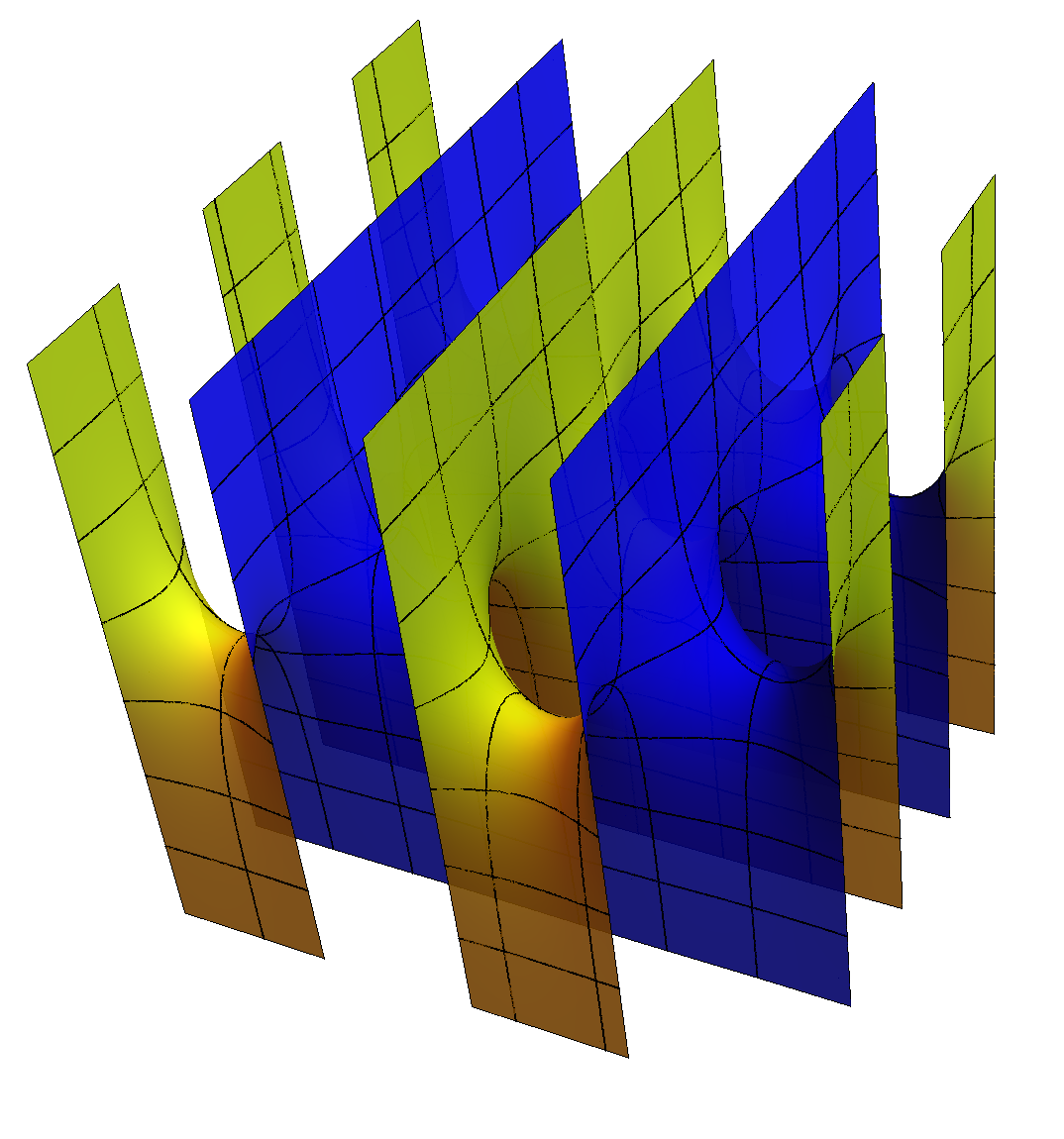}
\caption{The doubly periodic Scherk translator $\mathcal{S}_{\pi/2, \pi/2}$}.
\label{fig:scherk-translator}
\end{center}
\end{figure}

\begin{figure}[htbp]
\begin{center}
\includegraphics[height=5cm]{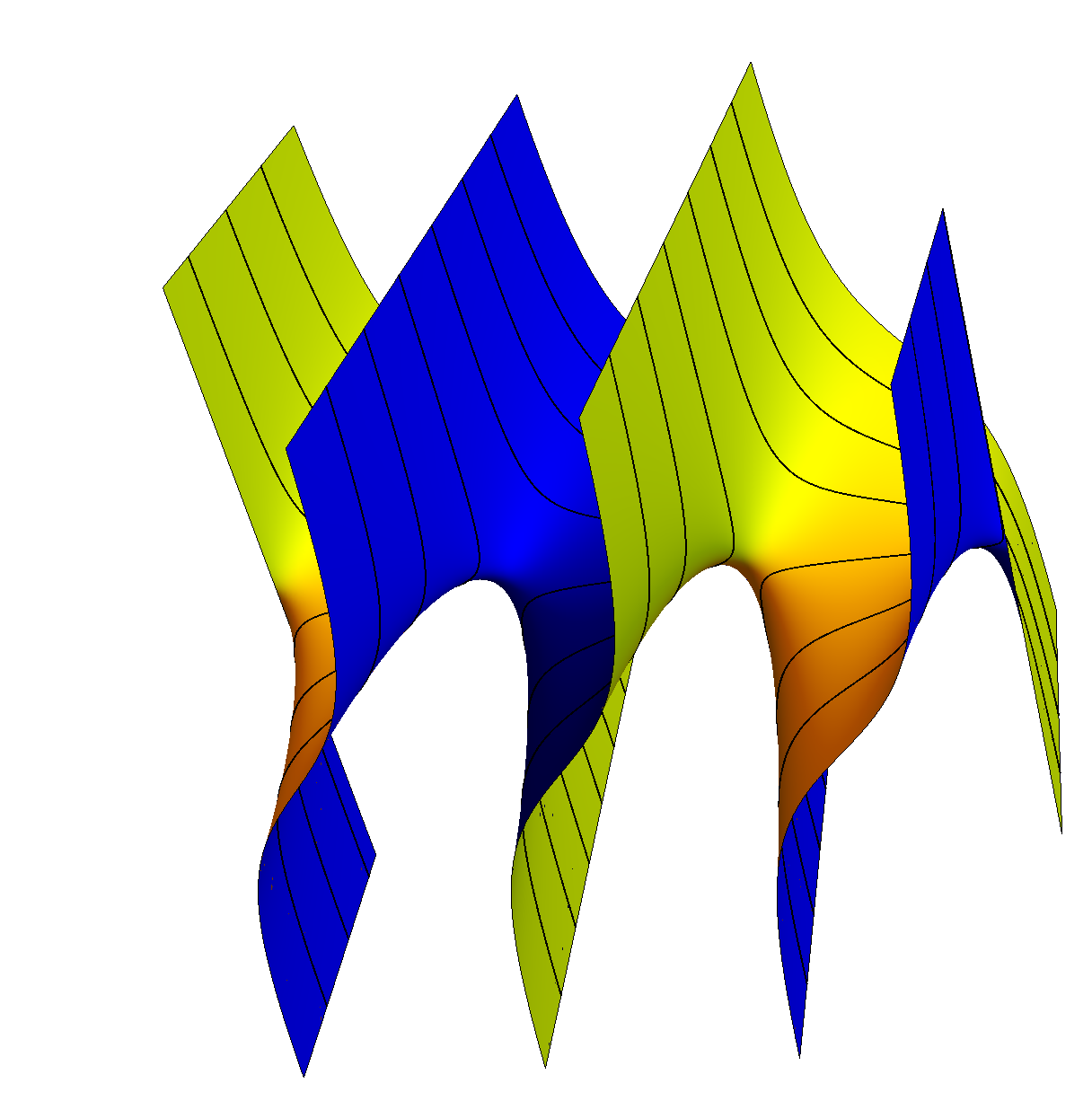}\quad \includegraphics[height=5cm]{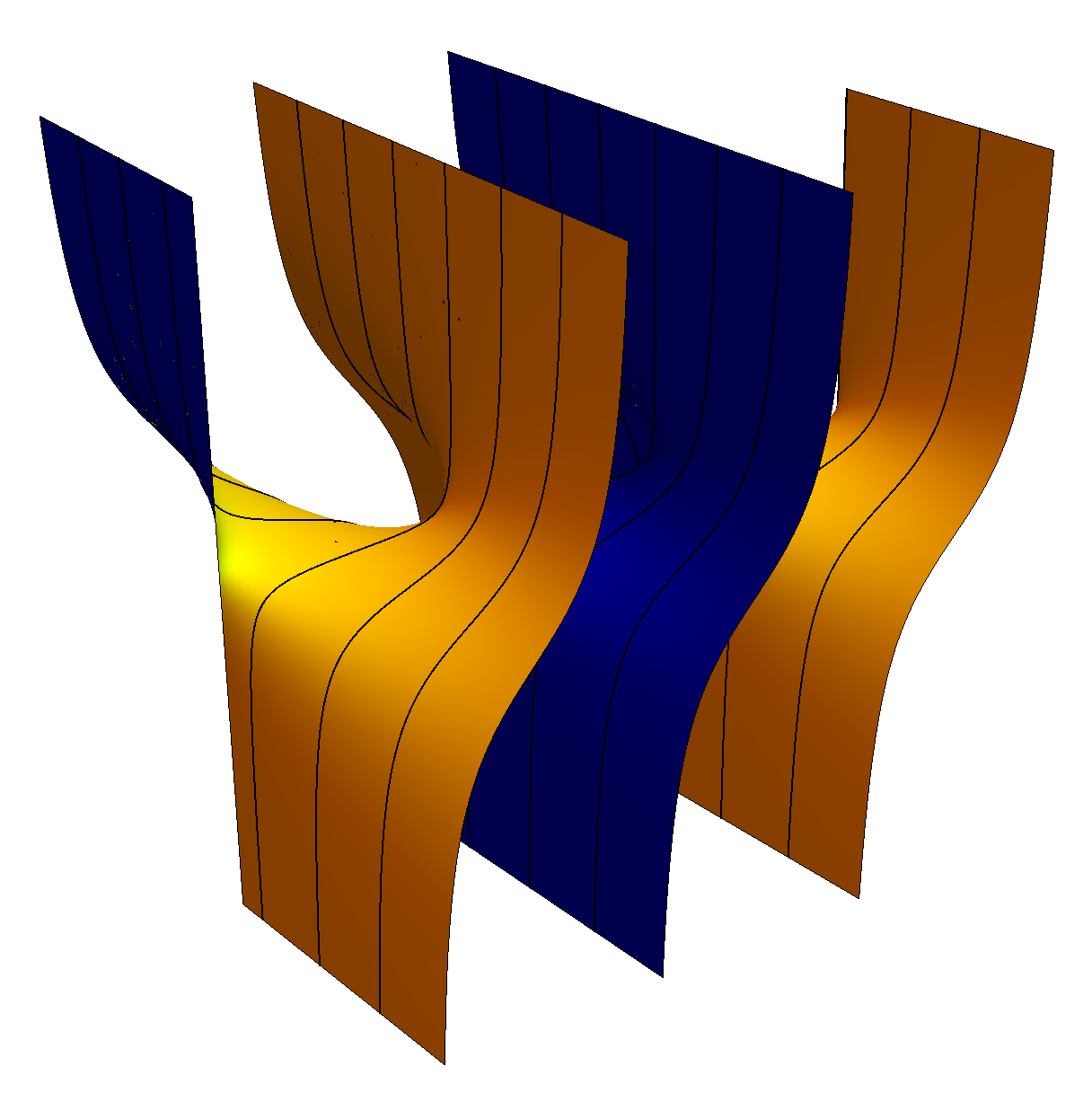}
\caption{A helicoid-like translator of width $w=\pi/2$.}
\label{fig:helicoid}
\end{center}
\end{figure}

The surfaces in Theorem~\ref{translating-scherk-summary} with widths $w<\pi$ are very much like classical Scherk surfaces and helicoids.  See Figures~\ref{fig:scherk-translator} and~\ref{fig:helicoid}.
But the surfaces with width $w\ge \pi$ are very different from standard minimal surfaces:
\begin{enumerate}[$\bullet$]
\item
The Scherkenoid is asymptotic to a infinite family of parallel vertical planes (namely the planes
$y=nw$, $n\in \ZZ$) as $z\to -\infty$, but it is asymptotic to a single vertical plane (the plane
$y=x\tan \alpha$) as $z\to\infty$.  Far away from the plane $y=x \tan\alpha$, the Scherkenoid
looks like a periodic family of tilted grim reapers, each of width $w$.  See Figure~\ref{fig:scherkenoid}.
\item
A pitchfork $M$ of width $w$ is a complete, simply connected, emdedded translator that
lies in the slab $\{|y|< w\}$.   The surface is asymptotic to the plane $y=0$ as $z\to\infty$
and to the three planes $y=0$, $y=w$, and $y= -w$ as $z\to -\infty$.
Away from the $z$-axis, the surface looks like a vertical plane and a tilted grim reaper.
See Figure~\ref{fig:pitchforks}.
\end{enumerate}

\begin{figure}[htbp]
\begin{center}
\includegraphics[height=5cm]{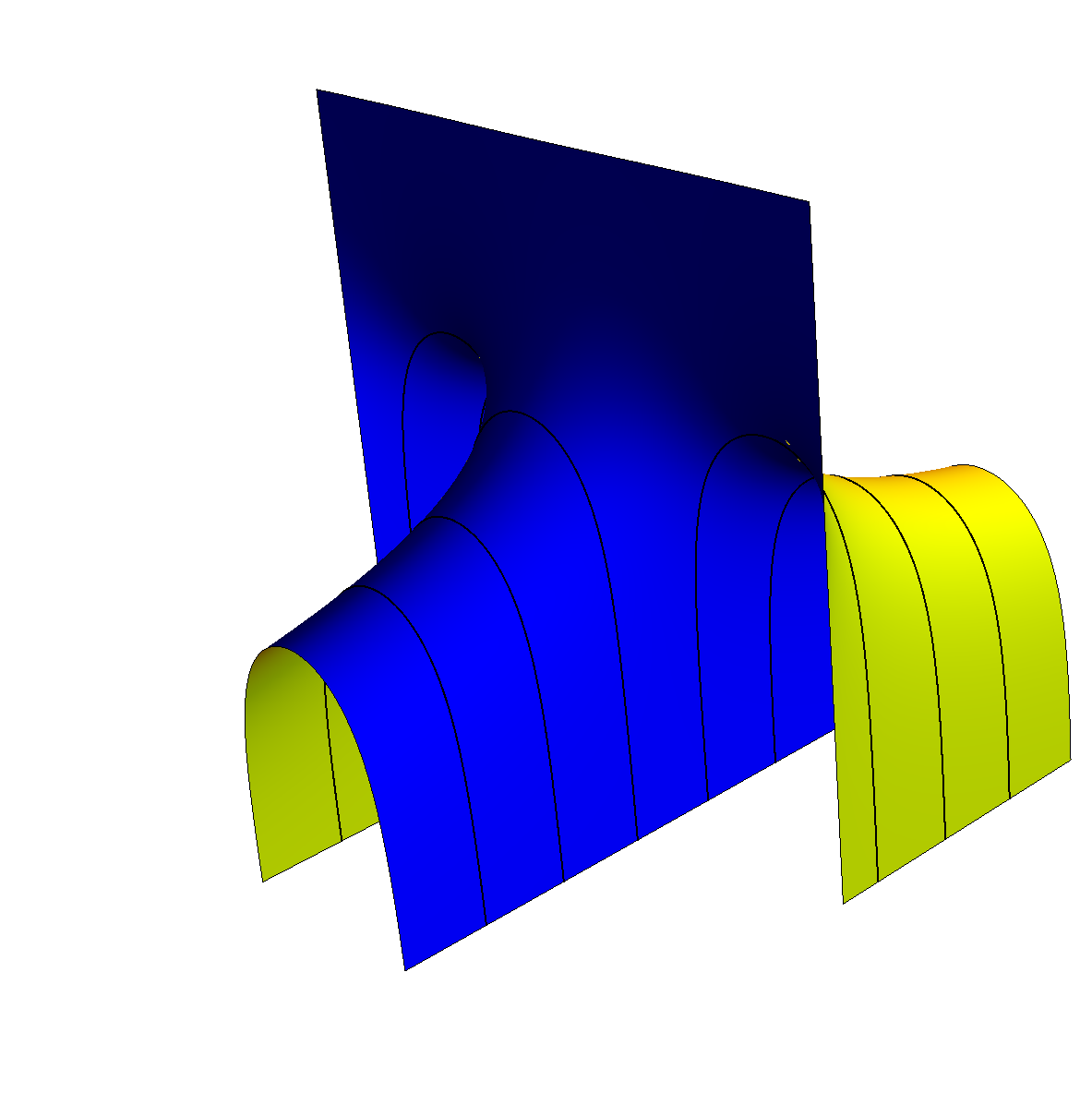} \includegraphics[height=5cm]{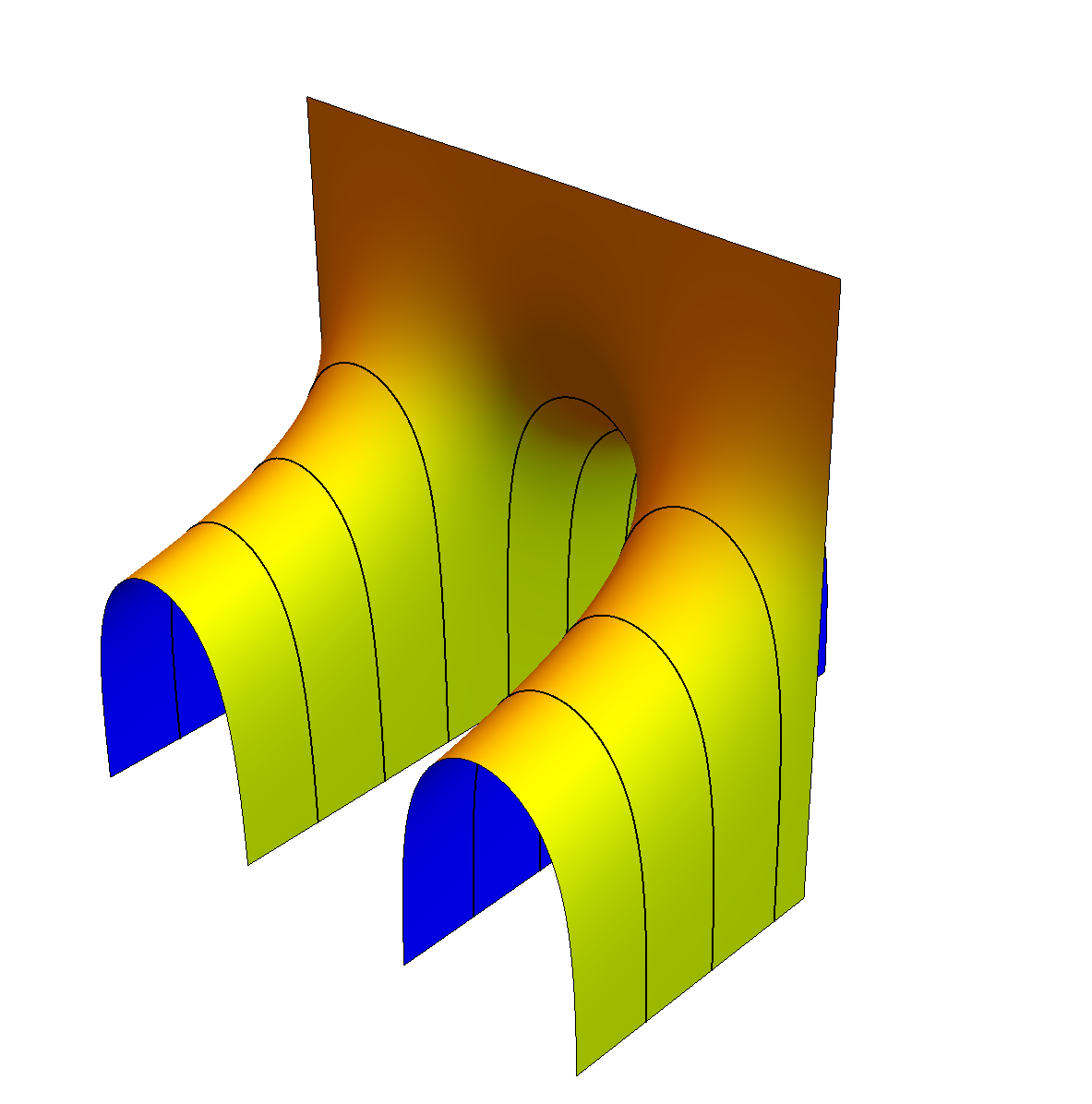}
\caption{The scherkenoid $\mathcal{S}_{\pi/2, \pi}$}.
\label{fig:scherkenoid}
\end{center}
\end{figure}

\begin{figure}[htbp]
\begin{center}
\includegraphics[height=4cm]{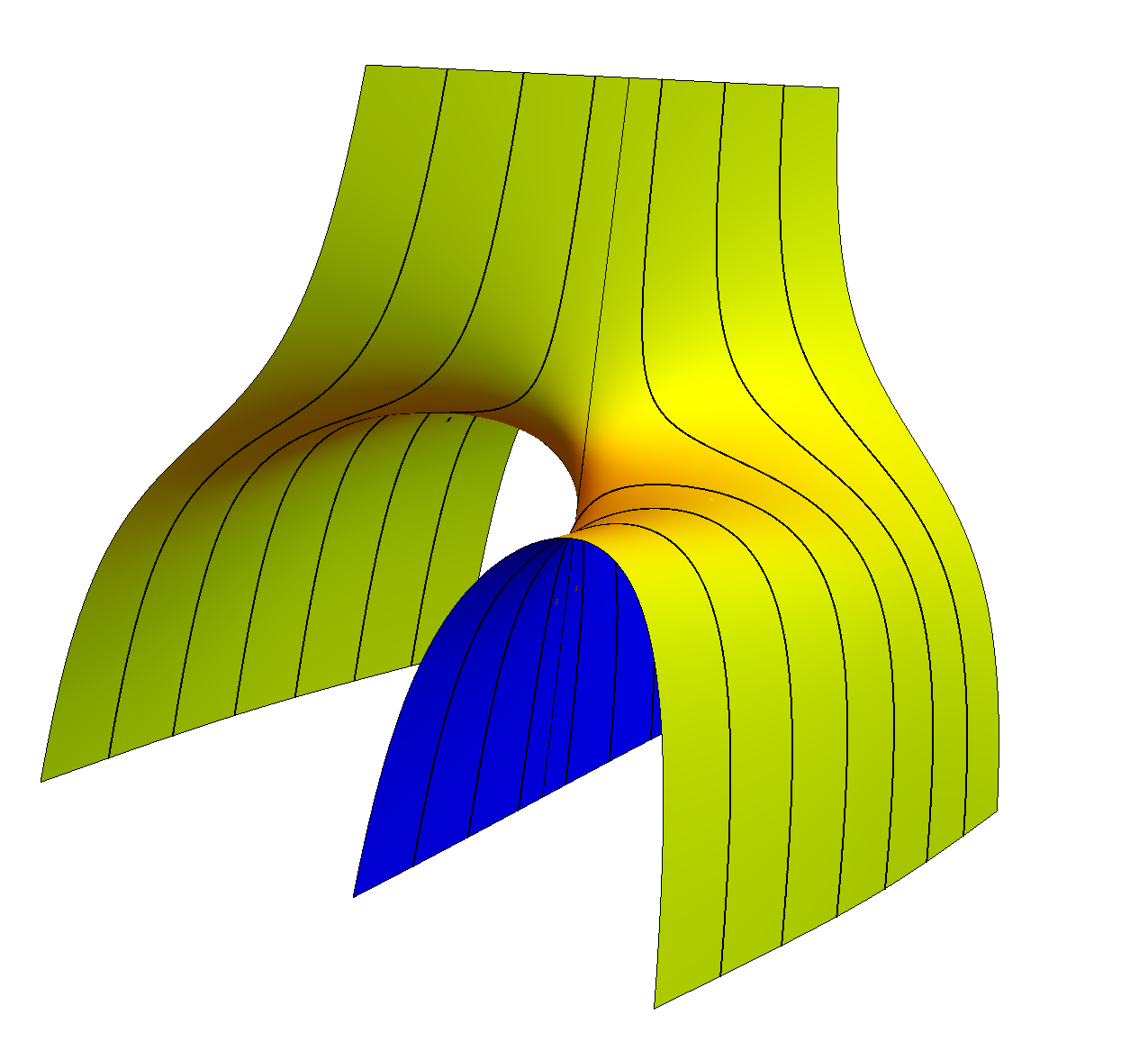} \includegraphics[height=4cm]{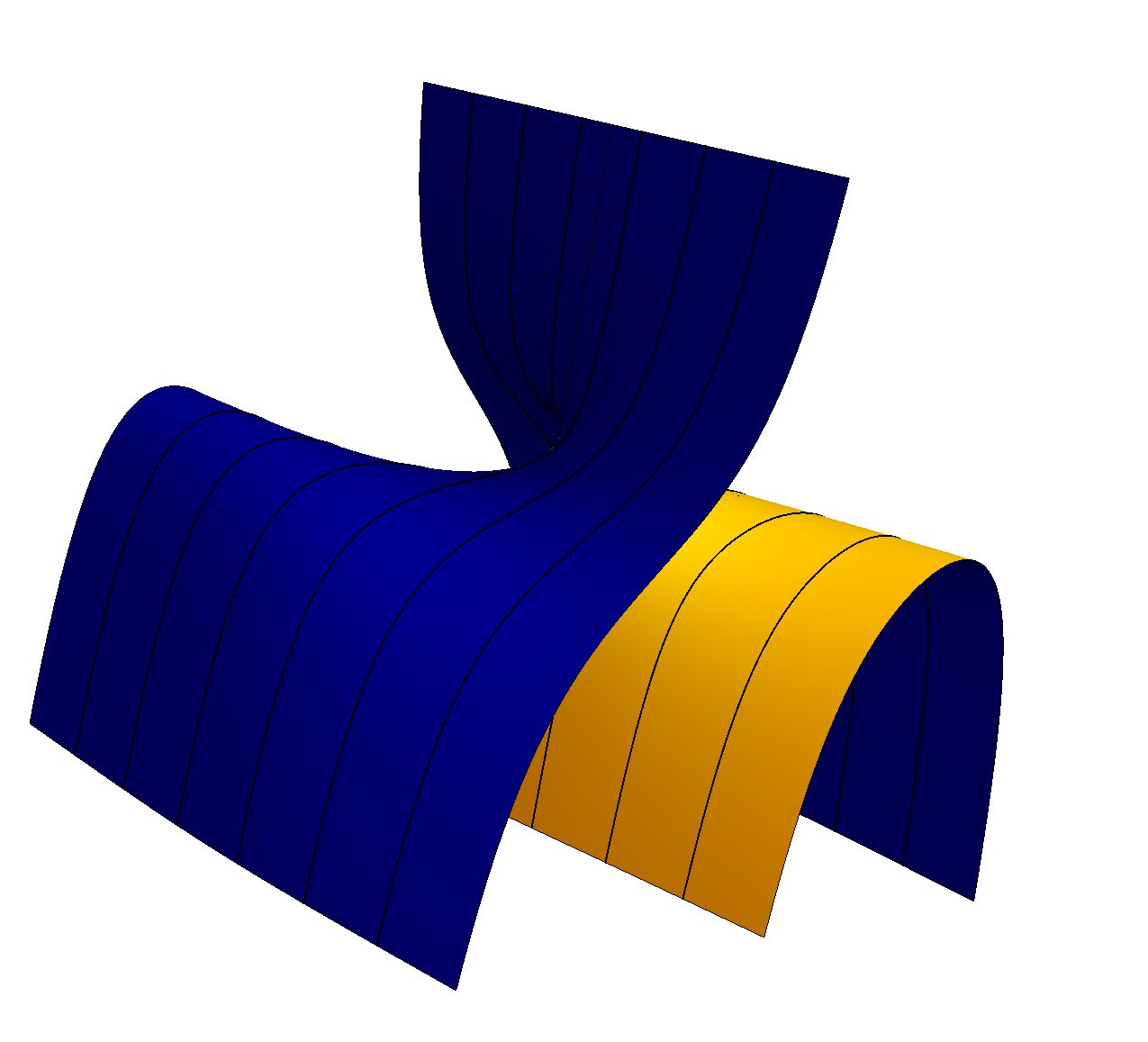}
\caption{A pitchfork  of width $w= \pi$}.
\label{fig:pitchforks}
\end{center}
\end{figure}

\begin{figure}[htbp]
\begin{center}
\includegraphics[height=10.5cm]{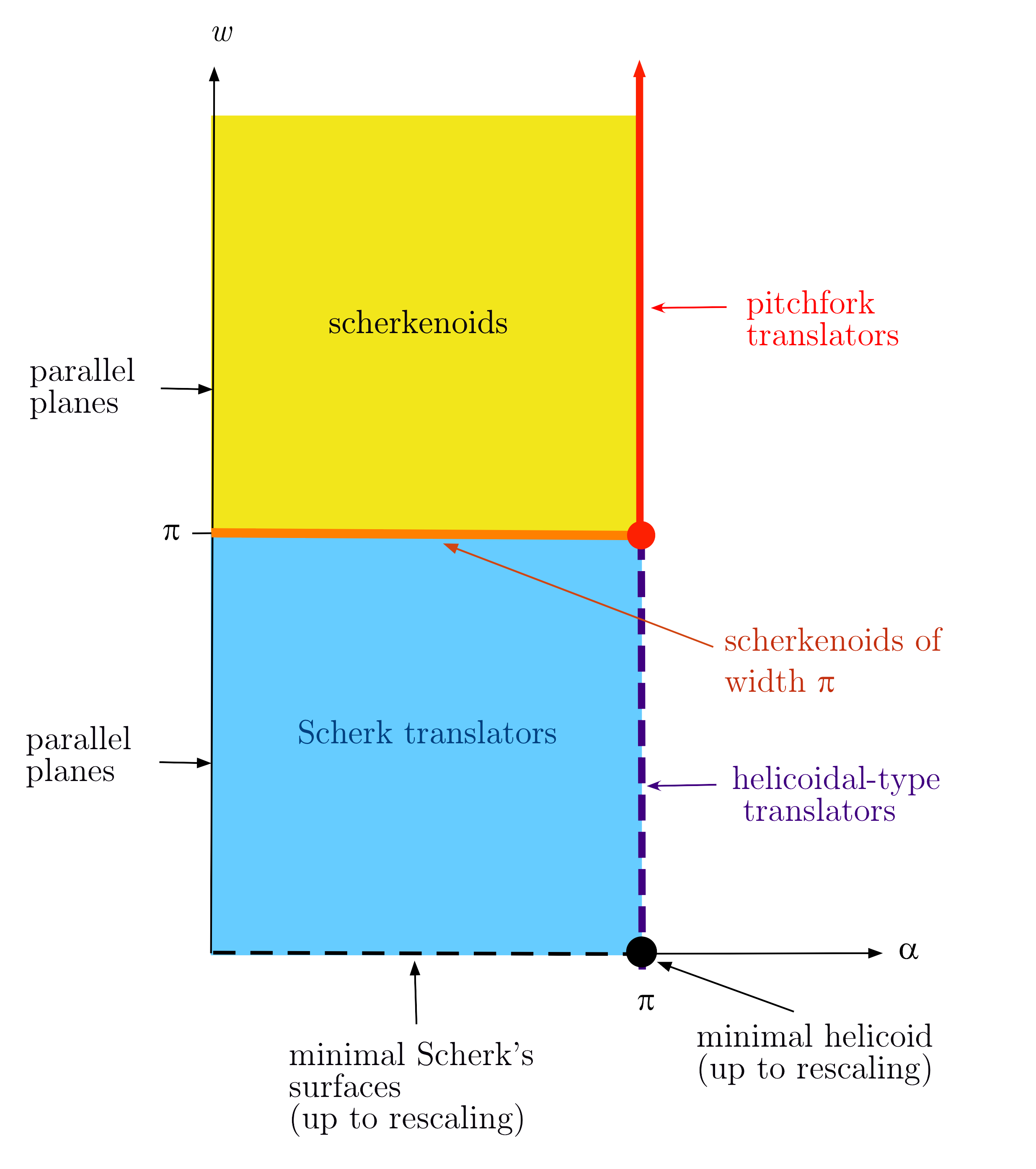}
\caption{
The Scherk translators~(\S\ref{scherk-section}) and
scherkenoids (\S\ref{scherkenoid-section}), and their
limits: translating helicoids (\S\ref{helicoid-section}) and
pitchforks (\S\ref{pitchfork-section}).
}
\label{fig:scherk-space}
\end{center}
\end{figure}

\addtocontents{toc}{\protect\setcounter{tocdepth}{0}}
\section*{Translators as Singularity Models}
\addtocontents{toc}{\protect\setcounter{tocdepth}{2}}

Consider a mean curvature flow $t\in [0,T)\mapsto M(t)$ of smooth closed
surfaces in $\RR^3$.  Suppose that the flow becomes singular at the
spacetime point $(p,T)$, i.e., that 
\[
   \lim_{t\to T} \sup \{ |A(M(t),q)| :   q\in M(t)\cap\BB(p,r)\} \to\infty
\]
for all $r>0$, where $A(\cdot,\cdot)$ is the second fundamental form.
 One can choose $(p_n,t_n)\to (p,t)$ and $\lambda_n\to\infty$ so that
the rescaled flows
\[
   t\in [-\lambda_n^2t_n,0] \mapsto \lambda_n(M(t_n+\lambda_n^{-2}t)-p_n)
\]
converge smoothly to an ancient mean curvature flow 
\[
   t\in (-\infty,0] \mapsto \widehat{M}(t)
\]
with $\widehat{M}(0)$ non-planar.
Determining which ancient flows $\widehat{M}(\cdot)$ arise as blowups in this way is a major unsolved problem.

A special case of the problem is determining which translators arise as blowups.
Any such translator $M$ must have finite topology,  and (by Huisken's mononocity) it must have finite
entropy, which is equivalent to the condition
\[
   \Theta(M):= \sup_{\BB(x,r)\subset \RR^3} \frac{\area(M\cap\BB(x,r))}{\pi r^2} < \infty. \tag{*}
\]
(The entropy of $M$ is bounded above by $\Theta(M)$ and below by a multiple of $\Theta(M)$.)
It follows that the Scherk translators, the Scherkenoids, and
the helicoids constructed in this paper cannot arise as blowups.  
(For those examples, the quantity~\thetag{*} is infinite,  since each is asympotic
as $z\to -\infty$ to an infinite collection of parallel planes.) 
On the other hand,
the pitchforks are simply connected and have finite entropy.
Whether they arise as blowups is an interesting open question.

So far, the only translators known to arise as blowups are the bowl soliton
and the grim reaper surface.  In particular, it is not known whether 
any of the $\Delta$-wings arise as blowups.

Actually, which blowups arise from smooth mean curvature flows depends on whether one considers
embedded or immersed mean curvature flows.  The grim reaper surface arises as a blow up of
an immersed, mean-convex mean curvature 
flow\footnote{Consider a non-embedded closed curve in the the $xz$-plane in $\RR^3$ with nowhere vanishing
curvature.
 If one translates
it in that plane sufficiently far  from $z$-axis, then the resulting surface of revolution
is mean convex and lies inside an Angenent torus.
Under mean curvature flow, it evolves to develop a singularity with a grim reaper surface blowup.},
 but 
not as a blowup of any embedded, mean-convex
mean curvature flow~\cite{white-size}*{Corollary~12.5}.
It has been conjectured that blowups of embedded mean curvature flows
must be ``non-collapsed", which would exclude the pitchforks.  But even if the non-collapsing
conjecture is true, the pitchforks might arise as blowups of smooth immersed mean curvature flows.


\section{Scherk Translators}\label{scherk-section}

 In this section, we  construct translators that are
modeled on  Scherk's doubly periodic minimal surfaces. 
As with minimal surfaces, it suffices to find solutions of the translator equation on a parallelogram
having boundary values $+\infty$ on two opposite sides and $-\infty$ on the other two sides:
repeated Schwarz reflection will then produce a complete, properly embedded, doubly periodic translator.


Recall that  $P(\alpha,w,L)$ is the parallelogram with horizontal sides of length $L$ in the lines $y=0$ and $y=w$,
with the lower-left corner at the origin, and with interior angle $\alpha$ at that corner (Figure~\ref{fig:para-1}).

  \begin{theorem}\label{scherk-translator-theorem} For every $0<\alpha<\pi$  
  and   $0<w<\pi$, there exists a unique $L=L(\alpha,w)>0$
  with the following property:
  there is a smooth surface-with-boundary $\Dd$
  such that $\Dd$ is a translator and such that $\Dd\setminus\partial \Dd$ is the graph
  of  a function 
  \[
      u: P(\alpha,w,L)\to \RR
  \]
 that has boundary values $-\infty$ on the horizontal sides and $+\infty$ on the nonhorizontal sides.

Given $\alpha$, $w$, and $L(\alpha,w)$, the surface $\Dd$ is unique up to vertical translation.
In particular, there is a unique such translator $\Dd=\Dd_{\alpha,w}$ if we also require that
the vector
\[
    (\cos(\alpha/2),\sin(\alpha/2),0)
\]
be tangent to $\Dd$ at the lower-left corner of $P(\alpha,w,L)$.

If $w\ge \pi$ and if $L\in (0,\infty)$, 
there is no translator $u:P(\alpha,w,L)\to\RR$ with the indicated boundary values.
\end{theorem}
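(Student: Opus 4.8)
The plan is to treat this as a Jenkins--Serrin problem for $g$-minimal graphs, where $g=e^{-x_3}\delta$ is the Ilmanen metric, and to isolate the single balancing (flux) condition that selects $L$. First I would solve approximating Dirichlet problems with finite data: since $P=P(\alpha,w,L)$ is convex, the cylinder $\partial P\times\RR$ is $g$-mean-convex (as noted in the discussion of translators as $g$-minimal surfaces), so for each $t>0$ the translator equation
\[
   \ddiv\!\left(\frac{\nabla u}{\sqrt{1+|\nabla u|^{2}}}\right)=\frac{-1}{\sqrt{1+|\nabla u|^{2}}}
\]
on $P$ with boundary data $-t$ on the two horizontal edges and $+t$ on the two slanted edges has a unique solution $u_{t,L}$. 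Barriers built from grim reapers --- tilted grim reapers of width slightly larger than $\pi$ with a vertical asymptotic plane along a slanted edge, and grim reaper surfaces over strips of width $\pi$ straddling $P$ and blowing down along a horizontal edge --- together with the interior and boundary curvature estimates for $g$-minimizing surfaces with polygonal boundary, yield the usual Jenkins--Serrin alternative: for fixed $L$, either $u_{t,L}$ converges locally smoothly as $t\to\infty$ to a translator $u_{L}$ on $P$ attaining the prescribed infinite boundary values, or it diverges; which of the two occurs is governed by a flux identity.

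To extract that identity I would integrate the translator equation over $P$ and let $t\to\infty$: the $g$-conormal flux of each edge tends to $\pm$(its length) precisely when the infinite boundary value is attained there, while the bulk term $\int_{P}(1+|\nabla u|^{2})^{-1/2}$ has a nonnegative limit $Q(L)$, giving the balancing condition
\[
   2\,\frac{w}{\sin\alpha}-2L+Q(L)=0 .
\]
The two triangles obtained by cutting $P$ along a diagonal impose only that, in each, the absolute value of the difference of total $+\infty$-edge length and total $-\infty$-edge length, corrected by the bulk term, be strictly less than the length of the diagonal; this is automatic for a nondegenerate parallelogram, and I would check that it persists at the $L$ selected below.

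The heart of the proof is to show that the left side of the balancing condition, read off from the approximating solutions for every $L\in(0,\infty]$, is continuous and strictly decreasing in $L$, is positive as $L\to0^{+}$ (where the slanted $+\infty$-edges dominate and $u_{t,L}$ diverges upward), and becomes negative for $L$ large exactly when $w<\pi$. For this last point I would invoke the Classification Theorem~\ref{classification-theorem}: were the approximating solutions to converge for arbitrarily large $L$, a recentered limit would be a complete graphical translator over a strip of width $w<\pi$, which does not exist. Granting this, the balancing condition has a unique root $L=L(\alpha,w)$, finite precisely when $w<\pi$, and at that value the dichotomy produces the translator $u$; its graph, with the four vertical lines through the corners added, is the smooth surface-with-boundary $\Dd$. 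I expect the strict monotonicity of the balancing quantity in $L$ --- and in particular the comparison of the growth of $Q(L)$ with the linear term $2L$ --- to be the main obstacle; the rest is the standard Jenkins--Serrin package adapted to the Ilmanen-minimal setting.

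For uniqueness, if $u$ and $v$ both solve the problem over $P(\alpha,w,L(\alpha,w))$, the $C^{0}$-at-infinity asymptotics near each edge make $u-v$ bounded on $P$; as $u-v$ satisfies a linear elliptic equation with no zeroth-order term, the maximum principle forces $u-v\equiv\text{const}$, and normalizing $(\cos(\alpha/2),\sin(\alpha/2),0)$ to be tangent to the graph at the origin fixes the constant, giving $\Dd_{\alpha,w}$. Finally, for $w\ge\pi$ and $L\in(0,\infty)$, any solution over $P(\alpha,w,L)$ with the indicated boundary values would force $2w/\sin\alpha-2L+Q(L)=0$, which has no root in $(0,\infty)$ (there the left side stays positive; equivalently, the $L=\infty$ case --- the Scherkenoid --- is the limiting object, handled separately). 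Hence no such translator exists.
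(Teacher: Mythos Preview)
Your proposal differs from the paper's proof in all three parts, and at least two of the differences conceal genuine gaps.

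\textbf{Existence and uniqueness of $L$.} Your balancing condition
\[
   2\,\frac{w}{\sin\alpha}-2L+Q(L)=0
\]
is not an a priori constraint on $L$: the quantity $Q(L)=\int_{P}(1+|\nabla u|^{2})^{-1/2}$ is defined in terms of the very solution whose existence is in question, so the equation is circular. You acknowledge that strict monotonicity of the left side in $L$ is the main obstacle, but you offer no mechanism for proving it. The paper sidesteps this entirely: instead of fixing $L$ and sending the boundary data to $\pm\infty$, it fixes a finite height $h$, lets $L$ vary, and uses an intermediate value argument on the value of the approximating solution at the center of the parallelogram (comparing the limits $L\to0$ and $L\to\infty$, the latter using the Classification Theorem) to find an $L(h)$ with the center value equal to $h/2$. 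Only then does it send $h\to\infty$. This produces a solution directly, without ever needing to define or control $Q(L)$ for the ``wrong'' values of $L$.

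\textbf{Nonexistence for $w\ge\pi$.} Your flux argument cannot give this. The identity yields only $0<2L-2w/\sin\alpha<wL$ (this is Theorem~\ref{jenkins-serrin-like-theorem}), which for $w<2$ gives an upper bound on $L$ but for no value of $w$ forces a contradiction. Your assertion that ``the left side stays positive'' for $w\ge\pi$ is unsupported. The paper proves nonexistence by a direct barrier argument (Proposition~\ref{no-thick-scherks}): for $w>\pi$ one slides a grim reaper of width $\pi$ inside the strip to produce an interior maximum of the difference, and for $w=\pi$ a more delicate sliding argument with half-grim-reapers is needed.

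\textbf{Uniqueness of $u$.} Your claim that $u-v$ is bounded on $P$ is asserted, not proved; it would require uniform asymptotics of both solutions along each edge and control at the corners, none of which you establish. The paper does not attempt this route at all: uniqueness is obtained by a Rad\'o-type argument (Theorem~\ref{scherk-uniqueness-theorem} via Proposition~\ref{rado-proposition}), showing that the difference of two solutions over overlapping translated parallelograms can have no critical points, and in fact Theorem~\ref{monotone-theorem} simultaneously gives uniqueness of $L$.
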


\begin{figure}[htbp]
\begin{center}
\includegraphics[width=.36\textwidth]{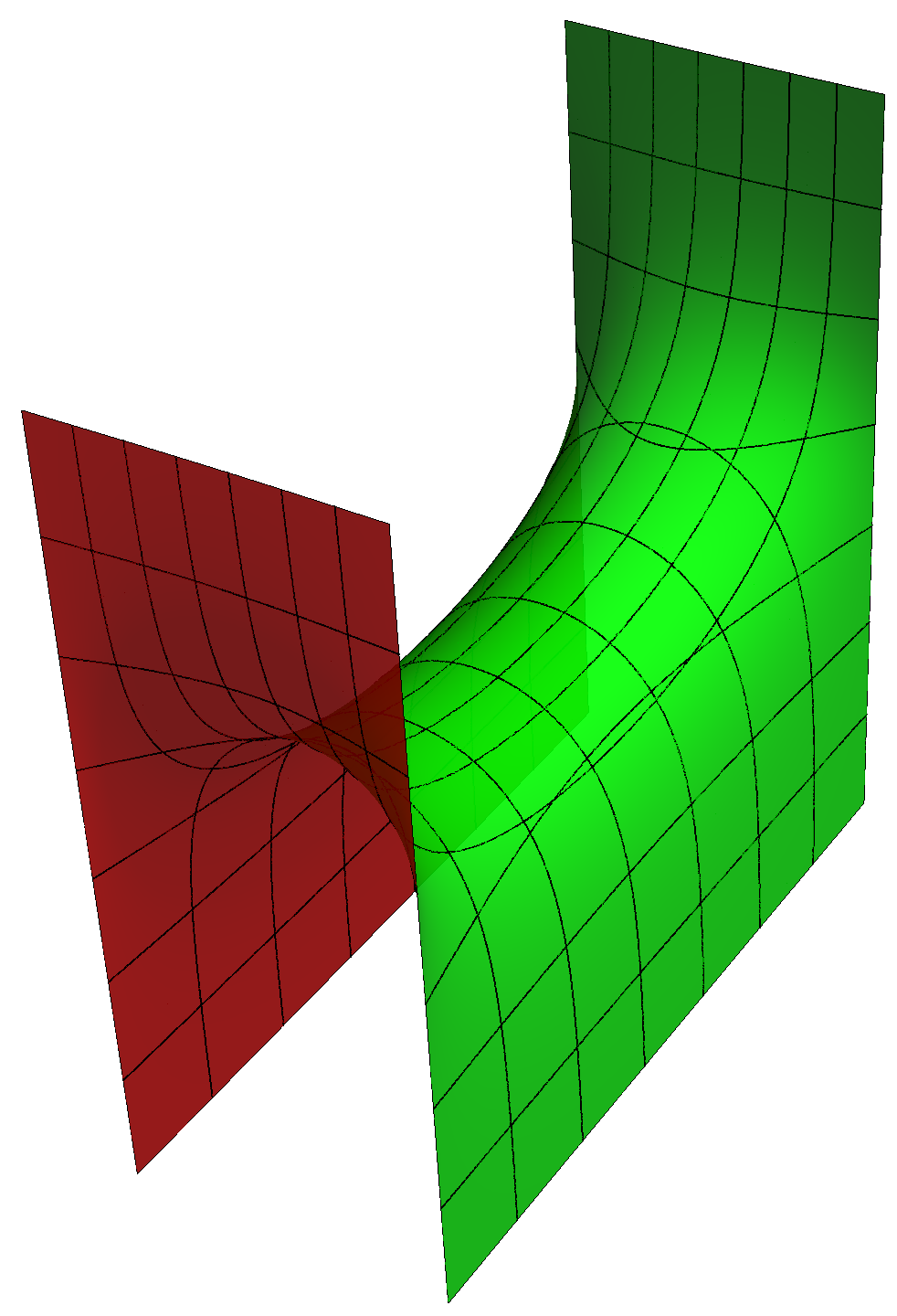}
\caption{The graph of the function $u$ in Theorem \ref{scherk-translator-theorem} for $\alpha=\pi/2$ and $w=\pi/2$. Notice that (contrary to what happens in the minimal case) $L(\pi/2,\pi/2)>\pi/2$.}
\label{prime}
\end{center}
\end{figure}

  \begin{remark}
Since $P(\pi-\alpha,w,L)$ is the image of $P(\alpha,w,L)$ under reflection in the line $x=L/2$,
it follows that $L(\alpha,w)=L(\pi-\alpha,w)$ and that $\Dd_{\pi-\alpha,w}$ 
is the image of $\Dd_{\alpha,w}$
under reflection in the plane $x=L(\alpha,w)/2$, followed by a vertical translation.
  \end{remark}
  
\begin{proof}
Let $P'(\alpha,w,L)$ be $P(\alpha,w,L)$ translated so that its center is at the origin.
Fix a width $w$ in $(0,\pi)$ and an angle $\alpha$ in $(0,\pi)$.
For $L>0$ and $h>0$, let
\[
    u_L^h: P'(\alpha,w,L) \to\RR
\]
be the unique solution of the translator equation that takes boundary values $0$ on the horizontal edges
of the parallelogram and $h$ on the other two edges.  
(See Theorem~\ref{finite-existence} for a proof of existence and uniqueness.)
Let $M_L^h$ be the graph of $u_L^h$.  
The boundary $\Gamma_L^h$ of $M_L^h$ consists of two  segments at height $0$, two 
segments at height $h$, and four vertical segments of length $h$ joining them.   
See Figure~\ref{fig:curve-1}.
By construction, $M_L^h$ is the $g$-area-minimizing disk with boundary $\Gamma_L^h$.
By uniqueness, $M_L^h$ is invariant under rotation by $\pi$ about the $z$-axis, and thus
\begin{equation}\label{center-horizontal}
   Du_L^h(0,0)=0.
\end{equation}

\begin{figure}[htbp]
\begin{center}
\includegraphics[height=5cm]{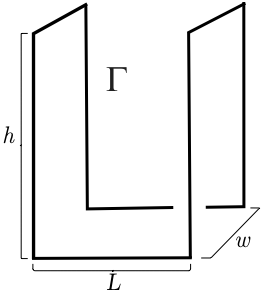} \hspace{0.43in}
\includegraphics[width=.55\textwidth]{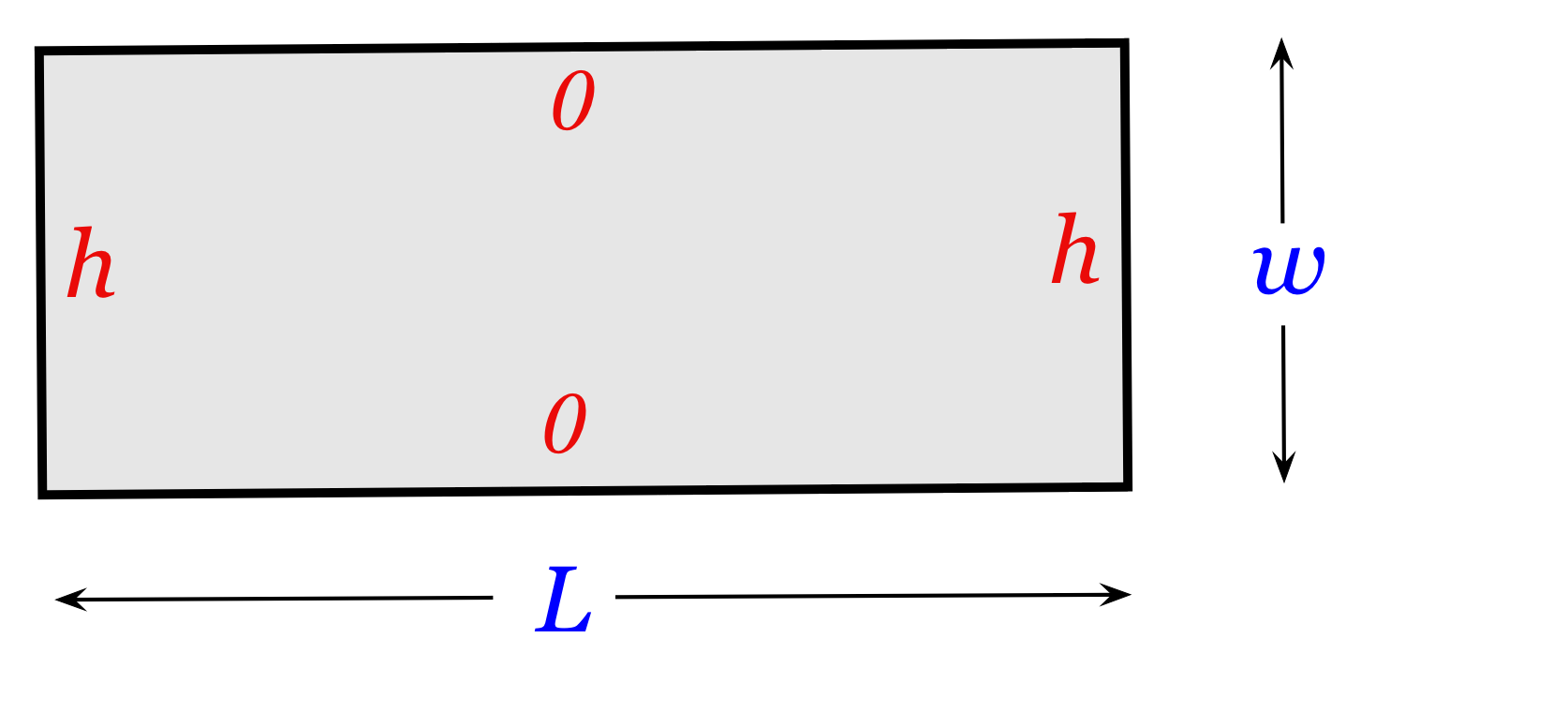}
\caption{Left: The curve $\Gamma$. Right: The  domain with boundary values indicated in red.
The figure shows the case when the parallelogram is a rectangle.}
\label{fig:curve-1}
\end{center}
\end{figure}

As $L\to 0$, the $g$-area of the graph tends to $0$ (since the graph is $g$-area-minimizing).  
Consequently, by the monotonicity formula,
the distance from $(0,0,u_L^h(0,0))$ to $\Gamma_L^h$ tends to $0$.   Thus
\[
    \lim_{L\to 0} u_L^h(0,0) = h.
\]

Next we claim that
\begin{equation}\label{bounded-claim}
  \text{$u_L^h(0,0)$ is bounded above as $L\to\infty$}.
\end{equation}
For suppose to the contrary that there is a sequence $L(i)\to\infty$
for which $u_{L(i)}^h(0,0)\to\infty$.  Then, passing to a further subsequence,
the graph of $u_{L(i)}^h - u_{L(i)}^n(0,0)$ converges smoothly to a complete
translator $M$ lying in the slab $\{|y|\le w/2\}$. 
Since the tangent plane to $M$ at the origin is horizontal, $M$ is a graph.
But by the Classification Theorem~\ref{classification-theorem}, there are no complete translating graphs
in slabs of width less than $\pi$.  This proves~\eqref{bounded-claim}.

Next, we claim that
\begin{equation}\label{exact-limit}
   \lim_{L\to\infty} u_L^h(x,y) = \log(\cos y) - \log(\cos (w/2)).
\end{equation}
To see this, let $M$ be a subsequential limit of the graph of $u_L^h$ as $L\to\infty$.
Note that $M$ is a smooth translator lying in the slab $\{|y|\le w/2\}$
and that $\partial M$ consists of the two lines $\{(x,-w/2,0): x\in \RR\}$ and $\{(x,w/2,0): x\in \RR\}$.
By~\eqref{center-horizontal}, $M$ contains a point on the $z$-axis at which the tangent plane is horizontal.  Thus
$M$ is a graph.
Any translating graph over a strip with boundary values $0$ is translation invariant
in the direction of the strip (\cite{himw}*{Theorem~3.2}), from which~\eqref{exact-limit} follows.

From~\eqref{bounded-claim} and~\eqref{exact-limit}, we see that
for every sufficiently large $h$, there exists an $L(h)$ such that 
\[
    u_{L(h)}^h(0,0)= h/2.
\]

Now let 
\[
    v^h := u_{L(h)}^h - u_{L(h)}^h(0,0)
\]
(see Figure~\ref{fig:curve-2}), and 
consider a sequence of $h$ tending to $\infty$.  
Then there is a subsequence $h_n$ such that $L(h_n)$ converges to a limit $L\in [0,\infty]$
and such that the graphs of the functions $v^{h_n}$ converge smoothly to a limit translator $\Dd$.

\begin{figure}[htbp]
\begin{center}
\includegraphics[height=6cm]{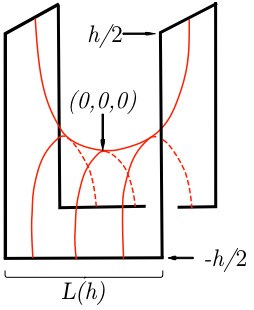}
\caption{The graph of $v^h$}
\label{fig:curve-2}
\end{center}
\end{figure}

Since the tangent plane to $\Dd$ at the origin is horizontal, $\Dd$ is the graph of a function $u$.
Thus $L\ne 0$, since if $L$ were $0$, then $\Dd$ would lie in a vertical plane.
If $L$ were $\infty$, then $\Dd$ would be a complete translating graph in a slab $\{|y|\le w/2\}$
of width less than $\pi$, which is impossible by the Classification Theorem~\ref{classification-theorem}.

Thus $L$ is a finite positive number.  Consequently, $\Dd$ is a translating graph that
lies in $P'(\alpha,w,L)\times\RR$, and the boundary of $\Dd$ consists of the four vertical lines
through the vertices of $P'(\alpha,w,L)$.  By Lemma~\ref{straight-lemma} below,
the domain of $u$ is a convex open subset $\Omega$ of $P'=P'(\alpha,w,L)$, and each component of
\[
   (\partial \Omega)\setminus \{ \textnormal{the corners of $P'$} \}
\]
is a straight line segment.  Since $\Omega$ contains the center of $P'$, 
it follows that $\Omega$ is the entire parallelogram $P'$.
The smooth convergence to $\Dd$ implies that $u$ has the asserted boundary values.
This completes the proof of existence.

The uniqueness assertion is proved in \S\ref{scherk-uniqueness-section}; 
see Theorem~\ref{monotone-theorem}.
The  non-existence for widths $w\ge \pi$ is proved in Proposition~\ref{no-thick-scherks} below.
\end{proof}

\begin{lemma}\label{straight-lemma}
Let $\Omega$ be a convex open subset of $\RR^2$ and let $S$ be a discrete
set of points in $\partial \Omega$.  Let $\Dd$ be a translator that is the graph
of a function $u$ over an open subset $U$ of $\Omega$ and such that $\overline{\Dd}\setminus \Dd$
is contained in the vertical lines $S\times \RR$.  
Then each component of $(\partial U)\setminus S$ is a line, ray, or line segment.
\end{lemma}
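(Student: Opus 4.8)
The plan is to exploit the fact that a translator is a minimal surface for the Ilmanen metric $g = e^{-x_3}\delta_{ij}$, combined with a boundary regularity / tangent cone analysis at points of $\partial U$. First I would observe that, since $\Dd$ is a graph over $U$ with $\overline{\Dd}\setminus\Dd \subset S\times\RR$ and $S$ is discrete, any point $p\in(\partial U)\setminus S$ has a neighborhood $V$ in $\RR^2$ with $V\cap S = \emptyset$, so that $\overline{\Dd}$ over $V$ is a $g$-minimal surface-with-boundary whose boundary lies in no vertical line through $S$; in fact near such a $p$ the set $\overline{\Dd}\setminus\Dd$ is empty over $V\setminus\overline U$, which forces $\overline{\Dd}$ over $V$ to be a $g$-minimal surface that is ``free'' there in the sense that it simply ends. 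The key point is: a graph cannot have an interior boundary edge — so the boundary behavior over $(\partial U)\setminus S$ must be that the graph function $u$ tends to $\pm\infty$ as one approaches that part of $\partial U$. Thus each component $\gamma$ of $(\partial U)\setminus S$ is a curve along which $u\to+\infty$ or $u\to-\infty$ (and by continuity/connectedness of $\gamma$, the sign is constant on $\gamma$).

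Next I would invoke the boundary-curvature estimates and the structure theory already available: over a neighborhood of $\gamma$, the translator $\Dd$ together with its vertical reflections foliates a region, so $\Dd$ is $g$-area-minimizing there (as in the discussion preceding the statement and Theorem~\ref{minimizing-theorem}). The classical fact for the (Euclidean or more generally smooth Riemannian) Jenkins--Serrin-type problem is that the locus where a minimal graph has infinite boundary values must itself be a geodesic for the relevant metric. In our setting the relevant ``geodesics'' are the curves along which the blow-down of $\Dd$ is a vertical plane; and the only complete $g$-minimal surfaces arising as such limits over a half-plane with $u\equiv\pm\infty$ on the edge are vertical planes, whose traces on $\RR^2$ are straight lines. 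Concretely, I would take a point $p\in\gamma$, translate vertically so that a sequence of heights of $\Dd$ near $p$ stays bounded, and extract a subsequential limit: because $u\to\pm\infty$ along $\gamma$, the limit surface is a complete translator containing the vertical line through $p$ and lying on one side of the vertical plane over the tangent line to $\gamma$ at $p$. Then I would apply the strong maximum principle / Schwarz reflection: reflecting across that vertical line produces a complete translator, and the one-sidedness together with the maximum principle forces the limit to be exactly that vertical plane. Hence $\gamma$ is tangent at $p$ to a fixed straight line; since $p\in\gamma$ was arbitrary and $\Omega$ (hence $U$) is convex, $\gamma$ agrees with that line, and so $\gamma$ is a line, ray, or segment according to which corners of $S$ bound it.

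I expect the main obstacle to be the boundary blow-up argument: making rigorous that the limit over a half-neighborhood of $p\in\gamma$ is genuinely a complete $g$-minimal surface (not something with extra boundary appearing ``out of nowhere'') and that it must be a vertical plane rather than, say, a tilted grim reaper or a more exotic translator asymptotic to a vertical plane. The convexity of $\Omega$ is used to rule out the limit surface folding back; the curvature estimates up to the (polygonal-free) boundary (Theorem~\ref{curvature-bound}) are what guarantee smooth subsequential convergence away from $S$; and the Classification Theorem~\ref{classification-theorem}, or more precisely the non-existence of complete graphical translators over strips of width $<\pi$ together with the structure of those over wider strips, is what pins down the limit. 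A secondary technical point is handling the two alternatives $u\to+\infty$ versus $u\to-\infty$ uniformly; but since $\gamma$ is connected and the set where $\liminf u = -\infty$ is relatively open and closed in $\gamma$ by the barrier arguments, the sign is locally constant and hence constant on each component, so no difficulty arises there.
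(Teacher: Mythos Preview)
Your first step is correct: each component $\gamma$ of $(\partial U)\setminus S$ is a set along which $u\to+\infty$ or $u\to-\infty$, with the sign constant on each component. But the argument that follows is both more involved than necessary and circular at a key point. You want the limit surface at $p\in\gamma$ to lie on one side of ``the vertical plane over the tangent line to $\gamma$ at $p$'' and then use the maximum principle to identify it with that plane---but we do not know a priori that $\gamma$ has a tangent line at $p$; that is essentially what we are trying to prove. You also write ``$\Omega$ (hence $U$) is convex'', but convexity of $U$ is not a hypothesis; in the applications it is exactly the conclusion one extracts from this lemma. The obstacles you flag (ruling out tilted grim reapers, invoking the Classification Theorem) are symptoms of this detour rather than genuine difficulties.

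The paper's argument sidesteps all of this with a single global observation. Rather than analysing limits at individual points, translate the whole surface: as $z\to\infty$ the surfaces $\Dd+(0,0,z)$ converge smoothly (by the curvature estimates) to $\{p\in\partial U: u(p)=-\infty\}\times\RR$. This limit is therefore itself a smooth translator, and it is manifestly invariant under vertical translation, i.e.\ a vertical cylinder; but for a vertical surface the translator equation reduces to $H=0$, so each component is a piece of a plane and the corresponding piece of $\partial U$ is a segment, ray, or line. Letting $z\to-\infty$ handles the $u=+\infty$ components identically. No pointwise tangent-line analysis, Schwarz reflection, or appeal to the Classification Theorem is needed.
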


\begin{proof}
If $C$ is a connected component of $(\partial U)\setminus S$, then $u=+\infty$
on $C$ or $u=-\infty$ on $C$.  Note that as $z\to\infty$, the surfaces $\Dd+(0,0,z)$
converge smoothly to 
\[
\{p\in \partial \Omega: u(p)=-\infty\}\times \RR.   \tag{*}
\]
Consequently, \thetag{*} is a smooth translator, so each each component of it is flat.
The same reasoning shows that the components of $\partial \Omega$ where $u=\infty$
are also flat.
\end{proof}

  The translator $\mathcal{D}_{\alpha, w}$ in Theorem~\ref{scherk-translator-theorem}
  extends by iterated Schwarz reflection to a complete, doubly periodic translator $\Ss_{\alpha,w}$.
   See Figure~\ref{fig:scherk-translator}.
  These doubly periodic translators are the analogs of the classical doubly periodic Scherk minimal surfaces
   discussed at the beginning of \S\ref{main-section}.

\begin{proposition}\label{no-thick-scherks}
Let $w\ge \pi$ and $L<\infty$.  Then there is no translator
\[
   u: P=P(\alpha,w,L) \to \RR
\]
with boundary values $-\infty$ on the horizontal sides and $+\infty$ on the nonhorizontal sides.
\end{proposition}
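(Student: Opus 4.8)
Argue by contradiction, using the grim reaper surface as a barrier. Suppose $w\ge\pi$, $L<\infty$, and $u\colon P=P(\alpha,w,L)\to\RR$ is a translator with boundary values $-\infty$ on the two horizontal sides of $P$ and $+\infty$ on the two nonhorizontal sides.

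\textbf{The case $w>\pi$.} Since $w>\pi$ we may pick $a$ with $0<a<a+\pi<w$, and let $g\colon\RR\times(a,a+\pi)\to\RR$, $g(x,y)=\log\sin(y-a)$, be the grim reaper over the horizontal strip of width $\pi$ sitting strictly inside the slab $\{0<y<w\}$; it is a translator because horizontal translations are isometries of the Ilmanen metric. Set $\Omega:=P\cap\{a<y<a+\pi\}$, a bounded convex domain. The key point is that for \emph{every} constant $c\in\RR$ one has $\liminf\,(u-(g+c))\ge 0$ along $\partial\Omega$: the boundary $\partial\Omega$ consists of (i) the two chords $\{y=a\}\cap P$ and $\{y=a+\pi\}\cap P$, which lie in the interior of $P$ except for their endpoints, so there $u$ is finite while $g+c\to-\infty$, and near the endpoints (which lie on the nonhorizontal edges of $P$) $u\to+\infty$; and (ii) the portions of the nonhorizontal edges of $P$ with $a<y<a+\pi$, on which $u=+\infty$ while $g+c$ is finite. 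In all cases $u-(g+c)\to+\infty$. By the comparison principle for the translator equation on the bounded domain $\Omega$, we get $u\ge g+c$ on $\Omega$ for every $c\in\RR$; fixing an interior point $p$ of $\Omega$ and letting $c\to\infty$ forces $u(p)=+\infty$, a contradiction. Note that $w\ge\pi$ is used here exactly as the room needed to slip a width-$\pi$ grim reaper strictly between the two horizontal sides of $P$.

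\textbf{The case $w=\pi$, and the main obstacle.} When $w=\pi$ the only grim reaper whose horizontal $-\infty$ lines coincide with the horizontal sides of $P$ is $g(x,y)=\log\sin y$ over $\{0<y<\pi\}$ itself, whose graph plunges to $-\infty$ along $\{y=0\}$ and $\{y=\pi\}$ at the same logarithmic rate as $u$. The comparison on $\Omega=P$ then only yields $u\ge g+c$ for $c$ up to $c_0:=\inf_x\,\liminf_{y\to0^+}(u(x,y)-\log\sin y)$, which is finite (the inner $\liminf$ tends to $+\infty$ as $x$ approaches either end of the bottom edge, since there $u$ is dragged to $+\infty$ by the adjacent slanted edge). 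So the hard part will be to exclude the remaining possibility that the highest subordinate grim reaper $g+c_0$ touches $u$ asymptotically along the interior of a horizontal edge. This is a boundary-strong-maximum-principle statement for the difference $u-g$ of two translators sharing the same infinite edge; the difficulty is that the linearized operator degenerates there because the gradients blow up, so I would resolve it through boundary regularity of Jenkins--Serrin solutions near an infinite edge — equivalently, passing to the isothermal (conformal) parametrization of the associated $g$-minimal surface, where the degeneracy disappears — to see that $u-g$ extends $C^1$ up to the open edge with vanishing inner normal derivative, which is incompatible with the Hopf boundary inequality at the putative contact point. An alternative route is to deduce $w=\pi$ from the already-settled case $w>\pi$ by a domain-enlargement/limiting argument. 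Either way, this degenerate boundary analysis at the threshold $w=\pi$ is the only genuinely delicate point; the geometric content of the proposition is already carried by the elementary barrier argument above.
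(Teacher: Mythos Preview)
Your treatment of the case $w>\pi$ is correct and essentially identical to the paper's: a grim reaper over a width-$\pi$ strip strictly inside $\{0<y<w\}$ gives an interior barrier, and the strong maximum principle does the rest.

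For $w=\pi$, however, your proposal is incomplete. You correctly isolate the obstacle --- the grim reaper and $u$ share the same $-\infty$ edges, so a straightforward sliding in $z$ only gives a finite optimal constant $c_0$ and leaves open the possibility of asymptotic contact along a horizontal edge --- but you do not actually close the gap. Both of your suggested fixes (a Hopf-type boundary argument after showing $C^1$ extension of $u-g$ to the infinite edge, or a limiting argument from $w>\pi$) are only sketched, and neither is routine: the first requires a boundary regularity statement at an edge where the gradient blows up, and the second would need you to produce a translator on $P(\alpha,\pi,L)$ as a limit of translators on $P(\alpha,w_n,L_n)$ with $w_n\downarrow\pi$, which is circular since no such translators with $w_n>\pi$ exist in the first place.

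The paper bypasses the degenerate boundary entirely with a different sliding direction. Normalize so that $\min_x u(x,\pi/2)=0$, let $g(x,y)=\log\sin y$, and restrict attention to the half-strip $\{0<y\le\pi/2\}$. If $u<g$ somewhere there, slide the \emph{half} grim reaper $G=\operatorname{graph}(g|_{0<y\le\pi/2})$ in the $\ee_2$-direction: for the largest $t>0$ with $(G+t\ee_2)\cap M\ne\emptyset$ (where $M$ is the graph of $u$ over $\{0<y\le\pi/2\}\cap P$), the contact cannot occur on the artificial boundary $y=\pi/2$ (there $u\ge0>\log\sin(\pi/2-t)$), nor on $\partial P$ (where $u=+\infty$), nor as $y\to t^+$ (where $g(\cdot-t)=-\infty$ but $u$ is finite). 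So the contact is interior, contradicting the strong maximum principle. Hence $u\ge g$ on $\{0<y\le\pi/2\}$; the same argument on the upper half gives $u\ge g$ on all of $P$, and equality at the point realizing $\min u(\cdot,\pi/2)=0=g(\cdot,\pi/2)$ then violates the strong maximum principle. The point is that sliding in $y$ rather than in $z$ separates the $-\infty$ edges of the two surfaces, so no degenerate boundary analysis is needed.
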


\begin{proof}
First suppose that $w>\pi$.
Let $g: \RR\times (w/2- \pi/2, w/2 + \pi/2)\to \RR$ be a grim reaper surface.
If there were such a $u$, then $g-u$ would have a local maximum (since it is $-\infty$ at all boundary
points of its domain), contradicting the strong maximum principle.

Now consider the more delicate case when $w=\pi$.
Suppose there were such a translator $u$.  By adding a constant, we can assume that 
\begin{equation}\label{normalization}
    \min u(\cdot, \pi/2) = 0. 
\end{equation}
Let $M$ be the portion of the graph of $u$ in the region $0\le y \le \pi/2$.
Consider the grim reaper surface 
\[
   g: (x,y) \in \RR\times (0,\pi) \mapsto \log(\sin y).
\]
Let $G$ be the portion of the graph of $g$ in the region $0\le y\le \pi/2$.
We claim that $G$ and $M$ cannot cross each other, i.e., that 
\begin{equation}\label{half-above}
   \text{$u(x,y)\ge g(x,y)$ for $0<y\le \pi/2$}.
\end{equation}
For suppose not.  Then there would be a largest $t>0$ such that the surfaces $G+t\ee_2$ and $M$
have nonempty intersection.  But for that $t$, $G+t\ee_2$ and $M$ would violate the strong maximum principle.
This proves~\eqref{half-above}.

In exactly the same way, one proves that $u(x,y)\ge g(x,y)$ for $\pi/2\le y < \pi$.
Thus $u\ge g$ at all points of $P$.   But by~\eqref{normalization}, we have equality at some point
on the line $y=\pi/2$, violating the strong maximum principle.
\end{proof}


  \section{Uniqueness of Scherk Translators}\label{scherk-uniqueness-section}

  \begin{theorem}[Scherk Uniqueness Theorem]
  \label{scherk-uniqueness-theorem}
Let $P$ be the interior of a parallelogram in $\RR^2$ with two sides parallel to the $x$-axis.
Suppose 
\[
   u, v: P\to \RR
\]
are  translators that have boundary values $-\infty$ on the horizontal sides of $P$ and $+\infty$ on the other sides.
Then $u-v$ is constant.
\end{theorem}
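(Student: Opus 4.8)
The plan is to compare $u$ and $v$ via the translating property and the maximum principle, exploiting the vertical translation symmetry that the problem enjoys. Since $u$ and $v$ are both $-\infty$ on the horizontal sides and $+\infty$ on the other sides, the difference $u-v$ extends continuously (in a suitable sense) to the corners only, and one cannot directly apply the maximum principle to $u-v$ because it is not a solution of a linear equation with good structure near the infinite boundary values. Instead, I would use the geometric reformulation: the graph of $v$ is a $g$-minimal surface, and its vertical translates foliate the solid cylinder $P\times\RR$ (by the remarks in the section "Translators as Minimal Surfaces," since $v$ is a translating graph). Then I would slide the graph of $u$ vertically and compare with this foliation.

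Concretely, the key steps are as follows. First, for $t\in\RR$ let $M_t$ be the graph of $v+t$, so $\{M_t\}_{t\in\RR}$ is a smooth foliation of $P\times\RR$ by $g$-minimal surfaces. Let $N$ be the graph of $u$. Because $u$ and $v$ both blow up to $+\infty$ on the non-horizontal sides and to $-\infty$ on the horizontal sides at exactly the same rate dictated by the boundary geometry (this is the content one must extract — near a horizontal side both behave like $\log(\mathrm{dist})$, and near a non-horizontal side both behave like $-\log(\mathrm{dist})$, up to bounded error), for $t$ sufficiently large $M_t$ lies entirely above $N$ near $\partial(P\times\RR)$, and likewise for $t$ sufficiently negative $M_t$ lies entirely below $N$. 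Define
\[
   t^* = \inf\{t : M_t \text{ lies weakly above } N\}.
\]
Then $M_{t^*}$ touches $N$ either at an interior point or "at infinity" near the boundary. At an interior touching point the strong maximum principle for $g$-minimal surfaces forces $M_{t^*}=N$, i.e. $u=v+t^*$, and we are done. To rule out touching only at the boundary, I would use the boundary asymptotics to produce a definite gap: if $M_{t^*}$ and $N$ were separated by a uniform amount on $\partial(K\times\RR)$ for every compact $K\Subset P$ one could decrease $t$ slightly, contradicting minimality of $t^*$; so the touching must be genuinely interior.

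The main obstacle is precisely the boundary analysis: establishing that $u-v$ is bounded near each open edge of $\partial P$ (away from the corners), so that the sliding argument detects a true interior contact rather than a spurious contact at infinity. For the horizontal edges one compares $u$ (and $v$) with the grim-reaper barrier $\log(\sin(\cdot))$ from above and with a translate of it from below, as is done in Proposition~\ref{no-thick-scherks}, to pin down the logarithmic blow-up and control the bounded error. For the non-horizontal edges, where the boundary value is $+\infty$, one instead uses that $u$ lies below a tilted grim reaper of appropriate width and above another, giving matching $-\log(\mathrm{dist})$ asymptotics; subtracting, $u-v$ stays bounded on a neighborhood of each open edge. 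The corners are handled by the fact that $S$ (the corner set) is finite, so the contact analysis can be localized away from them, and curvature estimates up to the boundary away from corners (Theorem~\ref{curvature-bound}) guarantee the smooth convergence needed to apply the maximum principle. Once $u-v$ is known to be bounded near $\partial P$, the foliation/sliding argument closes exactly as above and yields $u-v\equiv t^*$.
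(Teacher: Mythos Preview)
Your approach is genuinely different from the paper's, and there is a real gap.

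\medskip

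\textbf{What the paper does.} The paper never tries to bound $u-v$ near $\partial P$. Instead it argues by contradiction: if $u-v$ is nonconstant, pick $p$ with $Du(p)\ne Dv(p)$. Since the Gauss map of $\operatorname{graph}(u)$ is onto the open upper hemisphere (Theorem~\ref{scherk-gauss-map-theorem}), there is $q$ with $Du(q)=Dv(p)$. Setting $\xi_0=p-q$, the translate $u_{\xi_0}$ satisfies $D(u_{\xi_0}-v)(p)=0$. This critical point is isolated (unique continuation), so by a Morse-type stability result (Corollary~\ref{morse-family-corollary}) the nearby translate $u_\xi-v$ also has a critical point for $\xi$ close to $\xi_0$; choose such a $\xi$ with $P$ and $P_\xi=P+\xi$ in general position. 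But for generic $\xi$ the overlap $P\cap P_\xi$ is a parallelogram on which $u_\xi-v$ equals $+\infty$ on two adjacent sides and $-\infty$ on the other two, and a Rad\'o-type level-set argument (Proposition~\ref{rado-proposition}) forces every level set to be a single arc with no nodes, hence no critical points --- a contradiction. The whole argument is topological and never touches boundary asymptotics.

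\medskip

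\textbf{The gap in your argument.} Your sliding argument stands or falls on the claim that $u-v$ is bounded on $P$; you correctly identify this as the crux, but the grim-reaper barrier sketch does not establish it. Recall that $P$ has width $w<\pi$ (otherwise no such $u,v$ exist, by Proposition~\ref{no-thick-scherks}), while every grim reaper lives over a strip of width $\pi$. Near a horizontal edge, say $y=0$, one can indeed bound $u$ from above by $\log(\sin y)+C$ on a rectangle $[a,b]\times(0,\delta)\subset P$ using a grim reaper over $\RR\times(0,\pi)$ and the approximation $g_\eps(y)=\log\sin(y+\eps)$. But the matching lower bound $u\ge \log(\sin y)-C'$ does \emph{not} follow from the same barrier: on the line $y=\delta$ one would need $u(x,\eps)\ge\log(\sin\eps)-C'$ uniformly as $\eps\to0$, which is precisely the estimate you are trying to prove. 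No grim reaper over a width-$\pi$ strip is $-\infty$ on $y=0$ while staying below $u$ on the rest of $\partial R$, because the opposite horizontal edge $y=w$ also has $u=-\infty$ whereas the grim reaper is finite there. The situation at the $+\infty$ edges is worse: grim reapers take value $-\infty$, not $+\infty$, at their strip boundaries, so they give no upper control on $u$ there at all. Your appeal to Proposition~\ref{no-thick-scherks} is also off-target: that proposition concerns $w\ge\pi$, the opposite regime.

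In short, smooth convergence of $\operatorname{graph}(u)$ and $\operatorname{graph}(v)$ to the same vertical half-plane along each edge (which you do have from Lemma~\ref{straight-lemma} and the curvature bounds) does not by itself control the \emph{rate} of blow-up well enough to bound the difference; two graphs can both flatten onto $\{y=0\}$ as $z\to-\infty$ while $u-v\to\pm\infty$. Filling this gap would require a genuine asymptotic expansion at each end (e.g.\ linearising the equation for $y=Y(x,z)$ about $Y\equiv0$ and matching leading modes), which is substantially more work than your sketch indicates. The paper's Rad\'o/Morse route sidesteps all of this.
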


\begin{proof}   
If $\xi$ is a vector in $\RR^2$, let
\[
   u_\xi: P_\xi\to \RR
\]
be the result of translating $u:P\to\RR$ by $\xi$, and let
\begin{align*}
  &\phi_\xi: P\cap P_\xi \to \RR, \\
  &\phi_\xi = u_\xi - v.
\end{align*}

Suppose contrary to the theorem that $u-v$ is not constant.  Then there is a point $p$ where $Du(p)\ne Dv(p)$.
By Theorem~\ref{scherk-gauss-map-theorem}\eqref{gauss-image-assertion},
the image of the Gauss map of the graph of $u$ is the entire upper hemisphere.
Equivalently, $Du(\cdot)$ takes every possible value.  Thus there is a $q\in P$ such that $Du(q)=Dv(p)$.
Let $\xi_0=p-q$.  Then $p$ is a critical point  of $\phi_{\xi_0}$.

We claim that it is an isolated critical point.  For otherwise $\phi_{\xi_0}$ would be constant near $p$
and therefore (by unique continuation) constant throughout $P\cap P_{\xi_0}$, which is impossible.
(Note for example that $\phi_{\xi_0}$ is $+\infty$ on some edges of $P\cap P_{\xi_0}$ and $-\infty$
on other edges.)

It follows (by Corollary~\ref{morse-family-corollary}, for example)
that for every $\xi$ sufficiently close to $\xi_0$, $\phi_\xi$ has a critical point.
In particular, there is such a $\xi$ for which $P_\xi$ is in general position with respect to $P$
(where ``general position" is as in Proposition~\ref{rado-proposition}.)

However, for such a $\xi$, $\phi_\xi$ cannot have a critical point, a contradiction.  Heuristically, it cannot have 
a critical point because $\phi_\xi$ is $+\infty$ on two adjacent sides of its parallelogram domain $P\cap P_\xi$
and $-\infty$ on the other two sides.
Rigorously, it cannot have a critical point by Proposition~\ref{rado-proposition} below.

(Note that $P$ contains exactly one vertex of $P_\xi$, $P_\xi$ contains exactly one vertex of $P$,
and at each of the two points where an edge of $P$ intersects an edge of $P_\xi$, 
one of the functions $u$ and $u_\xi$ is $+\infty$
and the other is $-\infty$.)
\end{proof}

\begin{proposition}\label{rado-proposition}
Suppose that $P_1$ and $P_2$ are convex open planar polygonal regions (perhaps unbounded).
Suppose that that $P_1\cap P_2$ is bounded and nonempty,
and that $P_1$ and $P_2$ are in general position: no vertex of one is in the boundary of the other,
and each intersection of edges is transverse.
Suppose that $M_1$ and $M_2$ are smooth, propertly embedded translators with straight-line boundaries
such that $M_i\setminus\partial M_i$ is the graph of a function $u_i:P_i\to\RR$.  (Thus the boundary
values of $u_i$ are alternately $+\infty$ and $-\infty$ on the edges of $P_i$.)

Let $S$ be the set consisting of the following:
\begin{align*}
&\text{Vertices of $P_1$ that lie in $P_2$}, \\
&\text{Vertices of $P_2$ that lie in $P_1$}, \\
&\text{Points where two $u_i=+\infty$ edges intersect, and} \\
&\text{Points where two $u_i=-\infty$ edges intersect.}
\end{align*}
If $S$ has fewer than four points, then it has exactly two points,
Furthermore, each level set of $u_1-u_2$ is a smooth curve connecting
those two points, and $u_1-u_2$ has no critical points.
\end{proposition}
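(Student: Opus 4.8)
The plan is to carry out a Radó-type argument for the difference $w := u_1 - u_2$ on the bounded convex region $\Omega := P_1\cap P_2$. (We may assume $w$ is nonconstant: otherwise, by unique continuation, $M_1$ and $M_2$ would coincide over $\Omega$, hence be the same $g$-minimal surface, forcing $P_1 = P_2$, contrary to the general position hypothesis.) First I would record that the difference of two translators solves a linear equation: the translator equation is quasilinear, locally uniformly elliptic, and has no zeroth-order dependence on the unknown function, so subtracting the equations for $u_1$ and $u_2$ and integrating the derivative of the symbol along the segment from $(Du_2,D^2u_2)$ to $(Du_1,D^2u_1)$ yields $Lw = 0$ on $\Omega$, where $L$ is linear, smooth, locally uniformly elliptic, and \emph{has no zeroth-order term}. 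This supplies the three standard facts that drive everything else: the strong maximum principle (a nonconstant solution has no interior local extremum), unique continuation, and the Bers--Hartman--Wintner description of nodal sets (at an interior point $p$ with $w(p)=c$ and $Dw(p)=0$, either $w\equiv c$ near $p$, or $\{w=c\}$ is locally $2m$ smooth arcs through $p$ meeting at equal angles for some integer $m\ge 2$; at every other point $\{w=c\}$ is locally a single smooth arc).

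Next I would analyze $w$ near $\partial\Omega$. On the relative interior of any edge of $\partial\Omega$ one of $u_1,u_2$ is $\pm\infty$ and the other is finite, so $w\to\pm\infty$ there, and $\overline{\{w=c\}}$ meets $\partial\Omega$ only at the corners of $\Omega$. Examining the corners edge by edge (using general position), at a corner where an edge carrying $u=+\infty$ meets an edge carrying $u=-\infty$ one has $w\to+\infty$ from every interior direction, so $\{w=c\}$ avoids a neighborhood of it; at every remaining corner --- exactly the points of $S$ --- $w$ attains values above and below $c$ arbitrarily near the corner, so $\{w=c\}$ accumulates there and $\sign(w-c)$ flips across it. Hence $\overline{\{w=c\}}\cap\partial\Omega = S$, and $\sign(w-c)$ is locally constant on $\partial\Omega\setminus S$, alternating from one of the $|S|$ boundary arcs to the next. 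Going once around $\partial\Omega$, this forces $|S|$ to be even; and $|S|=0$ is impossible, since then $w-c$ would keep one sign near all of $\partial\Omega$, producing an interior extremum of $w$ and contradicting the strong maximum principle. Therefore $|S|<4$ forces $|S|=2$; write $S=\{s_1,s_2\}$, with $w\to+\infty$ on one boundary arc and $w\to-\infty$ on the other. In particular every value of $w$ is attained, since $w$ is continuous and is unbounded above and below near $\partial\Omega$.

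The combinatorial conclusion then follows. Fix $c$, put $Z=\{w=c\}$, and note $\overline Z = Z\cup\{s_1,s_2\}$. No embedded circle can lie in $\overline Z$: such a circle lies in the convex set $\overline\Omega$ and touches $\partial\Omega$ only within $\{s_1,s_2\}$, so (its convex hull lying in $\overline\Omega$) it bounds a disk $\Delta$ with $\Delta\setminus\{s_1,s_2\}\subset\Omega$, and the maximum principle would force $w\equiv c$ on $\Delta$, against unique continuation. Combined with the nodal structure above, $\overline Z$ is a finite forest whose interior vertices have degree $\ge 4$ and whose only possible leaves are $s_1,s_2$. But a tree with a vertex of degree $\ge 4$ has at least four leaves --- impossible --- so $\overline Z$ has no interior vertices; as $c$ was arbitrary, $w$ has no critical point anywhere. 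Each component of $\overline Z$ is then an arc with endpoints in $\{s_1,s_2\}$, and by the same convex-hull/maximum-principle argument there can be neither an arc from $s_i$ to itself nor two arcs joining $s_1$ to $s_2$. Hence $Z$ is a single smooth arc from $s_1$ to $s_2$, which is the assertion.

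The genuinely delicate point --- used silently in the last two steps --- is the local behaviour of $w$ at the points of $S$: one must know that $\{w=c\}$ reaches each $s_i$ as a \emph{finite} union of smooth arcs and that interior critical points do not accumulate at $s_i$, so that $\overline Z$ really is a finite graph. When $s_i$ is a vertex of one polygon lying interior to the other, I would get this from Schwarz reflection: rotation by $\pi$ about the vertical line over $s_i$ is an isometry of the Ilmanen metric and extends the corresponding translator to a smooth $g$-minimal surface across that line, so near $s_i$ the relevant level set is the projection of the intersection of two distinct smooth $g$-minimal surfaces, which is locally a finite union of arcs (the ambient version of the nodal structure in Step 1). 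When $s_i$ is a transverse crossing of two like-signed edges, I would instead use the asymptotics of a translator near a $\pm\infty$ edge --- the graph is asymptotic to the vertical plane over that edge, with $u = -\log(\dist\text{ to the edge}) + O(1)$, as for the grim reaper and consistent with the boundary curvature estimates --- which gives $w = \log(\text{ratio of distances to the two edges}) + O(1)$ near $s_i$, a function whose level sets near $s_i$ are single arcs. With that local input in place, Steps 1--3 complete the proof.
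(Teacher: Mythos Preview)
Your argument is correct and follows the same Rad\'o-type strategy as the paper: analyze the nodal set of $w=u_1-u_2$ as a network whose interior nodes have even valence $\ge 4$, rule out closed loops via the strong maximum principle, and count ends to force $|S|=2$ and the level set to be a single arc. One small slip: at a corner of $\Omega$ where a $u_1=+\infty$ edge meets a $u_2=+\infty$ edge you would not get $w\to+\infty$; rather, the corners of $\Omega$ \emph{not} in $S$ are crossings where the two infinite values have opposite signs (so $w\to+\infty$ or $w\to-\infty$ there), and your conclusion that $\{w=c\}$ avoids them is still correct.

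The only substantive difference from the paper is in your final paragraph. The paper dispatches the local structure near $S$ in one sentence, asserting that outside a large compact $K$ the level set consists of exactly $|S|$ arcs, one terminating at each point of $S$, and that ``this does not involve the translator equation; it follows from transversality.'' You instead supply a mechanism: Schwarz reflection about the vertical line over a vertex-type point of $S$ (so the intersection is that of two smooth surfaces), and grim-reaper asymptotics $u\sim -\log(\mathrm{dist})$ near a $\pm\infty$ edge for a crossing-type point of $S$. Your version is more explicit and does invoke translator-specific facts, whereas the paper's transversality remark is terser but leaves the reader to fill in why exactly one arc reaches each point of $S$; both routes are valid and lead to the same conclusion.
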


\begin{proof}
Consider  a level set $Q$ of $u_1-u_2$ in $P_1\cap P_2$.
Since we can add a constant to $u_1$, it suffices to consider the case
when $Q$ is the set where $u_1-u_2=0$.

Note that if $K$ is a suitably chosen large compact subset of $P_1\cap P_2$, then $Q\setminus K$
consists of $\#S$ curves: each point of $S$ is an endpoint of exactly one of those curves.
(This does not involve the translator equation; it follows from transversality.)

Note that $Q$ is a network.  The nodes of $Q$ are the points of $Q$ where $Du_1=Du_2$.
Each node has valence $\ge 4$.  (The valence is $2n$ where $n$ is as in~\eqref{the-form}.)
 The network cannot have any closed loops, since $u_1-u_2$ would have
 a local minimum or local maximum in the interior of the region bounded by such a loop,
 violating the strong maximum principle.
Hence each component of $Q$ is an arc or a tree.  Since the valence of each node is even,
the number of ends of $Q$ is even.

Since $Q$ has an even number of ends and since the ends of $Q$ correspond to points of $S$, 
we see that $S$ has zero or two points.  Since $P_1\cap P_2$ is nonempty, there is a nonempty
level set $Q$, and thus $S$ cannot have zero points.  Thus $S$ has exactly two points.
Consequently $Q$ is a single arc, with no nodes.
\end{proof}

\begin{proposition}\label{4-points-proposition}
Under the hypotheses of Proposition~\ref{rado-proposition}, 
if the set $S$ has at most four points, then $u_1-u_2$ has at most one critical point, and such a
critical point must be nondegenerate.
\end{proposition}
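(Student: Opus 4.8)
The plan is to reuse the network/Euler-characteristic bookkeeping from the proof of Proposition~\ref{rado-proposition}, but now applied one level set at a time rather than to a single distinguished one, and then to rule out critical points sitting at two distinct levels by a planarity argument. Since $\#S$ is even and (as shown in the proof of Proposition~\ref{rado-proposition}) nonzero, the hypothesis ``$\#S\le 4$'' means $\#S\in\{2,4\}$; the case $\#S=2$ is already contained in Proposition~\ref{rado-proposition}, so the real content is $\#S=4$.

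First I would set $\phi:=u_1-u_2$ on the bounded convex region $D:=P_1\cap P_2$. Because the translator equation has no dependence on the value of the function, $\phi$ solves a linear elliptic equation $a^{ij}\phi_{ij}+b^i\phi_i=0$; by unique continuation its critical points are isolated, $\phi$ has no interior extremum, and $\phi$ takes every real value on $D$. Fix a value $a$ and let $Q_a:=\{\phi=a\}\subset D$. Exactly as in the proof of Proposition~\ref{rado-proposition}, $Q_a$ is a loop-free network (no loops, by the strong maximum principle) whose nodes are precisely the critical points of $\phi$ at level $a$, each of even valence $2m\ge 4$ (the local normal form there forces $m\ge 2$ at a critical point), and for a suitable large compact $K\subset D$ the set $Q_a\setminus K$ consists of exactly $\#S$ arcs, one running out to each point of $S$, with every component of $Q_a$ meeting $K$. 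If $Q_a\cap K$ has nodes of valences $2m_1,\dots,2m_c$ and $\kappa$ connected components, then its vertex set consists of these $c$ nodes together with the $\#S$ degree-$1$ points where $Q_a$ crosses $\partial K$, so the handshake identity reads $2E=\sum_i 2m_i+\#S$ while the forest identity reads $E=V-\kappa=(c+\#S)-\kappa$; eliminating $E$,
\[
   \sum_{i=1}^{c}(m_i-1)\;=\;\tfrac{\#S}{2}-\kappa\;\le\;2-1\;=\;1,
\]
using $\#S\le 4$ and $\kappa\ge 1$. Since $m_i-1\ge 1$ for each $i$, this forces $c\le 1$, and when $c=1$ it forces $\kappa=1$ and $m_1=2$. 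Thus $\phi$ has at most one critical point at each level; if it has one, the level set has four prongs there, so the Hessian is nonsingular (the point is nondegenerate), and moreover $Q_a$ is then a single tree with one degree-$4$ vertex whose four legs run out to the four points of $S$. (When $\#S=2$ the same inequality gives $c=0$ at every level, recovering Proposition~\ref{rado-proposition}.)

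It then remains to see that $\phi$ cannot have critical points at two distinct values $a\ne a'$. If it did, then $Q_a$ and $Q_{a'}$ would each be a four-legged tree with a single interior node whose legs run to the four points of $S$ taken in their cyclic order around $\partial D$, and $Q_a\cap Q_{a'}=\varnothing$ since $a\ne a'$. But $Q_a$, together with its four endpoints on $\partial D$, separates $\overline D$ into four open regions, the closure of each meeting $\partial D$ in a single boundary arc that carries only two cyclically adjacent points of $S$. As $Q_{a'}\cap D$ is connected and disjoint from $Q_a$, it lies entirely in one of these regions, so its four legs can reach only points of $S$ on that one arc --- at most two of them --- contradicting that they reach all four. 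Hence $\phi$ has at most one critical value, so by the previous paragraph at most one critical point, and that point is nondegenerate.

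The one genuinely new ingredient is the separation argument of the last paragraph, which is precisely what upgrades ``at most one critical point per level'' to ``at most one critical point''; I expect this to be the main obstacle only in the bookkeeping sense that one must be careful about how $Q_a$ sits in $\overline D$ (the four legs really do end at the four points of $S$, in cyclic order, and the complementary regions abut exactly two adjacent such points). Everything else --- the local normal form for $\phi$ at a critical point, the ``one leg per point of $S$'' picture, the absence of loops, the confinement of critical points to a compact subset of $D$ --- is the apparatus of Proposition~\ref{rado-proposition} applied verbatim to each level set, and it applies for exactly the reasons it applied to the single level set treated there.
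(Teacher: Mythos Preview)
Your argument is correct and follows essentially the same route as the paper's proof. The paper also analyzes each level set, concludes that with $\#S=4$ a level set is either a pair of arcs or a single tree with one valence-$4$ node, and then invokes the elementary topological fact that two such connected trees with the same four boundary endpoints must intersect---which is exactly your separation argument, only stated more tersely. Your Euler-characteristic bookkeeping with the handshake and forest identities is a clean explicit version of what the paper asserts directly (``each node has valence $\ge 4$, so\dots''), and your spelled-out separation argument is what the paper calls ``elementary topology''.
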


\begin{proof}
We may assume that $S$ has exactly four points, as otherwise the assertion follows from
Proposition~\ref{rado-proposition}.
Exactly as in the proof of Proposition~\ref{rado-proposition},
each level set is a tree or union of trees with endpoints in $S$.
Since each node has valence $\ge 4$, each level set must be one of the following:
\begin{enumerate}[\upshape (1)]
\item A pair of arcs, or
\item A  tree with exactly one node, a node of valence $4$.
\end{enumerate}
In case 2, the node is a nondegenerate critical point of $u_1-u_2$.

By elementary topology, any two trees in $P_1\cap P_2$ whose
endpoints are the four points of $S$ must intersect each other.  Since different
level sets are disjoint, this means that at most one level set can have a node.
\end{proof}

\begin{theorem}\label{monotone-theorem}
Suppose for $i=1,2$ that
\[
u_i:  P(\alpha, w_i,L_i) \to \RR
\]
is a graphical translator such that $u_i$ has boundary value $-\infty$ on the horizontal
edges of its domain and $+\infty$ on the nonhorizontal edges.
Suppose also that $w_1\le w_2$ and that $L_1\ge L_2$.   Then $w_1=w_2$, $L_1=L_2$,
and $u_1-u_2$ is constant.
\end{theorem}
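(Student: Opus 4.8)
\textbf{Proof proposal for Theorem~\ref{monotone-theorem}.}

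The plan is to combine the uniqueness machinery of Proposition~\ref{rado-proposition} and Proposition~\ref{4-points-proposition} with a sliding/comparison argument that exploits the monotonicity in the parameters $w$ and $L$. First I would reduce to the two separate monotonicity claims: (a) for fixed corner angle $\alpha$, the base length $L(\alpha,w)$ in Theorem~\ref{scherk-translator-theorem} is a strictly decreasing function of $w$ on $(0,\pi)$, and (b) two such solutions with the same domain differ by a constant, which is just the Scherk Uniqueness Theorem~\ref{scherk-uniqueness-theorem}. So the real content is showing that $w_1\le w_2$ and $L_1\ge L_2$ forces $w_1=w_2$ and $L_1=L_2$; once we have that, Theorem~\ref{scherk-uniqueness-theorem} finishes the job.

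To get the forced equality I would argue by contradiction, assuming the two parallelograms $P_1=P(\alpha,w_1,L_1)$ and $P_2=P(\alpha,w_2,L_2)$ are not congruent. After a vertical translation we may center them. The key observation is that because $w_1\le w_2$ and $L_1\ge L_2$ (with at least one inequality strict), we can translate $P_1$ horizontally so that $P_1\cap P_2$ is a bounded nonempty region and, after a further small generic perturbation of the translation vector $\xi$, the two parallelograms are in general position in the sense of Proposition~\ref{rado-proposition}. Here I must check the combinatorics: the hypotheses on the corner angles being equal and the width/length inequalities should guarantee that the translated copies $P_1+\xi$ and $P_2$ intersect so that the relevant set $S$ (vertices of one in the other, plus $+\infty$/$+\infty$ and $-\infty$/$-\infty$ edge crossings) has fewer than four points — indeed the horizontal ($-\infty$) edges of the narrower parallelogram sit strictly between those of the wider one, so no two horizontal edges cross, and a careful count of the slanted ($+\infty$) edges gives exactly two points of $S$. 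Then Proposition~\ref{rado-proposition} says $u_1(\cdot+\xi)-u_2$ has no critical point. On the other hand, Theorem~\ref{scherk-gauss-map-theorem}\eqref{gauss-image-assertion} (the Gauss map of each graph covers the whole upper hemisphere, i.e.\ $Du_i$ is surjective onto $\RR^2$) lets me choose an \emph{interior} point $p$ of $P_2$ and a point $q$ of $P_1$ with $Du_1(q)=Du_2(p)$, so that for $\xi_0=p-q$ the difference $\phi_{\xi_0}=u_1(\cdot+\xi_0)-u_2$ has a critical point at $p$; the unique-continuation argument from the proof of Theorem~\ref{scherk-uniqueness-theorem} shows it is isolated, and then Corollary~\ref{morse-family-corollary} (persistence of critical points under perturbation of $\xi$) produces a critical point of $\phi_\xi$ for all $\xi$ near $\xi_0$, including a general-position $\xi$. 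This contradicts Proposition~\ref{rado-proposition}, so $P_1$ and $P_2$ must be congruent, forcing $w_1=w_2$ and $L_1=L_2$.

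With $w_1=w_2=:w$ and $L_1=L_2=:L$, the two functions $u_1,u_2\colon P(\alpha,w,L)\to\RR$ have the same domain and the same infinite boundary data, so Theorem~\ref{scherk-uniqueness-theorem} (taking into account that $P(\alpha,w,L)$ is a translate of the parallelogram appearing there) gives that $u_1-u_2$ is constant, completing the proof.

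The step I expect to be the main obstacle is the combinatorial bookkeeping that guarantees one can choose the translation vector $\xi$ so that $P_1+\xi$ and $P_2$ are simultaneously in general position and have the set $S$ with at most (hence exactly) two points — in particular verifying that the width inequality $w_1\le w_2$ really does keep the two families of horizontal $-\infty$-edges from crossing, and that the length inequality $L_1\ge L_2$ together with the common angle $\alpha$ controls the crossings of the slanted $+\infty$-edges. This is where the specific geometry of parallelograms with equal corner angles, as opposed to the general setup of Proposition~\ref{rado-proposition}, has to be used carefully; everything else is an application of the already-established maximum-principle and Gauss-map results.
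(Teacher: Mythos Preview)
Your approach is correct and is exactly what the paper intends: it says only that ``the proof is almost identical to the proof of Theorem~\ref{scherk-uniqueness-theorem}, so we omit it,'' and you have reconstructed that adaptation --- produce a critical point of $(u_1)_{\xi_0}-u_2$ via Gauss-map surjectivity, note it is isolated by unique continuation, perturb $\xi_0$ to a general-position $\xi$ (Corollary~\ref{morse-family-corollary}), and contradict Proposition~\ref{rado-proposition}.

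On the combinatorial step you flag: the key observation is that since both parallelograms have corner angle $\alpha$, every transverse edge crossing is horizontal-meets-slanted, hence of mixed $(-\infty)/(+\infty)$ type and never contributes to $S$; thus $S$ consists only of vertices of one parallelogram lying in the interior of the other. Writing everything in coordinates $(y,\,s)$ with $s=x-y/\tan\alpha$, the hypotheses $w_1\le w_2$ and $L_1\ge L_2$ say exactly that (generically) the $y$-strip of $P_1+\xi$ cannot contain that of $P_2$ and the $s$-strip of $P_2$ cannot contain that of $P_1+\xi$; a short case split then gives $|S|\le 2$ for every general-position $\xi$, which is what Proposition~\ref{rado-proposition} needs. (Minor slip: in your first paragraph $L(\alpha,w)$ is strictly \emph{increasing} in $w$, and that monotonicity is a consequence of this theorem rather than an input --- but you abandon that reduction immediately, so it does not affect the argument.)
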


\begin{proof}
The proof is almost identical to the proof of Theorem~\ref{scherk-uniqueness-theorem}, so we omit it.
\end{proof}

Theorem~\ref{monotone-theorem} implies that for each $\alpha$, 
the length $L(\alpha,w)$ is a strictly increasing function of $w$.


\section{The behavior of $L(\alpha,w)$}\label{behavior-section}

Recall that (according to Theorem~\ref{scherk-translator-theorem}) for each $(\alpha,w)\in (0,\pi)^2$, there is a unique $L=L(\alpha,w)$
for which  there is a translator $u:P(\alpha,w,L)\to\RR$ with boundary
values $-\infty$ on the horizontal sides and $+\infty$ on the nonhorizontal sides.

In~\cite{JenkSerr-II}, Jenkins and Serrin proved that a parallelogram
can be the fundamental domain of a doubly periodic Sherk-type minimal
surface if and only if it is a rhombus.   
Note that the length of the nonhorizontal side of $P(\alpha,w,L)$ is $w/\sin\alpha$, so $P(\alpha,w,L)$
is a rhombus if and only if $L=w/\sin\alpha$.
In the case of translators, the Jenkins-Serrin
proof gives an inequality:

\begin{theorem}\label{jenkins-serrin-like-theorem}
For $0<\alpha<\pi$, $0<w<\pi$, and $L=L(\alpha,w)$, 
\begin{equation}\label{non-rhombus}
    0 <L -  \frac{w}{\sin\alpha}  < \frac12 wL,
\end{equation}
\end{theorem}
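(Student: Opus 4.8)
The plan is to adapt the classical divergence-theorem (flux) argument that Jenkins and Serrin used for minimal graphs over polygons, carefully tracking the extra term coming from the translator equation. Recall that the translator equation for $u:P\to\RR$ can be written in divergence form as
\[
   \ddiv\!\left( \frac{Du}{\sqrt{1+|Du|^2}} \right) = - \frac{1}{\sqrt{1+|Du|^2}} ,
\]
since the translator (in Ilmanen's convention) is the $g$-minimal graph with $g = e^{-x_3}\delta$, and the right-hand side is exactly the component of the Ilmanen mean curvature in the vertical direction. Write $W = \sqrt{1+|Du|^2}$ and $T u = Du/W$, so that $|Tu| < 1$ pointwise and $\ddiv(Tu) = -1/W$. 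Integrating over a subdomain and applying the divergence theorem, the flux of $Tu$ across a boundary arc records, in the limit, $\pm$ the length of that arc depending on whether $u = +\infty$ or $u=-\infty$ there (this is the standard ``limiting flux'' computation: as one approaches an edge where $u=+\infty$, $Tu$ points outward and has length $\to 1$; where $u = -\infty$, it points inward). The novelty over the minimal case is the area term $\int 1/W$, which is strictly positive and bounded above by the area $|P| = wL$.

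First I would set up the flux identity on the whole parallelogram $P = P(\alpha,w,L)$: truncating near the four corners by small arcs, letting the truncation shrink, and using the known boundary behavior ($-\infty$ on the two horizontal sides of total length $2L$, $+\infty$ on the two slanted sides of total length $2w/\sin\alpha$), one obtains
\[
   \left( 2\,\frac{w}{\sin\alpha} \right) - \big( 2L \big) \;=\; \int_{P} \frac{1}{W}\, dx\,dy .
\]
Wait — I need to be careful with the sign convention so that the left side comes out with the correct sign; depending on orientation this reads $2L - 2w/\sin\alpha = \int_P 1/W$ or its negative. Since the right-hand integral is strictly positive, exactly one choice is consistent, and it must be $2L - 2w/\sin\alpha = \int_P (1/W)\,dx\,dy > 0$; this immediately gives the left inequality $L - w/\sin\alpha > 0$ in \eqref{non-rhombus}. (Heuristically it also explains why $L(\alpha,w)$ exceeds the rhombus value $w/\sin\alpha$, cf.\ Figure~\ref{prime}.) For the upper bound, since $W \ge 1$ everywhere, we have $\int_P (1/W) \le |P| = wL$, hence $2L - 2w/\sin\alpha \le wL$, i.e., $L - w/\sin\alpha \le \tfrac12 wL$. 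To get the strict inequality I would observe that $W > 1$ on a set of positive measure — indeed $Du$ is nonconstant (the graph is not a plane, since it attains infinite boundary values), so $|Du|>0$ somewhere and hence on an open set — so $\int_P(1/W) < |P|$ strictly, yielding $L - w/\sin\alpha < \tfrac12 wL$.

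The step I expect to be the main obstacle is the rigorous justification of the limiting flux computation at the four corners of $P$: one must show that the contributions of the small truncating arcs near each corner tend to zero, which requires control on $|Du|$ (equivalently $|Tu|\le 1$, which is automatic) but more delicately requires that the boundary arcs along the slanted and horizontal edges genuinely contribute $+1$ and $-1$ per unit length in the limit. This is exactly the kind of estimate Jenkins and Serrin established for minimal graphs with infinite boundary data; here I would either cite that the same argument applies verbatim to $g$-minimal graphs (the structure of the equation near an edge where $u\to\pm\infty$ is the same up to lower-order terms, and the curvature estimates up to the boundary in Theorem~\ref{curvature-bound} together with the barrier constructions used elsewhere in the paper give the needed control), or reprove it using the grim-reaper and tilted-grim-reaper barriers already available. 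Once the corner contributions are shown to vanish and the edge contributions are identified, the two inequalities in \eqref{non-rhombus} follow immediately from the positivity and the $|P|$-bound on $\int_P (1/W)$ as above.
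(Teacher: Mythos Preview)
Your approach is correct and essentially identical to the paper's: integrate the divergence-form translator equation $\ddiv(Du/W)=-1/W$ over $P$, identify the boundary flux as $2L - 2w/\sin\alpha$, and bound the right-hand side by $0 < \int_P 1/W < \area(P) = wL$. The paper's proof is a single displayed line and does not spell out the corner truncation or the strict upper bound; your worry about justifying the limiting edge fluxes and your observation that $W>1$ on an open set (hence strict inequality) are exactly the details the paper leaves implicit.
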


Note that $w/\sin\alpha$ is the length of the nonhorizontal side,
so the inequality~\eqref{non-rhombus} implies that the parallelogram
$P(\alpha,w,L(\alpha,w))$ is never a rhombus,
though it is very close to one when $w$ is small:

\begin{corollary}
\[
  \lim_{w\to 0} \frac{w}{L(\alpha,w)} = \sin\alpha.
\]
\end{corollary}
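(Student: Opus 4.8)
The plan is to read the corollary off directly from the two-sided estimate of Theorem~\ref{jenkins-serrin-like-theorem}, so the argument is essentially just a squeeze. Fix $\alpha\in(0,\pi)$ and write $L=L(\alpha,w)$; by Theorem~\ref{scherk-translator-theorem} this is a well-defined finite positive number for every $w\in(0,\pi)$, and $\sin\alpha>0$. First I would start from inequality~\eqref{non-rhombus},
\[
   0 < L - \frac{w}{\sin\alpha} < \tfrac12\, wL,
\]
and divide through by $L>0$. The left inequality becomes $\frac{w}{L\sin\alpha} < 1$, and the right inequality becomes $1 - \frac{w}{L\sin\alpha} < \frac{w}{2}$, i.e. $\frac{w}{L\sin\alpha} > 1 - \frac{w}{2}$. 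Hence for every $w\in(0,\pi)$,
\[
   1 - \frac{w}{2} \;<\; \frac{w}{L(\alpha,w)\,\sin\alpha} \;<\; 1 .
\]

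The second and final step is to let $w\to 0$: the left-hand bound tends to $1$, so by the squeeze theorem $\frac{w}{L(\alpha,w)\sin\alpha}\to 1$, and multiplying by the fixed constant $\sin\alpha$ gives $\frac{w}{L(\alpha,w)}\to\sin\alpha$, which is the assertion.

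There is no genuine obstacle at this stage; all of the substance is contained in Theorem~\ref{jenkins-serrin-like-theorem}, whose inequality~\eqref{non-rhombus} is proved separately by the Jenkins--Serrin-type argument. The only points worth checking are the ones already noted: that $L(\alpha,w)$ is finite and positive for $w<\pi$ (Theorem~\ref{scherk-translator-theorem}) and that $\sin\alpha$ is a positive constant on $(0,\pi)$, so that the division and the passage to the limit are legitimate and the limiting value is a genuine number in $(0,1]$.
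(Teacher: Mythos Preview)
Your proof is correct and is exactly the intended argument: the paper does not give a separate proof for the corollary, treating it as an immediate consequence of the two-sided inequality~\eqref{non-rhombus} in Theorem~\ref{jenkins-serrin-like-theorem}, and your squeeze is precisely how that consequence is extracted.
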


\begin{proof}[Proof of Theorem~\ref{jenkins-serrin-like-theorem}]
Let $u:P(\alpha,w,L)\to \RR$ be a translator that is $-\infty$ on the horizontal sides and $\infty$ on the
nonhorizontal sides.  Then $u$ satisfies the translator equation:
\[
  -\Div\left(\frac{\nabla u}{\sqrt{1+|\nabla u|^2}} \right) = \frac{1}{\sqrt{1+|\nabla u|^2}}. \tag{*}
\]
Integrating over the parallegram $P$ and using the divergence theorem gives
\begin{equation}\label{js-equality}
   2L - 2\frac{w}{\sin\alpha} = \iint_P (1+ |\nabla u|^2)^{-1/2}\,dx\,dy \le \area(P) = wL
\end{equation}
The inequalities in Theorem~\ref{jenkins-serrin-like-theorem} follow immediately.
\end{proof}

\begin{theorem}\label{scherk-convergence-theorem}
Let $\Dd_{\alpha,w}$ and $L(\alpha,w)$ be as in Theorem~\ref{scherk-translator-theorem}.  Then
\begin{enumerate}[\upshape(1)]
\item\label{L-continuous-item}  $L(\alpha,w)$ and $\Dd_{\alpha,w}$ depend
  continuously on $(\alpha,w)\in (0,\pi)^2$.
\item\label{L-long-item} If $\alpha_i\in (0,\pi)$ converges to $\alpha\in (0,\pi)$ and if $w_i\in (0,\pi)$ converges to $\pi$,
then $L(\alpha_i,w_i)\to\infty$.  
\end{enumerate}
\end{theorem}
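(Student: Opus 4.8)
The plan is to exploit, in both parts, the compactness and curvature estimates for $g$-minimizing surfaces with polygonal boundary (Theorem~\ref{curvature-bound} and Remark~\ref{dichotomy}), together with the uniqueness statements already established (Theorems~\ref{scherk-translator-theorem}, \ref{monotone-theorem}), and the obstruction that there are no complete graphical translators over strips of width $<\pi$ (the Classification Theorem~\ref{classification-theorem}). I would set things up so that every convergent subsequence of the normalized pieces $\Dd_{\alpha_i,w_i}$ has a limit that is forced, by the dichotomy of Remark~\ref{dichotomy} and by uniqueness, to be the surface one expects; continuity and divergence then follow from a standard subsequence argument.

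For part~\eqref{L-continuous-item}, fix $(\alpha_i,w_i)\to(\alpha,w)\in(0,\pi)^2$ and write $L_i=L(\alpha_i,w_i)$. First I would show the $L_i$ stay in a compact subinterval of $(0,\infty)$: by Theorem~\ref{jenkins-serrin-like-theorem}, $L_i > w_i/\sin\alpha_i$, which is bounded below away from $0$; and $L_i < \frac{2L_i}{2 - w_i L_i}\cdot(\dots)$ — more precisely, rearranging~\eqref{js-equality} gives $L_i(2 - w_i)\le 2w_i/\sin\alpha_i$, wait, rather $2L_i - 2w_i/\sin\alpha_i \le w_i L_i$, i.e. $L_i(2-w_i)\le 2w_i/\sin\alpha_i$, so since $w_i\to w<\pi<2$ we get an upper bound on $L_i$. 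Hence, after passing to a subsequence, $L_i\to L_\infty\in(0,\infty)$, and by the curvature estimates up to the boundary the normalized pieces $\Dd_{\alpha_i,w_i}$ (normalized so that $(\cos(\alpha_i/2),\sin(\alpha_i/2),0)$ is tangent at the lower-left corner, which is pinned at the origin) converge smoothly away from the four corner lines to a translator $\Dd_\infty$ over $P(\alpha,w,L_\infty)$ with the correct infinite boundary values and the correct normalization. By the uniqueness in Theorem~\ref{scherk-translator-theorem}, $L_\infty=L(\alpha,w)$ and $\Dd_\infty=\Dd_{\alpha,w}$. Since every subsequence has a further subsequence converging to the same limit, $L_i\to L(\alpha,w)$ and $\Dd_{\alpha_i,w_i}\to\Dd_{\alpha,w}$, which is continuity.

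For part~\eqref{L-long-item}, suppose $\alpha_i\to\alpha\in(0,\pi)$, $w_i\to\pi$, and, for contradiction, that $L_i=L(\alpha_i,w_i)$ does not tend to $\infty$; pass to a subsequence with $L_i\to L_\infty<\infty$. The lower bound $L_i>w_i/\sin\alpha_i\ge \pi$ (approximately) keeps $L_\infty$ away from $0$, so $L_\infty\in[\,\cdot\,,\infty)$ is finite and positive. Apply the curvature estimates and Remark~\ref{dichotomy} again: the normalized $\Dd_{\alpha_i,w_i}$ converge smoothly away from the corner lines to a translator $\Dd_\infty$ which is the graph of a function $u_\infty:P(\alpha,w=\pi,L_\infty)\to\RR$ with boundary values $-\infty$ on the horizontal sides and $+\infty$ on the nonhorizontal sides (the boundary values pass to the limit because the smooth convergence is uniform on compact subsets away from corners, and the $\pm\infty$ barriers — tilted grim reapers — survive in the limit). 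But by Proposition~\ref{no-thick-scherks}, no such translator exists over a parallelogram of width $w=\pi$ and finite base. This contradiction shows $L_i\to\infty$.

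The main obstacle is the compactness step in each part: one must be sure that the limiting surface is still a \emph{graph} over the expected parallelogram (not a vertical plane, not degenerate) and that it genuinely has the prescribed $\pm\infty$ boundary behavior rather than some smaller convex domain. The first point is handled as in the proof of Theorem~\ref{scherk-translator-theorem}: the tangent plane at the origin stays horizontal (or at least non-vertical, controlled by the normalization vector) in the limit, so $L_\infty\ne 0$; and $L_\infty\ne\infty$ in part~\eqref{L-continuous-item} is exactly the $w<\pi$ no-complete-graph argument. The second point follows from Lemma~\ref{straight-lemma} together with the fact that the limit domain contains the center of $P'$, forcing it to be the full parallelogram, just as in the existence proof. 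Once these are in place, everything else is the routine subsequence-limit packaging.
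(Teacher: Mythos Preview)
Your overall strategy matches the paper's, and Part~\eqref{L-long-item} is essentially correct. But Part~\eqref{L-continuous-item} has two problems.

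First, a slip: you write ``$w<\pi<2$'' to extract an upper bound on $L_i$ from~\eqref{non-rhombus}, but of course $\pi>3$, so for $2\le w<\pi$ the rearranged inequality $L_i(2-w_i)<2w_i/\sin\alpha_i$ yields nothing. You do supply the correct argument later (if $L_i\to\infty$, recentering at the parallelogram's center would produce a complete translating graph in a slab of width $w<\pi$, contradicting the Classification Theorem), so this is recoverable---but that is the argument the paper actually uses, and it should be primary, not an afterthought.

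Second, and more substantively: you assert that ``the limit domain contains the center of $P'$,'' but with the corner normalization this is not justified. In the existence proof of Theorem~\ref{scherk-translator-theorem} the approximating surfaces were built so that the horizontal tangent plane sits over the center; that is why Lemma~\ref{straight-lemma} forces the full parallelogram there. Here the $\Dd_{\alpha_i,w_i}$ are pinned at the lower-left corner, and you have no control on the height $z_i$ of the graph over the center $p_i$; if $|z_i|\to\infty$, the limit $\Dd$ of the $\Dd_{\alpha_i,w_i}$ need not contain any point over $p$ at all. The paper closes this gap by running two limits simultaneously: $\Dd_i\to\Dd$ (corner-normalized) and $\Dd_i':=\Dd_i-(0,0,z_i)\to\Dd'$ (center-normalized). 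For $\Dd'$ the tangent plane at $(p,0)$ is horizontal, so the component through that point is a graph, and now Lemma~\ref{straight-lemma} legitimately gives the full parallelogram. One then shows $\Dd$ is a vertical translate of $\Dd'$: the corner tangent vector $(\cos(\alpha/2),\sin(\alpha/2),0)$ forces $\Dd$ to contain points in the interior of $P\times\RR$, and if $\Dd$ were not a translate of $\Dd'$, some $\Dd'+(0,0,c)$ would intersect $\Dd$ transversally---impossible, since for large $i$ this would make $\Dd_i'+(0,0,c)$ cross $\Dd_i$, yet these are vertical translates of one another. Without this two-limit device (or an independent bound on $z_i$), the claim that the limit is a graph over the whole parallelogram is a genuine gap.
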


\begin{proof}
Suppose $(\alpha_i,w_i)$ in $(0,\pi)^2$ converges to $(\alpha,w)$ in $(0,\pi)^2$.
By Theorem~\ref{jenkins-serrin-like-theorem},
\begin{equation*}
    \frac{w_i}{\sin\alpha_i}  <L_i,
\end{equation*}
where $L_i=L(\alpha_i,w_i)$, so, after passing to a subsequence, we can assume that the $L_i$
converge to a limit $L$ with 
\[
    0 < w/\sin\alpha\le L \le \infty.
\]

Let $\Dd_i=\Dd_{\alpha_i,w_i}$.   Let $p_i$ be the center of the parallelogram $P_i:=P(\alpha_i,w_i,L_i)$
and let $(p_i,z_i)$ be the corresponding point in $\Dd_i$.
If $L=\infty$, then $\Dd_i-(p_i,z_i)$ would converge subsequentially to 
a complete translator $M$ in $\RR\times [-w/2,w/2]$.  Since the tangent plane to $M$ at the origin
is horizontal, $M$ would be a graph, which is impossible since $w<\pi$.
Thus $L<\infty$, so the parallelograms $P_i$ converge
to the parallelogram $P=P(\alpha,w,L)$.

Let $\Dd_i'=\Dd_i-(0,0,z_i)$.  Note that the tangent plane to $\Dd_i'$ at $(p_i,0)$ is horizontal.
By passing to a subsequence, we can assume that $\Dd_i$ and $\Dd_i'$ converge
smoothly to limits $\Dd$ and $\Dd'$.

Let $p$ be the center of the parallelogram $P=P(\alpha,w,L)$.
Since the tangent plane to $\Dd'$ at $(p,0)$ is horizontal, the component $M'$ of
$\Dd'\setminus \partial \Dd'$ containing $(p,0)$ is the graph of a function
$u':\Omega\to \RR$.  By Lemma~\ref{straight-lemma}, $\Omega$ is a convex
open subset of $P$ containing the center $p$, and each component of 
\[
  (\partial \Omega)\setminus \{\textnormal{the corners of $P$}\}
\]
is a straight line segment.  It follows that the domain of $u'$ all of $P$.  

Since the vector $(\cos(\alpha/2),\sin(\alpha/2),0)$ is tangent to $\Dd$ 
at the origin (the lower-left corner of $P$),
we see that $\Dd$ contains points in the interior of $P\times\RR$.
If $\Dd$ were not a vertical translate of $\Dd'$, then we could find a vertical translate
$\Dd'+(0,0,c)$ of $\Dd'$ that has nonempty, transverse intersection with $\Dd$.
But then for large $i$, $\Dd_i'+(0,0,c)$ would intersect $\Dd_i$ transversely, which is impossible
(since $\Dd_i'$ is a vertical translate of $\Dd_i$).
Thus $\Dd$ is a vertical translate of $\Dd'$, so $\Dd\setminus \partial\Dd$ is the graph of a function $u:P\to \RR$.

By smooth convergence, 
\begin{enumerate}[\upshape$\bullet$]
\item The boundary of $\Dd$ is the four vertical lines through the corners of $P$.
\item The boundary values of $u$ are $-\infty$ on the horizontal sides and $+\infty$ on the nonhorizontal sides.
\item The vector $(\cos(\alpha/2), \sin(\alpha/2),0)$ is tangent to $\Dd$ at the origin.
\end{enumerate}
It follows that $L=L(\alpha,w)$ and $\Dd=\Dd_{\alpha,w}$.  This proves Assertion~\ref{L-continuous-item}.

To prove Assertion~\ref{L-long-item}, suppose it were false.  Then after passing to a subsequence, 
 the $L_i:=L(\alpha_i,w_i)$ converge to a finite limit $L$.  
 Exactly as in the proof of Assertion~\ref{L-continuous-item},
we get existence of a translator
\[
     u: P(\alpha,\pi,L)\to\RR
\]
with boundary values $-\infty$ on the horizontal sides and $+\infty$ on the nonhorizontal sides.
But, according to Theorem~\ref{scherk-translator-theorem}, no such translator exists .  
This proves Assertion~\ref{L-long-item}.
\end{proof}

\begin{corollary}\label{scherk-convergence-corollary}
As in Theorem~\ref{scherk-translator-theorem}, let $\Ss_{\alpha,w}$ be the doubly periodic translator
obtained from $\Dd_{\alpha,w}$ be repeated Schwarz reflection.  Then $\Ss_{\alpha,w}$
depends continuously on $(\alpha,w)\in (0,\pi)^2$.
\end{corollary}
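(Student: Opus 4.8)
The plan is to write $\Ss_{\alpha,w}$ explicitly as a union of congruent copies of the fundamental piece $\Dd_{\alpha,w}$ — the orbit of $\Dd_{\alpha,w}$ under the symmetry group $G_{\alpha,w}$ that produces the Schwarz extension — and then to play the continuity of this group action off against the continuity of $\Dd_{\alpha,w}$ furnished by Theorem~\ref{scherk-convergence-theorem}\eqref{L-continuous-item}, one compact subset of $\RR^3$ at a time. Concretely, $G_{\alpha,w}$ is generated by the $180^\circ$ rotations about the four vertical lines through the corners of $P:=P(\alpha,w,L(\alpha,w))$, these lines being $\partial\Dd_{\alpha,w}$. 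A short computation (writing each planar rotation as $x\mapsto 2c_j-x$, $c_j$ the $j$-th corner, $c_1$ the origin) identifies $G_{\alpha,w}$ as the semidirect product $L_{\alpha,w}\rtimes\langle\sigma\rangle$, where $\sigma:(x,y,z)\mapsto(-x,-y,z)$ is the rotation about the vertical line through the lower-left corner of $P$ and $L_{\alpha,w}$ is the rank-two lattice spanned by twice the edge vectors of $P$, i.e. by $2(L(\alpha,w),0)$ and $2(w/\tan\alpha,\,w)$. Since $\Ss_{\alpha,w}$ is by construction the $G_{\alpha,w}$-orbit of $\Dd_{\alpha,w}$, this gives
\[
 \Ss_{\alpha,w}=\bigcup_{v\in L_{\alpha,w}}\bigl(\widehat\Dd_{\alpha,w}+v\bigr),\qquad \widehat\Dd_{\alpha,w}:=\Dd_{\alpha,w}\cup\sigma(\Dd_{\alpha,w}),
\]
where the doubled piece $\widehat\Dd_{\alpha,w}$ is the smooth surface-with-boundary obtained by gluing $\Dd_{\alpha,w}$ to its $\sigma$-image along the vertical line through the origin.

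By Theorem~\ref{scherk-convergence-theorem}\eqref{L-continuous-item}, $L(\alpha,w)$ is continuous, so $L_{\alpha,w}$ depends continuously on $(\alpha,w)$, and $\Dd_{\alpha,w}$ — hence $\widehat\Dd_{\alpha,w}$ — converges smoothly as $(\alpha,w)$ varies in $(0,\pi)^2$. The next and central step is a \emph{uniform} local finiteness statement: given a compact $K\subset\RR^3$ and a point $(\alpha_0,w_0)\in(0,\pi)^2$, I would produce a fixed finite set $F_0\subset\ZZ^2$ so that for all $(\alpha,w)$ in a neighborhood of $(\alpha_0,w_0)$, every lattice translate $\widehat\Dd_{\alpha,w}+v$ that meets $K$ has $v=2m(L(\alpha,w),0)+2n(w/\tan\alpha,w)$ with $(m,n)\in F_0$. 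This holds because $\widehat\Dd_{\alpha,w}$ projects orthogonally into a union of two parallelograms of uniformly bounded diameter near $(\alpha_0,w_0)$ — so only $v$ of bounded norm can contribute — while near $(\alpha_0,w_0)$ the edge lengths $L(\alpha,w)$ and $w/\sin\alpha$ of $P$ are bounded away from $0$, so the systole of $L_{\alpha,w}$ is bounded below and only boundedly many lattice vectors have bounded norm.

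Granting this, for $(\alpha,w)$ near $(\alpha_0,w_0)$ we have
\[
 \Ss_{\alpha,w}\cap K=\Bigl(\bigcup_{(m,n)\in F_0}\bigl(\widehat\Dd_{\alpha,w}+2m(L(\alpha,w),0)+2n(w/\tan\alpha,w)\bigr)\Bigr)\cap K,
\]
an intersection with $K$ of a union of a \emph{fixed} finite number of pieces. For any sequence $(\alpha_i,w_i)\to(\alpha_0,w_0)$ each of these pieces converges smoothly (the piece $\widehat\Dd_{\alpha_i,w_i}$ converges smoothly by Theorem~\ref{scherk-convergence-theorem}\eqref{L-continuous-item} and the translation vector converges), and since the limit $\Ss_{\alpha_0,w_0}$ is a smooth surface covered near $K$ by the finitely many limit pieces, the finite union converges smoothly as well; hence $\Ss_{\alpha_i,w_i}\to\Ss_{\alpha_0,w_0}$ smoothly on a neighborhood of $K$. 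As $K$ was arbitrary, this is exactly the asserted continuous dependence of $\Ss_{\alpha,w}$ on $(\alpha,w)\in(0,\pi)^2$.

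The step I expect to be the main obstacle is the uniform local finiteness of the second paragraph: one must rule out that a distant reflected copy of the fundamental piece reaches into $K$, and do so uniformly as $(\alpha,w)$ varies. Everything there reduces to the metric geometry of the parallelograms $P(\alpha,w,L(\alpha,w))$ — that they stay in a fixed compact family with edges bounded away from $0$ near $(\alpha_0,w_0)$ — which is precisely the content of Theorem~\ref{scherk-convergence-theorem}\eqref{L-continuous-item}; beyond supplying smooth convergence of $\Dd_{\alpha,w}$, the translator equation plays no further role.
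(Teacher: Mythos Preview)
The paper states this corollary without proof, treating it as an immediate consequence of Theorem~\ref{scherk-convergence-theorem}\eqref{L-continuous-item}; your argument is correct and is precisely the natural expansion of that implicit step, writing $\Ss_{\alpha,w}$ as the orbit of $\Dd_{\alpha,w}$ under the reflection group and using the continuity of both the fundamental piece and the lattice to get uniformly locally finite smooth convergence. There is nothing to correct.
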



\section{Curvature and the Gauss Map}\label{curvature-section}

In this section we prove that the surfaces constructed in this paper
 all have strictly negative curvature. We begin with the Scherk translator.

\begin{theorem}\label{scherk-gauss-map-theorem}
Let $\Dd=\Dd_{\alpha,w}$ be a fundamental piece of a Scherk translator, 
as in Theorem~\ref{scherk-translator-theorem}.
Then 
\begin{enumerate}[\upshape(1)]
\item\label{negative-assertion} $\Dd$ has negative Gauss curvature everywhere, and
\item\label{gauss-image-assertion}  the Gauss map is a diffeomorphism
from $\Dd$ onto 
\[
   \SS^{2+} \setminus Q
\]
where $\SS^{2+}$ is the closed upper hemisphere and 
where $Q$ is the set consisting of the four unit vectors in the equator $\partial \SS^{2+}$
that are perpendicular to the edges of the parallelogram $P$ over which $\Dd$ is a graph.
\end{enumerate}
\end{theorem}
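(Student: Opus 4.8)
The plan is to combine a Jacobi-field/Morse-theory argument for the negative-curvature assertion \eqref{negative-assertion} with a degree or boundary-monotonicity argument for the Gauss-map assertion \eqref{gauss-image-assertion}. For \eqref{negative-assertion}, I would first observe that the Gauss curvature $K$ of $\Dd$ vanishes exactly at the points where the Gauss map is critical, and since $\Dd$ is the graph of $u$ this is where the Hessian $D^2u$ is singular. The key structural input is that vertical translates of $\Dd$ foliate the solid cylinder $P\times\RR$ (as explained in the ``Translators as Minimal Surfaces'' section), so $\ee_3\cdot\nu$ is a nowhere-vanishing Jacobi field on $\Dd\setminus\partial\Dd$; in particular $\Dd$ is a stable $g$-minimal surface and the linearized operator has trivial kernel with the right boundary behavior. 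To rule out $K=0$ I would suppose $K(p)=0$ at some interior point and construct a second, linearly independent Jacobi field by differentiating the family $\Dd_{\alpha,w}$ (or translates $u_\xi-v$ as in the proof of Theorem~\ref{scherk-uniqueness-theorem}) in a horizontal direction; a zero of $K$ forces this field to have a zero of order $\ge 2$, producing a critical point of a difference of two translators that contradicts the Rad\'o-type count of Proposition~\ref{rado-proposition} (the set $S$ has exactly two points, so $u-u_\xi$ has no interior critical point). This is essentially the same mechanism already used for uniqueness, so the main work is bookkeeping the boundary contributions near the four corners.

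For \eqref{gauss-image-assertion}, once $K<0$ everywhere the Gauss map $\nu:\Dd\to\SS^{2+}$ is a local diffeomorphism and orientation-reversing (negative curvature). I would then analyze the boundary: on each of the four edges of $P$ the surface $\Dd$ contains a vertical line (the Schwarz-reflection lines through the corners), and along such a vertical line the normal $\nu$ sweeps monotonically through a half-equator, from one corner vector in $Q$ to the adjacent one. Hence $\nu$ maps $\partial\Dd$ (the four vertical lines) onto $\partial\SS^{2+}\setminus Q$ — four open arcs of the equator — with each arc covered exactly once and with the correct orientation. Combined with the fact that $\nu$ is a proper local diffeomorphism on the open surface (properness follows from the curvature bounds of Theorem~\ref{curvature-bound} together with the smooth convergence of the vertical translates of $\Dd$ to the boundary planes as $z\to\pm\infty$, which pins down the limiting values of $\nu$), a degree argument shows $\nu:\Dd\to\SS^{2+}\setminus Q$ has degree $\pm1$, hence is a diffeomorphism onto its image, and the boundary behavior identifies the image as all of $\SS^{2+}\setminus Q$.

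The hard part will be establishing properness of the Gauss map — equivalently, controlling $\nu$ near the ``ends'' of $\Dd$. There are two kinds of ends: the four corner points of $P\times\RR$ (where the boundary of $P$ is not smooth and where the Schwarz-reflection lines meet), and the non-compact directions of $\Dd$ coming from the $u=\pm\infty$ edges. For the latter, the smooth convergence of $\Dd+(0,0,z)$ as $z\to\mp\infty$ to the flat strips over the $u=\pm\infty$ edges (Lemma~\ref{straight-lemma}) shows $\nu$ converges to the fixed horizontal unit normals of those strips, i.e.\ to the points of $Q$; so those ends are precisely the punctures. For the corner ends one needs the up-to-the-corner curvature estimate to see that $\nu$ extends continuously and hits the equator transversally — this is where one must be careful, since a priori a corner could be a further source of non-properness. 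I expect the corner analysis, using the reflection symmetries together with the boundary curvature bound, to be the only genuinely delicate point; everything else is a packaging of the stable-Jacobi-field and Rad\'o-count machinery already developed in the preceding sections.
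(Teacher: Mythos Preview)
Your approach to assertion~\eqref{gauss-image-assertion} is essentially the paper's: once $K<0$ everywhere, the Gauss map is a local diffeomorphism, and one shows it is proper onto $\SS^{2+}\setminus Q$ by analyzing the behavior along the four vertical boundary lines and at the four ends. The paper packages this more cleanly by compactifying $\Dd$ to a closed disk $\widehat{\Dd}$ (adding one point for each of the four edges of $P$), observing that $\nu$ extends continuously to $\widehat{\Dd}$, maps $\partial\widehat{\Dd}$ homeomorphically onto the equator, and hence (being a local diffeomorphism on the interior) is a global homeomorphism onto the closed upper hemisphere. Your worry about ``corner ends'' is misplaced: the corners of $P$ correspond to the four vertical boundary lines already in $\partial\Dd$, not to separate ends; the only ends are the four edge-ends, where $\nu$ tends to the four points of $Q$.

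Your argument for~\eqref{negative-assertion}, however, has a genuine gap. The step ``a zero of $K$ forces this [horizontal-translation Jacobi] field to have a zero of order~$\ge 2$'' is not correct as stated: if $(a,b)$ is the null direction of $D^2u(p)$, then the Jacobi field $(a,b,0)\cdot\nu$ has vanishing \emph{gradient} at $p$, but it vanishes at $p$ only under the extra, unrelated condition $(a,b)\perp Du(p)$. Nor does a critical point of a Jacobi field translate directly into a critical point of $u-u_\xi$ for a finite $\xi$ in the general-position regime needed for Proposition~\ref{rado-proposition}. The paper's mechanism is different and is the missing idea: at any interior point with $K=0$, one can find a \emph{tilted grim reaper} $g$ making second-order contact with $u$ there (tilted grim reapers have one identically zero principal curvature, and the tilt and position can be chosen to match $Du(p)$, after which the translator equation forces the remaining second derivative to agree; see Remark~\ref{matching}). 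Then $g-u$ has a critical point of multiplicity~$\ge 2$ at $p$, contradicting Proposition~\ref{reaper-count-proposition}, which says any critical point of $g-u$ is nondegenerate. You also omit the boundary case: on $\partial\Dd$ the paper notes that $H=0$ (so $K\le 0$), and that the Jacobi field $\ee_3\cdot\nu$ vanishes on $\partial\Dd$ but, by the Hopf boundary lemma, has nonzero normal derivative, forcing the second fundamental form to be nonzero and hence $K<0$ strictly.
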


Thus $Q$ consists of the four vectors $\pm\ee_2$ and $\pm (-\sin\alpha,\cos\alpha,0)$.
It follows from the theorem that the Gauss map from the doubly periodic surface $\Ss_{\alpha,w}$
may be regarded as covering space of $\SS^2\setminus Q$.
It is not a universal cover because $\Ss_{\alpha,w}$ is not simply connected. (Indeed, it has
infinite genus.)

\begin{proof}[Proof of Theorem~\ref{scherk-gauss-map-theorem}]
By the translator equation $\vec H= -(0,0,1)^\perp$, and because the boundary of $\Dd$ 
consists of vertical lines, $\Dd$ has mean curvature $0$ at all boundary points,
and therefore Gauss curvature is $\le 0$ at all boundary points.  Note that $\nu\cdot\ee_3$ is a Jacobi
field on $\Dd$ that is positive everywhere on $\Dd\setminus \partial \Dd$.
Hence by the Hopf Boundary Lemma,  its derivative is nonzero at each boundary point.  
Thus the second fundamental form
is nonzero, so the Gauss is strictly negative.
Thus $K$ is negative everywhere on $\partial \Dd$.  In Corollary~\ref{K-not-zero} below, we show that 
there are no interior points where $K=0$.  Hence $K<0$ everywhere, since $\Dd$ is connected.

Now we prove Assertion~\ref{gauss-image-assertion}.
Compactify $\Dd$ by adding four points at infinity corresponding to the four ends of $\Dd$.
Let $\widehat{\Dd}$ be the compactified surface.  Then $\widehat{\Dd}$ is topologically a closed
disk.  Note that the Gauss map $\nu$ extends continuously to $\widehat{\Dd}$ (because $\nu$
has a limiting value as we go to infinity along an end.)  The Gauss map takes
those four points at infinity to the four points of the set $Q$ in the statement of the theorem.

Note also that
\begin{enumerate}
\item $\nu$ is a continuous map from $\widehat{\Dd}$ to the closed upper hemisphere of $\SS^2$.
\item $\nu$ is locally a smooth diffeomorphism at all interior points of $\widehat{\Dd}$.
\item $\nu$ maps $\partial \widehat{\Dd}$ homeomorphically onto the equator.
\end{enumerate}
It follows by elementary topology that $\nu$ maps $\widehat{\Dd}$ homeomorphically onto 
the upper hemisphere.  Furthermore, since the Gauss curvature is negative everywhere on $\Dd$,
the map is a smooth diffeomorphism except at the four points of $\widehat{\Dd}\setminus \Dd$.
\end{proof}


\begin{remark}\label{trivial-remark}
Let $I$ be a closed horizontal segment, ray, or line.
Let $S$ be an open strip such that $\partial S$ does not contain any endpoint of $I$.
If $I$ is a ray or line, assume that $S$ is not horizontal.  Note that if $S\cap I$ is nonempty,
then the number of endpoints of $I$ in $S$ plus the number of points in $(\partial S)\cap I$
is $2$.
\end{remark}

\begin{proposition}\label{reaper-count-proposition}
Let $P=P(\alpha,w,L(\alpha,w))$ and let $u:P\to \RR$ be a Scherk translator, i.e., a translator
that has boundary values $-\infty$ on the horizontal sides and $+\infty$ on the nonhorizontal sides.
 Let $I_1$ and $I_2$ be the horizontal edges of $P$.
Suppose $g:S\to\RR$ is a tilted grim reaper over an open strip~$S$.
Then $g-u$ has at most one critical point, and such a critical point must be non-degenerate.
If $S\cap I_1$ or $S\cap I_2$ is empty, then $g-u$ has no critical points.
\end{proposition}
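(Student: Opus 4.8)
The plan is to count critical points of $\phi := g - u$ by analyzing its level sets as networks, exactly in the spirit of Proposition~\ref{rado-proposition}, with the grim reaper $g$ playing the role of the second translator and the open strip $S$ playing the role of the second polygonal region $P_2$. First I would replace $P$ and $S$ by $\Omega := P \cap S$, the common domain of $\phi$, which is a bounded convex polygonal region (possibly empty; if empty there is nothing to prove). The key input is that each end of a level set $Q = \{\phi = c\}$ of $\phi$ must escape to a point of $\partial \Omega$ where the boundary values of $g$ and $u$ force a sign change in $\phi$. Concretely: $u = -\infty$ on $I_1 \cup I_2$ and $u = +\infty$ on the two nonhorizontal sides of $P$, while $g = -\infty$ on the two horizontal sides of $S$ and $g$ is finite (indeed $\to +\infty$ in the tilt direction, but never $-\infty$ there) on... — the point is that $g - u \to +\infty$ near an edge of $P$ on which $u = -\infty$, and $g - u \to -\infty$ near an edge of $P$ on which $u = +\infty$ (since $g$ stays finite there), and similarly near the horizontal edges of $S$ (where $g = -\infty$, $u$ finite, so $\phi \to -\infty$). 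So the ends of $Q$ occur only at the following boundary points of $\Omega$: (a) a vertex of $P$ lying in $S$; (b) a vertex of $S$ lying in $P$ (i.e.\ an endpoint of a horizontal edge $I_j$ of $S$... wait, $S$ is a strip, so its only "vertices" are at infinity — there are none in the bounded region $P$ unless an edge of $P$ crosses $\partial S$); (c) a transverse intersection of a horizontal edge of $P$ with a horizontal edge of $S$ (two $-\infty$-vs-$-\infty$ edges, where $\phi$ changes sign); (d) a transverse intersection of a nonhorizontal edge of $P$ with a horizontal edge of $S$ — here $u = +\infty$ and $g = -\infty$ on the respective sides, so $\phi$ does \emph{not} change sign, and this is \emph{not} an end of $Q$.

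So I would set $S' :=$ the set of boundary points of type (a) or (c) (after a transversality perturbation of $S$, using that the conclusion is stable and that the grim reapers of a given width form a family one can translate), and the same network argument as in Proposition~\ref{rado-proposition} applies: $Q$ is a network all of whose nodes have even valence $\ge 4$ (at a node, $Dg = Du$, and the local picture of a critical point of a difference of two translators, both solving the quasilinear translator PDE, is a $2n$-valent cross with $n \ge 2$, by the standard Bers/Hartman–Wintner local analysis for solutions of the same elliptic equation — this is the content of equation~\eqref{the-form} referenced in the paper); $Q$ contains no closed loop (strong maximum principle: $\phi$ would have an interior extremum inside the loop); hence every component of $Q$ is a tree, and the number of ends of $Q$ equals the number of nodes counted with multiplicity (valence $-2$), and is in particular even. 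Therefore $\#S' $ is even, and I claim $\#S' \le 4$. This is the Rado-type combinatorial count: $\partial \Omega$ alternates between "$\phi = +\infty$" arcs and "$\phi = -\infty$" arcs as one traverses it, and a type-(a) or type-(c) point is exactly a place where two like-signed pieces meet — there is at most one vertex of $P$ in $S$ (since $S$ is a strip and $P$ a parallelogram, an easy convexity/transversality count bounds the number of $P$-vertices inside $S$ by $2$, and among those at most... here I would check the geometry carefully: the two horizontal edges of $S$ can each cut across $P$, and the configuration of which $P$-vertices land inside $S$ together with the type-(c) crossings is constrained so that $\#S' \le 4$). Given $\#S' \le 4$ and even, $\#S' \in \{0, 2, 4\}$, and then I conclude as in Propositions~\ref{rado-proposition} and~\ref{4-points-proposition}: if $\#S' = 4$ there is at most one node, necessarily $4$-valent hence a nondegenerate critical point; if $\#S' = 2$ then $Q$ is a single arc with no node, so no critical point; if $\#S' = 0$ then $Q$ is empty or a loop, but loops are excluded, so $Q$ is empty — and this $\#S' = 0$ case is precisely "$S \cap I_1$ or $S \cap I_2$ empty," giving the last sentence of the proposition (if $S$ misses one horizontal edge of $P$, one checks directly that no type-(a) or type-(c) point can occur, so $\#S' = 0$, so $\phi$ has no critical point).

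The main obstacle I expect is the bookkeeping in the claim $\#S' \le 4$ — i.e.\ verifying that for a strip $S$ in general position against the parallelogram $P$, the number of (vertices of $P$ inside $S$) plus (horizontal-edge/horizontal-edge transverse crossings) is at most $4$. This is elementary but requires care because $S$ is unbounded and one must use Remark~\ref{trivial-remark} (the observation that a horizontal line or segment meets an open strip in a set with exactly $2$ boundary-plus-endpoint incidences when it meets it at all) applied to each of the two horizontal edges $I_1, I_2$ of $P$ against the strip $S$, together with a parallel analysis of the two horizontal edges of $S$ against $P$; summing these incidence counts and sorting them into types (a)–(d) gives the bound. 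A secondary technical point is justifying the "general position" reduction: one must know that translating $g$ slightly (within the family of tilted grim reapers of the given width, or simply translating the surface) does not create a degenerate critical point or change the count — this follows from the Morse-family machinery already cited (Corollary~\ref{morse-family-corollary}) exactly as in the proof of Theorem~\ref{scherk-uniqueness-theorem}, since a nondegenerate critical point persists and its count is locally constant, while the hypotheses ("$S \cap I_j$ empty") are open conditions.
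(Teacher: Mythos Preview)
Your approach is essentially the paper's: in general position one applies Remark~\ref{trivial-remark} to each of $I_1,I_2$ (which gives $\#S'\le 4$, and $\#S'\le 2$ when one $S\cap I_j$ is empty) together with Propositions~\ref{rado-proposition} and~\ref{4-points-proposition}, and then reduces the general case by perturbation via Corollary~\ref{morse-family-corollary}.

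Two small corrections. First, when $S\cap I_1=\emptyset$ but $S\cap I_2\ne\emptyset$ you get $\#S'=2$, not $0$: Remark~\ref{trivial-remark} applied to $I_2$ still contributes two points (endpoints of $I_2$ inside $S$ and/or crossings of $\partial S$ with $I_2$). The conclusion is unaffected, since $\#S'\le 2$ is already the no-critical-point case of Proposition~\ref{rado-proposition}. Second, the hypothesis ``$S\cap I_j=\emptyset$'' is not literally an open condition on $S$ (consider $I_j\subset\partial S$), so your translation perturbation needs a direction chosen with care. The paper handles this by \emph{rotating} $g$ about a point $p_0$ lying in $\{0<y<w\}$ and to the left of $P$, and picks the sign of the rotation angle ($\theta>0$ for the $I_1$ case, $\theta<0$ for the $I_2$ case) precisely so that the rotated strip $S_\theta$ still misses the relevant edge while achieving general position.
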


Both assertions of the conclusion can be combined into one statement:
\[
N(g-u) 
\le 
\#\textnormal{(components of $S\cap (I_1\cup I_2)$)} - 1,
\]
where $N(g-u)$ denotes number of critical points of $g-u$, counting multiplicity.
(See~\S\ref{morse-section} for the definition of multiplicity.)

\begin{proof}
{\bf Case 1:} $\partial S$ does not contain any vertices of $P$.
In this case, the proposition follows immediately from Remark~\ref{trivial-remark} together
with Propositions~\ref{rado-proposition} and~\ref{4-points-proposition}.

{\bf Case 2:} General $S$.
Let $p_0$ be a point in the strip $\{(x,y): 0<y<w\}$ that lies to the left of $P$.
For $|\theta|<\pi$, 
let $g_\theta:S_\theta \to \RR$ be the result of rotating $g:S\to\RR$ counterclockwise about $p_0$
 by angle $\theta$.  For all sufficiently small $|\theta|>0$, $\partial S_\theta$ will not contain any vertex
 of $P$.  
By Case 1, $N(g_\theta-u)\le 1$ for such $\theta$.
It follows (see Corollary~\ref{morse-family-corollary}) that $N(g_0-u)\le 1$.
This proves the first assertion of the proposition.

Now suppose that $S\cap I_1$ is empty (where $I_1$ is the lower horizontal edge of $P$).  
Then $S_\theta\cap I_1$ is empty for all small $\theta>0$,
so  $N(g_\theta-u)=0$ by Case 1.  Thus $N(g_0-u)=0$ by Corollary~\ref{morse-family-corollary}.

Similarly, if $S\cap I_2$ is empty, then $N(g_\theta-u)=0$ for all $\theta<0$ sufficiently close to $0$, 
and therefore $N(g_\theta-u)=0$ by Corollary~\ref{morse-family-corollary}.
\end{proof}

\begin{corollary}\label{K-not-zero} There is no point $p$ in $\Dd\setminus\partial\Dd$ at which the Gauss
curvature is zero.
\end{corollary}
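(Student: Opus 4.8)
The plan is to argue by contradiction, assuming $K(p)=0$ at some interior point $p$ of $\Dd=\Dd_{\alpha,w}$, constructing a tilted grim reaper that osculates $\Dd$ to \emph{second} order at the point of $\Dd$ over $p$, and then invoking Proposition~\ref{reaper-count-proposition}. Recall that $\Dd\setminus\partial\Dd$ is the graph of a Scherk translator $u\colon P(\alpha,w,L(\alpha,w))\to\RR$, so Proposition~\ref{reaper-count-proposition} applies to $u$.

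So suppose $K(p)=0$ with $p\in\Dd\setminus\partial\Dd$, let $q=(p,u(p))$ be the corresponding point of $\Dd$, and let $\nu$ be the upward unit normal of $\Dd$ at $q$. Since $p$ is an interior point, $T_q\Dd$ is not vertical, hence $\nu\cdot\ee_3>0$; by the translator equation~\eqref{MCF} the mean curvature of $\Dd$ at $q$ with respect to $\nu$ equals $-\nu\cdot\ee_3\ne 0$, so the second fundamental form $\mathrm{II}_\Dd(q)$ is nonzero. Since $\det\mathrm{II}_\Dd(q)=K(p)=0$, this form has rank one; let $T$ be a unit vector spanning its kernel (the asymptotic direction at $q$). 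As $T$ lies in the non-vertical plane $T_q\Dd$, $T$ is not vertical.

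Next I would produce a tilted grim reaper $G$, that is, the graph of a function $g\colon S\to\RR$ over a strip $S\subset\RR^2$ (as in~\eqref{TGR}, up to translation and rotation about a vertical axis), whose ruling direction at $q$ is $T$ and which passes through $q$ with upward normal $\nu$ there. This rests on a short piece of spherical geometry: a tilted grim reaper is a cylinder over a rescaled grim reaper curve, its ruling direction can be prescribed to be any non-vertical unit vector (the tilt angle $\theta$ in~\eqref{TGR} ranges over all of $(-\pi/2,\pi/2)$, i.e.\ the width over $[\pi,\infty)$), and for a fixed ruling direction $r$ the upward Gauss map of the surface is constant along each ruling and maps \emph{onto} the open half great circle $\{\,n\in\SS^2\colon n\perp r,\ n\cdot\ee_3>0\,\}$. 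Applying this with $r=T$ and using $\nu\perp T$ together with $\nu\cdot\ee_3>0$, there is a point of such a grim reaper with upward normal $\nu$; translating the grim reaper so that this point becomes $q$ gives $G$. Then $p\in S$, $g(p)=u(p)$, and $Dg(p)=Du(p)$, so $p$ is a critical point of $g-u$.

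Finally I would check that $p$ is a \emph{degenerate} critical point of $g-u$, i.e.\ $D^2g(p)=D^2u(p)$, which since $Dg(p)=Du(p)$ is equivalent to $\mathrm{II}_G(q)=\mathrm{II}_\Dd(q)$ (the Hessian of a graph and its second fundamental form differ by a factor depending only on the gradient). Both $\mathrm{II}_G(q)$ and $\mathrm{II}_\Dd(q)$ are symmetric bilinear forms on the plane $\nu^\perp=T_q\Dd=T_qG$; both have determinant $0$ (a tilted grim reaper is flat, being a cylinder, and $K(p)=0$ by hypothesis); both have trace $-\nu\cdot\ee_3$ (both surfaces are translators with normal $\nu$ at $q$); and both have kernel $\RR T$ (the ruling direction of $G$ at $q$ is $T$, and $T$ is the asymptotic direction of $\Dd$ at $q$). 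A rank-one symmetric form on a $2$-plane is determined by its kernel and its trace, so $\mathrm{II}_G(q)=\mathrm{II}_\Dd(q)$, and hence the Hessian of $g-u$ vanishes at $p$. This contradicts Proposition~\ref{reaper-count-proposition}, which says $g-u$ has at most one critical point and that every critical point is nondegenerate; so no such $p$ exists. The only nonroutine step is the spherical-geometry claim about which (ruling direction, normal) pairs occur on tilted grim reapers — everything else is bookkeeping with second fundamental forms, using only that tilted grim reapers are flat and that the mean curvature of a translator at a point is determined by its normal there.
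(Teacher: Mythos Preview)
Your proof is correct and follows the same strategy as the paper's: show that at a point where $K=0$ one can find a tilted grim reaper making second-order contact with $\Dd$, which forces a degenerate critical point of $g-u$ and contradicts Proposition~\ref{reaper-count-proposition}. The only difference is presentational: the paper carries out the matching in graph coordinates (rotating so that $\ee_1$ is a null eigenvector of $D^2u$, then picking $\theta$ with $\tan\theta=D_1u$ and sliding along the strip to match $D_2u$; the translator equation then forces $D_{22}$ to agree), whereas you argue invariantly via second fundamental forms, using that a rank-one symmetric form on a $2$-plane is determined by its kernel and trace. Both arguments encode the same two degrees of freedom in the grim reaper family (tilt and transverse position), and your spherical-geometry description of which (ruling direction, normal) pairs occur is exactly the geometric content of the paper's coordinate computation.
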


\begin{proof}
The corollary is an immediate consequence of 
Proposition~\ref{reaper-count-proposition} and the following observation: 
Suppose that $u:\Omega\to\RR$ is any graphical translator (not necessarily complete)
over an open set $\Omega$
 and that $(x_0,y_0)\in \Omega$ is a point
for which the Gauss curvature of the graph of $u$ is $0$.
   Then one can find a tilted grim reaper $g:S\to\RR$
such that $u$ and $g$ make contact of order $2$ or higher at $(x_0,y_0)$.
(If this is not clear, see Remark~\ref{matching} below.)
\end{proof}

\begin{remark}\label{matching}
Suppose that $u:\Omega\to \RR$ is a translator and that 
\[
 \det D^2u(x_0,y_0)=0.
\]
By translating and rotating, we can assume that $(x_0,y_0)=(0,0)$ and 
that $\ee_1$ is an eigenvector of $D^2u(0,0)$ with 
eigenvalue $0$.  Note that if $g$ is the tilted grim reaper~\eqref{TGR},
then $D_1g\equiv \tan\theta$.  Thus we can choose $\theta$ so that $D_1g\equiv D_1u(0,0)$.
As $y$ goes from $-w/2$ to $w/2$, $D_2g(0,y)$ goes from $\infty$ to $-\infty$, so there exists a $\hat{y}$
for which $D_2g(0,\hat{y})=D_2u(0,0)$.

Thus 
\begin{align*}
D g(0,\hat{y}) &= Du(0,0), \\
D_{11}g(0,\hat{y}) &=0 = D_{11}u(0,0), \, \text{and} \\
D_{12}g(0,\hat{y}) &=0= D_{12} u(0,0).
\end{align*}
By the translator equation, it follows that $D_{22}g(0,\hat{y})=D_{22}u(0,0)$.
Hence we can translate $g$ and add a constant to get a tilted grim reaper $\tilde g$ that makes
contact of order $\ge 2$ with $u$ at $(0,0)$.
\end{remark}

\addtocontents{toc}{\protect\setcounter{tocdepth}{0}}
\section*{Other Domains}
\addtocontents{toc}{\protect\setcounter{tocdepth}{2}}

The arguments above in this section also work for other domains that will occur later in this paper.

\begin{theorem}\label{other-negative-theorem}
Let $w>0$.
Let $I_1$ be the ray $(0,\infty)\times \{0\}$.   Let $I_2$ be one of the following:
\begin{align*}
&\text{A ray of the form $(\hat{x},\infty)\times \{w\}$, or} \\
&\text{A ray of the form $(-\infty,\hat{x})\times \{w\}$, or} \\
&\text{The line $\RR\times\{w\}$.}
\end{align*}
Let $\Omega$ be the interior of the convex hull of $I_1\cup I_2$.
Let $M$ be a smooth translator such that $M\setminus \partial M$ is the graph of a function
\[
   u: \Omega \to \RR
\]
with boundary values $-\infty$ on $I_1\cup I_2$ and $+\infty$ on 
$J:=(\partial \Omega)\setminus \overline{I_1\cup I_2}$.
Then 
 the Gauss curvature is $<0$ at all points of $M$.
\end{theorem}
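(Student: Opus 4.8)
The plan is to follow the proof of Theorem~\ref{scherk-gauss-map-theorem}\eqref{negative-assertion} together with its companion Corollary~\ref{K-not-zero} almost verbatim, treating boundary points and interior points of $M$ separately. Since $M\setminus\partial M$ is a graph, $M$ is connected and $\nu\cdot\ee_3>0$ on $M\setminus\partial M$; and $K$ is continuous. So it suffices to show that $K<0$ at every point of $\partial M$ and that $K\ne 0$ at every interior point, after which connectedness and continuity of $K$ give $K<0$ everywhere.

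For boundary points, first note that, since $u=+\infty$ on $J$ and $u=-\infty$ on $I_1\cup I_2$, the only points of $\partial\Omega$ at which the boundary value of $u$ switches between $+\infty$ and $-\infty$ are the corners of $\Omega$ (the endpoints of the rays $I_1$, $I_2$). Over the interior of any edge of $\partial\Omega$ the graph of $u$ escapes to $\pm\infty$ and so has no finite limit points; hence $\partial M$ is contained in the union of the (one or two) vertical lines through the corners of $\Omega$. Along such a vertical line the tangent plane to $M$ contains $\ee_3$, so $\nu\cdot\ee_3=0$ there, and combined with the translator equation $\vec H=\ee_3^\perp$ this forces $\vec H=0$ on $\partial M$, hence $K\le 0$ on $\partial M$. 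Now $\nu\cdot\ee_3$ is a Jacobi field, positive on $M\setminus\partial M$ and zero on $\partial M$, so by the Hopf boundary point lemma its derivative in the direction of the unit conormal $n$ is nonzero at each boundary point; since that derivative equals $\pm\,\mathrm{II}(n,\ee_3)$, the second fundamental form does not vanish there. With $\vec H=0$ and $\mathrm{II}\ne 0$ at that point we get $K<0$ there. This is word for word the boundary argument of Theorem~\ref{scherk-gauss-map-theorem}.

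For interior points I argue by contradiction as in Corollary~\ref{K-not-zero}. If $K$ vanished at some $p=(x_0,y_0)\in\Omega$, then $\det D^2u(p)=0$, so by Remark~\ref{matching} there is a tilted grim reaper $g:S\to\RR$ over an open strip $S$ making contact of order $\ge 2$ with $u$ at $p$; thus $g-u$ has a \emph{degenerate} critical point at $p$. To get a contradiction I need the analog of Proposition~\ref{reaper-count-proposition} for $\Omega$: for every tilted grim reaper $g:S\to\RR$, each critical point of $g-u$ is nondegenerate (and there is at most one). The proof is the same Rad\'o-type count. Assume for the moment that $S$ is in general position with respect to $\Omega$ and that $S\cap\Omega$ is bounded and nonempty, and apply Propositions~\ref{rado-proposition} and~\ref{4-points-proposition} with $P_1=S$, $P_2=\Omega$, $u_1=g$, $u_2=u$. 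Since $g=-\infty$ on both edges of $S$ while $u=-\infty$ on $I_1\cup I_2$ and $u=+\infty$ on $J$, the grim reaper contributes no ``$+\infty$ edge'', so the finite set $S$ of Proposition~\ref{rado-proposition} reduces to the vertices of $\Omega$ lying in the strip together with the points where $I_1$ or $I_2$ meets an edge of the strip. Applying Remark~\ref{trivial-remark} separately to the horizontal ray or line $I_1$ and to $I_2$ (with the strip in the role of $S$ there), each of $I_1$, $I_2$ accounts for at most $2$ such points, so this set has at most $4$ points (and at most $2$ if $S\cap I_1$ or $S\cap I_2$ is empty). Propositions~\ref{rado-proposition} and~\ref{4-points-proposition} then give that $g-u$ has at most one critical point and that any critical point is nondegenerate, contradicting Remark~\ref{matching}.

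The one point needing care, and the only real obstacle beyond bookkeeping, is that $\Omega$ is unbounded, so one must secure boundedness of $S\cap\Omega$ and general position. If $S$ is not horizontal this is automatic: $\Omega\subset\{0<y<w\}$, and a nonhorizontal strip meets that slab in a bounded parallelogram, while general position with respect to the finitely many edges of $\partial\Omega$ fails for only finitely many orientations of $S$. If $S$ is horizontal (so $S\cap\Omega$ may be an unbounded sliver), I handle it exactly as in Case 2 of the proof of Proposition~\ref{reaper-count-proposition}: rotate $g$ about a point far to the left of $\Omega$ through a small angle $\theta$ to get grim reapers $g_\theta:S_\theta\to\RR$ with $S_\theta$ nonhorizontal and in general position for small $\theta\ne 0$; bound the number of critical points of $g_\theta-u$ by the case already treated, and let $\theta\to 0$ using Corollary~\ref{morse-family-corollary}. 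This establishes the nondegeneracy assertion for $g$ itself, hence the contradiction, so $K\ne 0$ at every interior point. Together with $K<0$ on $\partial M$ (and $\partial M\ne\emptyset$, since $\Omega$ has at least one corner) and the connectedness of $M$, this yields $K<0$ everywhere on $M$.
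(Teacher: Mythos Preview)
Your proof is correct and follows exactly the route the paper indicates (``identical to the proof in the case of parallelograms''): the boundary argument via the Hopf lemma on the Jacobi field $\nu\cdot\ee_3$, and the interior argument via Remark~\ref{matching} combined with the Rad\'o-type count of Propositions~\ref{rado-proposition}--\ref{4-points-proposition}, with the rotation trick from Case~2 of Proposition~\ref{reaper-count-proposition} to handle horizontal strips and unbounded intersections. Your bookkeeping on the size of the set~$S$ in Proposition~\ref{rado-proposition} (at most four, via Remark~\ref{trivial-remark} applied to each of $I_1$, $I_2$) is accurate in all three cases for $I_2$, and your use of Corollary~\ref{morse-family-corollary} to pass from $g_\theta$ back to $g$ is exactly how the paper does it.
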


(If $I_2$ is a line or a leftward-pointing ray, then $\Omega$ is the strip $\RR\times (0,w)$.
If $I_2$ is a rightward-pointing ray, then $\Omega$ is the portion of the strip $\RR\times(0,w)$
lying to the right of the line segment $J$ joining the endpoints of the rays $I_1$ and $I_2$.)

The proof of Theorem~\ref{other-negative-theorem}
 is identical to the proof in the case of parallelograms (Theorem~\ref{scherk-gauss-map-theorem}).

\begin{theorem}\label{other-tangency-theorem}
Let $I_1$, $I_2$, $M$, and $u$ be as in Theorem~\ref{other-negative-theorem}.
Let $G$ be a tilted grim reaper defined over an open strip $S$.
\begin{enumerate}[\upshape (1)]
\item\label{other-tangency-one} If $S$ is disjoint from $I_1$, then $G$ and $M$ are nowhere tangent.
\item\label{other-tangency-two}  If $I_2$ is a ray and if $S$ is disjoint from $I_2$, then $G$ and $M$ are nowhere tangent.
\end{enumerate}
\end{theorem}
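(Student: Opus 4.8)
The plan is to adapt the proof of Proposition~\ref{reaper-count-proposition}, now with $\Omega$ and its edges $I_1,I_2$ in the roles of the parallelogram $P$ and its horizontal edges. Write $g\colon S\to\RR$ for the tilted grim reaper function whose graph is $G$. Every vertical translate of $G$ is again a tilted grim reaper over the same strip $S$, and a tangency of $G$ with $M$ can occur only at a point lying over $S\cap\Omega$ (the boundary of $M$ consists of vertical lines, to which a graph is never tangent), where it amounts to a critical point of $g-u$ with critical value $0$. So it suffices to prove the stronger statement that $g-u$ has no critical point at all in $S\cap\Omega$, i.e.\ $N(g-u)=0$, in each of the two cases.

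First I would reduce to the situation in which $S$ is not horizontal and $\partial S$ is in general position with respect to $\partial\Omega$ (no endpoint of $I_1$ or $I_2$ lies on $\partial S$, and every intersection of an edge of $S$ with an edge of $\Omega$ is transverse). Exactly as in Case~2 of the proof of Proposition~\ref{reaper-count-proposition}, rotate $g\colon S\to\RR$ by a small angle $\theta$ about a suitably placed faraway point to obtain tilted grim reapers $g_\theta\colon S_\theta\to\RR$; for all sufficiently small $|\theta|>0$ of an appropriate sign, $S_\theta$ is non-horizontal and in general position with respect to $\partial\Omega$, and $S_\theta$ is still disjoint from $I_1$ (in Case~1) or from $I_2$ (in Case~2). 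The centre of rotation should be chosen so that the rotation tips $S$ off the relevant ray; the one configuration needing care is when an edge of $S$ actually contains the ray $I_1$ (resp.\ $I_2$), and then one rotates in the direction that lifts that edge off the ray. Once the conclusion is established for all such $\theta$, Corollary~\ref{morse-family-corollary} gives $N(g_0-u)=0$.

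So assume $S$ is not horizontal and $\partial S$ is in general position with respect to $\partial\Omega$. The key point is that $\Omega$ is always contained in the horizontal strip $\RR\times(0,w)$, and that a non-horizontal open strip meets $\RR\times(0,w)$ in a bounded set; hence $S\cap\Omega$ is bounded and Proposition~\ref{rado-proposition} applies with $(u_1,P_1)=(u,\Omega)$ and $(u_2,P_2)=(g,S)$, just as in Proposition~\ref{reaper-count-proposition}. The associated ``bad set'' consists of the endpoints of $I_1$ and $I_2$ that lie in $S$, together with the points where $\partial S$ crosses $I_1$ or $I_2$: the two edges of $S$ both carry the value $-\infty$ and $J$ carries $+\infty$, so a level set of $g-u$ can leave $S\cap\Omega$ only at such points (and $u$ takes all real values near an endpoint of $I_1$ or $I_2$, because there a $-\infty$-edge meets the $+\infty$-edge $J$). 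Now apply Remark~\ref{trivial-remark} to the non-horizontal strip $S$ and each horizontal ray or line $I_i$: if $S\cap I_i\neq\emptyset$, then the number of endpoints of $I_i$ in $S$ plus the number of points of $(\partial S)\cap I_i$ equals $2$; if $S\cap I_i=\emptyset$, then by general position both numbers vanish and the endpoint of $I_i$ (if any) lies outside $\overline S$. In Case~1 the hypothesis $S\cap I_1=\emptyset$ kills the contribution of $I_1$ and of its endpoint, so the bad set has exactly $2$ points if $S$ meets $I_2$ and is empty otherwise; in Case~2 the hypothesis $S\cap I_2=\emptyset$ does the same for $I_2$, so the bad set has exactly $2$ points if $S$ meets $I_1$ and is empty otherwise. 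When it is empty, one checks directly that $S\cap\Omega=\emptyset$ (otherwise $\partial(S\cap\Omega)$ would lie entirely on edges of $S$ and on $J$, so $g-u\equiv-\infty$ on $\partial(S\cap\Omega)$ while being finite inside, producing an interior maximum, in violation of the strong maximum principle for the difference of two translators), so there is nothing to prove. When it has exactly $2$ points, Proposition~\ref{rado-proposition} says every level set of $g-u$ is a single smooth arc joining those two points and $g-u$ has no critical points.

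The main obstacle is the first reduction: one must check that a single small rotation can simultaneously achieve general position, make $S$ non-horizontal, and preserve the disjointness hypothesis, and that Corollary~\ref{morse-family-corollary} then applies (no critical point of $g_\theta-u$ escaping through $\partial(S_\theta\cap\Omega)$ as $\theta\to 0$). Both are handled exactly as in Case~2 of Proposition~\ref{reaper-count-proposition}; the only genuinely new feature is that $\Omega$ has unbounded edges, which is precisely why it is important that the perturbed strip be non-horizontal — that is what keeps $S\cap\Omega$ bounded and makes Proposition~\ref{rado-proposition} available. (Note that, in contrast with Proposition~\ref{reaper-count-proposition}, the bad set here has at most two points in both cases, because the hypothesis protects one of the two $-\infty$-edges of $\Omega$; thus Proposition~\ref{4-points-proposition} is not needed.)
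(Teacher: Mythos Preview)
Your proof is correct and follows essentially the same approach as the paper: reduce to a non-horizontal strip in general position while preserving disjointness from the relevant ray (then invoke Corollary~\ref{morse-family-corollary}), and in that situation apply Proposition~\ref{rado-proposition} together with the count in Remark~\ref{trivial-remark}. The one place where the paper is sharper is the choice of rotation center: rather than a ``suitably placed faraway point,'' the paper rotates about the \emph{endpoint} of the relevant ray (the origin for $I_1$, the endpoint $(\hat x,w)$ for $I_2$); this makes preservation of disjointness under a small counterclockwise rotation transparent, since rotating $S$ about the endpoint of a ray is the same as rotating the ray about its own endpoint in the opposite sense, and the set of directions from that endpoint that miss the open convex strip $S$ is closed.
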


If $I_2$ is a line (the line $y=w$),
we do not know whether $S$ being 
disjoint from $I_2$ prevents $G$ from being tangent to $M$.

\begin{proof}[Proof of Theorem~\ref{other-tangency-theorem}]
We use the following fact:
\begin{enumerate}[(*)]
\item If $S$ is a nonhorizontal strip that is disjoint from $I_1$ or from $I_2$, 
or, more generally, if $S$ is a limit of such strips, then $G$ and $M$ are nowhere tangent.
\end{enumerate}
The fact was proved as part of the proof of Proposition~\ref{reaper-count-proposition}.

If $S$ is disjoint from the ray $I_1$, then rotating it counterclockwise about the origin through
a small angle $\theta$ produces a nonhorizontal  strip $S_\theta$ that is disjoint from $I_1$.
Thus Assertion~\ref{other-tangency-one} follows from the fact~\thetag{*}.

Assertion~\ref{other-tangency-two} is proved in exactly the same way. (One rotates about the endpoint of $I_2$.)
\end{proof}

In the preceding proof, we cannot handle the case when $I_2$ is a line and  $S$ contains $I_1$,
because in that case,  any nonhorizontal 
strip $S'$ sufficiently close to $S$ will intersect both $I_1$ and $I_2$, and so we cannot conclude anything
from the fact~\thetag{*}.



\section{Morse Theory}\label{morse-section}

Now we explain the Morse theory that was used in the previous section.
Suppose $\Omega\subset\RR^2$ is a connected open set and that
$u,v:\Omega\to\RR$ are translators such that $u-v$ is not constant.
If $p\in \Omega$ is a critical point of $u-v$, then, after a suitable affine change of coordinate that
takes $p$ to the origin, $u-v$ has the form
\begin{equation}\label{the-form}
     c + r^n \sin n\theta + o(r^n).
\end{equation}
for some integer $n\ge 2$.
We shall say that $p$ is a critical point (or saddle point) of $u-v$ with {\bf multiplicity} $n-1$.

The following theorem is about functions whose critical points have the form~\eqref{the-form}.

\begin{theorem}\label{morse-type-theorem}
Suppose $M$ is a compact, smooth surface-with-boundary and that $F:M\to \RR$ is a smooth
function such that
\begin{enumerate}
\item $\nabla(F|M)$ does not vanish at any point of $\partial M$.
\item $F|\partial M$ is a Morse function.
\item At each critical point of $F$, $F$ has the form~\eqref{the-form} in suitable local coordinates.
\end{enumerate}
Define $c_i=c_i(F|M)$ for $i=0,1$ as follows.
\begin{enumerate}
\item $c_0$ is the number of local minima of $F|\partial M$ that are also local minima of $F|M$.
\item $c_1$ is the number of local maxima of $F|\partial M$ that are not local maxima of $F|M$.
\end{enumerate}
Then 
\[
   N(F|M) = c_0 - c_1 - \chi(M), \tag{*}
\]
where $N(F|M)$ is the number of critical points (counting multiplicity) of $F|M$ and 
where $\chi(M)$ is the Euler characteristic of $M$.

More generally, if $M_a = M\cap \{F\le a\}$, then
\begin{equation}\label{general-counting-formula}
  N(F| M_a) = c_0(F|M_a) - c_1(F|M_a) - \chi(M_a)
\end{equation}
Furthermore, for~\eqref{general-counting-formula}
 to hold, it is not necessary for $M$ to be compact; it suffices for $M_a$ to be compact.
\end{theorem}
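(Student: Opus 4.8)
The plan is to run the standard sublevel-set Morse argument, tracking the Euler characteristic of $M_a=M\cap\{F\le a\}$ as $a$ increases and reading off the contribution of each critical point from a local normal form. First observe that hypotheses (1)--(3) force $F$ to have only finitely many critical points and finitely many critical values: $F|\partial M$ is Morse on the compact $1$-manifold $\partial M$, and each interior critical point, being of the form~\eqref{the-form} with $n\ge 2$, is isolated; moreover such a point is never a local extremum of $F$, since $\sin n\theta$ changes sign, so every interior critical point is a (possibly degenerate) saddle. For $a<\min_M F$ we have $M_a=\emptyset$, so $\chi(M_a)=0$, and for $a>\max_M F$ we have $M_a=M$. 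Between two consecutive critical values $\nabla F$ is nowhere zero on the relevant compact region, and one may flow down a gradient-like vector field that is tangent to $\partial M$ to deformation-retract $M_c$ onto $M_b$; hence $\chi(M_a)$ is locally constant in $a$ away from critical values. Consequently $\chi(M)=\sum_p\Delta_p\chi$, summed over the critical points $p$ (interior or boundary), where $\Delta_p\chi$ is the jump of $\chi(M_a)$ as $a$ crosses the critical value at $p$; critical points sharing a value lie in disjoint coordinate balls, so their contributions add.

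The next step is a local computation of $\Delta_p\chi$. For an interior critical point $p$ of multiplicity $m=n-1$: in the coordinates of~\eqref{the-form}, for small $\eta>0$ the set $\{F\le c-\eta\}$ meets a small ball $B_\delta(p)$ in $n$ disjoint contractible regions, whereas $\{F\le c+\eta\}\cap B_\delta(p)$ is a single contractible region (the ball $B_\delta(p)$ with $n$ disjoint regions, each abutting $\partial B_\delta(p)$, removed); hence $\Delta_p\chi=1-n=-m$. Equivalently, one may perturb $F$ inside $B_\delta(p)$, leaving it unchanged near $\partial B_\delta(p)$, so as to replace the degenerate saddle by $m$ nondegenerate saddles, each contributing $-1$. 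Either way the interior critical points contribute $-N(F|M)$ in total, $N(F|M)$ being their number counted with multiplicity.

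For a boundary critical point $p$ --- a zero of $\nabla(F|\partial M)$ at which, by hypothesis (1), $\nabla F(p)$ is a \emph{nonzero} vector normal to $\partial M$ --- there are four cases, according to whether $p$ is a local minimum or a local maximum of the Morse function $F|\partial M$ and whether $\nabla F(p)$ points into or out of $M$. Writing $M$ locally as $\{x\ge 0\}$ with $F\approx F(p)\pm cx\pm\tfrac12\lambda y^2$ ($c,\lambda>0$) and inspecting the sublevel sets gives: if $p$ is a minimum of $F|\partial M$ and $\nabla F(p)$ points inward, then $p$ is a local minimum of $F|M$ and $\Delta_p\chi=+1$; if $p$ is a minimum of $F|\partial M$ and $\nabla F(p)$ points outward, then $\Delta_p\chi=0$; if $p$ is a maximum of $F|\partial M$ and $\nabla F(p)$ points outward, then $p$ is a local maximum of $F|M$ and $\Delta_p\chi=0$; and if $p$ is a maximum of $F|\partial M$ and $\nabla F(p)$ points inward, then $p$ is \emph{not} a local maximum of $F|M$ and $\Delta_p\chi=-1$. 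Comparing with the definitions, the first case is exactly what $c_0$ counts and the last is exactly what $c_1$ counts, so the boundary critical points contribute $c_0-c_1$. Summing, $\chi(M)=-N(F|M)+c_0-c_1$, which is~\thetag{*}.

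Finally, for the relative statement~\eqref{general-counting-formula}: after replacing $a$ by a nearby regular value of both $F|M$ and $F|\partial M$ (which changes none of the counts), $M_a$ is a compact surface with corners along $\partial M\cap\{F=a\}$; smoothing the corners produces a compact surface-with-boundary on which $F$ satisfies the hypotheses of the theorem, and the new boundary arc $\{F=a\}$ carries only the maximum value of $F|M_a$, hence contributes nothing to $N$, $c_0$, or $c_1$. Running the sublevel-set argument over levels $\le a$ then yields~\eqref{general-counting-formula}; since that argument only ever refers to $M_b$ with $b\le a$, all of which are compact, $M$ itself need not be compact. The step I expect to require the most care is the boundary case analysis --- matching the four geometric configurations at a boundary critical point to the combinatorial quantities $c_0$ and $c_1$ and pinning down the sign of each local jump $\Delta_p\chi$ --- together with checking that the $o(r^n)$ remainder in~\eqref{the-form}, and the regularity implicit in it, really does make the local sublevel topology at a degenerate interior saddle agree with that of the model $c+r^n\sin n\theta$.
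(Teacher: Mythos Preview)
Your proof is correct and follows essentially the same approach as the paper: track $\chi(M_a)$ as $a$ increases and read off the jump at each critical value from a local model. Your write-up is in fact more detailed than the paper's, which simply asserts the four boundary cases and the interior saddle contribution without writing out the local normal forms.
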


\begin{proof}
Consider how $\chi(M_t)$ changes as we increase $t$.  When $t< \min_M F$,
$\chi(M_t)$ is zero.   When we pass a local minimum of $F|\partial M$ that is also
a local minimum of $F|M$, then $\chi(M_t)$ increases by $1$.
Similarly, when we pass a local maximum of $F|\partial M$ that is not a local maximum
of $F|M$,  $\chi(M_t)$ decreases by $1$. 
When we pass a critical point of $F$ with multiplicity $k$, then $\chi(M_t)$ decreases
by $k$.

Otherwise, $\chi(M_t)$ does not change.
In particular, passing a local maximum of $F|\partial M$
that is also a local maximum of $F|M$ does not change $\chi(M_t)$, nor does passing
a local minimum of $F|\partial M$ that is not a local minimum of $F|M$.
\end{proof}

\begin{theorem}\label{morse-family-theorem}
Let $M$ be a smooth surface without boundary and $F_\theta: M\to \RR$, $\theta\in [0,T]$, be
a one-parameter family of smooth functions such that the critical points all have the form~\eqref{the-form}. 
Suppose $p\in M\setminus \partial M$ is a critical point of multiplicity $k$ of $F_0$.
Then there is a neighborhood $D\subset M$ of $p$ and an $\eps>0$ such that 
$N(F_\theta|D)=k$ for all $\theta \in [0,\eps]$.
\end{theorem}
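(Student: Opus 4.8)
The plan is to reduce the statement to the homotopy invariance and the additivity of the local degree of a gradient vector field. The starting point is the following purely local fact about the normal form~\eqref{the-form}: if $F$ has the form $c + r^n\sin n\theta + o(r^n)$ at a critical point $p$, with $n = m+1$, then $p$ is isolated among the critical points of $F$, and on every sufficiently small circle about $p$ the vector field $\nabla F$ is nonvanishing and has winding number $-m$. Indeed, after the affine change of coordinates putting $F$ in the stated form, the leading term may be written as $\operatorname{Im}(z^{m+1})$ in a complex coordinate $z$ centered at $p$; by the Cauchy--Riemann equations, $\nabla\operatorname{Im}(z^{m+1})$ equals, up to a nonzero multiplicative complex constant, the map $z\mapsto\overline{z}^{\,m}$, whose local degree at the origin is $-m$ (conjugation contributing a sign). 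The remainder $o(r^{m+1})$ is dominated by the leading term on sufficiently small circles, hence changes neither this winding number nor the nonvanishing of $\nabla F$ off $p$ nearby.

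Next I would fix the disk and the parameter window. Since critical points of the form~\eqref{the-form} are isolated, choose a closed disk $D\subset M\setminus\partial M$ centered at $p$ so small that $p$ is the only critical point of $F_0$ in $D$ and $\nabla F_0$ does not vanish on $\partial D$. After shrinking $D$ we may also fix a trivialization of $TM|_D$, so that whenever $\nabla F_\theta$ is nonvanishing on $\partial D$, the normalized gradient $x\mapsto \nabla F_\theta(x)/|\nabla F_\theta(x)|$ defines a map $\partial D\to \SS^1$ with a well-defined degree $d(\theta)\in\ZZ$. By continuity of $(\theta,x)\mapsto \nabla F_\theta(x)$ and compactness of $\partial D$, there is an $\eps>0$ such that $\nabla F_\theta$ is nonvanishing on $\partial D$ for all $\theta\in[0,\eps]$.

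The conclusion then follows from two standard degree-theoretic facts. First, homotopy invariance: on $[0,\eps]$ the family $\theta\mapsto \nabla F_\theta/|\nabla F_\theta|$ is a homotopy of maps $\partial D\to\SS^1$, so $d(\theta)=d(0)$ for all $\theta\in[0,\eps]$; and by the local fact above, $d(0)=-k$. Second, additivity of the local index (the planar Poincar\'e--Hopf formula on $D$): for each fixed $\theta\in[0,\eps]$, the critical points of $F_\theta$ in $D$ are isolated, hence finite in number since $D$ is compact and $\nabla F_\theta$ is nonzero on $\partial D$; calling their multiplicities $m_1,\dots,m_r$, the degree $d(\theta)$ equals the sum of the local degrees of $\nabla F_\theta$ at its zeros in $D$, which by the local fact is $-\sum_j m_j$. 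Combining the two, $-\sum_j m_j = d(\theta) = -k$, i.e. $N(F_\theta|D)=\sum_j m_j = k$ for all $\theta\in[0,\eps]$, as claimed.

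The main obstacle is the local fact in the first paragraph: identifying the local index of the gradient at a critical point of type~\eqref{the-form} with minus its multiplicity, and verifying that the $o(r^{m+1})$ remainder affects neither the winding number on small circles nor the isolatedness of the zero. This is where the specific normal form~\eqref{the-form}---itself a consequence of unique continuation applied to the difference of two solutions of the translator equation---is essential; once it is in hand, everything else is soft topology, uniform in $\theta$ thanks only to the continuity of $\nabla F_\theta$ in $\theta$.
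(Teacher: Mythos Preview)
Your argument is correct and self-contained, but it proceeds by a different mechanism than the paper's. The paper invokes its counting formula (Theorem~\ref{morse-type-theorem}), $N(F|D)=c_0-c_1-\chi(D)$: after perturbing $D$ so that $F_0|\partial D$ is Morse, one takes $\eps$ small enough that $\nabla F_\theta$ stays nonzero on $\partial D$ \emph{and} $F_\theta|\partial D$ remains Morse; then $c_0,c_1,\chi(D)$ are all unchanged on $[0,\eps]$, so $N$ is constant. You instead compute the local Brouwer degree of $\nabla F$ at a critical point of type~\eqref{the-form} (correctly identifying it as $-m$ via $\nabla\operatorname{Im}(z^{m+1})\propto\bar z^{\,m}$), then use homotopy invariance of the degree on $\partial D$ together with additivity (Poincar\'e--Hopf on the disk) to conclude. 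Your route is slightly more economical here---you need only that $\nabla F_\theta$ is nonvanishing on $\partial D$, not that $F_\theta|\partial D$ is Morse---whereas the paper's route reuses the counting theorem it has already proved for other purposes. Both rely in the same essential way on the normal form~\eqref{the-form} to guarantee that critical points are isolated with a known local index.
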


\begin{proof}
Let $D\subset M$ be a compact region with smooth boundary such that $p\in D\setminus\partial D$
and such that $D$ contains no other points where $\nabla F_0$ vanishes.
By perturbing $D$ slightly, we can assume that $F_0|\partial D$ is a Morse function.
Let $J$ be the set of $\theta\in I$ such that $\nabla F_\theta$ does not vanish anywhere on $\partial D$
and such that $F_\theta|\partial D$ is a Morse function.  Then $J$ is a relatively open subset of $[0,T]$
containing $0$.  
By Theorem~\ref{morse-type-theorem}, $N(F_\theta|D)$ is constant on each connected component of $J$.
\end{proof}

\begin{corollary}\label{morse-family-corollary}
If $k$ is an integer and $N(F_0|M)\ge k$, then $N(F_\theta|M)\ge k$ for all $\theta$ sufficiently close to $0$.
\end{corollary}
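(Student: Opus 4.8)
The plan is to deduce Corollary~\ref{morse-family-corollary} from Theorem~\ref{morse-family-theorem} by a finite-addition argument. The point is that $N(F_\theta\vert M)$ need not itself be constant for $\theta$ near $0$ (the total count can jump if critical points escape toward infinity or merge), but it cannot drop below its value at $\theta=0$, because each critical point present at $\theta=0$ persists with at least its multiplicity for small $\theta$.

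First I would reduce to the case where the lower bound $N(F_0\vert M)\ge k$ is witnessed by finitely many critical points. If $N(F_0\vert M)$ is infinite there is nothing to prove beyond the finite case, so pick finitely many distinct critical points $p_1,\dots,p_m$ of $F_0$ in $M\setminus\partial M$ with multiplicities $\mu_1,\dots,\mu_m$ satisfying $\mu_1+\cdots+\mu_m\ge k$. (Here I use that each critical point has the form~\eqref{the-form} with a well-defined multiplicity $n-1\ge 1$.) Next, apply Theorem~\ref{morse-family-theorem} to each $p_j$ separately: it furnishes a neighborhood $D_j\subset M$ of $p_j$ and an $\eps_j>0$ such that $N(F_\theta\vert D_j)=\mu_j$ for all $\theta\in[0,\eps_j]$. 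Since the $p_j$ are distinct, the $D_j$ may be shrunk so as to be pairwise disjoint. Then for $\theta\in[0,\eps]$ with $\eps=\min_j\eps_j$, the critical points counted inside the disjoint regions $D_1,\dots,D_m$ are all distinct, so
\[
   N(F_\theta\vert M)\ \ge\ \sum_{j=1}^m N(F_\theta\vert D_j)\ =\ \sum_{j=1}^m \mu_j\ \ge\ k,
\]
which is the assertion.

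The only delicate point is the reduction and the choice of disjoint neighborhoods: one must make sure the regions $D_j$ produced by Theorem~\ref{morse-family-theorem} can be taken small enough to be mutually disjoint without destroying the conclusion $N(F_\theta\vert D_j)=\mu_j$. But inspecting the proof of that theorem, $D_j$ can be any sufficiently small compact region with smooth boundary containing $p_j$ in its interior and no other zero of $\nabla F_0$, so shrinking is harmless; since $p_1,\dots,p_m$ are finitely many distinct points, disjoint such neighborhoods exist. I expect no real obstacle here — the content is entirely in Theorem~\ref{morse-family-theorem}, and the corollary is just the observation that a lower bound, unlike an exact count, only needs local persistence of finitely many saddles and is therefore stable under the convergence arguments invoked in Sections~\ref{scherk-uniqueness-section} and~\ref{curvature-section}.
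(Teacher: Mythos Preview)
Your argument is correct and is exactly the natural unpacking of the corollary from Theorem~\ref{morse-family-theorem}; the paper in fact gives no explicit proof of the corollary, treating it as an immediate consequence of that theorem. Your observation that the neighborhoods $D_j$ in the proof of Theorem~\ref{morse-family-theorem} can be shrunk (since they are required only to isolate the single critical point $p_j$ of $F_0$) is precisely what is needed to make the finite sum work.
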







\section{Semi-infinite strips}

In this section, we prove some basic properties of translating graphs defined over
semi-infinite strips.  These properties will be used repeatedly in subsequent sections.
The reader may wish to skip this section and refer back to it when needed.

\begin{theorem}\label{no-translator}
No translator is the graph of a function
\[
   u: (0,\infty)\times (0,w)\to \RR
\]
such that $u(\cdot,0)=u(\cdot,w)= + \infty$.
\end{theorem}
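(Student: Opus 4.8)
The plan is to suppose such a translator $u:(0,\infty)\times(0,w)\to\RR$ exists with $u(\cdot,0)=u(\cdot,w)=+\infty$ and derive a contradiction from the strong maximum principle, by comparing $u$ with a suitable translator that blows up to $-\infty$ (or stays bounded) along the horizontal sides, so that the difference is forced to attain an interior extremum.

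First I would dispose of the case $w<\pi$ quickly: there is no complete graphical translator over a strip of width $<\pi$ by the Classification Theorem~\ref{classification-theorem} (Spruck-Xiao), but that alone does not contradict existence of $u$ on the \emph{semi-infinite} strip, so even here we need a comparison argument. The cleanest uniform approach: fix a small $\eps>0$ and consider the substrip $S'=(a,\infty)\times(\eps,w-\eps)$ for a parameter $a>0$. On the segment $\{a\}\times[\eps,w-\eps]$ the function $u$ is finite (smoothness up to nothing is needed — it is an interior segment), hence bounded, say $u\le C_a$ there; on the horizontal sides $y=\eps$, $y=w-\eps$ the function $u$ is finite but we do not control it there. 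So instead I would run the comparison the other way, using the fact that $u=+\infty$ on $y=0$ and $y=w$: take a tilted grim reaper (or an ordinary grim reaper surface) $g$ defined over a strip $T$ of width $\pi$ whose central line is horizontal, i.e. $g:\RR\times(y_0,y_0+\pi)\to\RR$ with $g\to-\infty$ on both edges of its strip. If $\pi\le w$, one can position $T$ so that $T\cap\big((0,\infty)\times(0,w)\big)$ is a region whose entire boundary relative to the domain of $u$ consists of arcs where either $g=-\infty$ while $u$ is finite, or $g$ is finite while $u=+\infty$; on both kinds of arcs $g-u\to-\infty$. Then $g-u$ attains a finite maximum at an interior point of its domain, and there $g$ and $u$ are two translators touching from one side, violating the strong maximum principle (two minimal graphs for the Ilmanen metric cannot touch at an interior point unless they coincide). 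This is exactly the mechanism used in Proposition~\ref{no-thick-scherks}, and I would invoke it in the same form. For $w<\pi$ one first has to shrink: the difficulty is that a width-$\pi$ grim reaper does not fit inside a width-$<\pi$ strip. Here I would instead argue by a sweeping/translation argument — slide a grim reaper surface $g+t\ee_2$ (or a tilted grim reaper, which can be made arbitrarily ``flat'' and hence have its graph lie low) — or, more simply, observe that for $w<\pi$ the desired nonexistence is already essentially Proposition~\ref{no-thick-scherks}'s method applied with a tilted grim reaper of width $w'\in(w,\pi)$ positioned so that its horizontal-ish edges avoid the strip. Let me reorganize.

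The actual proof I would write: assume $u$ exists. Pick any $w'$ with $w<w'$ if we want room, but the robust route avoiding casework is this. Consider, for large $R$, the grim reaper surface $g_R$ over $\RR\times(0,\pi)$ translated and scaled so that after adding a large constant $-N$ its graph lies entirely below the graph of $u$ over the rectangle $(0,R)\times(\delta,w-\delta)$; since $u=+\infty$ on $y=0,w$, for fixed $\delta$ the function $u$ is bounded below on $(0,R)\times(\delta,w-\delta)$ (it is continuous on the compact closure minus possibly the two vertical edges, but on $[\,0,R\,]\times[\delta,w-\delta]$ it is genuinely continuous and finite, using that $y=0,w$ are excluded), so pick $N$ large. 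Now \emph{lower} $N$ (i.e. raise the grim reaper) until the graphs first touch. They must touch at an interior point of the common domain: they cannot first touch over $y=0$ or $y=w$ because $u=+\infty$ there, and they cannot first touch over the vertical edge $x=0$ or $x=R$ because... this is the gap, since near $x=0$ we have no control. The standard fix is to let $R\to\infty$ and use that the grim reaper surface is \emph{translation-invariant in $x$}: so the touching point, if it occurs over $x=R$, can be translated back; passing $R\to\infty$ and using interior curvature estimates (the surfaces are stable $g$-minimal, hence subconverge) yields an entire translating strip-graph in a strip of width $\le w<\pi$, contradicting Spruck-Xiao again, or a genuine interior tangency, contradicting the strong maximum principle.

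The step I expect to be the main obstacle is precisely handling the semi-infinite direction: ruling out that the comparison surface and the graph of $u$ only ``touch at infinity'' as $x\to+\infty$, or only touch over the finite edge $x=0$ where $u$ has ordinary finite boundary values and no sign control. I anticipate resolving it exactly as in \S\ref{scherk-section}: use the $x$-translation invariance of grim reapers together with the stability/curvature-estimate compactness for $g$-minimal graphs to pass to a limit — either getting a complete translating graph over a width-$<\pi$ strip (impossible by Theorem~\ref{classification-theorem}) or a genuine interior tangency of two translators (impossible by the strong maximum principle). Everything else — the existence of the comparison grim reaper, the boundedness of $u$ on compact sub-rectangles avoiding $y=0,w$, the first-contact formalism — is routine.
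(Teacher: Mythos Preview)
You are working much too hard, and the complications you identify (the edge $x=0$, the behavior as $x\to+\infty$, the case $w<\pi$) all stem from orienting the comparison grim reaper the wrong way. The paper's proof is three lines: take a grim reaper $g$ over the \emph{vertical} strip $(\pi,2\pi)\times\RR$ (i.e., rotate the standard grim reaper by $\pi/2$ about a vertical axis), so that $g\to-\infty$ on the lines $x=\pi$ and $x=2\pi$. The common domain of $g$ and $u$ is then the bounded rectangle $(\pi,2\pi)\times(0,w)$. On its two vertical edges $g=-\infty$ while $u$ is finite (these segments lie in the interior of the half-strip $(0,\infty)\times(0,w)$); on its two horizontal edges $u=+\infty$ while $g$ is finite. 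Hence $g-u\to-\infty$ on the entire boundary of the rectangle, so $g-u$ attains an interior maximum, contradicting the strong maximum principle. No casework on $w$ is needed, and the troublesome edge $x=0$ and the limit $x\to\infty$ never enter.

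Your approach, with the grim reaper over a horizontal strip $\RR\times(y_0,y_0+\pi)$, genuinely runs into the obstacles you name, and your proposed fixes have gaps. The compactness/subsequential-limit argument for the $x\to\infty$ direction is not worked out: you would need the translates of $u$ to subconverge to something on which the Classification Theorem bites, but $u$ has $+\infty$ (not $-\infty$) boundary values, and you have not shown the limit is a complete graph over a strip. More seriously, for $w<\pi$ you invoke ``a tilted grim reaper of width $w'\in(w,\pi)$'', but tilted grim reapers have width $\ge\pi$, so no such comparison surface exists; your sketch for this case does not get off the ground. The missing idea is simply to rotate the grim reaper so that its $-\infty$ edges are vertical and sit safely inside $(0,\infty)$.
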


\begin{proof}
Suppose there were such an $u$.   Let 
\[
   g: (\pi,2\pi)\times \RR \to \RR
\]
be a grim reaper surface.   Then $g-u$ would have boundary values $-\infty$ at all
points of the boundary of the square $(\pi,2\pi)\times (0,w)$ and thus would attain
a maximum at an interior point of the square, violating the strong maximum principle.
\end{proof}

Recall that a grim reaper surface is a tilted grim reaper of tilt $0$.

\begin{lemma}\label{half-strip-lemma}
Let $M$ be a translator that is the graph of a function
\[
   u: (0,\infty)\times (0,w)\to\RR
\]
with boundary values $ \infty$ or $-\infty$ on each horizontal edge of the domain.
If 
\[
    \pdf{u}{y}(x_i,y_i) = 0
\]
and $x_i\to \infty$, then, after passing to a subsequence, $y_i$ converges to a limit $\hat{y}$ and
the surfaces 
\[
   M_i = M-(x_i,0, u(x_i,y_i))
\]
converge smoothly to a complete translator.  The component containing the point $(0,\hat{y},0)$
 is a tilted grim reaper or a $\Delta$-wing
defined over the strip  $\RR\times I$ for some interval $I$ in $(0,w)$. Furthermore,
\begin{equation}\label{away-from-edges}
   \frac{\pi}2 \le \hat{y} \le w - \frac{\pi}2,
\end{equation}
\begin{equation}\label{bends-down}
     \lim_{i\to\infty}\left(\frac{\partial}{\partial y}\right)^2 u(x_i,y_i) < 0,
\end{equation}
and
\begin{equation}\label{trivial}
  \lim_{i\to\infty} |Du(x_i,y_i)| <\infty.
\end{equation}
\end{lemma}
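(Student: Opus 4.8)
The strategy is to extract a smooth subsequential limit from the translated surfaces $M_i = M - (x_i,0,u(x_i,y_i))$ and to identify its component through the origin using the classification of complete graphical translators over strips. First I would pass to a subsequence so that $y_i\to\hat y\in[0,w]$. Since $M$ has uniform curvature estimates (it is a stable $g$-minimal surface, as noted in the section ``Translators as Minimal Surfaces''), the surfaces $M_i$ converge smoothly, after passing to a further subsequence, to a complete translator $M_\infty$; the normalization $u_i(x_i,y_i)=0$ keeps a point of $M_i$ at the origin, and $\partial_y u_i(x_i,y_i)=0$ means the tangent plane to $M_i$ at the origin contains $\ee_1$. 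The component $N$ of $M_\infty$ through the origin is contained in the slab $\RR\times[0,w]\times\RR$ and, being a smooth translator with a horizontal-in-the-$y$-direction tangent line at the origin, it is locally a graph there; in fact by Lemma~\ref{straight-lemma} (applied with $\Omega$ the slab and $S$ the set of points at infinity) each boundary component of the domain of $N$ is a line, so $N$ is a complete graphical translator over a substrip $\RR\times I$, $I\subset(0,w)$. By the Classification Theorem~\ref{classification-theorem} (or the boxed remark following it), $N$ is either a tilted grim reaper or a $\Delta$-wing over that strip.

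Next I would establish~\eqref{away-from-edges}. Here the point is that a complete graphical translator cannot be defined over a strip of width $<\pi$, by the Classification Theorem (Spruck-Xiao). If $\hat y<\pi/2$, then the interval $I$ would have to be contained in $(0,2\hat y)$ after reflecting about $\hat y$ --- more precisely, since the tangent line at the origin is horizontal in $y$, the origin is a point where $\partial_y u=0$; both a tilted grim reaper and a $\Delta$-wing attain the vanishing of the $y$-derivative only at the midline $y=(\text{center of }I)$, so $\hat y$ must be the midpoint of $I$, forcing $I\subset(\hat y-\hat y,\hat y+\hat y)=(0,2\hat y)$ only if $0$ is the left endpoint --- in general $I=(\hat y-\ell,\hat y+\ell)$ with $\ell\le\min(\hat y,w-\hat y)$, and width $2\ell\ge\pi$ gives $\hat y\ge\pi/2$ and $w-\hat y\ge\pi/2$. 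This is the cleanest route: the midline property plus the width-$\ge\pi$ obstruction yields~\eqref{away-from-edges} immediately.

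For~\eqref{bends-down} and~\eqref{trivial}: the quantity $|Du(x_i,y_i)|$ converges to $|DU(0,\hat y)|$ where $U$ is the limiting graphical translator (tilted grim reaper or $\Delta$-wing), which is finite because $\hat y$ is an interior point of $I$ by~\eqref{away-from-edges}; this gives~\eqref{trivial}. For~\eqref{bends-down}, at the midline of a tilted grim reaper or $\Delta$-wing the second $y$-derivative is strictly negative (for a grim reaper $\log\sin(\cdot)$ it is $-1$ at the maximum; tilting and the $\Delta$-wing inherit strict negativity by the Classification Theorem's statement that the Gauss curvature is positive, together with $\det D^2U<0$ ruled out --- actually for a convex graph with $\partial_y U=0$ and $\partial_{xy}U$ controlled, $\partial_{yy}U<0$); by smooth convergence $\partial_{yy}u(x_i,y_i)\to\partial_{yy}U(0,\hat y)<0$.

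\textbf{Main obstacle.} The subtle point is ruling out the degenerate possibilities in the limit: that the component $N$ through the origin could fail to be a full strip graph, or that $\hat y$ could coincide with $0$ or $w$ (so that the limit domain degenerates) --- but the normalization $\partial_y u(x_i,y_i)=0$ pins the limit's critical-line through the origin, and~\eqref{away-from-edges} shows $\hat y$ stays a definite distance from the edges. The genuinely delicate step is verifying that the limit is \emph{graphical} rather than, say, a vertical plane or a non-graphical translator: this is where Lemma~\ref{straight-lemma} and the Classification Theorem do the essential work, exactly as in the proof of Theorem~\ref{scherk-translator-theorem}. I expect no new ideas are needed beyond carefully combining the curvature estimates, Lemma~\ref{straight-lemma}, and Theorem~\ref{classification-theorem}.
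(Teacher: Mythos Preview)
Your overall strategy matches the paper's: extract a subsequential smooth limit via curvature estimates, identify the component through $(0,\hat y,0)$ as a complete graphical translator over a substrip, and read off \eqref{away-from-edges}--\eqref{trivial} from the Classification Theorem together with the fact that $\partial_y u=0$ forces $\hat y$ to be the midline of $I$. The derivations of \eqref{away-from-edges}, \eqref{bends-down}, and \eqref{trivial} are correct and essentially identical to the paper's, though your justification of \eqref{bends-down} is more roundabout than needed: the paper simply observes that for any tilted grim reaper or $\Delta$-wing one has $\partial_{yy}<0$ \emph{everywhere}, not just on the midline.

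There is, however, a genuine gap at the step you yourself flag as delicate. Your invocation of Lemma~\ref{straight-lemma} with ``$S$ the set of points at infinity'' does not make sense: that lemma requires a \emph{discrete} subset $S\subset\partial\Omega$ and, more importantly, it presupposes that the surface is already globally a graph over some open $U$; it does not help you pass from a local graph near the basepoint to a global one. The paper's argument at this point is short and different: by the dichotomy in Remark~\ref{dichotomy} (each component of a smooth limit of translating graphs is either a graph or flat and vertical, since the Jacobi field $\nu\cdot\ee_3$ does not change sign), if the component $M'$ through $(0,\hat y,0)$ were not a graph it would be a vertical plane contained in the slab $\{0\le y\le w\}$, hence the plane $\{y=\hat y\}$; but $\ee_2$ is tangent to $M'$ at $(0,\hat y,0)$ (because $\partial_y u(x_i,y_i)=0$), which is impossible for that plane. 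Once $M'$ is known to be a complete graph in the slab, the Classification Theorem immediately gives that the domain is a full strip $\RR\times I$ of width $\ge\pi$; no appeal to Lemma~\ref{straight-lemma} is needed.

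Two minor slips worth correcting: the basepoint on $M_i$ is $(0,y_i,0)$, not the origin; and the tangent vector furnished by $\partial_y u=0$ is $\ee_2$, not $\ee_1$.
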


In fact, as we will see later (Theorem~\ref{half-reaper}), $I=(0,w)$ and the limit surface must
be a tilted grim reaper, not a $\Delta$-wing.

\begin{proof}
Smooth subsequential convergence of the $M_i$ 
follows from the standard curvature estimates (e.g., Remark~\ref{dichotomy}). 
If the component $M'$ of the limit translator containing the point $(0,\hat{y},0)$ were not a graph,
it would be a vertical plane in the slab $\{0\le y\le w\}$, and hence
the plane $\{y=\hat{y}\}$.  But that is impossible since $\ee_2=(0,1,0)$
is tangent to $M'$ at $(0,\hat{y},0)$.  Thus $M'$ is the graph of a function $u'$ over a domain
contained in $\RR\times (0,w)$.  By the Classification Theorem~\ref{classification-theorem},
  $M'$ is a tilted grim reaper surface or a $\Delta$-wing, and the domain is a strip $\RR\times I$
  of width $\ge \pi$.
Since $\pdf{u'}y(0,\hat{y})=0$, the point $(0,\hat{y})$ is in the center line of the strip $\RR\times I$,
and thus~\eqref{away-from-edges} holds.

Note that~\eqref{bends-down} holds since if $g:\RR\times I\to \RR$ is a tilted grim reaper or a $\Delta$-wing, 
 then $(\partial/\partial y)^2g$
is strictly negative everywhere.

Finally,~\eqref{trivial} follows trivially from the smooth convergence to a graphical limit.
\end{proof}

\begin{theorem}\label{vrock}
Suppose $M$ is a translator that is the graph of a function
\[
   u: (0,\infty)\times (0,w)\to \RR
\]
with boundary values $u(\cdot,0)\equiv -\infty$ and $u(\cdot,w)\equiv +\infty$
on the horizontal edges.   
 If $p_i=(x_i,y_i,z_i)\in M$ and $x_i\to \infty$, then $M-p_i$ converges smoothly
to the plane $y=0$, and the upward unit normal to $M-p_i$ converges to $-\ee_2$.
\end{theorem}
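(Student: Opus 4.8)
The plan is to combine the curvature estimates for translators over polygonal domains (which give smooth subsequential convergence of the translates $M-p_i$) with the tilted-grim-reaper dichotomy from Lemma~\ref{half-strip-lemma} and the tangency-obstruction results of the previous section. First I would fix a sequence $p_i=(x_i,y_i,z_i)\in M$ with $x_i\to\infty$ and, after passing to a subsequence, assume $y_i\to\hat y\in[0,w]$ and that the surfaces $M-p_i$ converge smoothly to a complete translator $M_\infty$; subsequential smooth convergence away from the (now absent, after translating to infinity) corner follows from the standard boundary curvature estimates, exactly as in Remark~\ref{dichotomy}. The limit $M_\infty$ lies in the slab $\{-\hat y\le y\le w-\hat y\}$, and the boundary of $M_\infty$ consists of the horizontal lines coming from the edges on which $u=-\infty$ and $u=+\infty$ that survive in the limit. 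Since it suffices to prove that \emph{every} such subsequential limit is the plane $y=0$ (with the stated normal), and since the claim about the normal $-\ee_2$ then follows from smooth convergence, the whole task reduces to identifying $M_\infty$.

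The key step is to show $M_\infty$ cannot be a nontrivial translator. The component $M'$ of $M_\infty$ through the origin is either a (possibly degenerate) vertical plane, or a translating graph. If it is a vertical plane it must be $\{y=\hat y-\hat y\}=\{y=0\}$ only if $\hat y=0$; more generally a vertical plane in the slab is $\{y=c\}$ for some constant $c$, and I must rule out $c\ne 0$. If $M'$ is a nonplanar graph over a substrip of $\RR\times(-\hat y,w-\hat y)$, then by the Classification Theorem~\ref{classification-theorem} it is a tilted grim reaper or a $\Delta$-wing over a strip of width $\ge\pi$. The plan is to derive a contradiction in each of these cases using grim-reaper barriers and the tangency results: a nonplanar complete translating graph $M'$ in the limit would have to be tangent to, or comparable with, the original graph $M$ of $u$ for large $i$, and Theorem~\ref{other-tangency-theorem} (with $I_1$ the lower edge $(0,\infty)\times\{0\}$ on which $u=-\infty$, and $I_2=\RR\times\{w\}$ on which $u=+\infty$) together with the strong maximum principle forbids this. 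Concretely, for any fixed $a>0$ the surface $M-p_i$ translated back down sits in the region $\{0<y<w\}$ with $u=-\infty$ along $y=0$; sliding a suitable grim reaper surface $g$ over a strip disjoint from the bottom edge toward $M-p_i$ produces a first point of contact in the interior (since both tend to $-\infty$ along the bottom), violating the strong maximum principle — this is exactly the mechanism already used in Theorem~\ref{no-translator} and in fact~\thetag{*} in the proof of Theorem~\ref{other-tangency-theorem}.

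I expect the main obstacle to be the following two linked points. (i) Pinning down which boundary lines survive in the limit: if $\hat y\in(0,w)$ then \emph{neither} horizontal edge of $P=(0,\infty)\times(0,w)$ survives, so $M_\infty$ is a \emph{boundaryless} complete translator in an open slab of width $w$, and one must argue — via Lemma~\ref{half-strip-lemma}-type reasoning combined with the tangency obstruction of Theorem~\ref{other-tangency-theorem}\eqref{other-tangency-one} applied to the bottom edge $I_1$ — that no such surface can arise as a limit of translates of this particular $u$; if instead $\hat y=0$ (resp.\ $\hat y=w$) then exactly the bottom edge (resp.\ top edge) survives, and $M_\infty$ is a translating graph over a substrip of $\RR\times(0,w)$ (resp.\ $\RR\times(-w,0)$) with boundary value $-\infty$ (resp.\ $+\infty$) on that edge, which by \cite{himw}*{Theorem~3.2} forces translation-invariance in the $x$-direction and hence $M_\infty$ is the graph $z=\log(\sin y)$-type surface only if the strip has width $\ge\pi$ — but here the available width is $<\pi$ unless $w\ge\pi$, so one gets a clean contradiction and concludes $M_\infty$ is the flat plane $y=0$. (ii) Once $M_\infty$ is forced to be flat and horizontal-normal-free, confirming it is the plane $y=0$ and not $y=c$: this is where the condition $u(\cdot,0)\equiv-\infty$ is essential, since the translate $M-p_i$ must always ``see'' the bottom edge $y=0$ (the point $p_i$ has $0<y_i<w$ and the bottom of the strip is at uniformly bounded $y$-distance below it only when $y_i$ stays bounded, which it does), and the only vertical plane in the closed slab $\{0\le y\le w\}$ that can bound a smooth limit coming from graphs with $u=-\infty$ on $\{y=0\}$ is $\{y=0\}$ itself. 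Assembling these cases gives: every subsequential limit of $M-p_i$ is the plane $y=0$, hence the full sequence converges smoothly to $\{y=0\}$ with upward normal $-\ee_2$, as claimed. $\square$
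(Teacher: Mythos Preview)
Your proposal has a genuine gap: the tools you invoke do not apply to this boundary configuration, and as a result you never actually rule out a tilted grim reaper or $\Delta$-wing in the limit.

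Specifically, Theorem~\ref{other-tangency-theorem} (and Theorem~\ref{other-negative-theorem}, which sets up its hypotheses) requires $u=-\infty$ on \emph{both} horizontal pieces $I_1$ and $I_2$; here $u=+\infty$ on the top edge, so those results are simply not available. The sliding-grim-reaper barrier you sketch (``slide $g$ toward $M-p_i$ over a strip disjoint from the bottom edge'') likewise fails: with $u=+\infty$ on $y=w$ and $u=-\infty$ on $y=0$, the difference $g-u$ does not go to $-\infty$ at all boundary points of a substrip, so there is no interior maximum to contradict. Your case $\hat y\in\{0,w\}$ also misfires: the citation of \cite{himw}*{Theorem~3.2} concerns translating graphs with \emph{finite} constant boundary values, not boundary values $\pm\infty$. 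Finally, even if you forced the limit to be a vertical plane $\{y=0\}$, nothing in your argument fixes the sign of the normal; you would still need to show $\nu\to -\ee_2$ rather than $+\ee_2$.

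The paper's proof avoids all of this by first establishing a monotonicity statement: there is an $a$ with $\partial u/\partial y>0$ on $(a,\infty)\times(0,w)$. This is proved by contradiction using the ``bends-down'' conclusion~\eqref{bends-down} of Lemma~\ref{half-strip-lemma}: if $\partial u/\partial y(x_i,\cdot)$ had a nonpositive value for arbitrarily large $x_i$, then since $u(x_i,y)\to+\infty$ as $y\to w$ one could take $y_i$ to be the \emph{largest} zero of $\partial u/\partial y(x_i,\cdot)$, forcing $(\partial/\partial y)^2u(x_i,y_i)\ge 0$, which contradicts~\eqref{bends-down}. Once $\partial u/\partial y>0$ for large $x$, any graphical subsequential limit $u'$ would satisfy $\partial u'/\partial y\ge 0$ everywhere, but no complete translating graph over a strip has this property (grim reapers and $\Delta$-wings all tend to $-\infty$ at both edges of their strip). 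Hence the limit is a vertical plane, necessarily $\{y=0\}$, and the same monotonicity gives $\nu_i\cdot\ee_2<0$, so $\nu\to-\ee_2$. This monotonicity step is the missing idea in your proposal.
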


\begin{proof}
First, we claim that there is an $a$ such that 
\begin{equation}\label{up}
  \pdf{u}y>0  \quad\text{on $(a,\infty)\times (0,w)$}.
\end{equation}
If not, there would be a sequence $(x_i,y_i)$ with $x_i\to\infty$ and with
\begin{equation}\label{down}
  \pdf{u}y(x_i,y_i)\le 0.
\end{equation}
Since $u(x_i,y)\to\infty$ as $y\to w$, we can choose $y_i$ to be the largest value in $(0,w)$
for which~\eqref{down} holds.
Then 
\[
   \pdf{u}{y}(x_i,y_i)=0 
\]
and
\[
  \pdf{u}{y}(x_i,y)>0 \quad\text{for $y\in (y_i,w)$},
\]
so $(\partial/\partial y)^2u(x_i,y_i)\ge 0$, contradicting~\eqref{bends-down}.
Thus we have proved~\eqref{up}.

Now let $(x_i,y_i)$ be any sequence with $x_i\to\infty$.
By passing to a subsequence, we can assume that the upward unit normal $\nu_i$
to $M$ at $p_i=(x_i,y_i,u(x_i,y_i))$ converges to a unit vector $\nu$.

Then (after passing to a subsequence) the surfaces $M - p_i$ converge smoothly
to a complete translator.  Let $M'$ be the component containing the origin.
If $\nu\cdot\ee_3>0$, then $M'$ would be the graph of a function $u':\RR\times I\to \RR$.
By~\eqref{up},
\[
   \pdf{u'}y\ge 0
\]
everywhere.  But there is no such complete translator $u'$.

Thus $\ee_3\cdot\nu=0$, so $M'$ must be a vertical plane.  Since $M'$ is contained in a slab
of the from $\{(x,y,z): y\in I\}$, in fact $M'$ must be the plane $\{y=0\}$. 
Thus $\nu=\pm \ee_2$.  Since $\nu_i\cdot\ee_2<0$
for all sufficiently large $i$ (by~\eqref{up}), we see that $\nu=-\ee_2$.
\end{proof}

\begin{theorem}\label{half-reaper}
Suppose $M$ is a translator that is the graph of a function
\[
  u: (0,\infty)\times (0, w) \to \RR
\]
with boundary value $-\infty$ on the horizontal edges:
\[
   u(\cdot,0)=u(\cdot,w) =  - \infty.
\]
Then $w\ge \pi$, and $M$ weakly asymptotic to a tilted grim reaper as $x\to\infty$
in the following sense.  There is a tilted grim reaper
\[
  g: \RR\times (0,w) \to \RR
\]
such that
\[
   M - (x, 0, u(x,0))
\]
converges smoothly to the graph of $g$ as $x\to\infty$.
\end{theorem}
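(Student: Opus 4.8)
The plan is to establish two things: first, the width bound $w \geq \pi$, and second, the weak asymptotic convergence to a tilted grim reaper. The width bound follows immediately from Theorem~\ref{no-translator}: if $w < \pi$, there would be no complete translating graph over a strip of width $w$, but after translating as $x\to\infty$ we could hope to produce one — more carefully, one uses the grim reaper comparison already in hand. Actually the cleanest route is: suppose $w<\pi$. The grim reaper $g:\RR\times(0,w)$ does not exist for $w<\pi$, so instead use a tilted grim reaper of width exactly $\pi$ placed so its strip contains $\RR\times(0,w)$; then $g - u$ has boundary value $-\infty$ on the two horizontal edges inside $\RR\times(0,w)$... but the strip of $g$ is wider, so this needs care. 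The simplest argument: a grim reaper surface $g:\RR\times(0,\pi)\to\RR$ exists; if $w<\pi$ we may place it so that $(0,w)\subset(0,\pi)$ after a vertical shift, and then on the domain $(0,\infty)\times(0,w)$ the function $u-g$ tends to $+\infty$ on the horizontal edges (since $u\to-\infty$ is wrong sign) — so instead consider $g-u$, which is $-\infty$ on horizontal edges and hence, by the argument in Theorem~\ref{no-translator}, violates the maximum principle on a square unless no such $u$ exists. This gives $w\geq\pi$.

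**Next I would** address the asymptotic statement, which is the substantive part. Take any sequence $x_i\to\infty$ and apply Lemma~\ref{half-strip-lemma}: we need a sequence of points where $\partial u/\partial y$ vanishes, and these exist because $u(\cdot,0)=u(\cdot,w)=-\infty$ forces $u(x_i,\cdot)$ to have an interior maximum in $y$ for each fixed $x_i$, hence a critical point $(x_i,y_i)$ of $\partial u/\partial y$ (vanishing $y$-derivative). By Lemma~\ref{half-strip-lemma}, after passing to a subsequence, $M_i = M - (x_i,0,u(x_i,y_i))$ converges smoothly to a complete translator whose relevant component $M'$ is a tilted grim reaper or a $\Delta$-wing over a strip $\RR\times I$ with $I\subseteq(0,w)$, with $|I|\geq\pi$. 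Combined with $w\geq\pi$ (just proved) and~\eqref{away-from-edges}, we get that $I$ has width at least $\pi$ inside a strip of width $w\geq\pi$; the goal is to upgrade this to $I=(0,w)$ and rule out the $\Delta$-wing.

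**To rule out the $\Delta$-wing** and pin down $I=(0,w)$, I would argue by contradiction using the boundary behavior. If $I=(a,b)$ were a proper subinterval, say $a>0$, then near the horizontal line $y=a$ the limit surface $M'$ is a complete graph bounded (in the limit) by a flat region, but the original surface has $u\to-\infty$ at $y=0<a$; one pushes a grim reaper or tilted grim reaper comparison surface, as in the proofs of Proposition~\ref{no-thick-scherks} and Proposition~\ref{reaper-count-proposition}, to derive a maximum-principle contradiction — essentially, the strip between $y=0$ and $y=a$ would support a translating graph with $-\infty$ boundary value on one side, forcing (by the width bound applied to a thin strip) a contradiction if $a>0$. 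Similarly $b=w$. For the $\Delta$-wing: a $\Delta$-wing has a maximum, so $\partial u'/\partial y$ changes sign, whereas Theorem~\ref{vrock}-type reasoning (or a direct argument: the original $u$ has $u\to-\infty$ on \emph{both} horizontal edges, so by symmetry of the situation one cannot have the monotonicity that distinguishes the grim reaper; but actually a $\Delta$-wing is also non-monotone in $y$, so this is consistent) — the correct exclusion is via \emph{uniqueness of the limit independent of subsequence}: since $\partial u/\partial y$ vanishes along \emph{some} curve for all large $x$ and has a sign structure forced by the $-\infty$ boundary data on both sides, the limit's second $y$-derivative at the center is negative (by~\eqref{bends-down}), consistent with both, so one instead invokes an area or curvature-growth estimate to rule out the $\Delta$-wing, or, most likely in this paper's framework, one shows the limit is unique hence translation-invariant in $x$, and then a complete translator over $\RR\times(0,w)$ with $-\infty$ on both edges that is $x$-invariant must be exactly a tilted grim reaper by the Classification Theorem.

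**The main obstacle** I expect is proving that the subsequential limit is \emph{independent of the subsequence} $x_i\to\infty$ and that $I$ is the full interval $(0,w)$ — i.e., that the convergence is genuine (not just subsequential) and captures the whole strip width. The first-derivative comparison in Theorem~\ref{vrock} handles the one-sided $\pm\infty$ case cleanly, but here both boundary values are $-\infty$, so there is no monotonicity in $y$ to exploit; instead one must use the structure of critical points of $\partial u/\partial y$ together with the Morse-theoretic machinery of \S\ref{morse-section} and the tilted-grim-reaper tangency results (Theorem~\ref{other-tangency-theorem}) to show the number of such critical curves is controlled, forcing $y_i$ to converge (without subsequence) and the width of $I$ to fill out $(0,w)$. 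Closing this gap — ruling out ``drift'' of the grim reaper axis and ruling out the $\Delta$-wing alternative — is where the real work lies, and it presumably uses that any tilted grim reaper comparison surface disjoint from the edge $I_1=(0,\infty)\times\{0\}$ cannot be tangent to $M$ (Theorem~\ref{other-tangency-theorem}\eqref{other-tangency-one}), pinning the limiting grim reaper's position uniquely.
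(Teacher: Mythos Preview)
Your outline has the right skeleton --- find critical points of $y\mapsto u(x,y)$, invoke Lemma~\ref{half-strip-lemma}, analyze subsequential limits --- but it is missing the key technical devices, and the routes you propose for the hard steps are not the ones that work. The decisive step you do not have is a \emph{uniform interior gradient bound}. From~\eqref{bends-down} in Lemma~\ref{half-strip-lemma}, for all large $x$ every critical point of $u(x,\cdot)$ is a strict local maximum, so there is a \emph{unique} such point $y(x)$, varying smoothly, with $y(x)\in[\pi/4,\,w-\pi/4]$ by~\eqref{away-from-edges} and $|Du(x,y(x))|$ bounded by~\eqref{trivial}. Thus $|Du|$ is bounded on an unbounded connected curve inside a compact $y$-band; the paper then invokes \cite{himw}*{Lemma~6.3} to upgrade this to $\sup_{[A,\infty)\times K}|Du|<\infty$ for every compact $K\subset(0,w)$. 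With that in hand, every subsequential limit of $M-(x_i,0,u(x_i,w/2))$ is automatically a complete graph over the \emph{full} strip $\RR\times(0,w)$, so the problem of showing $I=(0,w)$ that you wrestle with simply evaporates, and $w\ge\pi$ drops out for free from the Classification Theorem. Your proposed detour through Theorem~\ref{other-tangency-theorem} and Morse theory is not used here, and your attempted direct proof of $w\ge\pi$ has the signs and strip orientations tangled.

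For the two remaining issues --- excluding $\Delta$-wings and proving the limit is subsequence-independent --- the paper's arguments are again different from your suggestions. Write $\Ff$ for the family of all (normalized) subsequential limits; this family is \emph{connected}. If some member of $\Ff$ were a $\Delta$-wing, then $x\mapsto u(x,w/2)$ would have an unbounded sequence of strict local maxima, hence (between consecutive ones) an unbounded sequence of local minima $x_i$; along that sequence the limit $g\in\Ff$ satisfies $\partial_x g(0,w/2)=0$ and $\partial_x^2 g(0,w/2)\ge 0$, which forces $g$ to be the untilted grim reaper --- impossible for $w>\pi$ (and for $w=\pi$ the grim reaper is the only option anyway). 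So $\Ff$ consists only of tilted grim reapers; exactly two of width $w$ pass through the normalization point, and connectedness of $\Ff$ selects one of them. None of your proposed mechanisms (tangency restrictions, Morse counts, area/curvature growth) enter the argument.
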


\begin{proof}
For each $x$, the function $y\mapsto u(x,y)$ attains its maximum since it tends to  $-\infty$
at the boundary points.  Thus it has at least one critical point.
By~\eqref{bends-down} in Lemma~\ref{half-strip-lemma},
 there is an $A$ such that for $x\ge A$, every critical point of $y\mapsto u(x,y)$
is a strict local maximum.   Thus there is a unique critical point $y(x)$ (since between two
strict local maxima there is a local minimum), and $y(x)$ depends smoothly on $x$.

By Lemma~\ref{half-strip-lemma}, 
\begin{equation}\label{away}
   \frac{\pi}4 < y(x) < w - \frac{\pi}4
\end{equation}
for all sufficiently large $x$.
By replacing $A$ by a larger number, we can assume that~\eqref{away} holds for all $x\ge A$.

Also, $|Du(x,y(x))|$ is bounded above by~\eqref{trivial}.

Thus we have shown: $|Du|$ is bounded above on a unbounded, connected  set
(namely the curve $\{(x,y(x)): x\ge A\}$) contained in $[A,\infty]\times [\pi/4, w- \pi/4]$.
By~\cite{himw}*{Lemma~6.3},
\[
   \sup_{[A,\infty)\times K} |Du| < \infty
\]
 for each compact subinterval $K$ of $(0,w)$.
Consequently, every sequence $x_i\to\infty$
has a subsequence for which
\[
   M - (x_i,0,u(x_i,w/2))
\]
converges to a complete translator that is the graph of a function
\[
    g: \RR\times (0,w)\to\RR.
\]
with $g(0,0)=0$.
Note that the family $\Ff$ of such subsequential limit functions is connected.
By the Classification Theorem~\ref{classification-theorem}, each $g\in \Ff$ is a tilted grim reaper
or a $\Delta$-wing.

To prove the theorem, it suffices to show that $\Ff$ contains only one
element, a tilted grim reaper.

{\bf Case 1}: The width $w=\pi$.  
The only complete translator over a strip of width $\pi$ is the grim reaper surface,
so the theorem is true in this case.

{\bf Case 2}: The width $w$ is $>\pi$.
If $\Ff$ contained a $\Delta$-wing, then the function $x\mapsto u(x,w/2)$
would have an unbounded sequence of strict local maxima.  Since there is a local
minimum between each pair of local maxima, there would be an unbounded
sequence $x_i$ of local minima of $u(\cdot,w/2)$.
Thus (after passing to a further subsequence)
\[
   M - (x_i,0,u(x_i,0))
\]
would converge to a tilted grim reaper or $\Delta$-wing $g\in \Ff$ with $(\partial /\partial x)g(0,0)=0$ and 
$(\partial/\partial x)^2 g \ge 0$.
The only such $g$ is the grim reaper surface, which is not in $\Ff$ since the width is $>\pi$.

Thus $\Ff$ contains only tilted grim reapers.
There are only two tilted grim reapers over $\RR\times(0,w)$ that contain the point $(0,w/2,0)$.
Thus since $\Ff$ is connected, it contains only one of those two surfaces.
\end{proof}


  


 

\section{Scherkenoids} \label{scherkenoid-section}


For $\alpha\in (0,\pi)$ and $w\in (0,\infty)$, let
\[
  \Omega(\alpha,w)= \{ 0<y<w\} \cap \{ x> y/\tan\alpha\}.
\]
In the notation of Definition~\ref{P-notation}, $\Omega(\alpha,w)=P(\alpha,w,\infty)$.

\begin{definition}\label{g_w-definition}
For $w\ge \pi$, let $g_w:\RR\times (0,w)$ be the unique tilted grim reaper function
such that $g_w(0,w/2)=0$ and such that $\partial g_w/\partial x \le 0$.
Let $g_w'$ be the corresponding tilted grim reaper function with $\partial g_w'/\partial x\ge 0$.
(Thus $g_w'(x,y)\equiv g_w(-x,y)$.)
\end{definition}

Consequently 
\begin{equation*}
  g_w(x,y) = (w/\pi)^2 \log (\sin (y (\pi/w)) - x \sqrt{(w/\pi)^2 - 1}
\end{equation*}
(see~\eqref{TGR2}), and therefore
\[
  \pdf{g_w}x \equiv -\sqrt{ (w/\pi)^2-1}.
\]

\begin{theorem}\label{scherkenoid-theorem}
For every $\alpha\in (0,\pi)$ and $w\in [\pi,\infty)$, there exists a smooth
translator $\Dd$ such that $M\setminus \partial M$ is the graph of a function
\[
  u: \Omega(\alpha,w)\to \RR
\]
with boundary values $-\infty$ on the horizontal edges of $\Omega(\alpha,w)$
    and $+\infty$ on the non-horizontal edge. 
Furthermore, any such $\Dd$ has the following properties:
\begin{enumerate}[\upshape (1)]
\item\label{noid-curvature-item}
 $\Dd$ has negative Gauss curvature everywhere.
\item\label{noid-reaper-item}
As $a\to\infty$, 
\[
    u(a+x,y) - u(a,y)
\]
converges smoothly to the tilted grim reaper function $g_w(x,y)$. 
\item\label{noid-gauss-image-item}
The Gauss map takes $\Dd\setminus \partial \Dd$  diffeomorphically onto the
open region $\Rr=\Rr(w)$ in the upper hemisphere bounded by $C\cup C(w)$, 
where $C$ is the equatorial semicircle
\[
  C = \{(x,y,0)\in \SS^2: x \ge 0\}
\]
and where $C(w)$ is the great semicircle that
is the image of $\operatorname{graph}(g_w)$ under its Gauss map:
\[
    C(w) =  \left\{ (x,y,z)\in \SS^2: \text{$z>0$ and $x = z\,\sqrt{(w/\pi)^2-1}$} \right\}.
\]
\item\label{noid-unique-item} $\Dd$ is unique up to vertical translation.
In particular, there is a unique such surface $\Dd_{\alpha,w}$ with the additional property that
the vector
\[
    \vv_{\alpha/2}=(\cos(\alpha/2),\sin(\alpha/2),0)
\]
is tangent to $\Dd$ at the origin.
\end{enumerate}
\end{theorem}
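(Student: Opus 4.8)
The plan is to construct $\Dd$ by an exhaustion argument, realizing the scherkenoid as a limit of Scherk translators over larger and larger parallelograms $P(\alpha,w,L)$ — but since $w\ge\pi$ forces $L(\alpha,w)=\infty$ (equivalently, there are no finite-$L$ Scherk solutions for such $w$ by Proposition~\ref{no-thick-scherks}), I instead take a limit of solutions over truncated domains with finite boundary data, exactly in the style of the proof of Theorem~\ref{scherk-translator-theorem}. Concretely: fix $\alpha$ and $w\ge\pi$. For $L>0$ and $h>0$, let $u_L^h:\Omega_L(\alpha,w)\to\RR$ be the solution of the translator equation over the truncated region $\Omega_L(\alpha,w)=\Omega(\alpha,w)\cap\{x<L+y/\tan\alpha\}$ with boundary values $0$ on the horizontal edges and $h$ on the (two) nonhorizontal edges; existence and uniqueness come from Theorem~\ref{finite-existence}, and each graph is $g$-area-minimizing. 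First I would normalize: subtract a constant so that $u_L^h=h/2$ at the center of the truncated parallelogram (as in the Scherk proof, $u_L^h$ at the center runs from $\approx h$ as $L\to 0$ to a bounded limit as $L\to\infty$, by the monotonicity formula on one end and the Classification Theorem on the other — here we do not need to solve for $L$ since we let both $h\to\infty$ and $L\to\infty$). Then pass to a subsequential smooth limit (curvature estimates up to the boundary, Theorem~\ref{curvature-bound}, Remark~\ref{dichotomy}) to get a translator $\Dd$. The key point is to identify its domain: by Lemma~\ref{straight-lemma} the domain is convex with polygonal-or-flat boundary pieces, and the normalization pins down that the flat boundary pieces are precisely the two horizontal rays and the one oblique ray of $\partial\Omega(\alpha,w)$, with the correct $\pm\infty$ boundary values. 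One must also rule out the limit degenerating to a vertical plane or to a complete graph over a slab of width $<\pi$ (impossible by the Classification Theorem) and — the genuinely new issue here — rule out that the limit domain is a \emph{strip} $\RR\times(0,w)$ rather than the region $\Omega(\alpha,w)$; this is where the prescribed tangent vector $\vv_{\alpha/2}$ at the origin (carried through from the finite approximants via the reflection symmetry of the truncated domain) is used, forcing the oblique edge to be present.

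The "Furthermore" properties are then handled by the machinery already developed. Property~\eqref{noid-curvature-item}, negative Gauss curvature, is Theorem~\ref{other-negative-theorem} applied with $I_1=$ the ray $y/\tan\alpha<x$ along $y=0$ (after translating the corner to the origin and shearing so $I_1$ becomes horizontal — or more precisely, $\Omega(\alpha,w)$ is exactly the convex hull of two horizontal rays, one along $y=0$ and one along $y=w$, pointing in the same direction, so it is literally the domain covered by Theorem~\ref{other-negative-theorem} with $I_2$ a rightward ray). Property~\eqref{noid-reaper-item}, asymptotics to $g_w$, follows from Theorem~\ref{half-reaper}: translating by $(a,0,u(a,y_0))$ and letting $a\to\infty$ produces a smooth limit that is a complete translator over the strip $\RR\times(0,w)$ with $-\infty$ boundary values, hence (by Theorem~\ref{half-reaper}) a tilted grim reaper; one then checks it is $g_w$ rather than $g_w'$ by tracking the sign of $\partial u/\partial x$ (it must be $\le 0$ far out, since $u=+\infty$ on the oblique edge to the \emph{left} and the surface dips down as we move right). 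Property~\eqref{noid-gauss-image-item}, the Gauss map image, is proved exactly as in Theorem~\ref{scherk-gauss-map-theorem}: compactify $\Dd$ by adding the points/arcs at infinity — here the two ends along the horizontal rays map to $\pm(-\sin\alpha,\cos\alpha,0)$ and $\pm\ee_2$ on the equator, the oblique end maps to a boundary arc, and the grim-reaper end at $x\to+\infty$ contributes the great-semicircle arc $C(w)$ — and then invoke the same elementary topology (boundary maps homeomorphically to $C\cup C(w)$, interior is a local diffeo by $K<0$) to conclude $\nu$ is a diffeomorphism onto $\Rr(w)$.

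Property~\eqref{noid-unique-item}, uniqueness up to vertical translation, I would prove by the same Rad\'o-type / Morse-theory argument used for Theorem~\ref{scherk-uniqueness-theorem}: given two such translators $u,v$ over $\Omega(\alpha,w)$, if $u-v$ is nonconstant then since the Gauss map of $u$ is onto $\Rr(w)$ (which has nonempty interior, so $Du$ is an open map hitting a full neighborhood of values), one finds a translate $u_\xi$ of $u$ so that $\phi_\xi=u_\xi-v$ has an isolated interior critical point, and then — perturbing $\xi$ into general position and counting, via Proposition~\ref{rado-proposition} / Proposition~\ref{4-points-proposition} adapted to the at-infinity ends of $\Omega$ and its translate — one derives a contradiction from the configuration of $\pm\infty$ edges (two adjacent $+\infty$/$-\infty$ patterns give at most two points in the set $S$, hence no critical points). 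The main obstacle I anticipate is not the uniqueness (which is formally parallel to the parallelogram case) but the existence step: specifically, ensuring that the limit domain is exactly $\Omega(\alpha,w)$ and not a larger convex set (a strip) — one must choose the truncated domains and the normalization so that the oblique boundary ray survives in the limit, and the cleanest way is to keep the $\vv_{\alpha/2}$-tangency condition in force along the approximating sequence (using the reflection symmetry of the finite parallelogram, exactly as in Theorem~\ref{scherk-translator-theorem}) and then argue that a domain strictly containing $\Omega(\alpha,w)$ and having this tangent direction at the origin would have to have a flat boundary ray through the origin in direction $\vv_{\alpha/2}$, which is incompatible with convexity and the location of the other boundary pieces.
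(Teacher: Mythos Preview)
Your treatment of Assertions~\eqref{noid-curvature-item} and~\eqref{noid-gauss-image-item} matches the paper, but there are genuine gaps in the existence and uniqueness arguments.

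\textbf{Existence.} Your approximants $u_L^h$ on the parallelogram $P(\alpha,w,L)$ with $h$ on \emph{both} oblique sides are, by uniqueness, invariant under rotation by $\pi$ about the center; any limit taken with the center fixed inherits this symmetry and cannot be the graph over the asymmetric region $\Omega(\alpha,w)$. You try to repair this by ``keeping the $\vv_{\alpha/2}$-tangency in force along the sequence,'' but in Theorem~\ref{scherk-translator-theorem} that condition is imposed only \emph{after} the limit as a vertical-translation normalization; the symmetry of $u_L^h$ gives a horizontal tangent plane at the center, not a $\vv_{\alpha/2}$ tangent at a corner. Separately, your claim that $u_L^h(\text{center})$ tends to a bounded limit as $L\to\infty$ fails for $w\ge\pi$: there is no translation-invariant translator over $\RR\times(0,w)$ with zero boundary values once $w\ge\pi$ (the ODE solution $\log\cos(y-c)$ lives on an interval of length exactly $\pi$), so the intermediate-value step breaks down. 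The paper avoids both issues by an \emph{asymmetric} truncation $\Omega_{\alpha,w}(c)=\Omega(\alpha,w)\cap\{x<c\}$, with $h$ only on the oblique edge and $0$ on the other three (including the vertical side $x=c$). After $h\to\infty$ one has $u_c$ with $+\infty$ on the oblique edge and $0$ elsewhere; comparison with the rectangle solution $v_c$ forces $u_c\to\infty$ pointwise, so $\operatorname{graph}(u_c)$ tends to two vertical quarterplanes. The normalization is by the \emph{tangent angle along the $z$-axis}: pick $z(c)$ so that $\vv_{\alpha/2}$ is tangent at $(0,0,z(c))$, use the quarterplane convergence to get $z(c)\to\infty$, and take a subsequential limit of $u_c-z(c)$.

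\textbf{Uniqueness.} Perturbing the translation $\xi$ into general position does not suffice: Proposition~\ref{rado-proposition} requires $P_1\cap P_2$ to be bounded, and for every relevant $\xi$ the intersection $\Omega(\alpha,w)\cap(\Omega(\alpha,w)+\xi)$ contains a right half-strip and is unbounded. The paper's fix (Lemma~\ref{noid-unique-lemma}) is to \emph{rotate} the translated copy by a small angle $\theta>0$ about the lower-left corner of $\Omega$; the intersection then becomes a bounded triangle with $|S|=2$, Proposition~\ref{rado-proposition} gives zero critical points, and Corollary~\ref{morse-family-corollary} passes this back to $\theta=0$. Also, for Assertion~\eqref{noid-reaper-item} your sign-tracking is heuristic; the paper rules out $g_w'$ (for $w>\pi$) by a barrier: a width-$\pi$ grim reaper $g$ centered in the slab has $g-u\to-\infty$ along every divergent sequence in its domain if $u\sim g_w'$, forcing an interior maximum and violating the strong maximum principle.
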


\begin{figure}[htbp]
\begin{center}
\includegraphics[width=.46\textwidth]{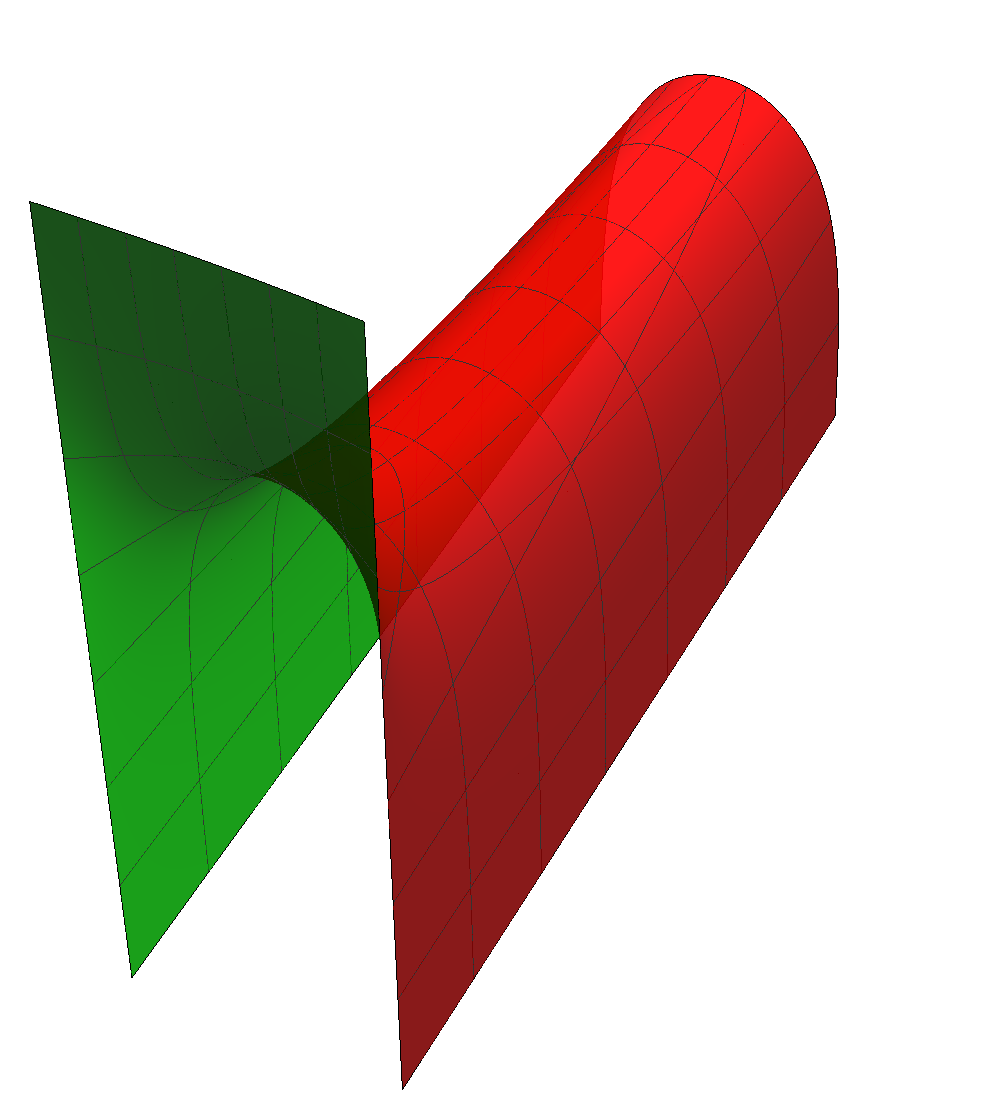}
\caption{The graph of the scherkenoid function $u$ in Theorem~\ref{scherkenoid-theorem}
with $\alpha=\pi/2$ and $w= \pi$.}
\label{default}
\end{center}
\end{figure}

\begin{remark} If $w<\pi$, there is no translator $u$ with the indicated boundary values.
 This follows from Theorem~\ref{half-reaper}.
\end{remark}

\begin{corollary}\label{scherkenoid-corollary}
If $\Dd$ and $u$ are as in Theorem~\ref{scherkenoid-theorem},
\begin{enumerate}[\upshape(1)]
\item The total curvature (i.e., the integral of the absolute value of the Gauss curvature)
of $\Dd$ is 
$
  \area(\Rr(w)) = 2\arcsin(\pi/w).
$
\item
The Gauss map $\nu$ is a diffeomorphism from $M$ onto
$
   \Rr \cup C \setminus \{\ee_2,-\ee_2,\hat{n}\},
$
where $\hat{n}=(-\sin\alpha, \cos\alpha,0)$.
\item $\partial u / \partial x < -\sqrt{(w/\pi)^2-1}$.
\end{enumerate}
\end{corollary}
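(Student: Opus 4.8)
The plan is to read off all three parts from Theorem~\ref{scherkenoid-theorem}, the bulk of the work being to push the Gauss-map statement~\eqref{noid-gauss-image-item} across $\partial\Dd$. For part~(1): the Jacobian determinant of the Gauss map $\nu$ of any surface is the Gauss curvature $K$, so $\int_{\Dd}|K|\,dA=\int_{\Dd}|\det d\nu|\,dA$. Since $\nu$ is a diffeomorphism from $\Dd\setminus\partial\Dd$ onto $\Rr(w)$ by Theorem~\ref{scherkenoid-theorem}\eqref{noid-gauss-image-item}, and $\partial\Dd$ has two-dimensional measure zero, the area formula gives $\int_{\Dd}|K|\,dA=\area(\Rr(w))$. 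To evaluate this, note that $\Rr(w)$ is bounded by the two great semicircles $C$ and $C(w)$, each joining the antipodal points $\pm\ee_2$, so $\Rr(w)$ is a spherical lune and $\area(\Rr(w))=2\beta$ with $\beta$ its dihedral angle. Writing $m=\sqrt{(w/\pi)^2-1}$ (so $1+m^2=(w/\pi)^2$), the great circle through $C$ lies in $\{z=0\}$ and that through $C(w)$ in $\{x=mz\}$, planes with unit normals $\ee_3$ and $(\pi/w)(1,0,-m)$; the dihedral angle of $\Rr(w)$ is the acute angle between these planes, whose cosine is $m\pi/w=\sqrt{1-(\pi/w)^2}$, so $\beta=\arcsin(\pi/w)$ and $\area(\Rr(w))=2\arcsin(\pi/w)$.

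For part~(2) I would analyze $\nu$ along $\partial\Dd$, which consists of the two vertical lines over the two corners of $\Omega(\alpha,w)$. Along each such line the tangent plane is vertical, so $\nu$ takes values on the equator, and by continuity $\nu(\partial\Dd)\subset\overline{\Rr(w)}\cap(\text{equator})=C$. As $z\to\pm\infty$ along such a line, $\Dd$ blows down to a vertical half-plane over one of the two edges of $\Omega(\alpha,w)$ meeting at that corner, so $\nu$ tends to the corresponding limiting normal — $\pm\ee_2$ along a horizontal edge (where $u=-\infty$) and the normal $\hat n$ along the slanted edge (where $u=+\infty$). Next, exactly as in the proof of Theorem~\ref{scherk-gauss-map-theorem} (whose argument applies here unchanged), $\Dd$ has vanishing mean curvature and strictly negative Gauss curvature at every boundary point, so $d\nu$ is nonsingular on $\partial\Dd$ and the vertical (straight-line, hence asymptotic) direction is not in its kernel; thus $\nu$ maps each boundary line into $C$ as an immersion with distinct limiting values at its two ends, i.e.\ diffeomorphically onto the open subarc of $C$ between those two limits. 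The line over the corner where the bottom edge meets the slanted edge therefore maps onto the open subarc from $-\ee_2$ to $\hat n$, and the line over the other corner onto the subarc from $\ee_2$ to $\hat n$; these are disjoint with union $C\setminus\{\ee_2,-\ee_2,\hat n\}$. Since interior points map into the open lune $\Rr(w)$ and boundary points onto the equator, $\nu$ is an injective local diffeomorphism of $M=\Dd$ onto $\Rr(w)\cup C\setminus\{\ee_2,-\ee_2,\hat n\}$, hence a diffeomorphism onto it. (Equivalently, one may compactify $\Dd$ to a disk $\widehat{\Dd}$ and follow $\nu$ around $\partial\widehat{\Dd}$, where the four ends of $\Dd$ — three along the edges of $\Omega(\alpha,w)$ and the grim-reaper end $x\to\infty$ — contribute the constant values $\pm\ee_2$, $\hat n$ and the whole arc $C(w)$, and conclude by a degree argument as for Theorem~\ref{scherk-gauss-map-theorem}.) I expect this boundary/end bookkeeping to be the main obstacle: one must verify that the grim-reaper end accounts precisely for the arc $C(w)$ of $\partial\Rr(w)$ that is \emph{not} in the image, that the two boundary arcs are genuinely disjoint, and that no further identifications occur.

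For part~(3): with $\nu=(-u_x,-u_y,1)/W$, $W=\sqrt{1+|\nabla u|^2}$, and $m=\sqrt{(w/\pi)^2-1}$, one computes $(1,0,-m)\cdot\nu=-(u_x+m)/W$, so $\partial u/\partial x<-m$ everywhere on $M$ is equivalent to $\nu(M)\subset\{x>mz\}$. But $C(w)$ lies on the great circle $\{x=mz\}$, the interior of $C$ satisfies $x>0=mz$, and the open lune $\Rr(w)$ lies on the $C$-side of $\{x=mz\}$; hence $\Rr(w)\cup C\setminus\{\ee_2,-\ee_2,\hat n\}\subset\{x>mz\}$. By part~(2) this set contains $\nu(M)$, which is precisely the inequality $\partial u/\partial x<-\sqrt{(w/\pi)^2-1}$.
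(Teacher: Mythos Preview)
Your argument is correct and is exactly the natural way to extract the corollary from Theorem~\ref{scherkenoid-theorem}; the paper offers no separate proof, so you have simply supplied the omitted details.

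One small point worth flagging in part~(2): as written in the paper, $\hat n=(-\sin\alpha,\cos\alpha,0)$ has $x$-coordinate $-\sin\alpha<0$ and therefore does \emph{not} lie on the semicircle $C=\{(x,y,0):x\ge 0\}$. The actual limiting value of $\nu$ as $z\to+\infty$ along either boundary line is the \emph{inward} unit normal to the slanted edge, namely $(\sin\alpha,-\cos\alpha,0)=-\hat n$, which does lie on $C$ strictly between $\pm\ee_2$. (Quick check at $\alpha=\pi/2$: the slanted edge is $x=0$, $u\to+\infty$ there, so $u_x\to-\infty$ and $\nu\to\ee_1$, not $-\ee_1$.) With that sign correction your two boundary subarcs are genuinely disjoint and fill $C\setminus\{\ee_2,-\ee_2,-\hat n\}$ as you argue. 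This appears to be a typo in the paper's statement rather than a flaw in your reasoning; your parts~(1) and~(3) are unaffected.
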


Recall that $\Ss_{\alpha,w}$ is
the complete, singly periodic surface 
obtained from $\Dd_{\alpha,w}$ by iterated Schwarz reflection.
Note that $\Ss_{\alpha,w}$ is simply connected (by Van Kampen's Theorem, for example).
Hence by Corollary~\ref{scherkenoid-corollary}, 
the Gauss map on $\Ss_{\alpha,w}$
may be regarded as a universal cover of $\Rr\cup\Rr'\cup C\setminus \{\hat{n}\}$,
where $\Rr'$ is the image of $\Rr$ under the reflection $(x,y,z)\mapsto (x,y,-z)$.

The proof of Theorem~\ref{scherkenoid-theorem} is somewhat long,
so we divide it in three parts: proof of existence, 
proof of Assertions~\ref{noid-curvature-item}--\ref{noid-gauss-image-item},
and proof of uniqueness (Assertion~\ref{noid-unique-item}).

\begin{proof}[Proof of Existence in Theorem~\ref{scherkenoid-theorem}]
For $c>0$, let 
\[
  \Omega_{\alpha,w}(c) = \{(x,y)\in \Omega(\alpha,w): x< c\}.
\]
We work with $c$ sufficiently large that $\Omega_{\alpha,w}(c)$ is a quadrilateral.
Let $h>0$.
Consider the translator $u^h_c:\Omega_{\alpha,w}(c)\to \RR$ 
such that $u^h_c=h$ on the left edge of $\Omega_{\alpha,w}(c)$ and such that $u^h_c=0$ 
on the other three edges.    (Of course $u^h_c$ also depends on $\alpha$ and $w$,
but for the moment we fix $\alpha$ and $w$.)

As $h\to\infty$, the graph of $u^h_c$ converges (perhaps after passing to a subsequence)
 to a limit
surface $\Sigma=\Sigma_c$ whose boundary is a polygonal curve $\Gamma$ consisting of the
two vertical rays with $z>0$ over $(0,0)$ and $\left(\frac{w}{\tan \alpha},w\right)$
together with three
sides of the quadrilateral $\Omega_{\alpha,w}(c)$ in the plane $z=0$.  The convergence is smooth
except possibly at the corners of $\Gamma$. (See Remark~\ref{dichotomy}).   If $\Sigma$ were not a graph, 
it would be flat and vertical.  But $\Gamma$ does not bound such a flat, vertical surface.
Thus $\Sigma$ is the graph of a function $u_c$.  The proof of Lemma~\ref{straight-lemma}
shows that the domain of $u$ is a convex open polygonal region in the quadrilateral $\Omega_{\alpha,w}(c)$
and that the portion of the boundary of the domain where $u_c=\infty$ is the straight line
segment joining $(0,0)$ and $\left(\frac{w}{\tan \alpha},w\right)$.   
Thus the domain of $u_c$ is all of $\Omega_{\alpha,w}(c)$, and $u_c$ has boundary value $+\infty$
on the left side of the quadrilateral $\Omega_{\alpha,w}(c)$ and $0$ on the other three sides.

Let 
\[
   v_c: [-c,c]\times [0,w] \to \RR
\]
be the translator with boundary values $0$.
By the maximum principle, $v_c\le u_c$ on the intersection of the two domains.
In the proof of~\cite{himw}*{Theorem~4.1},
it is shown that $v_c-v_c(0,0)$ converges (as $c\to\infty$) to a complete
translating graph over $\RR\times(0,w)$.  Thus $u_c(x,y)\to\infty$ for every $(x,y)\in \Omega(\alpha,w)$.
Consequently, $\Sigma_c$ converges (as $c\to\infty$) to the pair
of vertical quarterplanes 
\begin{equation}\label{thing-one}
   \{(x,0,z): x\ge 0, \, z\ge 0\}
\end{equation}
and
\begin{equation}\label{thing-two}
    \{ (x,w,z):  x\ge \hat{x}, \,z\ge 0\}
\end{equation}
where $\left(\hat{x}=\frac{w}{\tan \alpha},w \right)$ is the upper-left corner of $\Omega(\alpha,w)$.
By the standard curvature estimates (see Remark~\ref{dichotomy}), the convergence is smooth.

Let $(\cos\theta_c(z),\sin\theta_c(z),0)$ be tangent to the graph of $u_c$ at $(0,0,z)$.
Then $\theta_c(0)=0$ and $\theta_c(z)\to\alpha$ as $z\to\infty$.
Thus there is a $z(c)$ such that $\theta_c(z(c))=\alpha/2$.

By the smooth convergence of $\Sigma_c$ to the quarterplanes~\eqref{thing-one} and~\eqref{thing-two},
 we see that 
\[
  \lim_{c\to\infty}z(c) = \infty.
\]

Now take a subsequential limit $\Dd$ of the graph of $u_c-z(c)$ as $c\to\infty$.
Then $\Dd$ is a smooth translator in $\overline{\Omega(\alpha,w)}\times \RR$ whose boundary
consists of the two vertical lines through the corners of $\Omega(\alpha,w)$,
and $\vv:=(\cos\alpha/2,\sin\alpha/2,0)$ is tangent to  $\Dd$ at the origin.

Let $\Dd_0$ be the component of $\Dd$ containing the origin.
If $\Dd_0\setminus\partial \Dd_0$ were not a graph, then $\Dd_0$ would be flat and vertical, and
hence the halfplane $\{s\vv+ z\ee_3: s\ge 0, z\in \RR\}$.  But this is impossible, since that
halfplane is not contained in $\overline{\Omega(\alpha,w)}\times \RR$.
Thus $\Dd_0\setminus \partial \Dd_0$ is the graph of a function $u$.  
By Lemma~\ref{straight-lemma}, the domain of $u$ must be all of $\Omega$, and
 therefore $\Dd_0$ must be all of $\Dd$.
This completes the proof of existence in Theorem~\ref{scherkenoid-theorem}.
\end{proof}

\begin{proof}[Proof of Assertions~\ref{noid-curvature-item}--\ref{noid-gauss-image-item}
                in Theorem~\ref{scherkenoid-theorem}]
That $\Dd$ has negative Gauss curvature everywhere
 (Assertion~\ref{noid-curvature-item}) was part of Theorem~\ref{other-negative-theorem}. 
 
To prove Assertion~\ref{noid-reaper-item},
 note by Theorem~\ref{half-reaper} that $u(a+x,y)-u(a,w/2)$ converges smoothly as $a\to\infty$
either to $g_w$ 
or to $g_w'$ (as in Definition~\ref{g_w-definition}.)
If $w=\pi$, then $g'_w=g_w$, so we are done.  Thus we may assume that $w>\pi$.
Consider a grim reaper function $g$ over the strip
\[
    \RR\times (w/2-\pi/2, w/2+\pi/2).
\]
If $u(a+x,x)-u(a,w/2)$ converged to $g'_w$, then 
\[
   g-u: \RR\times (-w/2,w/2) \to \RR
\]
 would attain its maximum, violating the strong maximum principle.
(To see that $g-u$ attains its maximum, note that the assumption $u\sim g_w'$ implies that
$(g-u)(p_i)\to -\infty$ for any divergent sequence $p_i$ in $\RR\times (-w/2,w/2)$.)
This completes the proof of Assertion~\ref{noid-reaper-item}.

To prove Assertion~\ref{noid-gauss-image-item},
consider the set $\Gamma$ of all $v \in \SS^2$ for which there is a
 divergent sequence $p_i\in \Dd\setminus \partial \Dd$
such that $\nu(p_i)\to v$.  
By Assertion~\ref{noid-reaper-item} and by the boundary values of $u$, we see
that $\Gamma$ is the simple closed curve $C\cup C(w)$.  
Since 
\[
         \nu: \Dd\setminus \partial \Dd   \to   \SS^2
\]
is a smooth immersion (by Assertion~\ref{noid-curvature-item}), 
it follows that the multiplicity function
\[
     v\in \SS^2 \mapsto \#\{p\in M: \nu(p)=v\}
\]
is constant on each of the two connected components of $\SS^2\setminus \Gamma$.
The function is $0$ on the lower hemisphere, so it is $0$ on all of $\SS^2\setminus \overline{\Rr}$.
Thus 
\[
  \nu:\Dd\setminus\partial \Dd\to \Rr \tag{*}
\]
 is a proper immersion, i.e., a covering map.
Since $\Dd \setminus \partial \Dd$ is connected and $\Rr$ is simply connected,
the map~\thetag{*} must be a diffeomorphism.
\end{proof}

To complete the proof of Theorem~\ref{scherkenoid-theorem}, it remains only
to prove uniqueness (Assertion~\ref{noid-unique-item}):

\begin{theorem}[Scherkenoid Uniqueness Theorem]\label{scherkenoid-uniqueness}
Suppose 
\[
  u,v: \Omega=\Omega(\alpha,w)\to \RR
\]
are Scherkenoid functions (i.e., translators with boundary values $-\infty$ on the boundary rays
and $+\infty$ on the boundary segment.)
Then $u-v$ is constant.
\end{theorem}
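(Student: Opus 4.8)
The plan is to run, for this semi\nobreakdash-infinite domain, the Rad\'o\nobreakdash-type argument that proves the Scherk Uniqueness Theorem~\ref{scherk-uniqueness-theorem}; the one genuinely new ingredient is a description of the competing functions near the end $x\to+\infty$. First I would suppose, for a contradiction, that $u-v$ is nonconstant and pick $p\in\Omega$ with $Du(p)\ne Dv(p)$. By assertion~\ref{noid-gauss-image-item} of Theorem~\ref{scherkenoid-theorem}, $Du$ maps $\Omega$ diffeomorphically onto an open set $R\subset\RR^2$ (the preimage of $\Rr(w)$ under the Gauss\nobreakdash-map parametrization), and $Dv$ maps $\Omega$ diffeomorphically onto that \emph{same} set $R$ (since $\Rr(w)$ depends only on $w$); as $Dv(p)\in R$ there is a unique $q\in\Omega$, necessarily distinct from $p$, with $Du(q)=Dv(p)$. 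Put $\xi_0=p-q\ne 0$; for $\xi=(\xi_1,\xi_2)\in\RR^2$ let $\Omega_\xi=\Omega+\xi$, let $u_\xi$ be the translate of $u$ by $\xi$, and set $\phi_\xi=u_\xi-v$ on the convex set $\Omega\cap\Omega_\xi$, so that $D\phi_{\xi_0}(p)=Du(q)-Dv(p)=0$. I would then check that $p$ is an \emph{isolated} critical point of $\phi_{\xi_0}$: otherwise $\phi_{\xi_0}$ would be constant near $p$ and hence, by unique continuation, constant on $\Omega\cap\Omega_{\xi_0}$, which is impossible because $\Omega\cap\Omega_{\xi_0}$ always carries a boundary edge on which $\phi_{\xi_0}\equiv+\infty$ (a $-\infty$\nobreakdash-edge of $v$ or a $+\infty$\nobreakdash-edge of $u_{\xi_0}$) --- this is read off from $\lvert(\xi_0)_2\rvert<w$ by inspecting the two horizontal sides and the one slanted side of $\Omega\cap\Omega_{\xi_0}$, the borderline case $(\xi_0)_2=0$ being handled separately. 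Since $\xi\mapsto\phi_\xi$ is a smooth family all of whose critical points have the normal form~\eqref{the-form}, Corollary~\ref{morse-family-corollary}, applied on a small fixed disc about $p$, then produces $\delta>0$ such that $\phi_\xi$ has a critical point for every $\xi$ with $\lvert\xi-\xi_0\rvert<\delta$.

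The new step is to analyze $\phi_\xi$ as $x\to+\infty$. By assertion~\ref{noid-reaper-item} of Theorem~\ref{scherkenoid-theorem}, $u(a+x,y)-u(a,w/2)$ and $v(a+x,y)-v(a,w/2)$ both converge smoothly, as $a\to+\infty$, to $g_w(x,y)$, where $g_w$ is the one tilted grim reaper of Definition~\ref{g_w-definition}; equivalently, $u(x,y)-g_w(x,y)$ and $v(x,y)-g_w(x,y)$ converge to constants, locally uniformly in $y$, as $x\to+\infty$. Since $\partial g_w/\partial x$ is constant (Definition~\ref{g_w-definition}), the difference $g_w(x-\xi_1,y-\xi_2)-g_w(x,y)$ is independent of $x$; hence $\phi_\xi=u_\xi-v$ converges, smoothly and locally uniformly in $y$ as $x\to+\infty$, to
\[
  F_\xi(y)=(w/\pi)^2\log\frac{\sin\!\big(\tfrac{\pi}{w}(y-\xi_2)\big)}{\sin\!\big(\tfrac{\pi}{w}y\big)}+\mathrm{const},
\]
a function of $y$ alone on a subinterval of $(0,w)$. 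For $\xi_2\ne0$ one has $F_\xi'(y)=\tfrac{w}{\pi}\big(\cot(\tfrac{\pi}{w}(y-\xi_2))-\cot(\tfrac{\pi}{w}y)\big)$, which is nonzero of constant sign because $\cot$ is strictly decreasing on $(0,\pi)$; so $F_\xi$ is a strictly monotone bijection onto $\RR$, and for each level $c$ the set $\{\phi_\xi=c\}$ has exactly one end as $x\to+\infty$ (apply the implicit function theorem near the unique root of $F_\xi-c$; no branch escapes to a horizontal edge, since $\phi_\xi\to\pm\infty$ there).

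I would finish as in the proofs of Propositions~\ref{rado-proposition} and~\ref{4-points-proposition}. Choose $\xi$ with $\lvert\xi-\xi_0\rvert<\delta$ in general position: $\xi_2\ne0$, $\xi_1\ne\xi_2/\tan\alpha$, no vertex of either region lies on the boundary of the other, and all edge crossings are transverse. Because the two regions' horizontal edges are mutually parallel and their slanted edges are mutually parallel, a short case check (over the four sign cases of $\xi_2$ and of $\xi_1-\xi_2/\tan\alpha$) shows that exactly one vertex of one region lies in the other and that no two like\nobreakdash-signed edges cross. A level set $Q$ of $\phi_\xi$ is then a loop\nobreakdash-free network (the strong maximum principle forbids closed loops) each of whose nodes has even valence $\ge 4$, and its ends are precisely that one vertex together with the single end at $x=+\infty$ --- two in all. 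An even\nobreakdash-valence forest with two ends has no node, so $Q$ is a single embedded arc; hence $\phi_\xi$ has no critical point, contradicting the first paragraph, and the theorem follows.

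The step I expect to be the main obstacle is the asymptotic analysis of the second paragraph: rigorously extracting the explicit, strictly monotone limit profile $F_\xi$ (with smooth, locally uniform convergence) from assertion~\ref{noid-reaper-item} of Theorem~\ref{scherkenoid-theorem} --- in particular the facts that $u$ and $v$ approach the \emph{same} grim reaper $g_w$, up to a rigid motion and with no $y$\nobreakdash-shift, and the monotonicity of $\cot(\tfrac{\pi}{w}(y-\xi_2))-\cot(\tfrac{\pi}{w}y)$. A secondary point, which replaces Proposition~\ref{rado-proposition} (stated only for bounded intersections), is the bookkeeping of ends ``at infinity'': one needs the total number of ends of a level set to be even --- true here because in a finite tree in which every interior vertex has even degree the number of leaves is even --- so that one boundary vertex together with one end at infinity force $Q$ to be a single arc.
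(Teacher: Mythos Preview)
Your outline is workable, but it diverges from the paper's argument, and one step as written is not justified.

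The paper bypasses the end at $+\infty$ altogether. Its Lemma~\ref{noid-unique-lemma} proves directly that for any nonzero $\xi$ and any two scherkenoid functions on $\Omega$ and $\Omega+\xi$, the difference has no critical point, by the following device: rotate $u':\Omega'\to\RR$ through a small angle $\theta>0$ about the lower\nobreakdash-left corner of $\Omega$ to obtain $u'_\theta:\Omega'_\theta\to\RR$. Because the horizontal rays of $\Omega'_\theta$ are now tilted relative to those of $\Omega$, the intersection $\Omega\cap\Omega'_\theta$ is a \emph{bounded} triangle, and Proposition~\ref{rado-proposition} applies verbatim with $\#S=2$, giving $N(u'_\theta-u)=0$. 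If $u'_{\xi_0}-v$ had an (isolated) critical point, Corollary~\ref{morse-family-corollary} would force $u'_\theta-v$ to have one for all small $\theta>0$, a contradiction. The theorem then follows from the lemma together with the Gauss\nobreakdash-image matching, exactly as in your first paragraph. No asymptotic analysis is needed.

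Your direct attack on the end is also viable, but the claim that $\phi_\xi(x,y)\to F_\xi(y)+\mathrm{const}$ as $x\to+\infty$ is stronger than Theorem~\ref{scherkenoid-theorem}\eqref{noid-reaper-item} provides. That assertion says $u(a+x,y)-u(a,w/2)\to g_w(x,y)$; it does \emph{not} say that $u(a,w/2)-g_w(a,w/2)$ converges, so you have no control over the ``constant'' in $u(x,w/2)-v(x,w/2)$, and $\phi_\xi$ need not have a pointwise limit. What \emph{is} immediate from the smooth convergence is that $D\phi_\xi(x,y)\to\bigl(0,F_\xi'(y)\bigr)$ locally uniformly in $y$; since $F_\xi'$ is bounded away from $0$ on compact subintervals when $\xi_2\ne0$, $\phi_\xi(a,\cdot)$ is strictly monotone for all large $a$ and (using the $\pm\infty$ boundary values on the two horizontal edges of $\Omega\cap\Omega_\xi$) surjects onto $\RR$. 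Hence each level set meets each far\nobreakdash-right vertical line exactly once and has exactly one end at $+\infty$. With this repair your count (one interior vertex plus one end at infinity, hence two ends, hence no node) goes through. So your route can be completed, but the paper's rotation trick is shorter and sidesteps the issue entirely.
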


\begin{lemma}\label{noid-unique-lemma}
Let $\Omega=\Omega(\alpha,w)$ and $\Omega'=\Omega+\xi$, where $\xi=(\xi_1,\xi_2)$ is a nonzero vector.
Let $u$ and $u'$ be Scherkenoid functions on $\Omega$ and $\Omega'$.
Then $u'-u$ has no critical points.
\end{lemma}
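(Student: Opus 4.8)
The plan is to adapt the level-set network argument of Propositions~\ref{rado-proposition} and~\ref{4-points-proposition} to the (generally unbounded) region $\Omega\cap\Omega'$, whose only noncompact feature is a single end running off to $x=+\infty$.

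First I would record the shape of $\Omega\cap\Omega'$. Since $\Omega$ and $\Omega'=\Omega+\xi$ are each intersections of three half-planes and differ by a translation, their slanted edges lie on parallel lines and likewise their horizontal edges. Hence, away from the degenerate configurations in which an edge of $\Omega$ is collinear with the corresponding edge of $\Omega'$, the set $\Omega\cap\Omega'$ (when nonempty) is again a region of the type $\Omega(\alpha,w)$: a convex set bounded by a bottom horizontal edge, a top horizontal edge, and one slanted edge, with a single end at $x=+\infty$. On each of these three edges $u'-u$ equals $+\infty$ or $-\infty$, according to whether the edge comes from $\partial\Omega$ (where $u=\pm\infty$) or from $\partial\Omega'$ (where $u'=\pm\infty$); write $b,t,s\in\{+,-\}$ for the signs on the bottom, top and slanted edges. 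A short inspection gives $b\ne t$ always: the bottom edge comes from $\partial\Omega$ exactly when $\xi_2\le 0$ (so $b=+$), while the top edge comes from $\partial\Omega$ exactly when $\xi_2\ge 0$ (so $t=+$), and for $\xi_2\ne 0$ these are incompatible. Since $\{b,t\}=\{+,-\}$ and $s\in\{+,-\}$, it follows that exactly one of the two finite corners of $\Omega\cap\Omega'$ (bottom$\cap$slanted and top$\cap$slanted) is sign-changing for $u'-u$.

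Next I would analyze the end. By Assertion~\ref{noid-reaper-item} of Theorem~\ref{scherkenoid-theorem}, $u$ and $u'$ are each asymptotic, as $x\to\infty$, to a translate of the tilted grim reaper $g_w$; subtracting and using the explicit form of $g_w$, the difference $u'-u$ converges, as $x\to\infty$, to a function of $y$ alone of the form $(w/\pi)^2\log\!\bigl(\sin(\pi(y-\xi_2)/w)/\sin(\pi y/w)\bigr)+\mathrm{const}$, which is strictly monotone on the overlap $y$-interval and runs between $-\infty$ and $+\infty$ (its sign at the two ends of the overlap interval agreeing with $b$ and $t$). Hence $\nabla(u'-u)\ne 0$ near the end, and for every $c$ the level set $\{u'-u=c\}$ meets a neighborhood of the end in a single arc going to $x=+\infty$. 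Now I run the network argument: after adding a constant to $u$ (which preserves the Scherkenoid property) it suffices to consider $Q:=\{u'-u=0\}$. As in Proposition~\ref{rado-proposition}, $Q$ is a network whose nodes are the points where $Du=Du'$ and hence have even valence $\ge 4$ (by the normal form~\eqref{the-form}), $Q$ has no closed loop (strong maximum principle for the linear elliptic equation satisfied by $u'-u$), and $Q$ stays away from the edges of $\Omega\cap\Omega'$; removing a large compact set leaves one arc of $Q$ through each sign-changing corner and one arc going to $x=+\infty$. By the preceding two paragraphs this is exactly two ends. A forest all of whose nodes have even valence $\ge 4$ and which has exactly two ends is a single arc with no node (this is the counting used at the end of the proof of Proposition~\ref{rado-proposition}); therefore $u'-u$ has no critical point, since a critical point $p$ would make $p$ a node of the level set $\{u'-u=(u'-u)(p)\}$.

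The step I expect to be the main obstacle is the degenerate configurations excluded above: $\xi_2=0$ (the horizontal edges are shared, so $\Omega'\subseteq\Omega$ or $\Omega\subseteq\Omega'$) and $\xi$ parallel to the slanted direction (the slanted edges are shared). In these cases $u'-u$ is no longer $\pm\infty$ on the shared edge but an a priori indeterminate ``$\infty-\infty$'', so the end count is not immediate. I would handle these either by establishing the boundary asymptotics of a Scherkenoid function near a $\pm\infty$-edge --- it matches the corresponding grim reaper, i.e.\ behaves like $(w/\pi)^2\log(\dist)$ plus lower-order terms, so that $u'-u$ extends to the shared edge with a definite sign and the count above still applies --- or by approximating $\xi$ by non-degenerate vectors $\xi_n\to\xi$, passing to the limit in ``$u'_n-u$ has no critical point'' for Scherkenoid functions $u'_n$ on $\Omega+\xi_n$, the needed subsequential convergence $u'_n\to u'$ following from the standard curvature and compactness arguments used repeatedly above.
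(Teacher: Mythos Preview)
Your approach differs substantially from the paper's. You work directly in the unbounded region $\Omega\cap\Omega'$ and use the tilted–grim-reaper asymptotics of Assertion~\ref{noid-reaper-item} to control the single end at $x\to+\infty$, then run a Rad\'o-type end count. The paper sidesteps the unbounded region entirely: after arranging (by swapping $\Omega$ and $\Omega'$) that $\xi_2>0$ or $\xi_1>0=\xi_2$, it \emph{rotates} $u':\Omega'\to\RR$ by a small angle $\theta>0$ about the origin to get $u'_\theta:\Omega'_\theta\to\RR$. For all small $\theta>0$ the intersection $\Omega\cap\Omega'_\theta$ is a \emph{bounded} triangle, and Proposition~\ref{rado-proposition} applies with $|S|=2$, so $u'_\theta-u$ has no critical points. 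If $u'-u$ had an (isolated) critical point, Corollary~\ref{morse-family-corollary} would produce one for $u'_\theta-u$ as well --- a contradiction. This handles every nonzero $\xi$ uniformly, with no end analysis and no degenerate cases.

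Your generic case is essentially right, but there is a genuine gap in the degenerate cases. Your proposed fix via ``subsequential convergence $u'_n\to u'$ from standard compactness'' is circular: a subsequential limit of Scherkenoids on $\Omega+\xi_n$ is merely \emph{some} Scherkenoid on $\Omega+\xi$, and you cannot conclude it is the given $u'$ without already knowing the uniqueness that this lemma is used to prove. The correct repair is to take $u'_n(p):=u'(p-\eta_n)$ for small $\eta_n\to 0$ chosen so that $\xi+\eta_n$ is non-degenerate; then $u'_n\to u'$ trivially and Corollary~\ref{morse-family-corollary} transfers the conclusion --- but this is exactly the paper's perturbation idea, done with translations instead of rotations. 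A smaller point: your claim that $u'-u$ converges as $x\to\infty$ to a function of $y$ alone does not follow from Assertion~\ref{noid-reaper-item}, which controls $\nabla(u'-u)$ but allows $u'-u$ itself to drift; fortunately the weaker fact that $\partial_y(u'-u)$ is bounded away from zero for large $x$ (which does follow) is all you need for the single-arc claim at the end.
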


\begin{proof}[Proof of Lemma~\ref{noid-unique-lemma}]
By interchanging $\Omega$ and $\Omega'$ (if necessary), we may assume that $\xi_2>0$
or that $\xi_1 > 0 = \xi_2$.
Suppose, contrary to the lemma, that $u'-u$ has a critical point $p_0$.
Note that $p_0$ is an isolated critical point, since otherwise $u'-u$ would be
be constant in a neighborhood of $p_0$ and therefore (by unique continuation)
constant on all of $\Omega\cap\Omega'$, which is impossible (since $u'-u$
has boundary values $+\infty$ on some boundary components and $-\infty$ on other
boundary components.)

Rotate $u': \Omega'\to\RR$ counterclockwise by $\theta$ about the 
 lower-left corner of $\Omega$ to get 
\[
   u'_\theta: \Omega'_\theta \to \RR.
\]
Note that for all sufficiently small $\theta>0$, 
the domain $\Omega\cap \Omega'_\theta$
is a triangle.  
For such a $\theta$, the function $u'_\theta-u_\theta$ has no critical points
by Proposition~\ref{rado-proposition}.  
(In our situation, the set $S$ in that proposition contains exactly two points: one point is a vertex
of one region that lies in the interior of the other region, and the other point is a point where the lower boundary
ray of $\Omega'_\theta$ intersects the upper boundary ray of $\Omega$.)

But by Corollary~\ref{morse-family-corollary},
 for all $\theta>0$ sufficiently small, $u'_\theta-u$ must have one or more critical points near $p$,
a contradiction.  This proves the lemma.
\end{proof}

\begin{proof}[Proof of Theorem~\ref{scherkenoid-uniqueness}]
Assume to the contrary that $u-v$ is not constant.   Then there is a point $p_0$ in  $\Omega=\Omega(\alpha,w)$
such that $Du(p_0)\ne Dv(p_0)$.
By Theorem~\ref{scherkenoid-theorem}\eqref{noid-gauss-image-item}, 
 the graphs of $u$ and of $v$ have the same Gauss image.
Hence there is a point $q_0$ in $\Omega(\alpha,w)$ such that $Du(q_0)=Dv(p_0)$.  Consequently, 
if we translate $u: \Omega\to\RR$ by $p_0-q_0$ to get $u':\Omega'\to \RR$, then
$q_0$ is a critical point of $u'-v$.  But that contradicts Lemma~\ref{noid-unique-lemma}, thus proving the theorem.
\end{proof}


\begin{theorem}
Suppose that  $(\alpha_i,w_i)$ converges to $(\alpha,w)$ in $(0,\pi)\times (0,\infty)$.
Then
$\Dd_{\alpha_i,w_i}$ converges smoothly to $\Dd_{\alpha,w}$, and
$\Ss_{\alpha,w}$ converges smoothly to $\Ss_{\alpha,w}$.
\end{theorem}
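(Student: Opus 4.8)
The plan is to run the argument used for the continuity assertion \eqref{L-continuous-item} of Theorem~\ref{scherk-convergence-theorem}, now with the domains $\Omega(\alpha,w)$ in place of the parallelograms, the Scherkenoid Uniqueness Theorem~\ref{scherkenoid-uniqueness} in place of the Scherk uniqueness theorem, and Theorem~\ref{scherkenoid-theorem}\eqref{noid-reaper-item} (the grim-reaper asymptotics) in place of translation invariance of graphs over strips. Here $w\ge\pi$ (the range in which the scherkenoids $\Dd_{\alpha,w}$ are defined). Write $\Omega_i=\Omega(\alpha_i,w_i)$, $\Omega=\Omega(\alpha,w)$, and $\Dd_i=\Dd_{\alpha_i,w_i}$, recalling that $\Dd_i$ is the graph of a scherkenoid function $u_i:\Omega_i\to\RR$ together with the two vertical lines through the corners of $\Omega_i$, normalized so that $\vv_{\alpha_i/2}=(\cos(\alpha_i/2),\sin(\alpha_i/2),0)$ is tangent to $\Dd_i$ at the origin; in particular $0\in\Dd_i$ for every $i$. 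First I would extract a smooth subsequential limit: by the curvature bounds up to the boundary (Theorem~\ref{curvature-bound}) and the local $g$-area estimates for area-minimizing graphs (Corollary~\ref{minimizing-corollary}), after passing to a subsequence the $\Dd_i$ converge smoothly on compact subsets of $\RR^3$, away from the corners of $\Omega$, to a smooth translator-with-boundary $\Dd$ (cf.\ Remark~\ref{dichotomy}). Since the corners of $\Omega_i$ converge to those of $\Omega$, the boundary of $\Dd$ lies in the two vertical lines through the corners of $\Omega$; and since $0\in\Dd_i$ with tangent direction $\vv_{\alpha_i/2}\to\vv_{\alpha/2}$, the limit $\Dd$ contains the origin with $\vv_{\alpha/2}$ tangent there --- exactly as in the proof of Theorem~\ref{scherk-convergence-theorem}\eqref{L-continuous-item}, this last point being the one that requires inspecting $\Dd_i$ near the corner, where the convergence is only $C^0$.

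Next I would show $\Dd$ is a scherkenoid function over all of $\Omega$, following the existence proof of Theorem~\ref{scherkenoid-theorem}. Let $\Dd_0$ be the component of $\Dd$ through the origin. If $\Dd_0\setminus\partial\Dd_0$ were not a graph it would be flat and vertical, which is incompatible with $\Dd_0\subset\overline{\Omega}\times\RR$ and $\vv_{\alpha/2}$ being tangent at the origin; hence $\Dd_0\setminus\partial\Dd_0$ is the graph of a function $u$ over an open $U\subset\Omega$. By Lemma~\ref{straight-lemma}, each component of $(\partial U)\setminus\{\text{corners of }\Omega\}$ is a straight line, ray, or segment on which $u\equiv\pm\infty$, with the locus $\{u=+\infty\}$ flat; using convexity of $U$, that $U$ contains a neighborhood of the origin, and that $\Omega$ has a single non-horizontal edge (as in the existence proof), one concludes $U=\Omega$, $\Dd_0=\Dd$, and by smooth convergence $u=-\infty$ on the two horizontal edges and $u=+\infty$ on the non-horizontal edge. (An alternative route to $U=\Omega$ is via uniform interior gradient estimates for translators together with the normalization at the origin; and once $\Dd$ is a scherkenoid function over $\Omega$, Theorem~\ref{scherkenoid-theorem}\eqref{noid-reaper-item} and Definition~\ref{g_w-definition}, with $g_{w_i}\to g_w$, give it the correct grim-reaper end $g_w$ as $x\to\infty$.)

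Thus $\Dd$ is a scherkenoid function over $\Omega(\alpha,w)$ with $\vv_{\alpha/2}$ tangent at the origin, so by the Scherkenoid Uniqueness Theorem~\ref{scherkenoid-uniqueness} (uniqueness up to vertical translation) and the normalization, $\Dd=\Dd_{\alpha,w}$. Since every subsequence of $\{\Dd_i\}$ has a further subsequence converging smoothly to $\Dd_{\alpha,w}$, the full sequence converges: $\Dd_{\alpha_i,w_i}\to\Dd_{\alpha,w}$ smoothly. Finally, $\Ss_{\alpha,w}$ is built from $\Dd_{\alpha,w}$ by iterated Schwarz reflection in a locally finite family of vertical planes and vertical lines --- through the corners of $\Omega$ and their successive images --- which are isometries of the Ilmanen metric depending continuously on $(\alpha,w)$; hence the smooth convergence $\Dd_{\alpha_i,w_i}\to\Dd_{\alpha,w}$ propagates through the reflections, giving $\Ss_{\alpha_i,w_i}\to\Ss_{\alpha,w}$ smoothly, exactly as in Corollary~\ref{scherk-convergence-corollary}.

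I expect the main obstacle to be the identification step of the second paragraph: ensuring that the subsequential limit is a \emph{genuine} scherkenoid --- a graph over all of $\Omega$ with boundary value $+\infty$ on the non-horizontal edge and $-\infty$ on the two horizontal edges --- rather than something that has collapsed onto a proper subregion of $\Omega$ or escaped to infinity, and pinning down the vertical translation via the corner normalization despite the convergence being only $C^0$ at the corners. Once this identification is secured, the uniqueness theorem closes the argument and the passage from $\Dd$ to $\Ss$ is routine.
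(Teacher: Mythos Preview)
Your approach is correct and matches the paper's proof: take a smooth subsequential limit, rule out a flat vertical limit via the tangent normalization at the origin, use Lemma~\ref{straight-lemma} to force the domain to be all of $\Omega(\alpha,w)$, and identify the limit via scherkenoid uniqueness (equivalently, via the defining property of $\Dd_{\alpha,w}$ in Theorem~\ref{scherkenoid-theorem}\eqref{noid-unique-item}). Your worry about only $C^0$ convergence at the corner is unnecessary---$\partial\Dd_i$ consists of two straight vertical lines with no corners, so Theorem~\ref{curvature-bound} gives smooth convergence at the origin---and the only case you glossed over is $w=\pi$ with some $w_i<\pi$, where $\Dd_i$ is a Scherk translator over $P(\alpha_i,w_i,L_i)$ with $L_i\to\infty$ by Theorem~\ref{scherk-convergence-theorem}\eqref{L-long-item} and the identical argument applies.
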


\begin{proof}
For $w<\pi$, this was already proved (Theorem~\ref{scherk-convergence-theorem}
and Corollary~\ref{scherk-convergence-corollary}).  Thus consider the case $w\ge \pi$.
By passing to a subsequence, the $\Dd_i=\Dd_{\alpha_i,w_i}$ converge smoothly
to a limit surface $\Dd$ in $\overline{\Omega(\alpha,w)}\times \RR$.
By the smooth convergence, the vector $\vv=(\cos(\alpha/2),\sin(\alpha/2),0)$ is tangent to $\Dd$ at
the origin.  If $\Dd\setminus \partial \Dd$ were not a graph, then $\Dd$ would be the vertical
halfplane $\{ s\vv+t\ee_3: s\ge 0, t\in\RR\}$, which is impossible since that halfplane is not
contained in $\overline{\Omega(\alpha,w)}\times \RR$.  Thus $\Dd\setminus\partial\Dd$ is the 
graph of a function 
\[
   u:\Omega' \subset \Omega(\alpha,w)\to \RR.
\]
By Lemma~\ref{straight-lemma}, $\Omega'$ is a convex open set and each component
of
\[
   (\partial \Omega')\setminus \{\textnormal{the corners of $\Omega(\alpha,w)$} \}
\]
is a ray, segment or line.  It follows immediately that the $\Omega'$ is all of $\Omega(\alpha,w)$.
The smooth convergence implies that $u$ has the correct boundary values.
Thus $\Dd=\Dd_{\alpha,w}$, so we have proved that $\Dd_{\alpha_i,w_i}$ converges smoothly
to $\Dd_{\alpha,w}$.
It follows immediately that $\Ss_{\alpha_i,w_i}$ converges smoothly to $\Ss_{\alpha,w}$.
\end{proof}


\section{Letting $\alpha$ tend to $\pi$}

For $0< \alpha < \pi$ and for $0<w< \infty$,
let $\Dd_{\alpha,w}$ be the fundamental piece of the Scherk translator
(if $w<\pi$) or of the Scherkenoid (if $w\ge \pi$)
as in Theorems~\ref{scherk-translator-theorem} and~\ref{scherkenoid-theorem}.
In particular, $\Dd_{\alpha,w}\setminus \partial \Dd_{\alpha,w}$ is the graph of a function $u_{\alpha,w}$ over
the parallelogram $P(\alpha,w,L(\alpha,w))$ if $w<\pi$, and a graph over the region
$P(\alpha,w,\infty)$ if $w>\pi$.
The lower-left corner of the domain is at the origin, and the vector 
\newcommand{\uu}{\mathbf{u}}
\[
   \vv_{\alpha/2} = (\cos(\alpha/2), \sin(\alpha/2),0)
\]
is tangent to $\Dd_{\alpha,w}$ at the origin.

\begin{theorem}\label{pi-theorem}
Suppose that $\alpha_i\in (0,\pi)$ converges to $\pi$ and that $w_i\in (0,\infty)$ converges to $w\in (0,\infty)$.
Then, after passing to a subsequence, $\Dd_{\alpha_i,w_i}$ converges smoothly
to a translator $M$ such that $M\setminus \partial M$ is the graph of a function
\[
   u: \RR\times (0,w) \to \RR.
\]
\begin{enumerate}[\upshape (1)]
\item If $w< \pi$, then $\partial M$ consists of two vertical lines, the $z$-axis and the line
through a point $(\hat{x},w)$ on the line $y=w$.  In this case (see Figure \ref{default-3}), $u$ has boundary values
\[
   u(x,0) =
   \begin{cases} 
   +\infty &\text{if $x<0$} \\
   -\infty &\text{if $x>0$}
   \end{cases}
\quad
,
\quad
   u(x,w) =
   \begin{cases} 
   -\infty &\text{if $x<\hat{x}$}, \\
   +\infty &\text{if $x>\hat{x}$}.
   \end{cases}
\]
Repeated Schwarz reflection produces a complete, simply connected, 
singly periodic translator~$\widehat{M}$.
\item
If $w\ge\pi$, then the boundary of $M$ is a single vertical line, the $z$-axis $Z$.
In this case (see Figure \ref{default-2}), $u$ has boundary values 
\[
   u(x,0) =
   \begin{cases} 
   +\infty &\text{if $x<0$} \\
   -\infty &\text{if $x>0$}
   \end{cases}
\quad
,
\quad
   u(\cdot,w) = -\infty.
\]
A single Schwarz reflection about the $z$-axis produces a complete, simply connected
 translator $\widehat{M}$ without boundary.
\end{enumerate}
Furthermore, if $w\ne \pi$, then $\Ss_{\alpha_i,w_i}$ converges smoothly to $\widehat{M}$.
If $w=\pi$, then (after passing to a further subsequence) $\Ss_{\alpha_i,w_i}$ converges to
a smooth limit surface $\Ss'$, and the connected component of $\Ss'$ containing the origin is $\widehat{M}$.
\end{theorem}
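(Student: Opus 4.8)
The plan is to run the same compactness-and-identification scheme used for Theorems~\ref{scherk-convergence-theorem} and~\ref{scherkenoid-theorem}, the new feature being that the fundamental domain degenerates as $\alpha_i\to\pi$: the interior angle of $P(\alpha_i,w_i,L(\alpha_i,w_i))$ at the origin tends to $\pi$, so the two edges through the origin straighten out onto the line $\{y=0\}$ --- the horizontal edge (on which $u=-\infty$) sweeping out $\{y=0,\ x>0\}$ and the non-horizontal edge (on which $u=+\infty$) sweeping out $\{y=0,\ x<0\}$. Moreover, when $w<\pi$ we have $L(\alpha_i,w_i)>w_i/\sin\alpha_i\to\infty$ by Theorem~\ref{jenkins-serrin-like-theorem}, and when $w\ge\pi$ the domain is already the half-strip $\Omega(\alpha_i,w_i)$; in either case the lower-right and upper-left corners of the domain run off to infinity in $\RR^2$, so only the origin and (possibly) the corner opposite the origin remain at finite distance.

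First I would invoke the boundary curvature estimates (Theorem~\ref{curvature-bound}, Remark~\ref{dichotomy}) to pass to a subsequence along which $\Dd_{\alpha_i,w_i}$ converges smoothly, away from the limiting corner positions, to a translator-with-boundary $M$. Since $\vv_{\alpha_i/2}\to\ee_2$ is tangent to $\Dd_{\alpha_i,w_i}$ at the origin, $M$ contains a point of the $z$-axis $Z$ with horizontal tangent plane; hence, as in the proofs of Theorems~\ref{scherk-translator-theorem} and~\ref{scherkenoid-theorem}, the component $M_0$ of $M\setminus\partial M$ through that point is not a vertical plane and is therefore the graph of a function $u$ over an open set $U\subset\RR\times(0,w)$. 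By Lemma~\ref{straight-lemma}, $U$ is convex and each component of $\partial U$, minus the discrete set $S$ of projections of the vertical boundary lines of $M$, is a line, ray, or segment; moreover every genuine corner of $U$ must lie in $S$, and $S$ is contained in $\{(0,0)\}$ together with the limit point $(\hat x,w)$ of the corner opposite the origin when that limit is finite.

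Next I would identify $U$ and the boundary values, exactly as in the earlier theorems: since the two limiting edges through the origin lie along $\{y=0\}$, there is no genuine corner of $U$ at $(0,0)$, and together with convexity and the location of $S$ this forces $U=\RR\times(0,w)$; smooth convergence then gives $u(x,0)=+\infty$ for $x<0$ and $-\infty$ for $x>0$. The behavior on $\{y=w\}$ is where the case split enters. If $w<\pi$, the corner opposite the origin cannot escape to infinity: otherwise $M$ would be a graph over the whole strip with $u=-\infty$ on $\{y=w\}$ and on $\{y=0,\ x>0\}$, i.e.\ a translating graph over a half-strip of width $w<\pi$ with $-\infty$ boundary values on both horizontal edges, contradicting Theorem~\ref{half-reaper}; so its limit $(\hat x,w)$ is a point of $S$, the jump of $u$ along $\{y=w\}$ occurs there, and we obtain the boundary values of~(1). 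Reflecting $M$ in the two vertical lines $Z$ and $\{(\hat x,w)\}\times\RR$ then tiles the strips $\RR\times(nw,(n+1)w)$ and produces the helicoid-like, singly periodic surface $\widehat M$. If $w\ge\pi$, the corner opposite the origin escapes (its first coordinate is $w_i/\tan\alpha_i\to-\infty$), so $S=\{(0,0)\}$, $\partial M=Z$, the top edge of $\Omega(\alpha_i,w_i)$ sweeps out all of $\{y=w\}$, and hence $u(\cdot,w)\equiv-\infty$, giving~(2); a single reflection in $Z$ then yields the pitchfork $\widehat M$. In both cases $M$ is connected (it is a graph over a connected domain), $\widehat M$ is complete and properly embedded, and its simple connectivity follows as for the earlier examples.

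For the last statement, on any compact subset of $\RR^3$ only finitely many Schwarz copies of $\Dd_{\alpha_i,w_i}$ meet it, and the reflection lines that remain at finite distance converge to the vertical lines of $\partial M$; so those copies converge to the corresponding reflected copies of $M$, i.e.\ to $\widehat M$, while the copies generated using the reflection lines that run to infinity also run to infinity, giving $\Ss_{\alpha_i,w_i}\to\widehat M$ when $w\ne\pi$. I expect the genuine difficulty to be the borderline case $w=\pi$: there Theorem~\ref{half-reaper} no longer obstructs a half-strip of width $\pi$ (it carries a translating graph, namely the grim reaper surface), so Schwarz copies of $\Dd_{\alpha_i,\pi}$ whose centers of reflection run to infinity may leave behind grim-reaper-surface limits. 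Consequently one can only extract a subsequential limit $\Ss'$ of $\Ss_{\alpha_i,\pi}$ whose component through the origin is $\widehat M$, the other components (if any) being grim reaper surfaces; deciding whether any of those residual components actually survive --- equivalently, whether $\Ss'$ is connected --- is the point the theorem leaves open.
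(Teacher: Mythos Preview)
Your overall strategy matches the paper's, but there are three concrete gaps. First, the tangent plane to $M$ at the origin is \emph{not} horizontal: the origin lies on $Z\subset\partial M$, so the tangent plane contains $\ee_3$, and the conormal is $\lim\vv_{\alpha_i/2}=\ee_2$, giving the vertical plane $\{x=0\}$. The correct argument (which is what the scherkenoid existence proof you cite actually does) is that if the component through the origin were flat and vertical it would be the halfplane $\{x=0,\ y\ge0\}$, which does not fit in the slab $\{0\le y\le w\}$; hence by Remark~\ref{dichotomy} it is a graph. For the two-line case the paper also uses the $\pi$-rotational symmetry of $\Dd_i$: the geodesic from $(0,0,0)$ to the opposite corner has its midpoint at a point with horizontal tangent plane, and this interior point survives in the limit.

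Second, and more seriously, your Lemma~\ref{straight-lemma} argument only yields $U=\RR\times(0,w')$ for some $w'\le w$; it does not force $w'=w$. A priori the limit $\Dd$ of the $\Dd_{\alpha_i,w_i}$ could split, with the component through the origin a pitchfork of width $w'<w$ and further components in $\{w'<y<w\}$. The paper rules this out (when $w_i\ge\pi$) by comparing Gauss images: $\nu(\Dd_i\setminus\partial\Dd_i)=\Rr(w_i)$ by Theorem~\ref{scherkenoid-theorem}\eqref{noid-gauss-image-item}, while a pitchfork of width $w'$ has Gauss image $\Rr(w')$ by Theorem~\ref{pitchfork-theorem}\eqref{pitchfork-gauss-image-item}; smooth convergence then gives $\Rr(w')\subset\overline{\Rr(w)}$, hence $w'\ge w$. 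Finally, your argument for $\Ss_{\alpha_i,w_i}\to\widehat M$ when $w>\pi$ (``copies generated using the reflection lines that run to infinity also run to infinity'') fails as stated: the reflected scherkenoid pieces extend to $x=+\infty$, so a horizontal shift does not remove them from a fixed compact set. The paper instead proves a quantitative estimate (Lemma~\ref{down-lemma}), using $\partial u/\partial x<-\sqrt{(w/\pi)^2-1}$ from Corollary~\ref{scherkenoid-corollary}, to show that the part of $\Ss_{\alpha_i,w_i}$ in $\{y\ge w_i,\ x\ge c\}$ escapes \emph{downward} to $z=-\infty$.
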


\begin{proof}  
By the standard curvature estimates (see Remark~\ref{dichotomy}), 
we can assume (after passing to a subsequence)
smooth convergence to a limit translator $\Dd$. 
Note that $\ee_2$ is tangent to $M$ at the origin.
Let $M$ be the connected component of $\Dd$ containing the origin.

\begin{claim}\label{pi-claim} Suppose $\partial M=Z$.  Then $M\setminus \partial M$ is the graph of function
\[
   u: \RR\times (0,w')\to\RR
\]
for some $w'$ with $\pi \le w' \le w$, where $u(x,0)$ is $+\infty$ for $x<0$ and $-\infty$ for $x>0$
and where $u(\cdot,w')= - \infty$.  
\end{claim}

\begin{proof} Note that  $M\setminus \partial M$ is the graph of a function $u$,
as otherwise (by Lemma~\ref{straight-lemma}) it would be the vertical halfplane $\{x=0,\, y\ge 0\}$,
which is impossible since $M$ is contained in the slab $0\le y\le w$.
The domain $\Omega$ of $u$ is an convex open set containing $(0,0)$ in its boundary, 
and each component of $(\partial \Omega)\setminus \{(0,0)\}$ is a straight line or ray 
(by Lemma~\ref{straight-lemma}.)   
Thus $\Omega=\RR\times (0,w')$ for some $w'\le w$.  By Theorem~\ref{pitchfork-theorem}, $w'\ge \pi$.
This completes the proof of the claim.
\end{proof}

By passing to a subsequence, we can assume that either $w_i\ge \pi$ for all $i$
or that $w_i<\pi$ for all $i$.



{\bf Case 1}: $w_i\ge \pi$ for all $i$.
Then $\partial \Dd_i$ consists of $Z$ and one other vertical line,
and since $\alpha_i\to \pi$, that other vertical line drifts off to infinity as $i\to\infty$.
  Thus $M$
is as described in Claim~\ref{pi-claim}.  
The images under the Gauss map of $\Dd_i\setminus \partial \Dd_i$ and of $M\setminus\partial M$ 
are $\Rr(w_i)$ and $\Rr(w')$ 
by Theorems~\ref{scherkenoid-theorem} and~\ref{pitchfork-theorem}, so
\begin{equation}
    \Rr(w') \subset \lim_i \overline{\Rr(w_i)} = \overline{\Rr(w)},
\end{equation}
which implies that $w'\ge w$.  But $w'\le w$, so $w'=w$, and the theorem is proved in Case 1.

{\bf Case 2}: $w_i< \pi$ for all $i$.

Let $L_i=L(\alpha_i,w_i)$.
Note that upper-right corner of the domain parallelogram is at $(x_i,w)$, where
$x_i:=(L_i - w_i/\tan\alpha_i)$.   Since $\sin\alpha_i<\tan\alpha_i$,
\begin{align*}
  x_i \ge L_i - w/\sin\alpha_i 
  &>0
\end{align*}
by Theorem~\ref{jenkins-serrin-like-theorem}.   Since $\sin\alpha_i\to 0$, we see
that  $L_i\to\infty$, so the upper-left and lower-right corners of $P_i$ drift off to infinity.

We may assume that $x_i$ converges to a limit $\hat{x}\in [0,\infty]$.

If the boundary of $M$ consists of only one line (the $z$-axis), then we are done by Claim~\ref{pi-claim}.

Thus suppose $\partial M$ has two lines, the $z$-axis and the vertical line through $(\hat{x},w)$.
The geodesic in $M$ connecting $(0,0,0)$ and $(\hat{x},w,0)$ is the limit of the 
geodesic $C_i$ in $\Dd_i$ connecting $(0,0,0)$ and $(x_i,w_i,0)$.   The geodesic $C_i$
is unique since $\Dd_i$ is negatively curved, and by symmetry, its midpoint is $(x_i/2,w_i/2,u(x_i/2,w_i/2))$,
where the tangent plane is horizontal.  Thus the tangent plane to $M$ at the
midpoint $(\hat{x}/2,w/2,\hat{z})$ of $C$ is horizontal,
 so $M\setminus \partial M$ is the graph of a function $u$.  
Using Lemma~\ref{straight-lemma}, one concludes that the domain of $u$ is all of $\RR\times (0,w)$,
and (by smooth convergence) that $u$ has the specified boundary values.
By Theorem~\ref{helicoid-theorem}\eqref{helicoid-w-item}, $w<\pi$.

The simple connectivity of $\widehat{M}$ (for any value of $w$)
follows 
from the simple connectivity of $M$ by Van Kampen's Theorem, for example.

It remains only to prove the statements about $\Ss_{\alpha_i,w_i}$.  
After passing to a further subsequence, $\Ss_{\alpha_i,w_i}$ converges to a smooth limit $\Ss$,
and $\widehat{M}$ is the component of $\Ss$ containing the origin.
If $w\ne \pi$, then $\widehat{M}$ is all of $\Ss$.  
For $w>\pi$, this follows from 
Lemma~\ref{down-lemma} below applied to the $\Ss_{\alpha_i,w_i}$.
(From Figure~\ref{fig-new}, we see that any component of $\Ss\setminus \widehat{M}$ 
would have to lie in a slab of the form
$2nw \le y\le (2n+1)w$ or $2nw<-y<(2n+1)w$ for some positive integer $n$;
Lemma~\ref{down-lemma} implies that there are no such components.)

For $w < \pi$, note that $\widehat{M}$ is a graph over $\cup_{n\in\ZZ}\RR\times (nw,(n+1)w)$,
so any component of $\Ss\setminus\widehat{M}$ would have to lie in $\cup_{n\in\ZZ}\{y=nw\}$
and thus would be a vertical plane $y=nw$.  But $\widehat{M}$ contains a vertical line in that plane,
so $\Ss\setminus \widehat{M}$ has no components.
\end{proof}

In the following lemma, one should think of $c$ as some fixed number and of $\alpha$ as being close to $\pi$,
so that $\hat{x}:=w/\tan\alpha<<c$ and therefore $(c-2\hat{x}-1)>>0$.

\begin{lemma}\label{down-lemma}
Suppose that $0<\alpha<\pi$ and that $w\ge \pi$.
Let
\[
  \hat{x} = \frac{w}{\tan\alpha},
\]
(so that $(\hat{x},w)$ is a corner of $P(\alpha,w,\infty)$), and 
if $Q$ is a subset of $\RR^2$, let 
\[
    F(Q):= \sup \{  z:   (x,y,z)\in \Ss(\alpha,w), \, (x,y)\in Q \}.
\]
Suppose that 
\[
    2\hat{x} +1 \le c  \le 0.
\]
Then 
\[
   F( [c,\infty)\times [w,\infty)) \le  F( \{1\}\times (0,w))  -  (c-2\hat{x}-1) \sqrt{ (w/\pi)^2 - 1 }.
\]
\end{lemma}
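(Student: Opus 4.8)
My approach would be to use the explicit Schwarz-reflection structure of $\mathcal{S}(\alpha,w)$ to reduce everything to the fundamental piece $\mathcal{D}_{\alpha,w}$, where the one-sided derivative bound $\partial u/\partial x<-\sqrt{(w/\pi)^2-1}$ of Corollary~\ref{scherkenoid-corollary}(3) applies directly. Recall that $\mathcal{D}_{\alpha,w}$ (away from its boundary) is the graph of $u=u_{\alpha,w}$ over $\Omega(\alpha,w)=\{0<y<w\}\cap\{x>y/\tan\alpha\}$, that $\partial\mathcal{D}_{\alpha,w}$ consists of the two vertical lines over the corners $(0,0)$ and $(\hat x,w)$, and that every vertical line produced by iterated reflection lies over one of the points $(n\hat x,nw)$, $n\in\ZZ$, all of which lie on the line $\lambda=\{x=y/\tan\alpha\}$. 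The first step is to record the resulting description of $\mathcal{S}(\alpha,w)$ as $\bigcup_{n\in\ZZ}\mathcal{D}_n$, a union of congruent copies of $\mathcal{D}:=\mathcal{D}_{\alpha,w}$ with $\mathcal{D}_0=\mathcal{D}$, each $\mathcal{D}_n$ lying over the strip $\{nw<y<(n+1)w\}$; composing the defining $180^\circ$ rotations shows that for $n=2k$ even ($k\ge1$) the piece $\mathcal{D}_{2k}$ is the translate of $\mathcal{D}$ by $(2k\hat x,2kw,0)$, lying over $\{2kw<y<(2k+1)w\}\cap\{x>y/\tan\alpha\}$, while for $n$ odd $\mathcal{D}_n$ lies over $\{nw<y<(n+1)w\}\cap\{x<y/\tan\alpha\}$. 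Crucially, all these congruences preserve the $z$-coordinate.

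The second step is to identify which pieces lie over the target region $[c,\infty)\times[w,\infty)$. This is where the standing regime enters: $c$ is fixed while $\hat x=w/\tan\alpha\to-\infty$ as $\alpha\to\pi$ (as the discussion preceding the lemma makes explicit), so $\hat x<c$. For odd $n\ge1$, $\mathcal{D}_n$ lies over $\{x<y/\tan\alpha\}\subseteq\{x<\hat x\}$ (since $y>w$ and $y/\tan\alpha$ is decreasing), which misses $\{x\ge c\}$, so the odd pieces contribute nothing. For even $n=2k\ge2$, on its strip one has $y/\tan\alpha<2k\hat x\le2\hat x<c$, so $\{x\ge c\}\cap\{2kw<y<(2k+1)w\}$ lies entirely in the domain of $\mathcal{D}_{2k}$; translating back by $-(2k\hat x,2kw,0)$ identifies its contribution to $F$ with $\sup\{u(x,y):x\ge c-2k\hat x,\ 0<y<w\}$. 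Since $-\hat x>0$, these sets shrink in $k$, so the largest contribution is at $k=1$ and
\[
  F\big([c,\infty)\times[w,\infty)\big)=\sup\{\,u(x,y):x\ge c-2\hat x,\ 0<y<w\,\}.
\]
The same inequality $\hat x<c$ also keeps the vertical lines over the $(n\hat x,nw)$ — along which $\mathcal{S}(\alpha,w)$ is unbounded — out of the region, so this supremum is finite.

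The last step is the estimate itself. From $2\hat x+1\le c$ we get $c-2\hat x\ge1$, and for any $y\in(0,w)$ the horizontal segment from $(1,y)$ to $(x,y)$ with $x\ge c-2\hat x$ lies in $\Omega(\alpha,w)$ (as $1>0>y/\tan\alpha$), so integrating Corollary~\ref{scherkenoid-corollary}(3) along it,
\[
  u(x,y)=u(1,y)+\int_1^x\frac{\partial u}{\partial x}(t,y)\,dt<u(1,y)-(x-1)\sqrt{(w/\pi)^2-1}\le u(1,y)-(c-2\hat x-1)\sqrt{(w/\pi)^2-1}.
\]
Taking the supremum over such $(x,y)$, together with $F(\{1\}\times(0,w))=\sup_{0<y<w}u(1,y)$ (the segment $\{1\}\times(0,w)$ lies in $\Omega(\alpha,w)$ and meets no other piece $\mathcal{D}_n$), gives the asserted bound. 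I expect the main difficulty to be the bookkeeping in the second step — pinning down exactly which reflected copies of $\mathcal{D}$ sit over the region, and in particular justifying the use of $\hat x<c$, without which a copy containing a whole vertical line could enter the region and force $F=+\infty$; once that is in place, the inequality follows in one line from the derivative bound.
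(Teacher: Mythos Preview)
Your approach is essentially the paper's: exploit the period-$(2\hat x,2w)$ structure of $\mathcal S_{\alpha,w}$ to reduce to the fundamental piece, then integrate the bound $\partial u/\partial x<-\sqrt{(w/\pi)^2-1}$ of Corollary~\ref{scherkenoid-corollary}(3) from $x=1$ out to $x\ge c-2\hat x$. The paper compresses your bookkeeping into the single assertion that every point of $\Omega\cap([c,\infty)\times[w,\infty))$ has the form $(x,y+2wn)$ with $y\in(0,w)$ and $n\ge1$; your explicit handling of the odd reflected pieces and the vertical lines---and the observation that excluding them requires $\hat x<c$, which you (like the paper's preamble remark $\hat x\ll c$) take from the intended regime rather than from the stated hypothesis $2\hat x+1\le c$ alone---is precisely what underlies that assertion.
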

 \begin{figure}[htbp]
\begin{center}
\includegraphics[width=.70\textwidth]{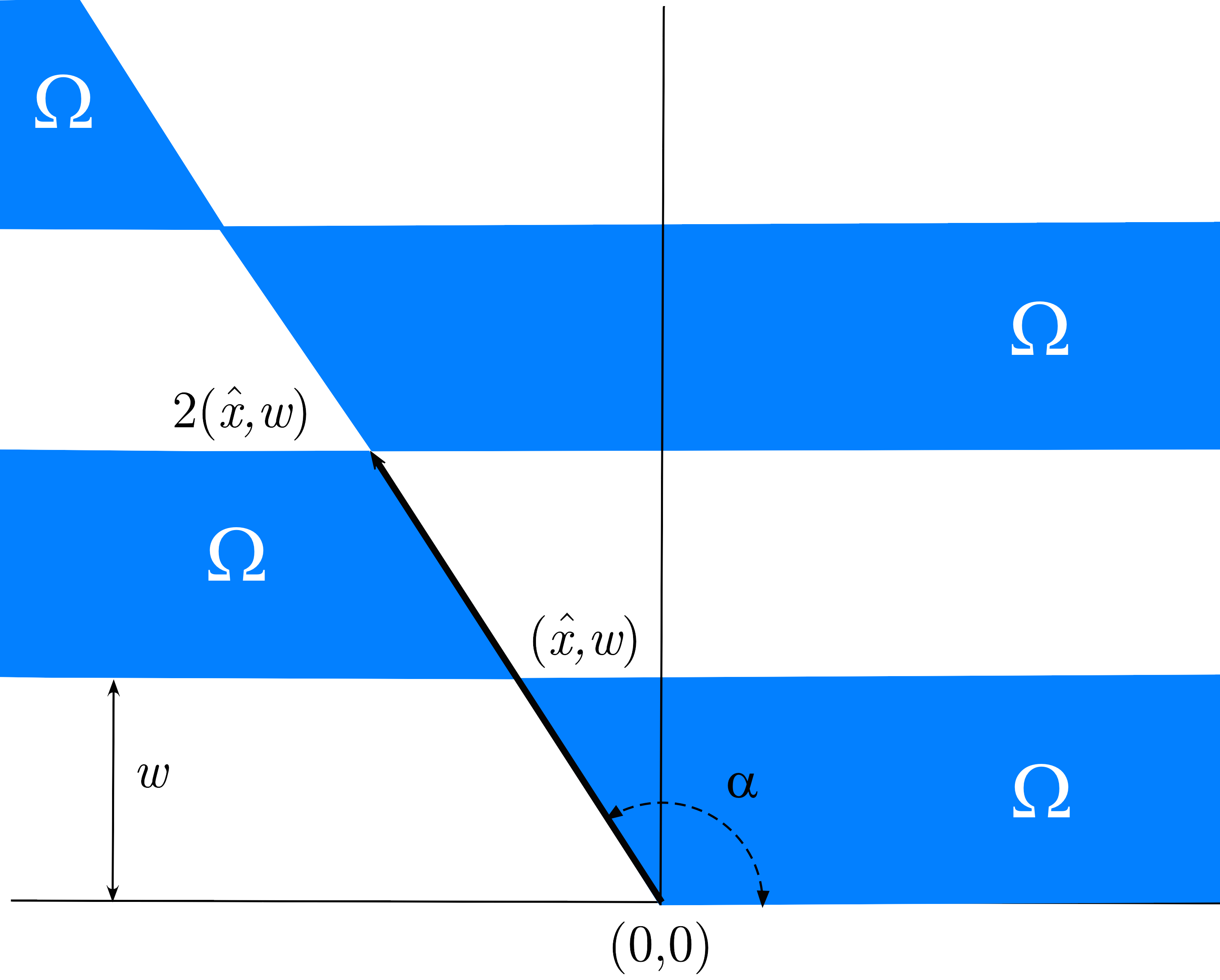}  
\caption{\small The domain $\Omega$ of a scherkenoid.}
\label{fig-new}
\end{center}
\end{figure} 

\begin{proof}
Let $L_n$ be the vertical line through the point $(n\hat{x},nw)$.
Then the surface $\Ss_{\alpha,w} \setminus \cup_nL_n$ is the graph of a function 
\[
    u: \Omega\to \RR
\]
where $\Omega$ is the domain shown in Figure~\ref{fig-new}.
The function $u$ is periodic with period $(2\hat{x},2w)$.
By Corollary~\ref{scherkenoid-corollary},
\[
    \pdf{u}{x} < -\sqrt{(w/\pi)^2 - 1} \quad \text{on $[0,\infty)\times (0,\pi)$}.
\]
Any point in $\Omega\cap ([c,\infty)\times [w,\infty))$ can be written uniquely as $(x,y+ 2wn)$
for some $(x,y)\in [c, \infty)\times (0,w)$ and some positive integer $n$.  By the periodicity,
\begin{align*}
u(x,y+2wn)
&=
u(x - 2\hat{x}n, y) 
\\
&\le
u(1,y)  -  (x - 2\hat{x}n -1) \sqrt{(w/\pi)^2 - 1} 
\\
&\le
u(1,y) - (c - 2\hat{x} - 1)  \sqrt{(w/\pi)^2 - 1}.
\end{align*}
\end{proof}



 \section{Translating helicoids}
 \label{helicoid-section}

\begin{theorem}\label{helicoid-theorem}
Suppose $M$ is a smooth translator such that $\partial M$ consists of two vertical lines,
and such that $M\setminus \partial M$ is the graph of a function
\[
   u: \RR\times (0,w)\to \RR
\]
with boundary values
\[
   u(x,0) =
   \begin{cases} 
   +\infty &\text{if $x<0$} \\
   -\infty &\text{if $x>0$}
   \end{cases}
\quad
,
\quad
   u(x,w) =
   \begin{cases} 
   -\infty &\text{if $x<\hat{x}$} \\
   +\infty &\text{if $x>\hat{x}$}
   \end{cases}
\]
for some $\hat{x}\in\RR$.
Let $\widehat{M}$ be the surface obtained from $M$ by repeated Schwarz reflection,
and let $\Ll$ be the line through $(0,0,0)$ and $(\hat{x},w,0)$.
Then
\begin{enumerate}[\upshape (1)]
\item\label{helicoid-flat-item}
If $p_i= (x_i,y_i,z_i)\in M$ and if $|x_i|\to -\infty$,
then $M-p_i$ converges smoothly to the plane $y=0$.
The unit normal $\nu(p_i)$ converges to $\ee_2$ if $x_i\to -\infty$ and to $-\ee_2$ if $x_i\to \infty$.
\item\label{helicoid-curvature-item}
    $M$ has negative Gauss curvature at every point.
\item\label{helicoid-gauss-image-item}
 The Gauss map is a diffeomorphism from $M$ onto $\SS^{2+}\setminus\{\ee_2,-\ee_2\}$,
where $\SS^{2+}$ is the closed upper hemisphere.
\item\label{helicoid-periodic-item}
 The surface $\widehat{M}$ is singly periodic with period $2(\hat{x},w,0)$.  
If $p_i \in \widehat{M}$ and if $\dist(p_i,\Ll)\to\infty$, then 
then $M-p_i$ converges smoothly to the parallel planes $y=nw$, $n\in \ZZ$.
\item\label{helicoid-w-item}
            $w<\pi$.
\item\label{helicoid-hat-x-item} $\hat{x}>0$.
\end{enumerate}
\end{theorem}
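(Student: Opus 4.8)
The plan is to treat the six assertions in the order (2), (1), (3), (4), (5), (6); the first four are essentially repetitions of arguments already carried out for Scherk translators and scherkenoids, while (5)--(6) are where the genuine difficulty lies. Assertion (2) is immediate: the strip $\RR\times(0,w)$ together with the prescribed boundary values is \emph{exactly} the configuration of Theorem~\ref{other-negative-theorem}, with $I_1=(0,\infty)\times\{0\}$ and $I_2=(-\infty,\hat x)\times\{w\}$ (the leftward-pointing-ray case, whose convex hull is the full strip), so that $u=-\infty$ on $I_1\cup I_2$ and $u=+\infty$ on $J=((-\infty,0)\times\{0\})\cup((\hat x,\infty)\times\{w\})$; hence $K<0$ on all of $M$. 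For Assertion (1), when $x_i\to+\infty$ I restrict $M$ to a half-strip $(a,\infty)\times(0,w)$ with $a>\max(0,\hat x)$, on which $u(\cdot,0)\equiv-\infty$ and $u(\cdot,w)\equiv+\infty$, and apply Theorem~\ref{vrock} verbatim to get $M-p_i\to\{y=0\}$ with $\nu(p_i)\to-\ee_2$. When $x_i\to-\infty$ I first apply the isometry $R\colon(x,y,z)\mapsto(-x,w-y,z)$ of the Ilmanen metric: $R(M)$ is a graph over the same strip whose boundary values on a right half-strip are $-\infty$ on the bottom and $+\infty$ on the top, so Theorem~\ref{vrock} applies to $R(M)$; transporting the conclusion back and using that $dR=\operatorname{diag}(-1,-1,1)$ fixes the plane $\{y=0\}$, preserves the upward normal, and sends $(a,b,c)\mapsto(-a,-b,c)$, I get $M-p_i\to\{y=0\}$ with $\nu(p_i)\to\ee_2$.

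Assertion (3) follows the proof of Theorem~\ref{scherk-gauss-map-theorem} almost word for word. Along the vertical boundary lines $\vec H=0$, hence $K\le0$ there; the Hopf boundary lemma applied to the positive Jacobi field $\nu\cdot\ee_3$ forces the second fundamental form to be nonzero at the boundary, hence $K<0$ there as well, so $\nu$ is a local diffeomorphism on all of $M$. I then compactify $M$ by adjoining its ends, obtaining a closed disk $\widehat M$; $\nu$ extends continuously to $\widehat M$, with the two ends at $x=\pm\infty$ collapsed to $\mp\ee_2$ by Assertion (1), and $\nu$ restricts to a degree-one map $\partial\widehat M\to\partial\SS^{2+}$ onto the equator. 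The elementary topology used in Theorem~\ref{scherk-gauss-map-theorem} then makes $\nu\colon\widehat M\to\SS^{2+}$ a homeomorphism, a diffeomorphism off the two collapsed points; restricting to $M$ gives the diffeomorphism onto $\SS^{2+}\setminus\{\ee_2,-\ee_2\}$.

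For Assertion (4), since $\partial M$ consists of the two vertical lines over $(0,0)$ and $(\hat x,w)$, repeated Schwarz reflection (rotation by $\pi$ about a vertical line is an Ilmanen isometry) produces a complete translator $\widehat M$ without boundary. The composition $\rho_2\circ\rho_1$ of the two half-turns is the translation by $2(\hat x,w,0)$, so $\widehat M$ is invariant under it, and as $M$ is a fundamental piece, $\widehat M$ is singly periodic with this period. For the asymptotic statement, given $p_i\in\widehat M$ with $\dist(p_i,\Ll)\to\infty$ I use periodicity to bring $p_i$ into the slab $\{0\le y<2w\}$; since the period is parallel to $\Ll$ the distance is unchanged, so the horizontal coordinate of $p_i$ must now tend to $\pm\infty$. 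Applying Assertion (1) inside the reflected copy of $M$ that contains $p_i$, and to the copies occupying the neighboring slabs (each an exact reflection of the previous one), shows that $\widehat M-p_i$ converges smoothly to the family of equally spaced vertical planes $\{y=nw\}$, $n\in\ZZ$.

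The hard part will be (5), and with it (6). For (5) the plan is to adapt Proposition~\ref{no-thick-scherks}: assuming $w\ge\pi$, place a grim reaper surface $g$ over a sub-strip $\RR\times(a,a+\pi)\subseteq\RR\times(0,w)$; on its horizontal edges $g\to-\infty$ while $u$ stays finite, so $g-u\to-\infty$ there, and the asymptotic plane structure of Assertion (1) controls $g-u$ as $x\to\pm\infty$. One then slides $g$ (chooses $a$) so that $g-u$ tends to $-\infty$ along the entire boundary of its non-compact domain \emph{and} as $x\to\pm\infty$, forcing $g-u$ to attain an interior maximum and contradicting the strong maximum principle for two translators; the borderline case $w=\pi$ needs the additional comparison-by-translation argument used for the $w=\pi$ case of Proposition~\ref{no-thick-scherks}. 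The reason this is not an immediate corollary of Proposition~\ref{no-thick-scherks} is precisely that the domain of $u$ is unbounded, so the behavior of $g-u$ at the two ends $x=\pm\infty$ must be pinned down carefully from Assertion (1) before any maximum principle can be applied. Finally, Assertion (6) (that $\hat x>0$) I expect to come from one further strong-maximum-principle argument: if $\hat x\le0$, comparing $M$ with its reflection in a suitable vertical plane — or sliding a tilted grim reaper against it — should produce a forbidden interior tangency of two translators. Together with (5), this completes the proof.
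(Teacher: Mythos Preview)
Your treatment of Assertions~(1)--(4) matches the paper's in substance. For~(3) the paper takes a slightly cleaner route: rather than compactify, it observes that any divergent sequence in $M\setminus\partial M$ has $\nu(p_i)$ accumulating on the equator (by~(1) and by the boundary values of $u$), so $\nu\colon M\setminus\partial M\to\SS^{2+}\setminus\partial\SS^{2+}$ is a proper local diffeomorphism onto a simply connected target, hence a diffeomorphism; one then checks $\nu$ maps $\partial M$ diffeomorphically onto the equator minus $\{\pm\ee_2\}$. Your compactification argument can be made to work but needs more care than you indicate, since divergent sequences in $M$ include those approaching the four boundary rays of the strip, not only those with $|x_i|\to\infty$.

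The genuine gap is in~(5). Assertion~(1) tells you that $M-p_i$ converges to a vertical plane, equivalently that $|\nabla u|\to\infty$ with $\nu\to\mp\ee_2$; it tells you \emph{nothing} about $u(x,y)$ for fixed $y$ as $x\to\pm\infty$. For large $x>0$ the function $u(x,\cdot)$ rises steeply from $-\infty$ to $+\infty$, so whether $u(x,y)\to+\infty$ or $-\infty$ depends on whether $y$ lies above or below an unknown transition level, and there is no reason any sub-strip of width $\pi$ sits on the correct side of this level at \emph{both} ends $x\to\pm\infty$. Hence you cannot force $g-u\to-\infty$ at infinity, and the maximum-principle argument does not close. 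The paper's proof of~(5) is entirely different and uses~(3): since $\nu$ is onto the open upper hemisphere there is a point $p_0=(x_0,y_0)$ with $Du(p_0)=0$, so a grim reaper over $S=\RR\times(y_0-\tfrac\pi2,y_0+\tfrac\pi2)$ is tangent to $M$; Theorem~\ref{other-tangency-theorem} (applied with $I_1=(0,\infty)\times\{0\}$ and the leftward ray $I_2=(-\infty,\hat x)\times\{w\}$) forces $S$ to meet both $I_1$ and $I_2$, giving $y_0-\tfrac\pi2<0$ and $y_0+\tfrac\pi2>w$, hence $w<\pi$. Your plan for~(6) is likewise only a hope and runs into the same non-compactness obstacle; the paper instead integrates the translator equation over $[-a,a]\times(0,w)$ and applies the divergence theorem: the flux through the horizontal edges is exactly $-2\hat x$, the flux through $x=\pm a$ is $\int(-\nu\cdot\ee_1)\,dy\to 0$ by~(1), and the right side tends to $-\iint(1+|\nabla u|^2)^{-1/2}$, yielding
\[
  \hat x=\tfrac12\iint_{\RR\times(0,w)}(1+|\nabla u|^2)^{-1/2}\,dx\,dy>0.
\]
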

 \begin{figure}[htbp]
\begin{center}
\includegraphics[height=.36\textheight]{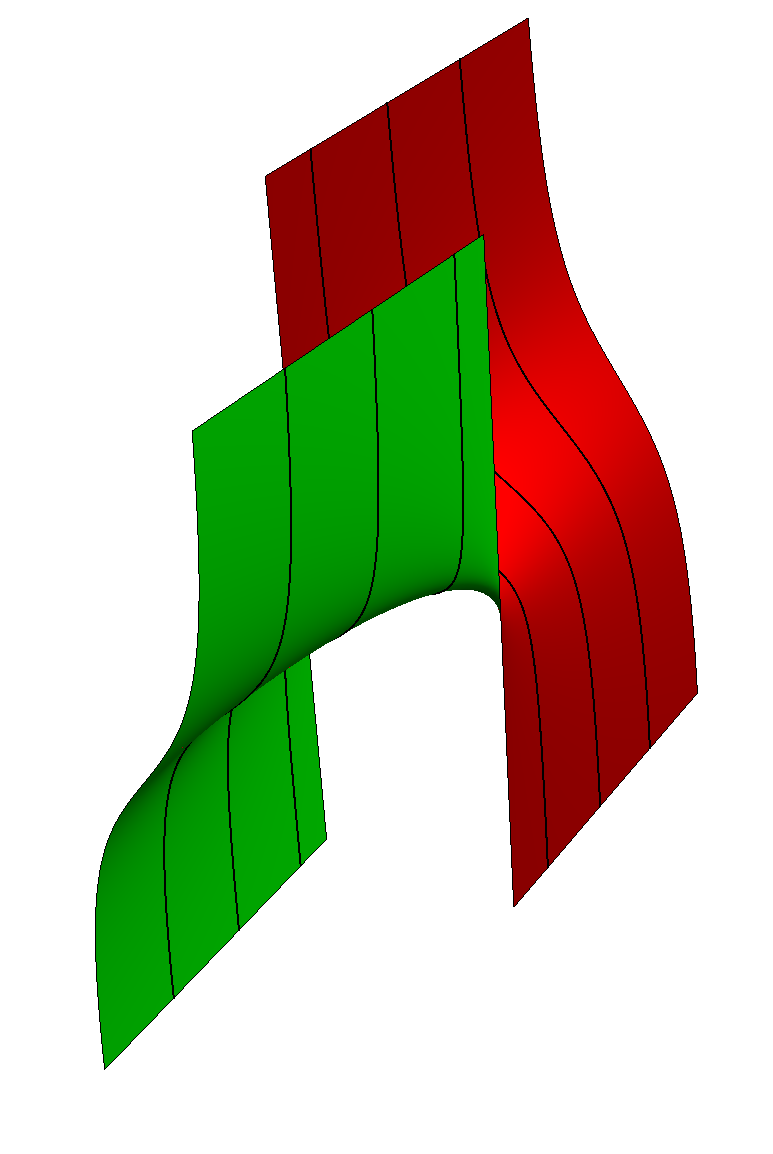}
\caption{The graph of a function $u$ for a helicoid-like translator in Theorem \ref{helicoid-theorem} 
with  $w= \pi/2$.}
\label{default-3}
\end{center}
\end{figure}
\begin{remark}
By Theorem~\ref{pi-theorem}, there exists such an example for each $w$ with $0<w<\pi$;
indeed there exists an example with the additional property that $M$
is invariant under rotation by $\pi$ about the vertical line through $(\hat{x}/2,w/2)$.
We do not know any results about uniqueness.
\end{remark}

We say that the line $\Ll$ is the {\bf axis} of the helicoid-like surface $\widehat{M}$.
By contrast with the ordinary helicoid, the axis is not perpendicular to the planes~$y=nw$;
see Proposition~\ref{axis-proposition} below.
Also, it seems unlikely that $\widehat{M}$ contains the axis or any other non-vertical line.

\begin{proof}[Proof of Theorem~\ref{helicoid-theorem}]
Assertion~\ref{helicoid-flat-item} is a special case of Theorem~\ref{vrock}.
Assertion~\ref{helicoid-curvature-item} (negative curvature everywhere) is 
a special case of Theorem~\ref{other-negative-theorem}.

If $p_i$ is a divergent sequence in $M\setminus \partial M$, then (after passing to a subsequence)
$\nu(p_i)$ converges to a limit $\vv\in \SS^2$.  After passing to a further subsequence,
either $p_i$ diverges in $M$ or $p_i$ converges to a point in $\partial M$.
In the first case, $\vv$ is $\ee_2$ or $-\ee_2$ by Assertion~\ref{helicoid-flat-item} and by the boundary
values of $u$. In the second case, $\vv$ is also in the equator $\partial \SS^{2+}$ (since the components
of $\partial M$ are vertical lines).
Thus 
\[
    \nu:  M\setminus\partial M  \to \SS^{2+}\setminus \partial \SS^{2+} 
    \tag{*}
\]
is a proper immersion, i.e., a covering map.  Since the domain is connected and the target is 
simply connected, the map~\thetag{*} is a diffeomorphism.
Assertion~\ref{helicoid-gauss-image-item} follows, since $\nu$ maps $\partial M$
diffeomorphically onto $(\partial \SS^{2+})\setminus \{\ee_2,-\ee_2\}$.

Assertion~\ref{helicoid-periodic-item} follows easily from the preceding assertions.

We now prove Assertion~\ref{helicoid-w-item}: $w<\pi$.
By Assertion~\ref{helicoid-gauss-image-item}, there is a point $p_0=(x_0,y_0)$
such that $Du(x_0,y_0)=0$.  Thus there is a grim reaper surface over the strip 
\[
    S = \RR\times (y_0-\tfrac{\pi}2, y_0+ \tfrac{\pi}2)
\]
that is tangent to $M$.  By Theorem~\ref{other-tangency-theorem}, the strip $S$
must intersect both of the boundary rays on which $u= -\infty$.  Thus
\[
    y_0-\tfrac{\pi}2 < 0  \quad\text{and}\quad w < y_0+\tfrac{\pi}2,
\]
which implies that $w< \pi$.

Assertion~\ref{helicoid-hat-x-item} ($\hat{x}>0$) follows from Proposition~\ref{axis-proposition} below.
\end{proof}

\begin{proposition}\label{axis-proposition}
Let $u$ and $\hat{x}$ be as in Theorem~\ref{helicoid-theorem}.  
Then
\[
   \hat{x} = \frac12\iint_{\RR\times (0,\pi)} \frac1{\sqrt{1+|\nabla u|^2}}\,dx\,dy.
\]
In particular, $\hat{x}>0$.
\end{proposition}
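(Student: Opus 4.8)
The plan is to integrate the translator equation in divergence form over a large truncation of the strip and apply the divergence theorem, exactly as in the proof of Theorem~\ref{jenkins-serrin-like-theorem}. Write the equation as $-\ddiv X = (1+|\nabla u|^2)^{-1/2}$ with $X := \nabla u/\sqrt{1+|\nabla u|^2}$, and for $R>|\hat{x}|$ set $\Omega_R=(-R,R)\times(0,w)$. Then
\[
  \iint_{\Omega_R}\frac{1}{\sqrt{1+|\nabla u|^2}}\,dx\,dy \;=\; -\int_{\partial\Omega_R}X\cdot n\,ds ,
\]
with $n$ the outward unit conormal. As in Theorem~\ref{jenkins-serrin-like-theorem}, the flux of $X$ out of a boundary edge on which $u\equiv+\infty$ equals the length of the edge, and out of an edge on which $u\equiv-\infty$ it equals minus the length; to handle the infinite boundary values carefully one first integrates over $(-R,R)\times(\eps,w-\eps)$, where $u$ is smooth up to the boundary, and then lets $\eps\to0$, using that on a horizontal edge $X\cdot n\to\mp1$ wherever $u\to\pm\infty$ (a consequence of the curvature estimates up to the boundary, Remark~\ref{dichotomy}).

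The next step is the flux bookkeeping on the four sides of $\Omega_R$. The bottom edge $(-R,R)\times\{0\}$ splits into the length-$R$ piece $\{x<0\}$ with $u=+\infty$ and the length-$R$ piece $\{x>0\}$ with $u=-\infty$, contributing $R-R=0$ to $\int_{\partial\Omega_R}X\cdot n$. The top edge $(-R,R)\times\{w\}$ splits into the length-$(R+\hat{x})$ piece $\{x<\hat{x}\}$ with $u=-\infty$ and the length-$(R-\hat{x})$ piece $\{x>\hat{x}\}$ with $u=+\infty$, contributing $-(R+\hat{x})+(R-\hat{x})=-2\hat{x}$. On the vertical edges $\{\pm R\}\times(0,w)$ the integrand is $\mp X_1(\pm R,y)$; since the upward unit normal is $\nu=(-u_x,-u_y,1)/\sqrt{1+|\nabla u|^2}$ we have $X_1=-\nu_1$ with $|X_1|\le1$, and by Theorem~\ref{helicoid-theorem}\eqref{helicoid-flat-item}, $\nu_1\to0$ along any sequence of points of these edges with $x\to\mp\infty$; hence, by dominated convergence over $y\in(0,w)$, the two vertical contributions tend to $0$ as $R\to\infty$.

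Letting $R\to\infty$, the left side of the identity increases monotonically to $\iint_{\RR\times(0,w)}(1+|\nabla u|^2)^{-1/2}$ while the right side tends to $-(0-2\hat{x}+0)=2\hat{x}$, which gives $\hat{x}=\frac12\iint_{\RR\times(0,w)}(1+|\nabla u|^2)^{-1/2}\,dx\,dy$; since the integrand is everywhere strictly positive, $\hat{x}>0$ (so this also re-proves Theorem~\ref{helicoid-theorem}\eqref{helicoid-hat-x-item}). The one genuinely delicate point is the value $\pm|E|$ of the flux across an infinite boundary edge $E$ — near such an edge $M$ becomes vertical and $\nu$ becomes the outward horizontal conormal — but this is standard Jenkins--Serrin theory and is exactly the fact already invoked, without further comment, in the proof of Theorem~\ref{jenkins-serrin-like-theorem}; the clean way to nail it down is the $(\eps,w-\eps)$ truncation together with the boundary curvature estimates. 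Everything else is bounded, routine computation.
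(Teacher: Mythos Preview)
Your proof is correct and follows essentially the same approach as the paper's: integrate the divergence-form translator equation over the rectangle $[-R,R]\times(0,w)$, identify the horizontal boundary contributions as $-2\hat{x}$, and use Theorem~\ref{helicoid-theorem}\eqref{helicoid-flat-item} to show the vertical side integrals vanish as $R\to\infty$. You add an explicit $\eps$-truncation to justify the $\pm|E|$ flux values on the infinite edges, which the paper (like its proof of Theorem~\ref{jenkins-serrin-like-theorem}) simply takes for granted; otherwise the two arguments are identical.
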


\begin{proof}
Note that $u$ solves the translator equation
\[
   \Div \left( \frac{\nabla u}{\sqrt{1+ |\nabla u|^2}} \right) = - \frac{1}{\sqrt{1+ |\nabla u|^2}}.
\]
For $a>|\hat{x}|$, integrating over the rectangle $\Rr(a)=[-a,a]\times [0,w]$  
and using the divergence theorem gives
\begin{align*}
   &-2\hat{x}
   +
   \int_0^w \frac{\partial u/ \partial x}{\sqrt{1+|Du|^2}}(a,y)\,dy
   -
   \int_0^w \frac{\partial u/ \partial x}{\sqrt{1+|Du|^2}}(-a,y)\,dy
\\
   =
   &-\iint_{\Rr(a)} \frac1{\sqrt{1+ |Du|^2}}\,dx\,dy.
\end{align*}
In the notation of Theorem~\ref{helicoid-theorem}, the integrands on
the left are equal to
\[
    - \nu(a,y)\cdot \ee_1 \quad\text{and}\quad  \nu(-a,y)\cdot \ee_1
\]
which tend to $0$ uniformly
as $a\to\infty$ according to that theorem.
\end{proof}


\section{Pitchforks}\label{pitchfork-section}

\begin{theorem}\label{pitchfork-theorem}
Let $M$ be a smooth translator such that $\partial M$ is the $z$-axis $Z$ 
and such that $M\setminus \partial M$
is the graph of a function
\[
    u: \RR\times (0,w)\to \RR
\]
with boundary values
\[
   u(x,0) =
   \begin{cases} 
   +\infty &\text{if $x<0$} \\
   -\infty &\text{if $x>0$}
   \end{cases}
\quad
,
\quad
   u(\cdot,w) = -\infty.
\]
Then
\begin{enumerate}[\upshape (1)]
\item\label{pitchfork-width-item} $w\ge \pi$.
\item\label{pitchfork-curvature-item}
 $M$ has negative Gauss curvature everywhere.
\item\label{pitchfork-flat-item}
If $p_i= (x_i,y_i,z_i)\in M$ and if $x_i \to -\infty$,
then $M-p_i$ converges smoothly to the plane $y=0$.
\item\label{pitchfork-reaper-item}
As $a\to\infty$, 
\[
    u(a+x,y) - u(a,y)
\]
converges smoothly to the unique tilted grim reaper
$
  g_w: \RR\times (0,w)\to \RR
$
such that $g_w(0,w/2)=0$ and $\partial g_w/\partial x \le 0$.
\item\label{pitchfork-gauss-image-item}
The Gauss map gives a diffeomorphism from $M\setminus \partial M$ onto 
the region $\Rr=\Rr(w)$ in the upper hemisphere bounded by $C\cup C(w)$,
where $C$ is the equatorial semicircle
\[
  C = \{(x,y,0)\in \SS^2: x\ge 0\}
\]
and where $C(w)$ is the great semicircle that is the image of the graph of $g_w$ under its Gauss map:
\[
    C(w) =  \left\{ (x,y,z)\in \SS^2: \text{$z>0$ and $x = z\,\sqrt{(w/\pi)^2-1}$} \right\}.
\]
\end{enumerate}
\end{theorem}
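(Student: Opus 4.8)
The plan is to follow closely the argument for the scherkenoid (Theorem~\ref{scherkenoid-theorem}), disposing first of the three assertions that are immediate consequences of results already established. Assertion~\eqref{pitchfork-width-item}, together with the existence of a grim reaper limit at the right-hand end, follows from Theorem~\ref{half-reaper} applied to the restriction of $u$ to the half-strip $(0,\infty)\times(0,w)$, where $u$ has boundary value $-\infty$ on \emph{both} horizontal edges: that theorem yields $w\ge\pi$ and, after a suitable vertical translation, smooth convergence of $M$ as $x\to\infty$ to the graph of some tilted grim reaper $g\colon\RR\times(0,w)\to\RR$; normalizing so that $(0,w/2,0)\in\operatorname{graph}(g)$, we have $g=g_w$ or $g=g_w'$. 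Assertion~\eqref{pitchfork-curvature-item} is the special case of Theorem~\ref{other-negative-theorem} in which $I_1=(0,\infty)\times\{0\}$ and $I_2$ is the whole line $\RR\times\{w\}$, so that $\Omega=\RR\times(0,w)$ and the $+\infty$ edge $J$ is the ray $(-\infty,0)\times\{0\}$, exactly matching the boundary data of $u$. Assertion~\eqref{pitchfork-flat-item} follows from Theorem~\ref{vrock}: applying the isometry $(x,y,z)\mapsto(-x,w-y,z)$ to $M\cap\{x<0\}$ produces a translating graph over $(0,\infty)\times(0,w)$ with boundary values $-\infty$ on $y=0$ and $+\infty$ on $y=w$, to which Theorem~\ref{vrock} applies; undoing the isometry gives $M-p_i\to\{y=0\}$ (with $\nu(p_i)\to\ee_2$) whenever $x_i\to-\infty$.

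It remains to identify the grim reaper in Assertion~\eqref{pitchfork-reaper-item}; when $w=\pi$ one has $g_w=g_w'$ and there is nothing to do, so assume $w>\pi$ and suppose for contradiction that $M$ is asymptotic to $g_w'$. Mimicking the proof of Theorem~\ref{scherkenoid-theorem}\eqref{noid-reaper-item}, I would compare $M$ with a grim reaper surface $h$ over the sub-strip $\RR\times(w/2-\pi/2,\,w/2+\pi/2)$, which lies compactly inside $\RR\times(0,w)$ since $w>\pi$. Then $h-u\to-\infty$ along the two horizontal edges of the sub-strip (where $h\to-\infty$ while $u$ stays finite) and as $x\to+\infty$ (where $g_w'$, hence $u$, grows linearly while $h$ is bounded). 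If one also knows that $h-u\to-\infty$ as $x\to-\infty$, then $h-u$ attains an interior maximum, contradicting the strong maximum principle; this last input is exactly what the flat asymptotics of Assertion~\eqref{pitchfork-flat-item} should provide, the graph of $u$ rolling up against the vertical plane $\{y=0\}$ and thereby forcing $u$ to be large in the interior of the strip as $x\to-\infty$. Hence $g=g_w$, and the smooth convergence already obtained is the content of Assertion~\eqref{pitchfork-reaper-item}.

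Assertion~\eqref{pitchfork-gauss-image-item} now follows by the multiplicity argument used for Theorem~\ref{scherkenoid-theorem}\eqref{noid-gauss-image-item}. Let $\Gamma\subset\SS^2$ be the set of subsequential limits of $\nu(p_i)$ over divergent sequences $p_i\in M\setminus\partial M$; the flat asymptotics contribute the point $\ee_2$, the grim reaper asymptotics sweep out the great semicircle $C(w)$, and the behaviour of $\nu$ along the boundary line $Z$ together with the behaviour near the horizontal edges $y=0$ and $y=w$ sweeps out the equatorial semicircle $C$, so that $\Gamma=C\cup C(w)$ is a simple closed curve. By Assertion~\eqref{pitchfork-curvature-item} the Gauss map $\nu$ is a smooth immersion on $M\setminus\partial M$, so $v\mapsto\#\nu^{-1}(v)$ is locally constant on $\SS^2\setminus\Gamma$; it is $0$ on the lower hemisphere, hence on all of $\SS^2\setminus\overline{\Rr}$. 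Therefore $\nu\colon M\setminus\partial M\to\Rr$ is a proper immersion, i.e., a covering map, and, the domain being connected and $\Rr$ simply connected, a diffeomorphism.

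The step I expect to be the real obstacle is the elimination of $g_w'$ in Assertion~\eqref{pitchfork-reaper-item}. For the scherkenoid this came essentially for free, because $\Omega(\alpha,w)$ is bounded on the left and the grim reaper comparison has no end at $x=-\infty$ there; the pitchfork lives over the full strip $\RR\times(0,w)$, so one genuinely has to extract the needed lower bound for $u$ at $x=-\infty$ from the flat asymptotics. Making this precise requires understanding the blow-down of $M$ to $\{y=0\}$ as $x\to-\infty$ and, in particular, controlling where the level set $\{u=0\}$ goes, so that one can choose a sub-strip on which $u\to+\infty$ uniformly. The remaining ingredients — determining exactly which arc of the equator is traced by $\nu|_{Z}$ and which normals are picked up near the horizontal edges — are routine but should be verified carefully, since they fix the precise shape of $\Gamma$ and hence of $\Rr$.
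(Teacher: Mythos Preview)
Your handling of Assertions~\eqref{pitchfork-width-item}, \eqref{pitchfork-curvature-item}, \eqref{pitchfork-flat-item}, and \eqref{pitchfork-gauss-image-item} is the same as the paper's: the same auxiliary results (Theorem~\ref{half-reaper}, Theorem~\ref{other-negative-theorem}, Theorem~\ref{vrock}, and the proper-immersion covering-map argument) are invoked in the same way.

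The gap is exactly where you flag it, and it is genuine. The maximum-principle comparison with an untilted grim reaper $h$ over $\RR\times(w/2-\pi/2,\,w/2+\pi/2)$ does not close at the left end, and the flat asymptotics of Assertion~\eqref{pitchfork-flat-item} do \emph{not} supply the missing input. Theorem~\ref{vrock} tells you that $M-p_i\to\{y=0\}$ and $\nu(p_i)\to\ee_2$ whenever $x_i\to-\infty$; this forces $|\nabla u|\to\infty$ but says nothing about the sign or size of $u$ itself on the interior of the strip. It does not rule out $u(x,y_0)\to-\infty$ for some $y_0$ in the sub-strip as $x\to-\infty$, in which case $h-u\to+\infty$ there and no interior maximum is produced. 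Controlling where the zero level set goes, as you propose, is not visibly easier than the original question.

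The paper sidesteps the left end entirely. Rather than a global barrier, it proves directly (Lemma~\ref{pitchfork-lemma}) that $\partial u/\partial x\le 0$ on the band $\RR\times(\pi,w)$, which immediately excludes $g_w'$. The mechanism is Theorem~\ref{other-tangency-theorem}\eqref{other-tangency-one}: any line $L$ meeting the negative $x$-axis at distance $>\pi$ from the origin spans a width-$\pi$ strip $S$ disjoint from the ray $I_1=[0,\infty)\times\{0\}$, so no grim reaper over $S$ can be tangent to $M$; hence $\nabla u\cdot\vv<0$ along $L$ for $\vv$ tangent to $L$ with $v_2>0$, and letting the slope of $L$ tend to $0$ yields $\partial u/\partial x\le 0$ for $y>\pi$. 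Note that this uses precisely the case of Theorem~\ref{other-tangency-theorem} (strip disjoint from the \emph{ray} $I_1$) that remains valid when $I_2$ is a full line---the configuration you are in.
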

\begin{figure}[htbp]
\begin{center}
\includegraphics[width=.46\textwidth]{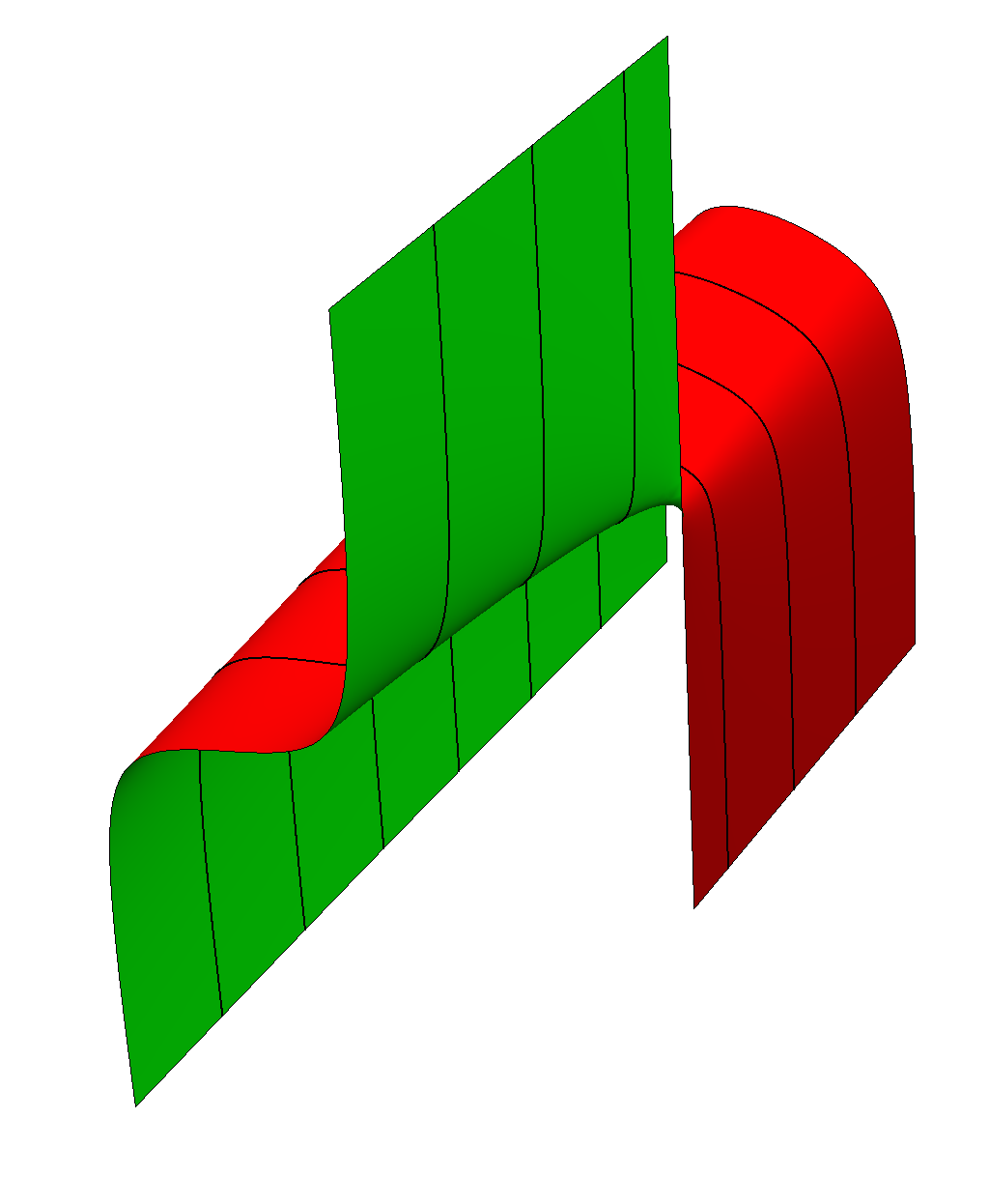}
\caption{The graph of a pitchfork function $u$ in Theorem \ref{pitchfork-theorem} with  $w= \pi$.}
\label{default-2}
\end{center}
\end{figure}

\begin{corollary}
The Gauss map is a diffeomorphism from $M$ onto
\[
     \Rr \cup (C\setminus \{\ee_2,-\ee_2\}),
\]
and the integral of the absolute Gauss curvature of $M$ is $2\arcsin(\pi/w)$.
If $\widehat{M}$ is the complete surface obtained from $M$ by Schwarz reflection
about the $z$-axis, then the Gauss map gives a diffeomorphism from $\widehat{M}$
onto 
\[
     \Rr \cup (C\setminus \{\ee_2,-\ee_2\}) \cup \widetilde{\Rr},
\]
where $\widetilde{\Rr}$ is the image of $\Rr$ under $(x,y,z)\mapsto (x,y,-z)$.
\end{corollary}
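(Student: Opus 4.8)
The plan is to promote the diffeomorphism $\nu\colon M\setminus\partial M\to\Rr$ of Theorem~\ref{pitchfork-theorem}\eqref{pitchfork-gauss-image-item} to a diffeomorphism on all of $M$ by analysing $\nu$ on the boundary line $\partial M=Z$, and then read off the total curvature and the statement about $\widehat M$. First the elementary facts. Since $M\setminus\partial M$ is a graph over $\RR\times(0,w)$ and $\partial M=Z\subset\{y=0\}$, we have $M\subset\{0\le y\le w\}$. By Theorem~\ref{pitchfork-theorem}\eqref{pitchfork-curvature-item} the curvature $K$ is negative at every point of $M$, so $\nu$ is an immersion everywhere; in particular $\nu|_Z$ is a regular curve in $\SS^2$. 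As $Z$ is vertical, $\nu(Z)$ lies in the equator, and by continuity $\nu(Z)\subset\overline\Rr$; the equator meets $\overline\Rr$ only along $C$, so $\nu(Z)\subset C$. Moreover $\nu$ omits $\ee_2$ and $-\ee_2$ on $Z$: if $\nu(p)=\pm\ee_2$ for some $p\in Z$, the tangent plane to $M$ at $p$ would be $\{y=0\}$, so the coordinate function $y|_M$ would have a critical point at $p$ with $\det\operatorname{Hess}(y|_M)(p)=K(p)<0$, incompatible with $y|_M$ vanishing on $Z$ and being positive on $M\setminus Z$. Hence $\nu(Z)\subset C\setminus\{\ee_2,-\ee_2\}$, and since $\nu|_Z$ is a regular curve in the arc $C$ with connected domain $Z\cong\RR$, it is strictly monotone, hence injective, with image an open subarc $J$ of $C\setminus\{\ee_2,-\ee_2\}$.

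The heart of the matter, and the step I expect to be the main obstacle, is showing $J=C\setminus\{\ee_2,-\ee_2\}$, i.e. that no equatorial point of $\overline\Rr$ is missed. Fix $c\in C\setminus\{\ee_2,-\ee_2\}$. Since $c\in\overline\Rr\setminus\Rr$ there are $q_i\in M\setminus\partial M$ with $\nu(q_i)\to c$, and no subsequence of $q_i$ converges in $M\setminus\partial M$. Passing to a subsequence, either $q_i\to q\in Z$, so $c=\nu(q)\in J$, or $q_i$ diverges in $M$, and I claim that then $\lim\nu(q_i)\in\overline{C(w)}$, which is absurd because $\overline{C(w)}$ meets the equator only in $\{\ee_2,-\ee_2\}$. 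To prove the claim I distinguish cases on the projections $(x_i,y_i)$ of $q_i$: if $|x_i|\to\infty$, then by Theorem~\ref{pitchfork-theorem}\eqref{pitchfork-flat-item} (flat limit, forcing $\nu\to\ee_2$) or \eqref{pitchfork-reaper-item} (grim reaper limit, forcing $\nu$ into $\overline{C(w)}$) we are done; if the $x_i$ stay bounded then $z_i=u(x_i,y_i)\to\pm\infty$ and, after a subsequence, $(x_i,y_i)$ converges either to an interior point of a horizontal edge of the strip — where the graph of $u$ looks like a vertical wall and a direct computation from the $\pm\infty$ boundary values gives $\nu\to\pm\ee_2$ — or to the corner $(0,0)$, the delicate subcase. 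Here $q_i$ escapes to infinity ``along $Z$'': I would translate the localized pieces $M\cap\BB(q_i,R)$ so that $q_i$ sits at the origin and show, using the curvature estimates, that they converge to a vertical half\nobreakdash-plane. The point is that $u$ is bounded on every compact subset of the open strip, so for large $i$ the part of the domain on which $u\approx z_i$ and $|x|$ is bounded collapses onto a subsegment of the ray $\{\,\mp x>0,\ y=0\,\}$; hence the limit surface lies in $\{y=0\}$, has boundary $Z$, and is therefore the half\nobreakdash-plane $\{y=0,\ \pm x\le 0\}$, whose upward normal — determined by the sign of the boundary value of $u$ along that ray — is $\pm\ee_2$. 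So $\lim\nu(q_i)=\pm\ee_2\ne c$, the divergent alternative is excluded, and $c\in J$. Thus $J=C\setminus\{\ee_2,-\ee_2\}$; equivalently $\nu(0,0,z)\to\ee_2$ as $z\to+\infty$ and $\nu(0,0,z)\to-\ee_2$ as $z\to-\infty$.

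It then follows quickly that $\nu$ is a continuous bijection from $M$ onto $\Rr\cup\bigl(C\setminus\{\ee_2,-\ee_2\}\bigr)$ — a diffeomorphism on $M\setminus\partial M$ onto $\Rr$ and on $Z$ onto $C\setminus\{\ee_2,-\ee_2\}$, with disjoint images — and its inverse is continuous, since a sequence in $\Rr$ converging to a point of $C\setminus\{\ee_2,-\ee_2\}$ pulls back to a sequence in $M\setminus\partial M$ that cannot diverge (its $\nu$\nobreakdash-limit would otherwise lie in $\overline{C(w)}$). Being a homeomorphism and an immersion, $\nu$ is a diffeomorphism of $M$ onto $\Rr\cup\bigl(C\setminus\{\ee_2,-\ee_2\}\bigr)$. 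Since $K<0$ we have $|\det d\nu|=|K|$, so by the area formula $\int_M|K|\,dA=\area\bigl(\nu(M)\bigr)=\area(\Rr(w))=2\arcsin(\pi/w)$, the last equality being the lune computation already used in Corollary~\ref{scherkenoid-corollary}.

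Finally, for $\widehat M=M\cup\rho(M)$ with $\rho$ the rotation by $\pi$ about $Z$: $\widehat M$ is a complete, simply connected translator without boundary (Theorem~\ref{pi-theorem}), hence orientable; normalize its unit normal $\nu_{\widehat M}$ to equal $\nu$ on $M$. For $p\in Z$ the normal $\nu(p)$ is horizontal, so $\rho_*\nu(p)=-\nu(p)$; matching across $Z$ forces $\nu_{\widehat M}(\rho(q))=\bigl(\nu_1(q),\nu_2(q),-\nu_3(q)\bigr)$ for $q\in M$. Since points of $C$ lie in the equator and are fixed by $(x,y,z)\mapsto(x,y,-z)$, this gives $\nu_{\widehat M}(\rho(M))=\widetilde\Rr\cup\bigl(C\setminus\{\ee_2,-\ee_2\}\bigr)$, hence $\nu_{\widehat M}(\widehat M)=\Rr\cup\bigl(C\setminus\{\ee_2,-\ee_2\}\bigr)\cup\widetilde\Rr$; and $\nu_{\widehat M}$ is a diffeomorphism onto this set by the same argument as for $M$, since $\widehat M$ has negative curvature everywhere (so $\nu_{\widehat M}$ is an immersion) and $\nu_{\widehat M}$ restricts to diffeomorphisms $M\to\Rr\cup(C\setminus\{\ee_2,-\ee_2\})$ and $\rho(M)\to\widetilde\Rr\cup(C\setminus\{\ee_2,-\ee_2\})$ that agree on $Z\leftrightarrow C\setminus\{\ee_2,-\ee_2\}$.
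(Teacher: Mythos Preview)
Your argument is correct and complete; there is no gap. The paper itself gives no proof of this corollary, treating it as an immediate consequence of Theorem~\ref{pitchfork-theorem}, so you have supplied exactly the kind of detail the paper omits.

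Two of your steps can be streamlined. First, to rule out $\nu(p)=\pm\ee_2$ for $p\in Z$ you do not need the Hessian argument: since $\nu|_Z$ is a regular curve with image in the closed arc $C$, and $\pm\ee_2$ are the endpoints of $C$, hitting an endpoint would force $(\nu|_Z)'$ to vanish there, contradicting the immersion property. Second, the surjectivity $J=C\setminus\{\pm\ee_2\}$ can be obtained more directly by computing the two limits $\lim_{z\to\pm\infty}\nu(0,0,z)$: translate $M$ down by $z$ and let $z\to+\infty$; by the curvature estimates and the boundary datum $u=+\infty$ on $\{x<0,\,y=0\}$, the surfaces $M-(0,0,z)$ converge smoothly to the half-plane $\{x\le 0,\,y=0\}$ with boundary $Z$, and the sign of the limiting normal is read off from $\nu_2=-u_y/\sqrt{1+|Du|^2}>0$ near that edge, giving $\nu(0,0,z)\to\ee_2$. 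The case $z\to-\infty$ is symmetric and yields $-\ee_2$. This is exactly your ``delicate subcase'' but applied to the single sequence $(0,0,z)$, so your more elaborate case analysis on arbitrary interior sequences, while valid, is not needed to pin down the endpoints of $J$.

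Your treatment of the Schwarz reflection and the transformation law $\nu_{\widehat M}(\rho(q))=(\nu_1(q),\nu_2(q),-\nu_3(q))$ is correct; this is the right way to reconcile the two orientations across $Z$, and the diffeomorphism onto $\Rr\cup(C\setminus\{\pm\ee_2\})\cup\widetilde\Rr$ follows as you say.
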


\begin{remark} By Theorem~\ref{pi-theorem},
 there exists at least one such an example $M$ for every $w\ge \pi$.
We do not know whether $M$ is unique.
\end{remark}

\begin{proof}
Assertion~\ref{pitchfork-width-item} ($w\ge \pi$) follows from Theorem~\ref{half-reaper}.
Assertion~\ref{pitchfork-curvature-item} (negative curvature everywhere) is 
a special case of Theorem~\ref{other-negative-theorem}.
Assertion~\ref{pitchfork-flat-item} follows from Theorem~\ref{vrock}.

Concerning Assertion~\ref{pitchfork-reaper-item}, we see from Theorem~\ref{half-reaper} that
\[
   u(a+x,y) - u(a,w/2)
\]
converges smoothly as $a\to\infty$ either to $g_w$ or to the corresponding
upward-sloping grim reaper
\[
   g_w': \RR\times (0,w)\to \RR
\]
with $\partial g_w'/\partial x \ge 0$.  If $w=\pi$, then $g_w=g_w'$.
Thus to prove Assertion~\ref{pitchfork-reaper-item},
it suffices to prove for $w>\pi$ that there is a $\hat{y}\in (0,w)$ such that
\[
  \lim_{x\to\infty} \pdf{u}x(x,\hat{y}) \le 0.
\]
This is done in Lemma~\ref{pitchfork-lemma} below.

The proof of Assertion~\ref{pitchfork-gauss-image-item} about the Gauss map image 
is identical to the proof of the corresponding assertion for the Scherkenoid
(Theorem~\ref{scherkenoid-theorem}.)
\end{proof}


\begin{lemma}\label{pitchfork-lemma}
Let $L$ be a straight line that intersects the negative $x$-axis and whose distance from the origin is $> \pi$.
Let $\vv=(v_1,v_2)$ be a vector tangent to $L$ with $v_2>0$.  
Then 
\begin{equation}\label{tilted-line}
    \nabla u\cdot \vv<0 \quad\text{at all points $(x,y)\in L$ where $0<y<w$,}
\end{equation}
and
\begin{equation}\label{untilted}
\pdf{u}x \le 0 \quad\text{at all points in $\RR\times (\pi,w)$}.
\end{equation}
\end{lemma}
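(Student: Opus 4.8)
The plan is to deduce both parts from Theorem~\ref{other-tangency-theorem}\eqref{other-tangency-one}. The pitchfork $M$ fits the setup of Theorems~\ref{other-negative-theorem} and~\ref{other-tangency-theorem} with $I_1=(0,\infty)\times\{0\}$, $I_2=\RR\times\{w\}$, $\Omega=\RR\times(0,w)$, and $J=(-\infty,0)\times\{0\}$, so Theorem~\ref{other-tangency-theorem}\eqref{other-tangency-one} tells us that no tilted grim reaper whose domain strip is disjoint from $I_1$ is tangent to $M$. I would first reduce~\eqref{tilted-line} to the statement that $\nabla u\cdot\vv$ never vanishes on $L\cap(\RR\times(0,w))$: since $L$ meets $\{y=0\}$ at a point $(-a,0)$ with $a\ge\dist(0,L)>\pi$, which lies on $J$ (where $u=+\infty$), and meets $\{y=w\}$ at a point where $u=-\infty$, the restriction of $u$ to $L$, parametrized in the direction $\vv$, tends to $+\infty$ at the $\{y=0\}$ end of the interval $L\cap(\RR\times(0,w))$ and to $-\infty$ at the $\{y=w\}$ end; hence if $\nabla u\cdot\vv$ is nowhere zero there, $u|_L$ is strictly monotone, so strictly decreasing, which is~\eqref{tilted-line}.

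So suppose for contradiction that $\nabla u(p')\cdot\vv=0$ at some $p'\in L\cap(\RR\times(0,w))$; after scaling take $|\vv|=1$, and let $\mathbf n$ be the unit normal to $L$ pointing toward the origin, so that $\nabla u(p')=s_0\mathbf n$ for some $s_0\in\RR$. The barrier would be the grim reaper surface $g$ over the open strip $S$ of width $\pi$ parallel to $L$ (hence containing $L$, since $p'\in L$), positioned so that $p'$ lies at the unique height in $S$ at which the grim reaper's gradient equals $s_0\mathbf n=\nabla u(p')$ --- possible precisely because $\nabla u(p')$ points across $S$ --- and then translated vertically so that $g(p')=u(p')$, making it tangent to $M$ at the interior point $(p',u(p'))$. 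The crucial computation is that $S$ is disjoint from $I_1$: writing $\rho(q)$ for the signed distance from $q$ to $L$ taken positive toward the origin, we have $S\subset\{-\pi<\rho<\pi\}$, while along $\{y=0\}$ the function $\rho$ is affine in $x$ with positive slope, vanishing at $x=-a$ and equal to $\dist(0,L)>\pi$ at $x=0$; hence $\{y=0\}\cap\{\rho<\pi\}\subset\{x<0\}$, so $S\cap\{y=0\}\subset\{x<0\}$ and $S\cap I_1=\emptyset$. This contradicts Theorem~\ref{other-tangency-theorem}\eqref{other-tangency-one} and proves~\eqref{tilted-line}.

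Part~\eqref{untilted} then follows by letting a line become horizontal. (If $w=\pi$ the set $\RR\times(\pi,w)$ is empty, so assume $w>\pi$.) Given $(x_0,y_0)$ with $\pi<y_0<w$, for all sufficiently small $\epsilon>0$ the line $L_\epsilon$ through $(x_0,y_0)$ in direction $(1,\epsilon)$ meets the negative $x$-axis and has $\dist(0,L_\epsilon)\to y_0>\pi$ as $\epsilon\downarrow0$, so $\dist(0,L_\epsilon)>\pi$; applying~\eqref{tilted-line} gives $\tfrac{\partial u}{\partial x}(x_0,y_0)+\epsilon\,\tfrac{\partial u}{\partial y}(x_0,y_0)<0$, and $\epsilon\downarrow0$ yields $\tfrac{\partial u}{\partial x}(x_0,y_0)\le0$. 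I expect the main obstacle to be the barrier construction: the position of the width-$\pi$ strip is \emph{forced} by the requirement that the grim reaper's gradient match $\nabla u(p')$ at $p'$, and one must verify that this forced strip is nonetheless disjoint from $I_1$ --- which is exactly where the hypothesis $\dist(0,L)>\pi$ enters, and keeping track of where the strip meets the line $y=0$ is the delicate step.
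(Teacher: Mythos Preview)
Your proof is correct and follows essentially the same approach as the paper's: both produce a grim reaper over a width-$\pi$ strip $S$ parallel to $L$ tangent to $M$ at a point where $\nabla u\cdot\vv=0$, use $\dist(0,L)>\pi$ to show $S$ misses $I_1$, and invoke Theorem~\ref{other-tangency-theorem}\eqref{other-tangency-one}; both then obtain~\eqref{untilted} by letting nearly-horizontal lines through a point of $\RR\times(\pi,w)$ tend to horizontal. Your verification that $S\cap I_1=\emptyset$ is in fact more explicit than the paper's, which just observes $\dist(S,0)>0$ and says ``in particular'' $S$ is disjoint from the ray.
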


\begin{proof}
Suppose, contrary to~\eqref{tilted-line}, that there is a point on $L$ at which $\nabla u\cdot \vv\ge 0$.
Since $u=+\infty$ on the point where $L$ intersects $\{y=0\}$, there is a point $p_0$ on $L$
such that $\nabla u\cdot \vv = 0$. 
Thus there is a grim reaper surface $G$ over an open strip $S$ containing $L$ such that $G$
and $M$ at tangent at $p_0$.

Since $S$ has width $\pi$ and since $\dist(L,(0,0))> \pi$, we see that $\dist(S,(0,0))>0$.
In particular, $S$ is disjoint from the ray $[0,\infty)\times \{0\}$, the portion of the $x$-axis where $u=-\infty$.
But then $M$ and $G$ cannot be tangent by Theorem~\ref{other-tangency-theorem},
a contradiction. 
 This proves~\eqref{tilted-line}.

Now let $\pi<b<w$.  For all sufficiently small $m>0$, 
the line $L$ given by $y=mx+b$ has distance $>\pi$ from $(0,0)$.
Thus by~\eqref{tilted-line},
\[
    \left( \frac{d}{dx} \right) u(x,mx+b) < 0  \quad\text{for all $-b/m < x < (w-b)/m$}.
\]
Letting $m\to 0$ gives $(\partial/\partial x)u(x,b)\le 0$ for all $x$.  Since this holds for all $b\in (\pi,w)$, we have
proved~\eqref{untilted}.
\end{proof}



\appendix

\section{Curvature Bounds, Existence, and Uniqueness}

In this appendix, we prove the curvature and area estimates for translating graphs  
that are used in the paper, and we prove existence and uniqueness of translating graphs
over polygonal domains with boundary values that are constant on each edge.
The proofs are rather standard, but we have included them for completeness.

\begin{theorem}\label{minimizing-theorem}
Let $M$ be a translator that is the graph of a function $u:\Omega\to \RR$, where $\Omega$ is a convex
open set in $\RR^2$. If $\Sigma$ is a compact region in $M$ and if $\Sigma'$ is a compact surface
with $\partial \Sigma'=\partial \Sigma$, then 
\[
   \area_g\Sigma \le \area_g\Sigma'. \tag{*}
\]
\end{theorem}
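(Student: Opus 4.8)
The plan is to show that the graph $M$ is $g$-calibrated by the foliation of the solid cylinder $\Omega\times\RR$ by the vertical translates of $M$, and to reduce the general competitor $\Sigma'$ to one lying inside that cylinder by means of the convexity of $\Omega$.

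First I would set up the calibration. Since the translator equation is invariant under vertical translation, each $M_t:=M+t\ee_3$ is again a $g$-minimal surface, and since $M$ is the graph of $u$ over $\Omega$ the family $\{M_t:t\in\RR\}$ is a smooth foliation of $\Omega\times\RR$ (through $(x,y,z)$ passes the leaf $M+(z-u(x,y))\ee_3$). Let $N$ be the $g$-unit normal field to this foliation, oriented ``leaf-upward''; it is smooth on $\Omega\times\RR$. Because each leaf is $g$-minimal, the $g$-divergence of $N$ vanishes (the standard identity $\operatorname{div}_gN=-\langle\vec H,N\rangle_g$ for the unit normal of a foliation, together with $|N|_g\equiv 1$). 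Hence the $2$-form $\omega:=\iota_N\,dV_g$ satisfies $d\omega=\mathcal L_N\,dV_g=(\operatorname{div}_gN)\,dV_g=0$ by Cartan's formula, using that $dV_g$ is top-degree. Moreover $\omega$ has $g$-comass one, with equality on a $g$-unit simple $2$-vector precisely when the plane it spans is tangent to a leaf; thus $\omega$ is a calibration whose calibrated planes are the tangent planes of the $M_t$. In particular $\omega$ restricts to the $g$-area form on $M$, so $\int_\Sigma\omega=\area_g\Sigma$, while $\int_T\omega\le\area_gT$ for every compact (Lipschitz) surface $T\subset\Omega\times\RR$.

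Next I would dispose of the competitor. Choose a convex open set $\Omega_0$ with $\Sigma\subset\Omega_0\times\RR$ and $\overline{\Omega_0}\subset\Omega$, and let $\Pi:\RR^3\to\overline{\Omega_0}\times\RR$ be nearest-point projection onto this closed convex set. Then $\Pi$ fixes the $x_3$-coordinate and is $1$-Lipschitz for the Euclidean metric, hence — since $g=e^{-x_3}\delta$ and $x_3$ is preserved — also $1$-Lipschitz, and therefore $g$-area-nonincreasing, for $g$. Since $\partial\Sigma'=\partial\Sigma\subset M\subset\Omega_0\times\RR$, the projection fixes $\partial\Sigma'$, so $\Sigma'':=\Pi(\Sigma')\subset\Omega\times\RR$ is a Lipschitz chain with $\partial\Sigma''=\partial\Sigma$ and $\area_g\Sigma''\le\area_g\Sigma'$. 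Because $\Omega$ is convex, $\Omega\times\RR$ is contractible, so $H_2(\Omega\times\RR)=0$ and the cycle $\Sigma-\Sigma''$ bounds a $3$-chain there; Stokes' theorem together with $d\omega=0$ then gives $\int_\Sigma\omega=\int_{\Sigma''}\omega$. Combining the three displayed facts, $\area_g\Sigma=\int_\Sigma\omega=\int_{\Sigma''}\omega\le\area_g\Sigma''\le\area_g\Sigma'$, which is \thetag{*}.

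The routine points (smoothness of $N$, the comass computation, and interpreting $\Sigma$, $\Sigma'$ and Lipschitz images as integral currents so that Stokes' theorem applies) are standard and I would not belabor them. The one place where genuine care is needed — and where the convexity hypothesis on $\Omega$ is used, in fact twice — is the reduction to a competitor inside $\Omega\times\RR$: without convexity the foliation does not extend to a calibration that $\Sigma'$ can ``see,'' and the nearest-point retraction need not be well defined or area-nonincreasing. I expect that reduction to be the main obstacle; everything after it is the textbook calibration argument.
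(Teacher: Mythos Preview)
Your argument is correct and follows essentially the same two-step route as the paper: first the calibration coming from the $g$-minimal foliation $\{M+t\ee_3\}$ of $\Omega\times\RR$, then nearest-point projection onto a convex cylinder to reduce an arbitrary competitor to one inside $\Omega\times\RR$. The only difference is cosmetic: where the paper cites \cite{solomon}*{Corollary~1.11} for the foliation/calibration step, you unpack that argument explicitly (closed form $\iota_N\,dV_g$, comass one, Stokes), and your choice of the convex target $\overline{\Omega_0}\times\RR$ differs inessentially from the paper's convex hull $C\times\RR$ of the projection of $\Sigma$.
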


\begin{proof}
Since $M$ and its vertical translates form a $g$-minimal foliation of $\Omega\times\RR$,
the theorem is true if $\Sigma'$ is contained in $\Omega\times\RR$ 
\cite{solomon}*{Corollary~1.11}.

For the general case, 
let $C$ be the convex hull of the projection of $ \Sigma$ to the $xy$-plane. 
Let $\Pi:\RR^3\to C\times\RR$ map each point $p$ to the point  $q\in C\times\RR$ 
that minimizes $|q-p|$.  Then $\Pi$ is distance decreasing (and therefore area decreasing)
 with respect to the $g$-metric, so
 \[
    \area_g(\Pi(\Sigma')) \le \area_g(\Sigma').
 \]
Thus the general case reduces to the special case of surfaces in $\Omega\times\RR$.
\end{proof}

\begin{corollary}\label{minimizing-corollary}
If $U$ is a bounded convex open subset of $\RR^3$ disjoint from $\Gamma:=\overline{M}\setminus M$, then
\[
  \area_g(M\cap U) \le \frac12 \area_g(\partial U).
\]
\end{corollary}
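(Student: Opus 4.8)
The plan is to realise $M\cap U$ as a $g$-minimizing piece of $M$ whose boundary lies on $\partial U$, and then to cap that boundary off, within the topological sphere $\partial U$, by whichever of the two complementary regions has the smaller $g$-area; Theorem~\ref{minimizing-theorem} will then yield the bound with the constant $\tfrac12$. First I would reduce to the case where $U$ is smooth and strictly convex, where $\overline U$ (and not merely $U$) is disjoint from $\Gamma$, and where $M$ meets $\partial U$ transversally. For this I would pass to the inner parallel bodies $U_t=\{x\in\RR^3:\dist(x,\RR^3\setminus U)>t\}$: these are convex, they increase to $U$ as $t\downarrow 0$, $\overline{U_t}\subset U$ (hence $\overline{U_t}\cap\Gamma=\emptyset$), and $\overline{U_t}\to\overline U$ in the Hausdorff metric, and an arbitrarily small additional shrinking makes each $U_t$ smooth and strictly convex. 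Since Euclidean perimeter is continuous on convex bodies under Hausdorff convergence and $e^{-x_3}$ is continuous and positive on the fixed compact set $\overline U$, the $g$-perimeter is continuous as well, so $\area_g(\partial U_t)\to\area_g(\partial U)$; and $\area_g(M\cap U_t)\uparrow\area_g(M\cap U)$ by monotone convergence. Thus it is enough to treat each $U_t$, and for a.e.\ $t$ the transversality holds by Sard's theorem.

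Granting these reductions, I would set $\Sigma=\overline{M\cap U}$. Because $\overline M=M\cup\Gamma$ with $\overline U\cap\Gamma=\emptyset$, we get $\Sigma\subset\overline M\cap\overline U=M\cap\overline U$, so $\Sigma$ is a compact subset of the (boundaryless) smooth surface $M$; transversality then makes $\Sigma$ a compact region in $M$ whose boundary $\partial\Sigma=M\cap\partial U$ is a finite disjoint union of smooth circles in $\partial U$. If $M\cap U=\emptyset$ the inequality is trivial; otherwise $\partial\Sigma\neq\emptyset$, since $M$ is connected, meets $U$, and is not contained in $\overline U$ (the latter being bounded and disjoint from $\Gamma\subset\overline M$). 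These circles cut $\partial U$ into finitely many faces which, since $\partial\Sigma$ is null-homologous mod $2$ in the sphere $\partial U$, admit a $2$-coloring for which $\partial\Sigma$ is exactly the common boundary of the two color classes. Let $\Sigma'$ be the closure of the union of the faces in whichever class has the smaller $g$-area; then $\Sigma'$ is a compact surface with $\partial\Sigma'=\partial\Sigma$ and $\area_g(\Sigma')\le\tfrac12\area_g(\partial U)$. Finally I would apply Theorem~\ref{minimizing-theorem} to the pair $\Sigma,\Sigma'$ to conclude
\[
\area_g(M\cap U)=\area_g(\Sigma)\le\area_g(\Sigma')\le\tfrac12\area_g(\partial U).
\]

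The substantive input is Theorem~\ref{minimizing-theorem}, which is already available, so I do not expect a real obstacle here; the one point that deserves care is the discrepancy between ``$U$ disjoint from $\Gamma$'' (what is assumed) and ``$\overline U$ disjoint from $\Gamma$'' (what the capping argument uses), and that is exactly what the passage to inner parallel bodies, together with the continuity of the $g$-perimeter, is there to handle.
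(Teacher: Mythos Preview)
Your argument is correct and follows the same strategy as the paper's proof: reduce by approximation to the case $\overline U\cap\Gamma=\emptyset$, then compare $M\cap\overline U$ against a cap in $\partial U$ with the same boundary and take the cheaper of the two halves. Your treatment is more careful than the paper's in two respects: you spell out the approximation via inner parallel bodies and continuity of the $g$-perimeter, and you use a $2$-coloring of the complementary faces of $M\cap\partial U$ in the sphere $\partial U$ rather than speaking loosely of ``the two regions'' (which is what the paper does, even though $M\cap\partial U$ need not be a single circle).
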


\begin{proof}
By approximation, it suffices to prove it when $\overline{U}$ is disjoint from $\Gamma$.
Since $M\cap \overline{U}$
is $g$-area-minimizing, it has less $g$ area than either of the two regions in $(\partial U)\setminus M$
with the same boundary as $M\cap \overline{U}$.  In particular, the $g$-area of $M$ is less than the average of the areas
of those two regions.
\end{proof}

\begin{theorem}\label{curvature-bound}
There is a constant $C<\infty$ with the following property.
Let $M$ be translator with velocity $-s\ee_3$ in $\RR^3$ (where $s>0$) such that
\begin{enumerate}
\item $M$ is the graph of a smooth function $F:\Omega\to\RR$ on a convex open subset $\Omega$ of $\RR^2$.
\item $\Gamma:=\overline{M}\setminus M$ is a polygonal curve (not necessarily connected)
       consisting of segments, rays, and lines.
\item $\overline{M}$ is a smooth manifold-with-boundary except at the corners of $\Gamma$.
\end{enumerate}
If $p\in \RR^3$, let $r(M,p)$ be the supremum of $r>0$ such that $\BB(p,r)\cap \partial M$ is either empty
or consists of a single line segment.
Then
\[
   A(M,p) \min \{s^{-1}, r(M,p) \} \le C,
\]
where $A(M,p)$ is the norm of the second fundamental form of $M$ at $p$.
\end{theorem}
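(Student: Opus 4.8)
The plan is a blow-up argument of Schoen type. The starting point is that a translating graph over a convex domain $\Omega$ is a stable $g$-minimal surface --- indeed, by Theorem~\ref{minimizing-theorem}, a $g$-area minimizing one --- since $\nu\cdot\ee_3>0$ is a Jacobi field and the vertical translates of $M$ foliate $\Omega\times\RR$. The other main input is the classical fact that a complete stable minimal surface in Euclidean $\RR^3$ is a plane (Fischer-Colbrie and Schoen, do Carmo--Peng). Before blowing up I would remove the velocity parameter: if $M$ translates with velocity $-s\ee_3$ then $sM$ translates with velocity $-\ee_3$, and dilation by $s$ leaves $A(M,p)\min\{s^{-1},r(M,p)\}$ unchanged while turning $s^{-1}$ into $1$; so it suffices to find a universal $C$ with $A(M,p)\min\{1,r(M,p)\}\le C$ for all $M$ (velocity $-\ee_3$, graphical over a convex domain, with polygonal $\Gamma=\overline M\setminus M$) and all $p$.

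Suppose no such $C$ exists, so there are $M_i$ and points $p_i$, normalized to be the origin, with $\kappa_i:=A(M_i,0)\min\{1,r(M_i,0)\}\to\infty$; in particular $A(M_i,0)\to\infty$. Put $\mu_i=\tfrac12\min\{1,r(M_i,0)\}\le\tfrac12$. Since the intersection of a ball with $\partial M_i$ is a sub-segment of the intersection with any larger concentric ball, $r(M_i,q)\ge r(M_i,0)-|q|\ge\mu_i$ for every $q\in\overline\BB(0,\mu_i)\cap M_i$, so $\BB(0,\mu_i)$ stays corner-free about each of its points. Now run the standard point-selection: pick $q_i\in\overline\BB(0,\mu_i)\cap\overline{M_i}$ maximizing $q\mapsto A(M_i,q)(\mu_i-|q|)$, and set $\sigma_i=A(M_i,q_i)$, $d_i=\mu_i-|q_i|>0$. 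Then $\sigma_id_i\ge A(M_i,0)\mu_i=\tfrac12\kappa_i\to\infty$, hence $\sigma_i\to\infty$; $A(M_i,\cdot)\le 2\sigma_i$ on $\BB(q_i,d_i/2)\cap M_i$; and $\BB(q_i,d_i/2)\cap\partial M_i$ is at most one segment. Rescale: $\widetilde M_i:=\sigma_i(M_i-q_i)$ is a translator with velocity $-\sigma_i^{-1}\ee_3$, equivalently a minimal surface for the metric $e^{-x_3/\sigma_i}\delta_{ij}$, which tends to the flat metric on compact sets; it is a graph over a convex domain, has $A(\widetilde M_i,0)=1$, has $A(\widetilde M_i,\cdot)\le 2$ on the balls $\BB(0,\tfrac12\sigma_id_i)$ whose radii tend to $\infty$, and its boundary inside those balls is at most one segment. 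By Theorem~\ref{minimizing-theorem} each $\widetilde M_i$ is area minimizing for its metric, so Corollary~\ref{minimizing-corollary} (away from $\partial\widetilde M_i$) together with the graph structure near $\partial\widetilde M_i$ gives uniform local area bounds. Passing to a subsequence, the $\widetilde M_i$ converge smoothly, away from at most a single straight line $L$ (a limit of boundary segments), to a complete minimal surface $N$ in Euclidean $\RR^3$ with $A(N,0)=1$ --- so $N$ is not flat --- whose boundary $\partial N$ is either empty or $L$. Normalizing the positive Jacobi fields $\nu\cdot\ee_3$ on the $\widetilde M_i$ by their suprema on $\BB(0,1)$ and using the Harnack inequality, these converge to a positive Jacobi field on $N\setminus\partial N$, so $N\setminus\partial N$ is stable.

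To derive a contradiction I would treat the two cases. If $\partial N=\emptyset$, then $N$ is a complete stable minimal surface in $\RR^3$, hence a plane, contradicting $A(N,0)=1$. If $\partial N=L$, I would reflect $N$ across $L$: rotation by $\pi$ about a line is a Euclidean isometry, so the Schwarz reflection principle produces a complete minimal surface $\widehat N$ without boundary. Being a limit of graphs over convex domains, $\widehat N$ is again a minimal graph, now over all of $\RR^2$ or over a slab or half-plane with $\pm\infty$ boundary values; the first case forces a plane by Bernstein's theorem, and the others are impossible by a Collin--Krust type estimate. Either way $N$ is flat, the desired contradiction. When $L$ happens to be vertical the reflection is available already for the $M_i$ (it is an isometry of the Ilmanen metric), which makes the convergence up to $L$ and the vanishing of $\nu\cdot\ee_3$ on $L$ transparent; then $\widehat N$ carries a nonnegative Jacobi field vanishing along the interior curve $L$, forcing $\widehat N$ to be a vertical plane, again a contradiction.

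The step I expect to be the main obstacle is precisely this boundary analysis: establishing the uniform area bounds and smooth convergence of the rescalings \emph{up to} the limiting boundary line, and showing the limit is flat when that line is not vertical (so that neither Schwarz reflection in the Ilmanen metric nor the vertical-plane argument applies directly, and one must instead exploit that $N$ is a minimal graph over a half-plane or slab with affine data along the straight boundary, bounding the gradient by a barrier and then invoking Schauder estimates). The interior estimates and the point-selection are routine; the real content of the theorem is that the straight-line structure of $\Gamma$ is rigid enough to carry the curvature and area control to the boundary and to force flatness of any blow-up limit.
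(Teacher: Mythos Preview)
Your blow-up with point selection is exactly the paper's strategy, and your interior case ($\partial N=\emptyset$) matches the paper's: the limit is a complete stable minimal surface in $\RR^3$, hence a plane. The divergence is in the boundary case, and there your argument has a genuine gap.

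The claim that $\widehat N=N\cup\rho_L N$ is ``again a minimal graph'' is not justified. The limit $N$ is a graph because it is a limit of graphs, but rotation by $\pi$ about a line $L$ preserves the graph property only when $L$ is horizontal or vertical; for an oblique $L$ (which the hypotheses allow, since $\Gamma$ is an arbitrary polygonal curve) the surface $\rho_L N$ need not be a graph at all, so neither Bernstein nor a Collin--Krust estimate is available. Your separate treatment of vertical $L$ is fine, and the same idea would handle horizontal $L$, but the oblique case is simply not covered by anything you wrote, and your final paragraph acknowledges as much without resolving it.

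The paper avoids this entirely by exploiting the full area-minimizing property rather than only stability. Since each rescaled surface is $g_i$-area-minimizing with $g_i\to\delta$, the limit $N$ is Euclidean area-minimizing in the half-space $\Hh$, and Corollary~\ref{minimizing-corollary} gives $\area(N\cap\BB(0,R))\le CR^2$. After Schwarz reflection about $L$ one obtains a complete, properly embedded, \emph{simply connected} (inherited from the $M_i$) minimal surface with quadratic area growth; the only such surface is a plane. No graph structure for $\widehat N$ is needed. You already invoked Theorem~\ref{minimizing-theorem} and Corollary~\ref{minimizing-corollary} to get area bounds for the convergence; the point is to carry those same bounds to the limit and use them, together with simple connectivity, to force flatness directly---rather than routing through a graph/Bernstein argument that breaks down for oblique $L$.
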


\begin{proof}
Suppose the theorem is false.  Then there is a sequence of examples $p_i\in M_i$ 
with
\begin{equation}\label{blows-up}
  A(M_i,p_i)\, \min\{ s_i^{-1}, r(M_i,p_i) \} \to \infty.
\end{equation}
Let $\BB_i=\BB(p_i,R_i)$ where
\begin{equation}\label{R-equals}
  R_i = \frac12 \min\{s_i^{-1}, r(M_i,p_i)\}.
\end{equation}
Let $q_i$ be a point in $\overline{M}\cap \BB_i$ that 
maximizes\footnote{The factor of $1/2$ in~\eqref{R-equals} is to guarantee that the maximum will be attained.
Any positive factor $<1$ would work.}
\[
   A(M_i,q_i) \, \dist(q_i, \partial \BB_i).
\]
By translating and scaling, we can assume that $q_i=0$ and that
$A(M_i,0)=1$. 
Thus 
\begin{align*}
  \dist(0,\partial \BB_i) 
  &= A(M_i,0) \, \dist(0,\partial \BB_i) \\
  &\ge  A(M_i,p_i) \,\dist(p_i,\partial \BB_i) \\
  &= A(M_i,p_i)  \, R_i  \\
  &\to \infty
\end{align*}
by~\eqref{blows-up} and~\eqref{R-equals}.

Since $R_i\ge \dist(0,\partial\BB_i)$, we see that $R_i$, $s_i^{-1}$, and $r(M_i,0)$ all tend to infinity.

Note that $\BB_i\cap \partial M$ is either a line segment or the empty set.
Note also that $A(M_i,\cdot)$ is bounded as $i\to\infty$ on compact subsets of $\RR^3$.
Thus, by passing to a subsequence, we can assume that the $M_i$ converge smoothly to limit $M$
with $A(M,0)=1$, where $\partial M$ is either empty or a straight line.

Since $s_i\to 0$, $M$ is area-minimizing and therefore stable with respect to the Euclidean metric.
Since complete stable minimal surfaces in $\R^3$ are flat and $A(M,0)=1$, we see that $M$
must have boundary.  Thus $L=\partial M$ is a straight line.  The convexity of $\Omega$
implies that $M$ is contained in a closed halfspace $\Hh$ bounded by a vertical plane $\partial \Hh$ containing $L$.

By Corollary~\ref{minimizing-corollary},
 the area of $M\cap (\BB(0,R)\cap \Hh)$ is at most half the area of the boundary of $\BB(0,R)\cap \Hh$.
Thus $M$ has quadratic area growth.  Also, $M$ is simply connected since the $M_i$ are simply connected.
Thus $M\cup \rho_LM$ is a complete, properly embedded, simply connected minimal surface with quadratic
area growth.  The only such minimal surface is the plane, contradicting $A(M,0)=1$.
\end{proof}

\begin{remark}\label{dichotomy}
Let $M_i$, $\Gamma_i=\overline{M_i}\setminus M_i$, and $\Omega_i$ be a sequence
of examples satisfying the hypotheses of Theorem~\ref{curvature-bound}
 with $s_i\equiv 1$.  Suppose that the $\Gamma_i$
converge (with multiplicity $1$) to a polygonal curve $\Gamma$.  Thus curvature estimates imply that
(after passing to a subsequence) the $M_i$ converge smoothly in $\RR^3\setminus\Gamma$
to a smooth translator $M$.  By Corollary~\ref{minimizing-corollary}, $M$ is embedded with multiplicity $1$.
Let $M_c$ be a connected component of $M$.  
Note that vertical translation gives a $g$-Jacobi field on $M$ that does not change sign (since $M$ is a limit
of graphs.)  
 By the strong maximum principle, if it vanishes anywhere on $M_c$, it would vanish everwhere on $M_c$.
 In that case, the translator equation implies that $M_c$ is flat.
Thus each connected component $M_c$ of $M$ is either a graph or is flat and vertical.
\end{remark}

\begin{theorem}\label{finite-existence}
Let $\Omega$ be the interior of a convex polygon in $\RR^2$.  Let $V$ be the set of vertices of the polygon.
Let $f:(\partial \Omega)\setminus V\to \RR$ be a function that is a (finite) 
constant on each edge, and let $\Gamma$
be the simple closed curve obtained by adding vertical segments (as needed) to the graph of $f$.
Then there is a unique embedded $g$-minimal disk $M$ with $\partial M=\Gamma$
such that $M$ is a smooth manifold-with-boundary except at the corners of $\Gamma$
and such that $M\setminus \Gamma$ is the graph of a smooth function $F: \Omega\to \RR$.
\end{theorem}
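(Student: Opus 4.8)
The plan is to establish existence by the method of continuity for smooth data on smooth convex domains, then obtain the general (polygonal domain, piecewise constant, possibly discontinuous $f$) case by passing to a limit, and to prove uniqueness by linearizing and invoking the maximum principle.

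\textbf{Existence for smooth data.} First I would treat the case where $\Omega$ is replaced by a smooth, uniformly convex domain $\Omega'$ and $f$ by a smooth function $\varphi$ on $\partial\Omega'$. Since the translator equation is the minimal surface equation for the Ilmanen metric $g=e^{-x_3}\delta$, and since the vertical cylinder $\partial\Omega'\times\RR$ is $g$-mean-convex with respect to the inner normal exactly because $\Omega'$ is convex (the conformal factor contributes nothing on a vertical cylinder), the Dirichlet problem is solvable by deforming through $\operatorname{div}(Tu)=-t\,(1+|\nabla u|^2)^{-1/2}$, $t\in[0,1]$, where $Tu=\nabla u/\sqrt{1+|\nabla u|^2}$; at $t=0$ this is the minimal surface equation, classically solvable over a convex domain with arbitrary smooth data. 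The required a priori estimates are standard: a $C^0$ bound (a constant $\equiv\min\varphi$ is a subsolution, and a grim reaper surface over a wide strip containing $\overline{\Omega'}$, shifted up past $\max\varphi$, is a supersolution, since $\operatorname{div}(Tw)+t\,(1+|\nabla w|^2)^{-1/2}=(t-1)(1+|\nabla w|^2)^{-1/2}\le 0$); a boundary gradient bound from the usual barrier $\varphi\circ\pi\pm\psi(\operatorname{dist}(\cdot,\partial\Omega'))$, which works because $\partial\Omega'$ is convex and the forcing term $(1+|\nabla u|^2)^{-1/2}\le 1$ is small precisely where the barrier is steep; and an interior gradient bound, which for translating graphs follows from the curvature estimate of Theorem~\ref{curvature-bound} together with the positive Jacobi field $\nu\cdot\ee_3>0$ (stability). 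These give a global $C^{1,\alpha}$ bound, Schauder estimates close the continuity method, and elliptic regularity gives smoothness up to the smooth boundary.

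\textbf{Passing to the limit.} Now take the convex polygon $\Omega$ and the piecewise constant $f$. Exhaust $\Omega$ by smooth uniformly convex domains $\Omega_j\uparrow\Omega$, and choose smooth data $\varphi_j$ on $\partial\Omega_j$ with values in $[\min f,\max f]$, converging to $f$ uniformly on compact subsets of $\partial\Omega\setminus V$, with the jumps "filled in" near the vertices. Let $M_j$ be the graph of the solution $u_j$. By Theorem~\ref{curvature-bound}, which gives curvature bounds up to the boundary away from corners, a subsequence of the $M_j$ converges smoothly, off the vertical lines over the corners, to a translator $M$. Each component of $M$ is a graph or a vertical plane (Remark~\ref{dichotomy}); but $M$ is confined to $\overline\Omega\times[\min f,\max f]$ — horizontally by the convex hull property (reflection in a vertical plane is a $g$-isometry, so vertical planes are $g$-totally geodesic), and vertically by comparison with a grim reaper surface — so no component is a full vertical plane over the bounded polygon, and hence $M\setminus\partial M$ is the graph of a function $u$ over an open subset of $\Omega$. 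By Lemma~\ref{straight-lemma} that subset is convex with boundary consisting of straight segments meeting at corners of $\Omega$, and since it contains interior points it is all of $\Omega$. Smooth convergence up to the relative interiors of the edges (again Theorem~\ref{curvature-bound}) gives $u=f$ there, and the confinement forces $\overline M\setminus M$ to be exactly $\Gamma$: the horizontal segments over the edges at the prescribed heights, joined by the full vertical segments over the corners. Thus $M=\operatorname{graph}(u)\cup\Gamma$ is an embedded $g$-minimal disk, smooth as a manifold-with-boundary except at the corners of $\Gamma$ (and by Theorem~\ref{minimizing-theorem} one may observe afterward that it is $g$-area-minimizing, since its vertical translates foliate $\Omega\times\RR$).

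\textbf{Uniqueness, and the main obstacle.} Suppose $u_1,u_2:\Omega\to\RR$ both produce such surfaces. Both graphs span $\Gamma$, hence lie in $\overline\Omega\times\RR$ by the convex hull property, and are bounded (otherwise a sequence of vertical translates would converge, by the interior curvature estimates, to a complete translator inside a bounded slab $\overline\Omega\times\RR$, which does not exist). The difference $v=u_1-u_2$ satisfies a linear uniformly elliptic equation with no zeroth-order term, because the translator operator $u\mapsto\operatorname{div}(Tu)+(1+|\nabla u|^2)^{-1/2}$ involves $u$ only through $\nabla u$ and $D^2u$. On the open edges $v\equiv 0$, and each corner of $\Omega$ is a convex wedge, which satisfies an exterior cone condition; so boundary Hölder estimates for $Lv=0$ show the bounded function $v$ extends continuously to $\overline\Omega$ with $v\equiv 0$ on $\partial\Omega$, whence $v\equiv 0$ by the maximum principle. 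The delicate point throughout is showing that the limit surface attains the prescribed data — that $\partial M$ is exactly $\Gamma$, reaching each prescribed height on each edge and filling the full vertical segment over each corner, neither more nor less. This is where the convexity of $\Omega$ is essential: it supplies the boundary barriers for the gradient estimate, the $g$-mean-convexity of the bounding cylinder, and (via the totally geodesic vertical planes) the convex hull confinement pinning the surface into $\overline\Omega\times[\min f,\max f]$; and it is there that the up-to-the-boundary curvature estimates of Theorem~\ref{curvature-bound} do the real work.
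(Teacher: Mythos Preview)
Your route differs from the paper's on both halves.

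For existence, the paper is direct and geometric: it traps $\Gamma$ in a compact $g$-mean-convex body $N$ (the portion of $\overline\Omega\times\RR$ lying under a bowl soliton and above a horizontal plane), invokes Meeks--Yau to get a least-$g$-area embedded disk $M\subset N$ with $\partial M=\Gamma$, and then shows $M\setminus\Gamma$ is graphical by a Rad\'o argument: a vertical tangent plane at an interior point $p$ with normal $(a,b,0)$ makes $p$ an interior saddle of $h=ax+by$ on the disk, which forces $\Gamma\cap\{h>h(p)\}$ to be disconnected, contradicting the convexity of $\Omega$. Your continuity-method-plus-approximation approach is a legitimate alternative in spirit, but two of your citations do not fit. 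Theorem~\ref{curvature-bound} is stated only for \emph{polygonal} boundary curves, so it does not apply to your smooth approximations $M_j$; you would need a separate up-to-boundary estimate for smooth boundary data, uniform as $\partial\Omega_j\to\partial\Omega$. And Lemma~\ref{straight-lemma} concerns surfaces whose closure minus themselves lies in vertical lines over a discrete set (it is tailored to $\pm\infty$ boundary values); in your bounded-data situation it is neither applicable nor needed, since the uniform $C^0$ bound on the $u_j$ together with interior estimates already gives a smooth limit graph over all of $\Omega$.

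The real gap is in uniqueness. The paper does \emph{not} linearize; it slides. First it shows, by a tangent-cone analysis, that at each corner of $\Gamma$ where a horizontal edge meets a vertical one the surface is tangent to a face of $\Omega\times\RR$ (the tangent cone is a quarter-plane, and Schwarz reflection makes the surface smooth across the vertical edge). Then it translates one solution upward until disjoint from the other, lowers it to first contact, and rules out interior contact, smooth boundary contact, and corner contact case by case using the strong and boundary maximum principles together with that corner-tangency fact. Your linearization argument breaks exactly at the corners where $f$ jumps. There $|\nabla u_1|$ and $|\nabla u_2|$ are unbounded, so the linear operator $L$ in $Lv=0$ (whose coefficients arise by integrating the derivative of the translator operator along the segment from $\nabla u_2$ to $\nabla u_1$) has ellipticity ratio comparable to $1+|\nabla u_i|^2$, which blows up. The boundary H\"older estimate you invoke via the exterior cone condition requires uniform ellipticity near the boundary point; that fails at a jump corner, so you have not excluded $\sup(u_1-u_2)>0$ being achieved in the limit along a sequence converging to such a corner. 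The paper's geometric corner analysis is precisely what closes this case.
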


\begin{proof}
First we prove existence.
Let $K$ be the closed region under a bowl soliton and above a horizontal plane, where the soliton and the plane
are chosen so that $K$ contains $\Gamma$.

Then $N:=K\cap (\overline{\Omega}\times\RR)$ is compact and $g$-mean convex.

Thus by~\cite{meeks-yau}, 
there is a least $g$-area disk $M$ in $N$ with boundary $\Gamma$,
and it is a smooth embedded manifold-with-boundary except at the corners of $\Gamma$.

To show that $M\setminus \partial M$ is a graph, it suffices to show that the tangent plane
is never vertical.  Suppose to the contrary that $\nu=(a,b,0)$ is a nonzero normal to $M$ at
a point $p\in M\setminus\partial M$.   Then 
\[
    h: (x,y,z)\in M\mapsto ax+by
\]
is a nonconstant harmonic function on the disk $M$ with a critical point at $p$, so
by a basic property of harmonic functions (Rado),
\[
   \Gamma \cap \{ h> h(p)\} \tag{*}
\]
has two or more connected components.   But by the convexity of the polygon $\Omega$,
\thetag{*} is connected.  The contradiction proves that $M$ is a graph.

Now we prove uniqueness.  Consider a corner $p$ of $\Gamma$ at which a horizontal edge
meets a vertical edge.  Note that $p$ lies above a corner of the polygon $\partial \Omega$.
Since $M\subset \overline{\Omega}\times\RR$, we see that the tangent cone to $M$ at $p$
must be a quarter of plane (and not, for example, three quarters of a plane.)
Thus Schwarz reflection around the vertical edge produces a surface $M^*$ whose tangent
cone at $p$ is a halfplane.  Thus $M^*$ is smooth at $p$.

In particular, at the point $p$, $M$ is tangent to a face of $\Omega\times \RR$.

Now suppose that uniqueness fails, i.e., that there are two distinct such surfaces $M$ and $M'$.
We may suppose that $M$ contains a point that lies above $M'$. 

Choose $z$ large enough that $M'(z):=M'+(0,0,z)$ is disjoint from $M$.  Now decrease
$z$ until the first time that $M'(z)$ touches $M$ at some interior point or that $M'(z)$ and $M$ are tangent at some boundary point.

Note that $z>0$ since $M$ contains a point that lies above $M'=M'(0)$.

By the strong maximum principle, $M'(z)$ cannot touch $M$ at an interior point.

Thus $M'(z)$ and $M$ must be tangent at a boundary point $p$. Note that $p$ must lie on a vertical segment $S$
of $\Gamma$.

There are four cases:
\begin{enumerate}[\upshape (1)]
\item $p$ is not a corner of $\partial M'(z)$ or of $\partial M$.
\item $p$ is a corner $\partial M'(z)$ but  not of $\partial M$.
\item $p$ is a corner of $\partial M(z)$ but not of $\partial M'(z)$.
\item $p$ is a corner of $\partial M'(z)$ and a corner of $\partial M$.
\end{enumerate}
Case 1 is impossible by the strong boundary maximum principle. Case 2 is impossible since at a corner of $\partial M'(z)$, $M'(z)$ is tangent to a face of $\Omega\times\RR$,
whereas $M$ cannot be tangent to a face at a noncorner by the boundary maximum principle.
Case 3 is just like Case 2. 
Case 4 is impossible since, if it occurred, $M'(z)$ would be tangent to one face of $\Omega\times\RR$
at $p$ whereas $M$ would be tangent to the adjacent face, and thus $M'(z)$ and $M$ would not be
tangent to each other at $p$.
\end{proof}
\begin{bibdiv}
\begin{biblist}

\bib{himw}{article}{
   author={Hoffman, D.},
   author={Ilmanen, T.},
   author={Mart\'{\i}n, F.},
   author={White, B.},
   title={Graphical translators for mean curvature flow},
   journal={Calc. Var. Partial Differential Equations},
   volume={58},
   date={2019},
   number={4},
   pages={Art. 117, 29},
   issn={0944-2669},
   review={\MR{3962912}},
   review={Zbl 1416.53062},
   doi={10.1007/s00526-019-1560-x},
}

\bib{himw-correction}{article}{
   author={Hoffman, D.},
   author={Ilmanen, T.},
   author={Mart\'{\i}n, F.},
   author={White, B.},
   title={Correction to: Graphical translators for mean curvature flow},
   journal={Calc. Var. Partial Differential Equations},
   volume={58},
   date={2019},
   number={4},
   pages={Art. 158, 1},
   issn={0944-2669},
   review={\MR{4029723}},
   review={Zbl 07091751},
   doi={10.1007/s00526-019-1601-5},
}

\bib{himw-survey}{article}{
author={Hoffman, D.},
author={Ilmanen, T.},
author={Martín, F.},
author={White, B.},
title={Notes on Translating Solitons of the Mean Curvature Flow},
date={2019},
journal={Preprint arXiv:1901.09101},
}

\bib{ilmanen}{article}{
   author={Ilmanen, T.},
   title={Elliptic regularization and partial regularity for motion by mean
   curvature},
   journal={Mem. Amer. Math. Soc.},
   volume={108},
   date={1994},
   number={520},
   pages={x+90},
   review={\MR{1196160 (95d:49060)}},
   review={Zbl 0798.35066},
}

\bib{JenkSerr-II}{article}{
   author={Jenkins, Howard},
   author={Serrin, James},
   title={Variational problems of minimal surface type. II. Boundary value
   problems for the minimal surface equation},
   journal={Arch. Rational Mech. Anal.},
   volume={21},
   date={1966},
   pages={321--342},
   issn={0003-9527},
   review={\MR{0190811}},
   review={Zbl 0171.08301},
   doi={10.1007/BF00282252},
}

\bib{meeks-yau}{article}{
   author={Meeks, William H., III},
   author={Yau, Shing Tung},
   title={The classical Plateau problem and the topology of
   three-dimensional manifolds. The embedding of the solution given by
   Douglas-Morrey and an analytic proof of Dehn's lemma},
   journal={Topology},
   volume={21},
   date={1982},
   number={4},
   pages={409--442},
   issn={0040-9383},
   review={\MR{670745}},
   review={Zbl 0489.57002},
   doi={10.1016/0040-9383(82)90021-0},
}

\bib{solomon}{article}{
   author={Solomon, Bruce},
   title={On foliations of ${\bf R}^{n+1}$ by minimal hypersurfaces},
   journal={Comment. Math. Helv.},
   volume={61},
   date={1986},
   number={1},
   pages={67--83},
   issn={0010-2571},
   review={\MR{847521}},
   review={Zbl 0601.53025},
   doi={10.1007/BF02621903},
}

\bib{spruck-xiao}{article}{
   author={Spruck, J.},
   author={Xiao, L.},
   title={Complete translating solitons to the mean curvature flow in $\R^3$ 
   with nonnegative mean curvature},
   journal={American J. Math. (forthcoming). ArXiv:1703.01003v2},
   date={2017},
   pages={1-23},
}

\bib{wang}{article}{
   author={Wang, X.J.},
   title={Convex solutions of the mean curvature flow},
   journal={Ann. Math.},
   volume={173},
   date={2011},
   pages={1185-1239},
   review={\MR{2800714}},
   review={Zbl 1231.53058},
}

\bib{white-size}{article}{
   author={White, Brian},
   title={The size of the singular set in mean curvature flow of mean-convex
   sets},
   journal={J. Amer. Math. Soc.},
   volume={13},
   date={2000},
   number={3},
   pages={665--695},
   issn={0894-0347},
   review={\MR{1758759}},
   review={Zbl 0961.53039},
   doi={10.1090/S0894-0347-00-00338-6},
}

\end{biblist}
\end{bibdiv}

\end{document}